\newtheorem{theorem}{Theorem}
\newtheorem{lemma}{Lemma}
\newtheorem{proposition}{Proposition}
\newtheorem{definition}{Definition}
\newtheorem{remark}{Remark}
\title{Error Analysis of Three-Layer Neural Network Trained with PGD for Deep Ritz Method}
\author[a]{Yuling Jiao}
\author[b]{Yanming Lai\thanks{Corresponding Author(ylaiam@connect.ust.hk)}}
\author[b]{Yang Wang}
\affil[a]{School of Mathematics and Statistics, and Hubei Key Laboratory of Computational Science, Wuhan University, Wuhan 430072, China}
\affil[b]{Department of Mathematics, The Hong Kong University of Science and Technology, Clear Water Bay, Kowloon, Hong Kong, China }
\begin{document}

\maketitle

\begin{abstract}

Machine learning is a rapidly advancing field with diverse applications across various domains. One prominent area of research is the utilization of deep learning techniques for solving partial differential equations(PDEs). In this work, we specifically focus on employing a three-layer tanh neural network within the framework of the deep Ritz method(DRM) to solve second-order elliptic equations with three different types of boundary conditions. We perform projected gradient descent(PDG) to train the three-layer network and we establish its global convergence. To the best of our knowledge, we are the first to provide a comprehensive error analysis of using overparameterized networks to solve PDE problems, as our analysis simultaneously includes estimates for approximation error, generalization error, and optimization error. We present error bound in terms of the sample size $n$ and our work provides guidance on how to set the network depth, width, step size, and number of iterations for the projected gradient descent algorithm. Importantly, our assumptions in this work are classical and we do not require any additional assumptions on the solution of the equation. This ensures the broad applicability and generality of our results.

\end{abstract}

\section{Introduction}

Machine learning is a field of artificial intelligence that focuses on developing algorithms and models capable of learning from data and making predictions or decisions. It involves the study of statistical techniques and computational algorithms to enable computers to automatically learn and improve from experience. Machine learning finds applications in various domains, such as image and speech recognition, natural language processing, recommendation systems, and autonomous vehicles. By leveraging large datasets and powerful computational resources, machine learning algorithms can uncover patterns, extract insights, and solve complex problems, driving advancements in technology and revolutionizing numerous industries.

Neural networks play a crucial role in machine learning methods, and their approximation capability is an important topic of concern for researchers. \cite{pinkus1999approximation} is a review of the two-layer network approximation results from the 1990s. In recent years, there have been numerous studies on the approximation results of deep neural networks\cite{yarotsky2017error, yarotsky2018optimal, yarotsky2021elementary,  yarotsky2020phase, siegel2020approximation, suzuki2018adaptivity, siegel2022sharp, shen2019deep, lu2021deep, shen2021deep, shen2021neural, zhang2022deep, zhang2024deep, jiao2023deep, guhring2020error, guhring2021approximation, yang2024optimal}. The pioneering work of \cite{yarotsky2017error} introduced a novel approach to approximate smooth functions using neural networks. By constructing neural networks that approximate Taylor expansions and partition of unity, this work provided insights into the approximation of smooth functions. \cite{suzuki2018adaptivity} focuses on approximation in Besov spaces. \cite{lu2021deep} study the approximation of smooth function classes. \cite{guhring2020error, guhring2021approximation} investigate approximation in Sobolev spaces. These works primarily utilize ReLU and sigmoid activation functions, which are commonly used in practice, to study the approximation properties of neural networks in common function spaces. There are also other works that explore neural networks with super-approximation capabilities. \cite{yarotsky2020phase} demonstrates that ReLU-periodic function networks can overcome the curse of dimensionality. \cite{yarotsky2021elementary} shows that sin-arcsin networks can overcome the curse of dimensionality. \cite{shen2021deep} demonstrates that ReLU-floor neural networks can overcome the curse of dimensionality. Shen et al. \cite{shen2021neural} shows that Floor-Exponential-Step networks can overcome the curse of dimensionality. \cite{zhang2022deep} leverage a triangular-wave function and the softsign function to overcome the curse of dimensionality. Jiao et al. \cite{jiao2023deep} demonstrate that ReLU-sine-exponential networks can overcome the curse of dimensionality. However, these results only address the curse of dimensionality at the approximation level and do not consider issues at the training level. Recently,  \cite{zhang2024deep} shows that ReLU neural networks can be approximated by commonly used activation functions. Therefore, the approximation results of ReLU networks can be translated into approximation results for other activation function networks.

The theoretical study of training problems in neural networks is a vast and rapidly evolving field. To gain a comprehensive understanding of this field, the readers are referred to the review article \cite{bartlett2021deep}. \cite{jacot2018neural} proposed the framework of Neural Tangent Kernel (NTK) analysis and proved the global convergence of infinitely wide neural networks. Since then, research based on NTK theory for studying optimization problems in shallow and deep neural networks has become a popular area of study. For further exploration, please refer to \cite{jacot2018neural,allen2019convergence,allen2019learning,allen2019convergence1, du2018gradient, du2019gradient, arora2019fine, zou2019improved, zou2020gradient, cao2020generalization, cao2019generalization, chen2021much, oymak2020toward, nguyen2020global, nguyen2021proof, liu2020linearity, liu2022loss}, and the references mentioned therein. These works demonstrate that training shallow and deep neural networks using gradient descent and stochastic gradient descent algorithms exhibits global convergence. In NTK analysis, the minimum eigenvalue of the kernel matrix plays a crucial role. However, research on the minimum eigenvalue of the kernel matrix is currently quite limited. \cite{oymak2020toward, nguyen2020global, banerjee2023neural, panigrahi2019effect} are outstanding works in this field. Given the importance of understanding the minimum eigenvalue's magnitude, further research in this area is highly needed. Apart from NTK theory, there are other theoretical approaches for studying neural network training. For example, mean field theory \cite{mei2018mean, chizat2018global, fang2021modeling, nguyen2023rigorous, sirignano2020mean} has been utilized in this context as well.

The generalization performance of neural networks is an important metric, and it is related to the complexity of the networks. \cite{bartlett2002rademacher, bartlett2019nearly, golowich2018size, neyshabur2015norm} have studied the VC dimension and pseudo-dimension of neural networks, which are indicators of function complexity. \cite{yang2024nearly} investigates upper bounds on the VC dimension and pseudo-dimension for function classes involving derivatives. Some works have shown that trained neural networks exhibit good generalization performance \cite{arora2019fine, allen2019learning, cao2019generalization, cao2020generalization}. However, these works still have some unresolved issues. In section 6 of \cite{arora2019fine}, the authors discuss the conditions under which the learning task can achieve good performance when the samples satisfy certain underlying functions. However, the function classes discussed in that section are highly limited and do not cover most real-world scenarios. It is mentioned that two-layer networks can generalize well, but there is no clear characterization of the parameter upper bounds for approximation in two-layer networks \cite{mhaskar1996neural, pinkus1999approximation}. Additionally, the function classes that two-layer networks can approximate are extremely limited. For example, it seems that there is currently no result indicating that two-layer networks can approximate functions in Sobolev spaces. The upper bounds in \cite{cao2019generalization} are of a similar nature to those in \cite{arora2019fine}, so they face the same issues. \cite{allen2019learning} assumes the existence of an underlying neural network that achieves low error on the data distribution, and the available data is significantly more than the minimum number of samples required to learn this underlying neural network. However, it is difficult to verify this assumption from the data. The results in \cite{cao2020generalization} depend on the PL coefficient, which in turn depends on the minimum eigenvalue of the NTK. However, as mentioned above, the relationship between the minimum eigenvalue and the number of samples is still not clearly explained.


Using deep learning methods to solve PDEs is a popular field of research. Neural network models have been applied to solve various types of PDEs \cite{han2018solving, weinan2021algorithms, sirignano2018dgm, lu2021deepxde, long2018pde, raissi2018deep}. Different loss functions have been proposed: \cite{weinan2018deep} introduced the deep Ritz method (DRM), \cite{raissi2019physics} proposed Physics-informed Neural Networks (PINNs), \cite{zang2020weak} introduced the weak adversarial network (WAN), and \cite{chen2023friedrichs} proposed Friedrichs learning. At the theoretical level, works such as \cite{weinan2022some, lu2021priori, hong2021priori, xu2020finite, lu2022priori} have provided error analysis for these methods based on the assumption that the solutions of the underlying equations lie in the so-called Barron spaces. Subsequently, works such as \cite{jiao2024error, duan2022convergence, duan2022deep, jiao2022rate, lu2022machine, shin2020convergence, shin2023error, jiao2023convergence, muller2022error} have discovered that this assumption is not necessary. Instead, relying on the classical theory of partial differential equation regularity is sufficient to establish error analysis for these methods, achieving consistency with classical numerical methods such as finite element methods. However, the aforementioned works only consider either the approximation error or the generalization error, or a combination of both. They fail to account for the optimization error that arises from training the neural network using optimization algorithms. Consequently, their analyses are incomplete. In order to provide a solid theoretical foundation for using machine learning to solve PDEs, it is essential to incorporate factors related to the training process into theoretical research. Indeed, this is a significant challenge because, as discussed earlier, the relationship between neural network optimization and generalization is not yet fully understood.


\subsection{Main Results}

Here, we present an informal version of the main results in this work. To keep it concise, we focus on providing the convergence rate of the Robin problem. For the formal and comprehensive version, please refer to Theorem \ref{convergence rate}.
\begin{theorem}[informal version]\label{Robin convergence rate}
Let $u_R$ be the weak solution of Robin problem \eqref{second order elliptic equation}\eqref{robin}. Let $n$ be the sample size. Let the overparametrization condition be
\begin{align*}
A=n^{\frac{415d^4(d+3)5^{d+2}}{288d^3+4}}.
\end{align*}
Let the step size
\begin{align*}
\eta\leq C(d,coe,\Omega)n^{-\frac{103d^3}{144d^3+2}}\frac{1}{A}
\end{align*}
and the iteration step $T=\frac{1}{\eta}$. Let $f_{W_T}$ be the three-layer neural network function trained by PGD after $T$ step. Then with probability at least $1-\frac{C(d,coe,\Omega)}{n}$,
\begin{align*}
&\|f_{W_T}-u_R\|_{H^1(\Omega)}\leq C(d,coe,\Omega)\max\{1,1/\beta\}n^{-\frac{1}{288d^3+4}}.
\end{align*}
\end{theorem}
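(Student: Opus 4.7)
The plan is to bound $\|f_{W_T}-u_R\|_{H^1(\Omega)}$ via the classical approximation/generalization/optimization decomposition attached to the Deep Ritz energy functional for the Robin problem. The associated energy $\mathcal{E}(v)$ is coercive on $H^1(\Omega)$ with constants depending on the Robin parameter $\beta$, which is the source of the $\max\{1,1/\beta\}$ factor. Concretely $c_1(\beta)\|v-u_R\|_{H^1}^2 \le \mathcal{E}(v)-\mathcal{E}(u_R) \le c_2\|v-u_R\|_{H^1}^2$, so it suffices to control the excess energy of the trained network $f_{W_T}$. Writing $\mathcal{E}_n$ for the Monte Carlo empirical energy actually minimized by PGD and $W^\star$ for a reference network in the admissible ball, I split
\[
\begin{aligned}
\mathcal{E}(f_{W_T})-\mathcal{E}(u_R)
&\le \underbrace{\big(\mathcal{E}(f_{W_T})-\mathcal{E}_n(f_{W_T})\big)+\big(\mathcal{E}_n(f_{W^\star})-\mathcal{E}(f_{W^\star})\big)}_{\text{generalization}} \\
&\quad +\underbrace{\big(\mathcal{E}_n(f_{W_T})-\mathcal{E}_n(f_{W^\star})\big)}_{\text{optimization}}
+\underbrace{\big(\mathcal{E}(f_{W^\star})-\mathcal{E}(u_R)\big)}_{\text{approximation}}.
\end{aligned}
\]

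For the approximation term I would invoke quantitative $H^1$ approximation of elliptic-regular functions by three-layer tanh networks: $u_R$ lies in some $H^{s}(\Omega)$ under the standing smoothness hypotheses on coefficients and boundary, and a width-$m$ tanh network approximates it in $H^1$ at rate $m^{-(s-1)/d}$ up to logarithms. The width $m$ will later be tied to the neuron count $A$. For the two generalization terms I would exploit the projection step of PGD, which keeps the parameters in a fixed norm ball throughout training. This uniform weight bound yields a uniform complexity estimate for both $\{f_W\}$ and $\{\nabla f_W\}$ (pseudo-dimension/Rademacher complexity polynomial in $A$); a symmetrization and concentration argument with a McDiarmid/Bernstein bound then gives generalization error $\widetilde{O}\!\big(\sqrt{\mathrm{poly}(A)/n}\big)$ with probability $1-O(1/n)$.

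The main obstacle is the optimization piece, and it dictates the specific exponents in the theorem. The idea is an NTK-style linearization around initialization: with $A$ sufficiently large and $\eta$ sufficiently small, the PGD iterates remain in a neighborhood in which $f_W$ is well approximated by its tangent model, and along the trajectory the restricted Hessian of $\mathcal{E}_n$ is uniformly lower bounded by $\lambda_{\min}(K)$, the smallest eigenvalue of the empirical NTK Gram matrix associated with the Robin functional. Geometric contraction then delivers $\mathcal{E}_n(f_{W_t})-\min\mathcal{E}_n \le (1-\eta\lambda_{\min}(K))^{t}$, and the choice $T=1/\eta$ drives the optimization term to the desired scale. Two non-routine subtleties appear. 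First, the Robin energy couples bulk values, bulk gradients, and a boundary integral, so the relevant NTK is a block matrix involving derivatives and boundary traces of the network; a quantitative lower bound on its minimum eigenvalue in terms of $n$, $d$ and the tanh activation requires a fresh concentration argument. Second, the overparametrization width $A$, step size $\eta$, iteration count $T$, and the approximation-side width must be coupled so that the linearization error stays negligible for all $T$ steps, the projection ball still contains a good approximant $W^\star$, and the three error sources balance.

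Solving this coupled polynomial system in $n$ is precisely what forces the exponents $\tfrac{415d^4(d+3)5^{d+2}}{288d^3+4}$ for $A$, $\tfrac{103d^3}{144d^3+2}$ for $\eta$, and the resulting $n^{-1/(288d^3+4)}$ rate; plugging the optimal $A(n),\eta(n),T(n)$ back into the three bounds, using the coercivity inequality, and collecting the $1/\beta$-dependence from the Robin coercivity constant closes the argument. I would end by recording the failure probabilities from the generalization concentration and from the initialization event controlling $\lambda_{\min}(K)$, both of which are arranged to be $O(1/n)$.
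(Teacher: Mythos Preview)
Your high-level decomposition and the handling of the approximation and generalization terms are broadly aligned with the paper. The real divergence is in the optimization step, and there the proposal has a genuine gap.

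The paper does \emph{not} use an NTK-style linearization, and in fact is designed to avoid it. As the authors stress, a quantitative lower bound on $\lambda_{\min}(K)$ for the Robin-energy NTK (a block operator mixing bulk values, bulk gradients, and boundary traces of a three-layer tanh network) is not available in the literature, and you give no indication of how you would prove one. Without that bound your geometric-contraction argument does not start, and nothing in your sketch explains why the very specific exponents $\tfrac{415d^4(d+3)5^{d+2}}{288d^3+4}$ and $\tfrac{103d^3}{144d^3+2}$ would fall out of an NTK calculation; in the paper they come from a completely different mechanism.

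What the paper actually does for optimization (Theorem~\ref{opt err}) is: (i) observe that $\widehat L_R(f_W)$ is \emph{convex in the outer-layer parameters} $W^3$ (Lemma~\ref{convex with respec to outer layer}), so a standard PGD-on-convex argument controls the averaged loss against any comparator sharing the current inner weights; (ii) use the projection set $\mathcal C$ to freeze the inner weights within $O(N^{-83d^3/2})$ of initialization; and (iii) use overparametrization $A$ not to condition an NTK, but to guarantee, via a coupon-collector/union-bound argument over the $A$ random subnetwork initializations (Lemma~\ref{initialization probability}), that with high probability at least $A'$ subnetworks start with inner weights already within the tolerance of the explicit approximator $W^\ast$ built in Theorem~\ref{app err}. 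The optimization error is then bounded by a generalization term, an approximation term over the ball $\mathcal B_{W^\ast}$, plus $\tfrac{1}{2\eta T}\|(W^\ast)^3\|^2$ and an averaged gradient term; the choice $T=1/\eta$ and the bound $\|(W^\ast)^3\|\le C N^{23d^3/2}/A'$ from the approximation theorem are what produce the stated exponents after setting $N\sim n^{1/(144d^3+2)}$. Your proposal misses this convexity-plus-random-initialization mechanism entirely, and the NTK route you outline would require substantial new eigenvalue estimates that neither you nor the paper supply.
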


\subsection{Our Contributions}

The following are the main contributions of our work: 
\begin{enumerate}

\item To the best of our knowledge, we are the first to provide a comprehensive error analysis of using overparameterized networks to solve PDE problems. Our work provides guidance on how to set the network depth, width, step size, and number of iterations for the projected gradient descent algorithm. Our assumption (\ref{Assumption}) in this work is classical, common, and weak, and we do not require any additional assumptions on the solution of the equation. Therefore, our results have strong generality.

\item In this work, we construct three-layer neural networks to approximate functions in Sobolev spaces, extending the results from \cite{de2021approximation} that were originally limited to $W^{s,\infty}(\Omega)$ spaces to general $W^{s,p}(\Omega)$ spaces. This result complements the research in \cite{guhring2021approximation}, which only considers approximation results for deep networks in Sobolev spaces.

\item When considering the generalization error of the neural network function class, it is necessary for the functions to lie within a pre-defined bounded ball in $C(\Omega)$. For PDE problems, truncation techniques that are useful for regression problems cannot be applied due to the consideration of functions in Sobolev spaces. To overcome this challenge, we leverage the properties of the projected gradient descent algorithm to ensure that the network parameters are initialized sufficiently close to the initialization, resulting in a bounded $C(\Omega)$ norm for the iteratively obtained neural network function sequence. Furthermore, we even find that the neural network sequence generated by projected gradient descent lies within a bounded ball in $C^{1}(\Omega)$. This allows us to apply complexity bounds for functions in the $C^{1}$ bounded ball to assist in estimating the generalization error.
\end{enumerate}

In this work, we have referenced the techniques used in \cite{drews2024universal, kohler2022analysis, drews2023analysis} to establish the global convergence of optimization algorithms. However, our work still differs in many aspects from theirs. First of all, our setups are not exactly the same. They use gradient descent to optimize deep neural networks, while we use projected gradient descent to optimize three-layer neural networks. Secondly, our problem setting is different. Their study focuses on regression problems and deals with a class of smooth functions, while our research focuses on PDE problems and deals with a class of Sobolev functions. Consequently, we need to construct neural networks that can approximate functions in Sobolev spaces. Thirdly, our approach to controlling the generalization error also differs from theirs. They utilize the smoothness of activation functions to control the covering number and, consequently, the generalization error. In contrast, we make full use of the properties of the projected gradient descent algorithm to demonstrate that the neural network sequences generated by the algorithm lie within a bounded ball in $C^{1}(\Omega)$. This enables us to apply complexity bounds for functions in the $C^{1}$ bounded ball to estimate the generalization error. Finally, our approach to decomposing the error is also different.

\subsection{Related Works}

Recently, Kohler et al.\cite{drews2024universal, kohler2022analysis, drews2023analysis} investigated the convergence of gradient descent for solving regression problems by overparameterized deep networks. In their work, they considered the approximation error, generalization error, and optimization error simultaneously, and their conclusions hold for smooth function classes, making their findings generalizable. They mitigated the negative impact of overparameterization on generalization error by exploiting the smoothness of activation functions, thereby bridging the gap between optimization error and generalization error under the overparameterized condition. More importantly, differing from the works based on the NTK theory, they presented an alternative approach to studying the global convergence of optimization algorithms. They utilized random initialization to ensure that the inner parameters of the neural network are sufficiently close to the best approximation elements and leveraged the convexity of the neural network function with respect to the outer parameters to establish the global convergence of gradient descent. Furthermore, when proving global convergence, they required the 2-norm of the outer parameters of the best approximation elements to be sufficiently small, and they achieved this by increasing the network width, leading to a proportional decrease in the 2-norm of the outer parameters. By not relying on the NTK framework, they were able to avoid explicitly analyzing the minimum eigenvalue of the NTK matrix.

\subsection{Organization of This Paper}
The paper is structured as follows: Section 2 provides a brief introduction to the Deep Ritz method. Section 3 covers the necessary background knowledge and tools employed in this work. In Section 4, we present the error decomposition, breaking down the total error into approximation error, generalization error, and optimization error. Subsequently, sections 5 to 7 delve into the investigation of these three types of errors individually. The main results are presented in Section 8, and finally, in Section 9, we conclude the paper.

\section{Deep Ritz Method}
	Let $\Omega$ be a convex
	bounded open set in $\mathbb{R}^d$ and assume that $\partial\Omega\in C^{\infty}$. Without loss of generality, we assume that $\Omega\subset(0,1)^d$. We consider the following second order  elliptic equation:
\begin{equation} \label{second order elliptic equation}
		-\triangle u +  w u = f  \text { in } \Omega
\end{equation}
with three kinds of boundary conditions:
\begin{align}
	u&=0\text { on } \partial \Omega\label{dirichlet}\\
	\frac{\partial u}{\partial n}&=g\text { on } \partial \Omega\label{neumann}\\
	u+\beta\frac{\partial u}{\partial n}&=g\text { on } \partial \Omega,\quad\beta\in\mathbb{R},\beta\neq0\label{robin}
\end{align}
which are called Dirichlet, Neumann and Robin boundary conditions, respectively. Note that for the Dirichlet problem, we only consider the homogeneous boundary condition here since the inhomogeneous case can be turned into the homogeneous case by translation.


We make the following assumption on the known terms in the equation:
\begin{equation}\label{Assumption}
f\in L^\infty(\Omega),\quad g\in H^{1/2}(\Omega),\quad w\in L^{\infty}(\Omega),\quad w\geq c_w
\end{equation}
where $c_w$ is some positive constant. In the following we abbreviate $$C\left(\|f\|_{L^\infty(\Omega)},\|g\|_{H^{1/2}(\Omega)},\|w\|_{L^{\infty}(\Omega)},c_w\right),$$  constants depending on the known terms in equation, as $C(coe)$ for simplicity.

For problem $(\ref{second order elliptic equation})(\ref{dirichlet})$, the variational problem is to find $u\in H_0^1(\Omega)$ such that
\begin{subequations}
\begin{equation} \label{variational dirichlet}
(\nabla u,\nabla v)+(wu,v)=(f,v),\quad\forall v\in H_{0}^1(\Omega).
\end{equation}
The corresponding minimization problem is
\begin{equation} \label{minimization dirichlet}
\min_{u\in H_0^1(\Omega)}\frac{1}{2}\int_{\Omega}\left(|\nabla u|^2+wu^2-2fu\right)dx.
\end{equation}
\end{subequations}
{
The variational problem (\ref{variational dirichlet}) is equivalent to the minimization problems (\ref{minimization dirichlet}). This is a well-known result. The following (\ref{variational neumann}) and (\ref{minimization neumann}), (\ref{variational robin}) and (\ref{minimization robin}) have the same kind of relationship. For a reference on this topic, please refer to \cite[Theorem 1.1.2]{ciarlet2002finite}.
}

For problem $(\ref{second order elliptic equation})(\ref{neumann})$, the variational problem is to find $u\in H^1(\Omega)$ such that
\begin{subequations}
\begin{equation} \label{variational neumann}
(\nabla u,\nabla v)+(wu,v)=(f,v)+(g,T_0v)|_{\partial\Omega},\quad\forall v\in H^1(\Omega).
\end{equation}
where $T_0$ is the zero order trace operator. The corresponding minimization problem is
\begin{equation} \label{minimization neumann}
\min_{u\in H^1(\Omega)}\int_{\Omega}\left(\frac{1}{2}|\nabla u|^2+\frac{1}{2}wu^2-fu\right)dx
-\int_{\partial\Omega}gT_0uds.
\end{equation}
\end{subequations}

For problem $(\ref{second order elliptic equation})(\ref{robin})$, the variational problem is to find $u\in H^1(\Omega)$ such that
\begin{subequations}
\begin{equation} \label{variational robin}
(\nabla u,\nabla v)+(wu,v)+\frac{1}{\beta}(T_0u,T_0v)|_{\partial\Omega}=(f,v)+\frac{1}{\beta}(g,T_0v)|_{\partial\Omega},\quad\forall v\in H^1(\Omega).
\end{equation}
The corresponding minimization problem is
\begin{equation} \label{minimization robin}
\min_{u\in H^1(\Omega)}\int_{\Omega}\left(\frac{1}{2}|\nabla u|^2+\frac{1}{2}wu^2-fu\right)dx
+\frac{1}{\beta}\int_{\partial\Omega}\left(\frac{1}{2}(T_0u)^2-gT_0u\right)ds.
\end{equation}
\end{subequations}
The next lemma says that when $g=0$ and $\beta\to0$, the solution of the Robin problem converges to the solution of the Dirichlet problem. 
\begin{lemma}[\cite{jiao2024error}, Lemma 3.4] \label{penalty convergence}
Let assumption $(\ref{Assumption})$ holds. Let $g=0$. Let $u_D$ be the solution of problem $(\ref{variational dirichlet})$(also $(\ref{minimization dirichlet})$) and $u_R$ the solution of problem $(\ref{variational robin})$(also $(\ref{minimization robin})$). There holds
\begin{equation*}
\|u_R-u_D\|_{H^1(\Omega)}\leq C(coe)\beta.
\end{equation*}
\end{lemma}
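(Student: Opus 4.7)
The plan is to establish the sharp rate $\|u_R-u_D\|_{H^1(\Omega)}\leq C(coe)\beta$ via a boundary-layer corrector. A direct energy estimate is too lossy: subtracting the two variational formulations and using Green's identity on $u_D$ (legitimate once elliptic regularity is invoked) yields $(\nabla(u_R-u_D),\nabla v)+(w(u_R-u_D),v)+\tfrac{1}{\beta}(T_0u_R,T_0v)_{\partial\Omega}=-(\partial u_D/\partial n,T_0v)_{\partial\Omega}$ for every $v\in H^1(\Omega)$. Testing with $v=u_R-u_D$ and applying Young's inequality only produces the suboptimal $\|u_R-u_D\|_{H^1}=O(\sqrt\beta)$, although as a byproduct it does yield the sharper trace bound $\|T_0u_R\|_{L^2(\partial\Omega)}=O(\beta)$. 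To recover the full $O(\beta)$ rate in $H^1$, one must subtract off the leading-order boundary-layer contribution before performing the energy estimate.

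For the regularity prerequisites, I would invoke standard elliptic regularity: since $\partial\Omega\in C^\infty$, $w\in L^\infty$ with $w\geq c_w>0$, and $f\in L^\infty\subset L^2$, one has $u_D\in H^2(\Omega)$ with $\|u_D\|_{H^2}\leq C(coe)$, and hence $\partial u_D/\partial n\in H^{1/2}(\partial\Omega)$ with norm $\leq C(coe)$. By the inverse trace theorem, pick a lift $\phi\in H^1(\Omega)$ satisfying $T_0\phi=-\partial u_D/\partial n$ and $\|\phi\|_{H^1(\Omega)}\leq C(coe)$, and define the residual $r:=u_R-u_D-\beta\phi$. By construction $T_0(u_D+\beta\phi)=-\beta\,\partial u_D/\partial n$, so the boundary mismatch driving the Robin equation is reduced from $O(1)$ to $O(\beta)$.

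Substituting $u_R=u_D+\beta\phi+r$ into \eqref{variational robin} (with $g=0$) and using Green's identity $(\nabla u_D,\nabla v)+(wu_D,v)=(f,v)+(\partial u_D/\partial n,T_0v)_{\partial\Omega}$ for every $v\in H^1(\Omega)$, the source terms $(f,v)$ cancel and the boundary contributions telescope, leaving
\begin{equation*}
(\nabla r,\nabla v)+(wr,v)+\tfrac{1}{\beta}(T_0r,T_0v)_{\partial\Omega}=-\beta\bigl[(\nabla\phi,\nabla v)+(w\phi,v)\bigr]
\end{equation*}
for all $v\in H^1(\Omega)$. Testing with $v=r$, using $w\geq c_w>0$ together with Cauchy--Schwarz and Young's inequality on the right-hand side, one obtains $\|r\|_{H^1(\Omega)}\leq C(coe)\beta\|\phi\|_{H^1}\leq C(coe)\beta$. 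The triangle inequality then gives $\|u_R-u_D\|_{H^1}\leq\beta\|\phi\|_{H^1}+\|r\|_{H^1}\leq C(coe)\beta$, as claimed.

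The main obstacle is precisely the $\sqrt\beta$-versus-$\beta$ mismatch inherent in the naive energy argument: the penalty term $\tfrac{1}{\beta}\|T_0\cdot\|_{L^2(\partial\Omega)}^2$ in the Robin bilinear form is too weak to control the full $H^1$-norm when tested against a boundary datum of size $O(1)$. Introducing the $O(\beta)$ corrector $\beta\phi$ cancels this $O(1)$ mismatch, converting the equation for $r$ into a Robin problem with an $O(\beta)$ \emph{volumetric} right-hand side, on which the standard coercive energy estimate finally delivers the sharp $O(\beta)$ bound. The secondary technical point is justifying Green's identity for $u_D$ against arbitrary $v\in H^1(\Omega)$, which is exactly why the $H^2$-regularity of $u_D$ must be established at the outset.
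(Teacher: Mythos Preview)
The paper does not supply its own proof of this lemma; it is simply cited from \cite{jiao2024error}, Lemma~3.4. So there is no in-paper argument to compare against, and your proposal stands or falls on its own.

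Your argument is correct and is the standard sharp approach to Robin-penalty convergence. The key point --- that the naive energy estimate loses a factor $\sqrt\beta$ because the boundary mismatch $\partial u_D/\partial n$ is $O(1)$, and that subtracting the corrector $\beta\phi$ with $T_0\phi=-\partial u_D/\partial n$ converts this into an $O(\beta)$ \emph{volumetric} source --- is exactly right. Your derivation of the equation for $r$ checks out: after substituting $u_R=u_D+\beta\phi+r$ into \eqref{variational robin}, using Green's identity for $u_D\in H^2(\Omega)$ (justified by Lemma~\ref{uD regularity}) and $T_0u_D=0$, the $(\partial u_D/\partial n,T_0v)_{\partial\Omega}$ contributions cancel precisely, leaving the clean identity you state. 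Testing with $v=r$ and using $w\geq c_w>0$ together with the nonnegativity of $\tfrac{1}{\beta}\|T_0r\|_{L^2(\partial\Omega)}^2$ (which implicitly requires $\beta>0$, the natural regime here) yields $\|r\|_{H^1}\leq C\beta$, and the triangle inequality finishes.

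One cosmetic remark: the constants arising from elliptic regularity (Lemma~\ref{uD regularity}) and the right inverse of the trace map (Lemma~\ref{trace theorem}) depend on $\Omega$, which the paper's $C(coe)$ notation does not formally record; but the cited result carries the same implicit dependence, so this is not a gap in your argument.
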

With this lemma, it suffice to consider the Robin problem since the Dirichlet problem can be handled through a limit process. Define ${L}_{R}$ as a functional on $H^1(\Omega)$:
\begin{equation*}
	{L}_{R}(u):=\int_{\Omega}\left(\frac{1}{2}|\nabla u|^2+\frac{1}{2}wu^2-fu\right)dx
	+\frac{1}{\beta}\int_{\partial\Omega}\left(\frac{1}{2}(T_0u)^2-gT_0u\right)ds.
\end{equation*}
Note that ${L}_{R}$ can be equivalently written as
\begin{align}
	{L}_{R}(u)=&|\Omega|\mathbb{E}_{X\sim U(\Omega)}\left(\frac{1}{2}|\nabla u(X)|^2+\frac{1}{2}w(X)u^2(X)-f(X)u(X)\right)\nonumber\\
	&+\frac{|\partial\Omega|}{\beta}\mathbb{E}_{Y\sim U(\partial\Omega)}\left(\frac{1}{2}(T_0u)^2(Y)-g(Y)T_0u(Y)\right),\label{continuous loss}
\end{align}
where $U(\Omega)$ and $U(\partial\Omega)$ are uniform distribution on $\Omega$ and $\partial\Omega$, respectively. We then introduce a discrete version of ${L}_R$:
\begin{align}
	\widehat{{L}}_{R}(u):=&\frac{|\Omega|}{n}\sum_{i=1}^{n}\left(\frac{1}{2}|\nabla u(X_i)|^2+\frac{1}{2}w(X_i)u^2(X_i)-f(X_i)u(X_i)\right)\nonumber\\
	&+\frac{|\partial\Omega|}{\beta m}\sum_{j=1}^{m}\left(\frac{1}{2}(T_0u)^2(Y_j)-g(Y_j)T_0u(Y_j)\right),\label{discrete loss}
\end{align}
where $\{X_i\}_{i=1}^{n}$ and $\{Y_j\}_{j=1}^{m}$ are i.i.d. random variables according to $U(\Omega)$ and $U(\partial\Omega)$ respectively. Similarly, for the Neumann problem we define
\begin{align}
	{L}_{N}(u):=&|\Omega|\mathbb{E}_{X\sim U(\Omega)}\left(\frac{1}{2}|\nabla u(X)|^2+\frac{1}{2}w(X)u^2(X)-f(X)u(X)\right)\nonumber\\
	&-|\partial\Omega|\mathbb{E}_{Y\sim U(\partial\Omega)}g(Y)T_0u(Y)\label{neumann continuous loss}
\end{align}
and
\begin{align}
	\widehat{{L}}_{N}(u):=&\frac{|\Omega|}{n}\sum_{i=1}^{n}\left(\frac{1}{2}|\nabla u(X_i)|^2+\frac{1}{2}w(X_i)u^2(X_i)-f(X_i)u(X_i)\right)\nonumber\\
	&-\frac{|\partial\Omega|}{m}\sum_{j=1}^{m}g(Y_j)T_0u(Y_j).\label{neumann discrete loss}
\end{align}
We consider the following two minimization problems:
\begin{equation} \label{optimization}
\min_{u\in\mathcal{P}}\widehat{{L}}_R(u),\quad\min_{u\in\mathcal{P}}\widehat{{L}}_N(u)
\end{equation}
The minimization is taken over some parametrized function class $\mathcal{P}$. In this work we choose $\mathcal{P}$ to be a neural network function class. 
Now we discuss in details that the neural network function class we choose. For some $m_1,m_2,A\in\mathbb{N}_{\geq1}$, let $W:=\{(W_s^l,b_s^l):s\in[A],l=1,2,3\}$ be the neural network parameters with $W_s^1\in\mathbb{R}^{m_1\times d},W_s^2\in\mathbb{R}^{m_2\times m_1},W_s^3\in\mathbb{R}^{1\times m_2},
b_s^1\in\mathbb{R}^{m_1},b_s^2\in\mathbb{R}^{m_2},b_s^3\in\mathbb{R}$ for $s\in[A]$ and define three-layer subnetworks $\{f_s^3\}_{s=1}^A$ by
\begin{align*}
f_s^0&=x;\\
f_s^1&=\sigma(f_{s,1}^{org})=\sigma(W_s^1f_s^0+b_s^1);\\
f_s^2&=\sigma(f_{s,2}^{org})=\sigma(W_s^2f_s^1+b_s^2);\\
f_s^3&=W_s^3f_s^2+b_s^3,
\end{align*}
where $\sigma:\mathbb{R}\to\mathbb{R}$, and define $f_W$ to be the sum of the subnetworks:
\begin{align*}
f_W=\sum_{s=1}^{A}f_s^{3}.
\end{align*}
$\{W_s^l\}$ are called weights and $\{b_s^l\}$ are called biases. $\sigma$ is called an activation function. The width of subnetworks $\{f_s^3\}_{s=1}^A$ is defined as $\max\{m_1,m_2\}$. Let ${W}^l=\{({W}_s^l,{b}_s^l),s\in[A]\}$ for $l=1,2,3$. Define $\|W^3\|:=\left[\sum_{s=1}^A\|W_{s}^3\|_F^2+|b_{s}^3|^2\right]^{1/2}$ and $\|W^3\|_1:=\left[\sum_{s=1}^A(\|W_{s}^3\|_{\infty}+|b_{s}^3|)\right]^{1/2}$. For some $B_{inn},B_{out}\in\mathbb{R}$, define the neural network function class $\mathcal{F}_{NN}(\{m_1,m_2,A\},B_{inn},B_{out})$ to be 
\begin{align*}
&\mathcal{F}_{NN}(\{m_1,m_2,A\},B_{inn},B_{out}):=\\
&\{f_W:\|W_s^l\|_F,\|b_s^l\|_2\leq B_{inn}(l=1,2;s\in[A]),\|W^3\|_1\leq B_{out}\}.
\end{align*}
In the following we abbreviate $\mathcal{F}_{NN}(\{m_1,m_2,A\},B_{inn},B_{out})$ as $\mathcal{F}_{NN}$ for simplicity. In this work we choose $m_1=5d,m_2=\binom{2d+1}{d+1}$ and let the activation function $\sigma$ be tanh. Let $B_{\sigma}=\max\{\|\sigma\|_{C(\Omega)},1\}$ and define $B_{\sigma'},B_{\sigma''}$ similarly. Other parameters will be specified in Theorem \ref{convergence rate}.

\begin{remark}
Indeed, as we will see later, the requirement of $\sigma=\tanh$ is only necessary when studying the approximation error. For the analysis of the generalization error and optimization error, it suffices for $\sigma$ to be $C^2$ continuous. But as highlighted by \cite{de2021approximation}, the approach we employ for handling the approximation error is also applicable to other common activation functions such as sigmoid, arctan, and more. Therefore, with slight adjustments, the analysis in this paper can be extended to derive convergence rates for neural networks activated by these functions as well.
\end{remark}

Many algorithms can be used to solve the minimization problem (\ref{optimization}), such as gradient descent, stochastic gradient descent, etc. In this paper we employ projected gradient descent(PGD) to solve (\ref{optimization}). Taking the Robin problem as an example, the update rule of PGD is as follows:
\begin{align*}
W_{t+1}&=\mathrm{proj}_{\mathcal{C}}(W_t-\eta\nabla_{W}\widehat{L}_R(W_t)),\quad t=0,1,\cdots,T-1,
\end{align*}
where $\eta\in\mathbb{R}_{>0}$ is step size, $T\in\mathbb{N}_{>0}$ is the number of iterations and $\mathcal{C}$ is the set to which we project the iterates sequence.
We initialize the outer layer parameters to zero: $W_{s}^3=0,b_s^3=0,s\in[A]$. The initialization of the inner layer parameters will be specified in Theorem \ref{convergence rate}.


\section{Preliminaries}

\subsection{Sobolev Spaces}
In this part we summarize some concepts and results related to Sobolev spaces that we will need for our analysis.

Let $d\in\mathbb{N}_{\geq1}$. Let $\Omega\subset\mathbb{R}^d$ be an open bounded domain with smooth boundary $\partial\Omega$, and without loss of generality we assume that $\Omega\subset[0,1]^d$. Let $\alpha=(\alpha_1,\cdots,\alpha_n)$ be an $n$-dimensional index with $|\alpha|:=\sum_{i=1}^{n}\alpha_i$ and $s$ be a nonnegative integer. We use the notation $D^{\alpha}=\frac{\partial^{|\alpha|}}{\partial x_1^{\alpha_1}\cdots\partial x_d^{\alpha_d}}$. The standard function spaces, including continuous function space, $L^p$ space, Sobolev spaces, are given below.
	\begin{align*}
	&C(\Omega):=\{\text{all the continuous functions defined on }\Omega\},\\
 &C^s(\Omega):=\{f:\Omega\to\mathbb{R}\ |\ D^{\alpha}f\in C(\Omega)\},\\
	&C(\bar\Omega):=\{\text{all the continuous functions defined on }\bar\Omega\},\quad \|f\|_{C(\bar\Omega)}:=\max_{x\in\bar{\Omega}}|f(x)|,\\
	&C^s(\bar\Omega):=\{f:\bar\Omega\to\mathbb{R}\ |\ D^{\alpha}f\in C(\bar\Omega)\},\quad \|f\|_{C^s(\bar\Omega)}:=\max_{x\in\bar{\Omega},|\alpha|\leq s}|D^{\alpha}f(x)|,\\
	&L^p(\Omega):=\left\{f:\Omega\to\mathbb{R}\ |\ \int_{\Omega}|f|^pdx<\infty\right\},\quad \|f\|_{L^p(\Omega)}:=\left[\int_{\Omega}|f|^p(x)dx\right]^{1/p},\quad \forall p\in[1,\infty),\\
	&L^{\infty}(\Omega):=\{f:\Omega\to\mathbb{R}\ |\ \exists C>0 \ s.t.\ |f|\leq C \ a.e.\},\quad\|f\|_{L^{\infty}(\Omega)}:=\inf\{C \ | \ |f|\leq C \ a.e.\}, \\
    & W^{s,p}(\Omega):=\{f:\Omega\to\mathbb{R}\ |\ D^{\alpha}f\in L^p(\Omega),|\alpha|\leq s\},\quad\|f\|_{W^{s,p}(\Omega)}:=\left(\sum_{|\alpha|\leq s}\|D^{\alpha}f\|_{L^p(\Omega)}^p\right)^{1/p}.
\end{align*}
If $s$ is a nonnegative real number, the fractional Sobolev space $W^{s,p}(\Omega)$ can be defined as follows: setting $\theta=s-\lfloor{s}\rfloor$ and
\begin{align*}
&W^{s,p}(\Omega):=\left\{f:\Omega\to\mathbb{R}\ |\ \int_{\Omega}\int_{\Omega}\frac{|D^{\alpha}f(x)-D^{\alpha}f(y)|^p}{|x-y|^{\theta p+d}}dxdy<\infty,\quad\forall |\alpha|=\lfloor{s}\rfloor\right\},\\
&\|f\|_{W^{s,p}(\Omega)}:=\left(\|f\|_{W^{\lfloor{s}\rfloor,p}(\Omega)}^p+\sum_{|\alpha|=\lfloor{s}\rfloor}\int_{\Omega}\int_{\Omega}\frac{|D^{\alpha}f(x)-D^{\alpha}f(y)|^p}{|x-y|^{\theta p+d}}dxdy\right)^{1/p}.
\end{align*}
Let $C_{0}^{\infty }({\Omega} )$ be the set of smooth functions with compact support in $\Omega$, and $W_0^{s,p}(\Omega)$ is the completion space of $C_0^{\infty}(\Omega)$ in $W^{s,p}(\Omega)$. For $s<0$, $W^{s,p}(\Omega)$ is the dual space of $W_{0}^{-s,q}(\Omega)$ with $q$ satisfying $\frac{1}{p}+\frac{1}{q}=1$. When $p=2$, $W^{s,p}(\Omega)$ is a Hilbert space and it is also denoted by $H^s(\Omega)$.

\begin{lemma}[\cite{de2021approximation}, Lemma A.7]\label{estimate for sobolev norm of composition}
Let $d_1, d_2, k \in \mathbb{N}_{\geq1}, \Omega_1 \subset \mathbb{R}^{d_1}, \Omega_2 \subset \mathbb{R}^{d_2}, f \in C^k\left(\Omega_1 ; \Omega_2\right)$ and $g \in C^k\left(\Omega_2 ; \mathbb{R}\right)$. Then it holds that
\begin{align*}
\|g \circ f\|_{W^{k, \infty}(\Omega_1)} \leq 16\left(e^2 k^4 d_2 d_1^2\right)^k\|g\|_{W^{k, \infty}(\Omega_2)} \max _{1 \leq i \leq d_2}\left\|(f)_i\right\|_{W^{k, \infty}(\Omega_1)}^k.
\end{align*}
\end{lemma}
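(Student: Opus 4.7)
The natural approach is via the multivariate Fa\`a di Bruno formula for the higher-order chain rule. I would fix a multi-index $\alpha\in\mathbb{N}_0^{d_1}$ with $|\alpha|\leq k$ and expand $D^\alpha(g\circ f)(x)$ as a finite sum of terms of the form
\begin{align*}
c_{\alpha,\lambda,\pi}\,(D^\lambda g)(f(x))\prod_{B\in\pi}D^{\gamma_B}f_{j_B}(x),
\end{align*}
where $\lambda\in\mathbb{N}_0^{d_2}$ satisfies $1\leq|\lambda|\leq|\alpha|$, $\pi$ ranges over certain decorated partitions of $\alpha$ with each block $B$ labelled by an index $j_B\in\{1,\dots,d_2\}$ and carrying a derivative multi-index $\gamma_B$, and the $c_{\alpha,\lambda,\pi}$ are positive combinatorial coefficients. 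The key structural feature is that in each term the inner derivative orders satisfy $\sum_{B\in\pi}|\gamma_B|=|\alpha|$, so the product of derivatives of components of $f$ can be bounded pointwise by $\max_{1\leq j\leq d_2}\|f_j\|_{W^{k,\infty}(\Omega_1)}^{|\alpha|}$.

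Next, I would bound each factor separately. The factor $(D^\lambda g)(f(x))$ is at most $\|g\|_{W^{k,\infty}(\Omega_2)}$ since $|\lambda|\leq k$ and $f(\Omega_1)\subset\Omega_2$. Combining with the bound on the $f$-product and taking the supremum over $x\in\Omega_1$ and over all $|\alpha|\leq k$, the remaining task is to control the total count of terms together with the coefficients $c_{\alpha,\lambda,\pi}$.

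For the combinatorial accounting, I would use three kinds of bounds: the number of admissible outer multi-indices $\lambda\in\mathbb{N}_0^{d_2}$ with $|\lambda|\leq k$ is dominated by a factor polynomial in $d_2^k$; the number of decorated partitions $\pi$ is controlled by a Bell-type factor in $k$ times $d_1^{|\alpha|}$ (coming from the number of derivative multi-indices of each order in $d_1$ variables); and the coefficients $c_{\alpha,\lambda,\pi}$ are bounded by a multinomial factor, hence by $k!$. Stirling's estimate $k!\leq ek^k$ together with these bounds collapses to a prefactor of the form $16(e^2 k^4 d_2 d_1^2)^k$.

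The main obstacle, and the reason the explicit constants in the statement look elaborate, is the exact bookkeeping of these combinatorial factors: pinning down the precise exponents $k^4$, $d_2$, and $d_1^2$ requires careful tracking of the number of partitions of a multi-index of total degree $k$, the number of ways each block can be labelled by a coordinate direction in $d_2$, the number of derivative multi-indices of bounded order in $d_1$ variables, and the multinomial coefficients produced by the Leibniz rule inside each block. This is essentially the calculation carried out in \cite{de2021approximation}, and the constant $16$ in front absorbs the small constants from the base case together with those from Stirling's inequality.
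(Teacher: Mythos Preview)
The paper does not prove this lemma at all; it is stated as a citation of \cite[Lemma~A.7]{de2021approximation} and no argument is given. Your sketch via the multivariate Fa\`a di Bruno formula is exactly the approach used in the cited reference, so your proposal is correct and aligned with the original source, even though there is nothing in the present paper to compare it against.
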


\begin{lemma}[\cite{guhring2021approximation}, Lemma B.5]\label{estimate for sobolev norm of product}
Let $k \in \mathbb{N}_{\geq0}$, and assume that $f \in W^{k, \infty}(\Omega)$ and $g \in W^{k, p}(\Omega)$ with $1 \leq p \leq \infty$. If $k \geq 3$, additionally assume that $f \in C^k(\Omega)$ or $g \in C^k(\Omega)$. Then $f g \in W^{k, p}(\Omega)$ and 
\begin{align*}
\|f g\|_{W^{k, p}(\Omega)} \leq C(k,p,d) \sum_{i=0}^k\|f\|_{W^{i, \infty}(\Omega)}\|g\|_{W^{k-i, p}(\Omega)},
\end{align*}
and, consequently
\begin{align*}
\|f g\|_{W^{k, p}(\Omega)} \leq C(k,p,d)\|f\|_{W^{k, \infty}(\Omega)}\|g\|_{W^{k, p}(\Omega)}.
\end{align*}
\end{lemma}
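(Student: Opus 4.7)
The plan is to reduce the claim to the generalized Leibniz formula and then obtain the desired bound by termwise H\"older estimates, with the $k \geq 3$ case requiring a separate density argument to make the product rule rigorous in the weak-derivative sense.

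First I would fix a multi-index $\alpha$ with $|\alpha| \leq k$ and write, via the (classical or weak) Leibniz rule,
\begin{equation*}
D^\alpha(fg) \;=\; \sum_{\beta \leq \alpha} \binom{\alpha}{\beta}\, D^\beta f \cdot D^{\alpha-\beta} g.
\end{equation*}
Each summand is the product of an $L^\infty$ factor with an $L^p$ factor, so H\"older's inequality yields
\begin{equation*}
\|D^\beta f \cdot D^{\alpha-\beta} g\|_{L^p(\Omega)} \;\leq\; \|D^\beta f\|_{L^\infty(\Omega)}\, \|D^{\alpha-\beta} g\|_{L^p(\Omega)}.
\end{equation*}
Since $\binom{\alpha}{\beta} \leq 2^{|\alpha|} \leq 2^k$ and the number of multi-indices $\alpha$ with $|\alpha| \leq k$ is at most $\binom{k+d}{d}$, summing over $\beta \leq \alpha$ and then over $\alpha$, and finally grouping terms by the order $i = |\beta|$, gives
\begin{equation*}
\|fg\|_{W^{k,p}(\Omega)} \;\leq\; C(k,p,d)\sum_{i=0}^{k}\|f\|_{W^{i,\infty}(\Omega)}\,\|g\|_{W^{k-i,p}(\Omega)},
\end{equation*}
which is the first claim. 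The consequent inequality then follows immediately from the monotonicity $\|f\|_{W^{i,\infty}(\Omega)} \leq \|f\|_{W^{k,\infty}(\Omega)}$ and $\|g\|_{W^{k-i,p}(\Omega)} \leq \|g\|_{W^{k,p}(\Omega)}$ for $0 \leq i \leq k$, after absorbing the resulting $(k+1)$-fold factor into the constant.

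The main obstacle, and the reason for the extra regularity hypothesis when $k \geq 3$, is to justify the Leibniz identity as an equality of \emph{weak} derivatives rather than a formal one. For $k \leq 2$ this is a classical consequence of the $W^{1,p}$ product rule applied inductively, since all intermediate products remain locally integrable. For $k \geq 3$ a direct induction runs into the difficulty that products of two weak derivatives of order strictly between $1$ and $k-1$ need not be locally integrable without some pointwise control, which is precisely what the assumption $f \in C^k(\Omega)$ or $g \in C^k(\Omega)$ supplies. To deal with this case I would assume without loss of generality that $g \in C^k(\Omega)$, mollify $f$ to obtain smooth approximants $f_\varepsilon \in C^\infty(\Omega')$ on relatively compact subdomains $\Omega' \Subset \Omega$ with $D^\beta f_\varepsilon \to D^\beta f$ almost everywhere and $\|D^\beta f_\varepsilon\|_{L^\infty(\Omega')} \leq \|D^\beta f\|_{L^\infty(\Omega)}$, apply the classical Leibniz rule to $f_\varepsilon g$, and pass to the limit against any test function $\varphi \in C_c^\infty(\Omega')$ using dominated convergence together with the uniform termwise bound established above. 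This identifies the right-hand side of the Leibniz expansion with $D^\alpha(fg)$ in the weak sense, establishes the membership $fg \in W^{k,p}(\Omega)$, and validates the estimate obtained in the previous paragraph.
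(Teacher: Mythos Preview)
The paper does not supply its own proof of this lemma: it is quoted verbatim as \cite[Lemma B.5]{guhring2021approximation} and used as a black box. Your argument is the standard one (Leibniz expansion, H\"older, and a mollification-and-pass-to-the-limit step to justify the weak product rule when $k\geq 3$), and it is correct; this is essentially how the cited reference establishes the result.
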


\begin{lemma}\label{Bramble-Hilbert lemma}
Let $s \in \mathbb{N}, 1 \leq p \leq \infty$. Let $\Omega \subset \mathbb{R}^d$ be open and bounded, $x_0 \in \Omega$ and $r>0$ such that $\Omega$ is star-shaped with respect to $B:=B_{r,\|\cdot\|_{2}}\left(x_0\right)$, and $r>(1 / 2) r_{\max }^{\star}$. Then for any $f \in W^{s, p}(\Omega)$, there exists a polynomial $f^{(poly)}=\sum_{\widetilde{s}=0}^{{s}-1}\sum_{\beta\in P_{\widetilde{s},d}}c_{\beta}x^{\beta}$ such that
$$
\left|f-f^{(poly)}\right|_{W^{k, p}(\Omega)} \leq C(s, d, \gamma) h^{s-k}|f|_{W^{s, p}(\Omega)} \quad \text { for } k=0,1, \ldots, s,
$$
where $h=\operatorname{diam}(\Omega)$ and $\gamma=h/{r_{\max}^*}$. Moreover,
$|c_{\beta}|\leq C(s,d,R)r^{-d/p}\|f\|_{W^{s-1,p}(\Omega)}$ for all $\beta\in P_{\widetilde{s},d}$ with $0\leq|\widetilde{s}|\leq s-1$.
\end{lemma}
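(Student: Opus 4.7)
The plan is to take $f^{(poly)}$ to be the averaged Taylor polynomial of $f$ of degree $s-1$ with respect to the ball $B = B_{r,\|\cdot\|_2}(x_0)$. I would fix a cutoff $\phi \in C_0^\infty(B)$ with $\int_B \phi\, dy = 1$, normalized so that $\|D^\alpha \phi\|_{L^\infty(B)} \leq C(d,|\alpha|)\, r^{-d-|\alpha|}$ for every multi-index $\alpha$, and define
\begin{equation*}
f^{(poly)}(x) := \int_B \sum_{|\alpha| \leq s-1} \frac{1}{\alpha!}\, D^\alpha f(y)\,(x-y)^\alpha\, \phi(y)\, dy.
\end{equation*}
Expanding $(x-y)^\alpha$ by the binomial theorem and collecting powers of $x$ produces a polynomial of degree at most $s-1$ of the prescribed form $\sum_{\widetilde{s}=0}^{s-1}\sum_{\beta \in P_{\widetilde{s},d}} c_\beta x^\beta$, where each coefficient $c_\beta$ is a finite combinatorial sum of integrals $\int_B y^{\alpha-\beta} D^\alpha f(y)\phi(y)\, dy$ with $\alpha \geq \beta$ and $|\alpha| \leq s-1$.

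Next I would establish the Sobolev seminorm estimate via the classical Bramble--Hilbert argument. Starting from Taylor's formula with integral remainder at each $y \in B$ and averaging against $\phi$, one derives a representation
\begin{equation*}
f(x) - f^{(poly)}(x) = \sum_{|\alpha|=s} \frac{s}{\alpha!} \int_\Omega k_\alpha(x,z)\, D^\alpha f(z)\, dz,
\end{equation*}
where the kernel $k_\alpha(x,z)$ is supported along rays connecting points of $B$ through $x$ to $z$, all contained in $\Omega$ by the star-shapedness assumption, and satisfies $|k_\alpha(x,z)| \leq C(s,d,\gamma)\,|x-z|^{s-d}$. Differentiating this identity $k$ times in $x$ and invoking the $L^p$ boundedness of the resulting Riesz-type potential operator on the bounded domain $\Omega$ yields $|f - f^{(poly)}|_{W^{k,p}(\Omega)} \leq C(s,d,\gamma)\, h^{s-k}\,|f|_{W^{s,p}(\Omega)}$. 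The hypothesis $r > r_{\max}^{\star}/2$ is precisely what keeps $\gamma = h/r_{\max}^{\star}$ finite and the operator norm manageable.

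For the coefficient bound I would apply H\"older's inequality directly to each integral comprising $c_\beta$. Since $B \subset \Omega \subset [0,1]^d$, we have $|y^{\alpha-\beta}| \leq 1$ on $B$. With $q$ denoting the conjugate exponent of $p$,
\begin{equation*}
\left|\int_B y^{\alpha-\beta} D^\alpha f(y)\phi(y)\, dy\right| \leq \|D^\alpha f\|_{L^p(B)}\,\|\phi\|_{L^q(B)} \leq C(d)\, r^{-d/p}\, \|f\|_{W^{s-1,p}(\Omega)},
\end{equation*}
using $|\alpha| \leq s-1$ and the normalization $\|\phi\|_{L^q(B)} \leq C(d)\, r^{-d}\, |B|^{1/q} = C(d)\, r^{-d/p}$. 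Summing the finitely many contributions, with combinatorial constants depending only on $s$ and $d$, produces the claimed estimate on $|c_\beta|$.

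The main technical obstacle will be the careful derivation of the kernel representation and explicit tracking of how the chunkiness parameter $\gamma$ enters $C(s,d,\gamma)$. Verifying that the $L^p$ boundedness of the associated fractional integral operator on a star-shaped domain degrades only polynomially in $\gamma$ is where the geometric hypothesis on $r$ is genuinely used; the remaining manipulations are essentially combinatorial bookkeeping.
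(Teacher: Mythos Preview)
Your proposal is correct and follows exactly the standard averaged Taylor polynomial argument that the paper defers to via citation: the paper's own proof consists solely of references to \cite[Lemma 4.3.8]{brennermathematical} for the seminorm estimate and \cite[Lemma B.9]{guhring2020error} for the coefficient bound, both of which proceed precisely along the lines you outline. Your sketch is in fact more detailed than the paper's treatment; the only cosmetic point is that the constant $C(s,d,R)$ in the coefficient bound absorbs the size of the domain (your use of $|y^{\alpha-\beta}|\leq 1$ relies on the ambient assumption $\Omega\subset(0,1)^d$, which the paper does impose globally but not in the lemma statement itself).
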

\begin{proof}
The existence of $f_{poly}$ is precisely the well-known Bramble-Hilbert lemma, and its proof can be found in \cite[Lemma 4.3.8]{brennermathematical}. The upper bound estimation of \(|c_{\beta}|\) can be found in \cite[Lemma B.9]{guhring2020error}.
\end{proof}

\begin{definition}[trace operator]
Let $m\in\mathbb{N}_{\geq1}$. Let $\Omega$ be a $C^{m}$ domain. The trace operator $T=(T_0,T_1,\cdots,T_{m-1})$ is defined by
\begin{align*}
T:H^m(\Omega)&\to H^{m-1/2}(\partial\Omega)\times H^{m-3/2}(\partial\Omega)\times\dots\times H^{1/2}(\partial\Omega)\\
v&\mapsto(T_0v,T_1v,\cdots,T_{m-1}v).
\end{align*}
\end{definition}

\begin{lemma}[trace theorem]\label{trace theorem}
Let $m\in\mathbb{N}_{\geq1}$. Let $\Omega$ be a $C^{m}$ domain. Then the trace operator is continuous and surjective.
\end{lemma}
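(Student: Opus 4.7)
The plan is to reduce the global statement on $\Omega$ to the model case of the half-space $\mathbb{R}^d_+ = \{x \in \mathbb{R}^d : x_d > 0\}$ via a partition of unity adapted to the $C^m$ boundary, then handle the half-space case by Fourier analysis in the tangential variables. This is the classical textbook route; I would follow the presentation in, e.g., Adams--Fournier or Lions--Magenes.

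First, using that $\partial\Omega$ is $C^m$, I would cover $\bar\Omega$ by finitely many open sets $\{U_i\}_{i=0}^N$ such that $U_0 \Subset \Omega$ and, for each $i \geq 1$, there is a $C^m$ diffeomorphism $\Psi_i : U_i \to B_1(0) \subset \mathbb{R}^d$ flattening the boundary, i.e.\ $\Psi_i(U_i \cap \Omega) = B_1(0) \cap \mathbb{R}^d_+$ and $\Psi_i(U_i \cap \partial\Omega) = B_1(0) \cap (\mathbb{R}^{d-1} \times \{0\})$. Let $\{\varphi_i\}$ be a subordinate smooth partition of unity. For $v \in H^m(\Omega)$, decompose $v = \sum_i \varphi_i v$ and note that each pullback $(\varphi_i v)\circ \Psi_i^{-1}$ lies in $H^m(\mathbb{R}^d_+)$ with norm controlled by $\|v\|_{H^m(\Omega)}$, because composition with $C^m$ diffeomorphisms preserves $H^m$ up to a constant depending only on $\|\Psi_i\|_{C^m}$ and $\|\Psi_i^{-1}\|_{C^m}$.

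For the half-space continuity, I would first argue for $u \in C_c^\infty(\overline{\mathbb{R}^d_+})$: writing
\[
\partial_{x_d}^j u(x',0) = -\int_0^\infty \partial_{x_d}^{j+1} u(x', x_d)\, dx_d,
\]
taking the Fourier transform in $x'$, and applying Plancherel together with the elementary inequality
\[
\int_0^\infty (1+|\xi'|^2+t^2)^{-(m-j)}\, dt \leq C(1+|\xi'|^2)^{-(m-j)+1/2},
\]
one obtains $\|T_j u\|_{H^{m-j-1/2}(\mathbb{R}^{d-1})} \leq C \|u\|_{H^m(\mathbb{R}^d_+)}$ for $0 \leq j \leq m-1$. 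Density of $C^\infty(\overline{\mathbb{R}^d_+}) \cap H^m$ in $H^m(\mathbb{R}^d_+)$ extends the bound. Reassembling via the charts and partition of unity yields continuity of $T$ on $H^m(\Omega)$.

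For surjectivity I would construct an explicit right inverse $E$ first on the half-space: given $(g_0,\ldots,g_{m-1}) \in \prod_{j=0}^{m-1} H^{m-j-1/2}(\mathbb{R}^{d-1})$, set, with $\hat{g}_j$ the Fourier transform in $x'$,
\[
\hat{u}(\xi', x_d) = \sum_{j=0}^{m-1} \frac{x_d^j}{j!}\, \hat{g}_j(\xi')\, \chi\!\bigl(x_d \langle \xi' \rangle\bigr),
\]
where $\chi \in C_c^\infty(\mathbb{R})$ equals $1$ near $0$ and $\langle \xi' \rangle = (1+|\xi'|^2)^{1/2}$. A direct computation using Plancherel shows $u \in H^m(\mathbb{R}^d_+)$ with $\partial_{x_d}^j u(\cdot,0) = g_j$ and the required norm bound. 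Transplanting via $\Psi_i^{-1}$, multiplying by cutoffs, and summing yields a continuous right inverse on $\Omega$, which gives surjectivity.

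The main obstacle in a fully rigorous write-up is the Fourier bookkeeping on the half-space (sharpness of the exponent $m-j-1/2$ and the construction of a simultaneous right inverse for all $j$); the chart argument and density steps are routine once this model estimate is established. In practice, since this is an entirely classical result, a single-sentence proof citing, e.g., \cite{ciarlet2002finite} together with standard Sobolev trace references would suffice in place of the full argument above.
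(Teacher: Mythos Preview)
Your proposal is correct; indeed it is far more detailed than the paper's own proof, which is simply the one-line citation ``See \cite[Theorem 7.33]{adams2003sobolev}.'' Your final remark that a single reference suffices is exactly what the paper does.
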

\begin{proof}
See \cite[Theorem 7.33]{adams2003sobolev}. 
\end{proof}
The next three lemmas are classical regularity results for the PDEs we are concerned with in this work.
\begin{lemma}[\cite{grisvard2011elliptic}, Theorem 2.4.2.5] \label{uD regularity}
Let assumption $(\ref{Assumption})$ holds. Let $u_D$ be the weak solution of the Dirichlet problem \eqref{second order elliptic equation}\eqref{dirichlet} but replacing the right-hand side of \eqref{dirichlet} with some $H^{3/2}(\partial\Omega)$ function $g$. Then $u_D\in H^2(\Omega)$ and
\begin{align*}
\|u_D\|_{H^2(\Omega)}\leq C(\Omega,w)(\|f\|_{L^2(\Omega)}+\|g\|_{H^{3/2}(\partial\Omega)}).
\end{align*}
\end{lemma}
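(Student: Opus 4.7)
The plan is the classical three-step recipe for elliptic $H^2$ regularity on smooth domains: lift the boundary data to reduce to a homogeneous Dirichlet problem, establish interior $H^2$ regularity by Nirenberg's method of difference quotients, and extend this to the boundary by locally flattening $\partial\Omega$ via its $C^\infty$ smoothness. Coercivity of the bilinear form $(\nabla u,\nabla v)+(wu,v)$ is immediate from $w\geq c_w>0$, so Lax--Milgram already yields a unique weak solution $u_D\in H^1(\Omega)$ with trace $g$; the remaining work is to upgrade $H^1$ to $H^2$ with an explicit bound.

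First I would invoke the surjectivity half of Lemma \ref{trace theorem} to pick a bounded right inverse $E:H^{3/2}(\partial\Omega)\to H^2(\Omega)$ of $T_0$ satisfying $\|Eg\|_{H^2(\Omega)}\leq C(\Omega)\|g\|_{H^{3/2}(\partial\Omega)}$. Setting $v:=u_D-Eg\in H^1_0(\Omega)$, the function $v$ weakly solves $-\Delta v+wv=\tilde f$ with $\tilde f:=f+\Delta(Eg)-w\cdot Eg\in L^2(\Omega)$, and the $L^\infty$ bound on $w$ in assumption (\ref{Assumption}) gives $\|\tilde f\|_{L^2(\Omega)}\leq C(\Omega,w)(\|f\|_{L^2(\Omega)}+\|g\|_{H^{3/2}(\partial\Omega)})$. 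It therefore suffices to prove the a priori estimate $\|v\|_{H^2(\Omega)}\leq C(\Omega,w)\|\tilde f\|_{L^2(\Omega)}$ for the zero-trace problem. For interior regularity, on any $\Omega'\subset\subset\Omega$ I would test the equation against $\partial_k^{-h}(\zeta^2\partial_k^h v)$ with $\zeta$ a cutoff adapted to $\Omega'$; standard manipulations, combined with coercivity and Cauchy--Schwarz, give uniform-in-$h$ control of $\|\partial_k^h\nabla v\|_{L^2(\Omega')}$ and hence $v\in H^2_{\mathrm{loc}}(\Omega)$ by Nirenberg's lemma. For boundary regularity, using $\partial\Omega\in C^\infty$ I would cover $\partial\Omega$ by finitely many charts in which a diffeomorphism flattens $\partial\Omega$ to a piece of $\{x_d=0\}$; in the new coordinates the PDE becomes a uniformly elliptic equation with smooth variable coefficients on a half-ball with $v=0$ on the flat face. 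Tangential difference quotients preserve this boundary condition, so the interior argument controls all second derivatives involving at least one tangential direction, and the pure normal derivative $\partial_d^2 v$ is then recovered algebraically by solving the PDE for $\partial_d^2 v$ in terms of quantities already controlled. A partition of unity glues the chart estimates with the interior estimate into the global $H^2$ bound, and $u_D=v+Eg$ satisfies the claimed inequality.

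The main obstacle --- and the reason one ordinarily just cites \cite{grisvard2011elliptic} --- is the bookkeeping in the boundary step: one must verify that the flattening diffeomorphism turns $-\Delta+w$ into a uniformly elliptic operator with coercivity constant still depending only on $\Omega$ and $w$, and then combine the local chart estimates via the partition of unity without losing this explicit dependence. The interior step and the lift are routine; the delicate tracking of the boundary flattening is the essential content of Theorem 2.4.2.5 in Grisvard, so in the present paper it is enough to quote that result directly.
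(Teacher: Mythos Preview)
Your proposal is correct and aligns with the paper's treatment: the paper does not prove this lemma at all but simply cites it as Theorem~2.4.2.5 in \cite{grisvard2011elliptic}, and you yourself conclude that quoting Grisvard directly suffices. The classical three-step outline you sketch (lift the boundary data via the trace right-inverse, interior regularity by difference quotients, boundary regularity by flattening) is exactly the content of that reference, so there is nothing to add.
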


\begin{lemma}[\cite{grisvard2011elliptic}, Theorem 2.4.2.7] \label{uN regularity}
Let assumption $(\ref{Assumption})$ holds. Let $u_N$ be the solution of the Neumann problem \eqref{second order elliptic equation}\eqref{neumann}. Then $u_N\in H^2(\Omega)$ and
\begin{align*}
\|u_N\|_{H^2(\Omega)}\leq C(\Omega,w)(\|f\|_{L^2(\Omega)}+\|g\|_{H^{1/2}(\partial\Omega)}).
\end{align*}
\end{lemma}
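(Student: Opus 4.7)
The plan is to prove this classical $H^2$ regularity result for the Neumann problem by combining Lax--Milgram for existence in $H^1$ with a localization/bootstrap argument for the higher regularity. As the statement just records a result from Grisvard, I will sketch the canonical chain of steps rather than grind through constants.

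First, I would establish existence, uniqueness, and the $H^1$ bound. The bilinear form $a(u,v) := (\nabla u,\nabla v) + (wu,v)$ on $H^1(\Omega)$ is continuous (since $w \in L^\infty$) and coercive (since $w \geq c_w > 0$ gives $a(u,u) \geq \min\{1,c_w\}\|u\|_{H^1}^2$). The linear functional $v \mapsto (f,v) + (g, T_0 v)_{\partial\Omega}$ is bounded on $H^1(\Omega)$ because the trace theorem (Lemma \ref{trace theorem}) gives $\|T_0 v\|_{H^{1/2}(\partial\Omega)} \leq C(\Omega)\|v\|_{H^1(\Omega)}$, so by Cauchy--Schwarz on $\partial\Omega$ the boundary pairing is controlled by $\|g\|_{H^{1/2}(\partial\Omega)}\|v\|_{H^1(\Omega)}$. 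Lax--Milgram then produces a unique $u_N \in H^1(\Omega)$ satisfying the variational identity (\ref{variational neumann}), together with the a priori estimate $\|u_N\|_{H^1(\Omega)} \leq C(\Omega,w)\bigl(\|f\|_{L^2(\Omega)} + \|g\|_{H^{1/2}(\partial\Omega)}\bigr)$.

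Second, I would upgrade to interior $H^2$ regularity by the standard difference-quotient method of Nirenberg. Fix a cutoff $\zeta \in C_c^\infty(\Omega)$ and, for small $h$ and each coordinate direction $k$, test the weak formulation against $v = -D_k^{-h}(\zeta^2 D_k^h u_N)$. The coercive leading term yields a uniform $L^2$ bound on $\zeta\nabla(D_k^h u_N)$, while the right-hand side is controlled by $\|f\|_{L^2}$ and the already-established $H^1$ norm. Letting $h \to 0$ and invoking the difference-quotient characterization gives $u_N \in H^2_{\mathrm{loc}}(\Omega)$ with the expected bound; note that no boundary values of $g$ enter this part of the argument.

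Third, and here lies the main obstacle, I would handle boundary regularity. Using $\partial\Omega \in C^\infty$, cover $\partial\Omega$ by finitely many charts that straighten the boundary to a portion of a half-space $\mathbb{R}^d_+$. In each flattened patch, the transformed PDE still has $C^\infty$ coefficients and retains the same divergence/ellipticity structure, and the Neumann condition becomes a co-normal condition on the flat face $\{x_d = 0\}$. I would then apply \emph{tangential} difference quotients $D_k^h$ for $k = 1,\dots,d-1$ — these are compatible with the flat boundary, so the same test-function trick as in the interior case works, producing $L^2$ control of all second derivatives that involve at least one tangential direction. The boundary data $g$ enters here through integrals of the form $\int g\, T_0 D_k^{-h}(\zeta^2 D_k^h u_N)$, which I would bound by moving one tangential difference quotient onto $g$ and using $\|g\|_{H^{1/2}(\partial\Omega)}$ together with trace estimates. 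Finally, the pure normal second derivative $\partial_d^2 u_N$ is recovered \emph{algebraically} from the PDE itself, which after rearrangement expresses $\partial_d^2 u_N$ as a linear combination of $f$, $wu_N$, and mixed/tangential second derivatives already controlled in $L^2$.

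Patching the interior and boundary estimates together with a partition of unity subordinate to the chart cover, and absorbing chart-Jacobian constants into $C(\Omega,w)$, delivers the claimed global estimate $\|u_N\|_{H^2(\Omega)} \leq C(\Omega,w)\bigl(\|f\|_{L^2(\Omega)} + \|g\|_{H^{1/2}(\partial\Omega)}\bigr)$. The delicate step is genuinely the boundary one: getting the $H^{1/2}$-norm of $g$ (rather than a stronger norm) to suffice requires carefully integrating by parts in the tangential difference quotient and using duality between $H^{1/2}$ and $H^{-1/2}$ on $\partial\Omega$.
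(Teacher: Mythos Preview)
The paper does not supply a proof of this lemma at all; it simply records the statement with a citation to \cite{grisvard2011elliptic}, Theorem 2.4.2.7, and uses it as a black box (in particular, inside the proof of Lemma~\ref{uR regularity} and Lemma~\ref{lemma:regularity:robin}). Your sketch is the standard Lax--Milgram plus Nirenberg difference-quotient argument that underlies Grisvard's theorem, and the outline is correct, including the delicate point that only $\|g\|_{H^{1/2}(\partial\Omega)}$ is needed because the tangential difference quotient transferred onto $g$ lands in $H^{-1/2}(\partial\Omega)$ and pairs against the $H^{1/2}$ trace. So there is nothing to compare: you have written out what the paper merely cites.
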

\begin{lemma} \label{uR regularity}
Let assumption $(\ref{Assumption})$ holds. Let $u_R$ be the solution of the Robin problem \eqref{second order elliptic equation}\eqref{robin}. Then $u_R\in H^2(\Omega)$ and 
\begin{align*}
\|u_R\|_{H^2(\Omega)}\leq C(\Omega,w)(\|f\|_{L^{2}(\Omega)}+\|g\|_{H^{3/2}(\partial\Omega)}).
\end{align*}
\end{lemma}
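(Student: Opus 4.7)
The plan is to reduce the Robin problem to a Neumann problem, for which Lemma \ref{uN regularity} already provides the desired $H^2$ regularity. The key observation is that the Robin condition $u + \beta \frac{\partial u}{\partial n} = g$ can be rewritten as
\begin{equation*}
\frac{\partial u_R}{\partial n} = \frac{g - T_0 u_R}{\beta} =: \tilde{g} \quad \text{on } \partial \Omega,
\end{equation*}
so that $u_R$ simultaneously solves the Neumann problem for $-\Delta u + wu = f$ with boundary data $\tilde{g}$. The whole proof is then the exercise of showing that $\tilde{g}$ has enough regularity and a good enough estimate to feed into Lemma \ref{uN regularity}.

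First I would establish an a priori $H^1$ bound for $u_R$ by testing the weak formulation \eqref{variational robin} against $v = u_R$:
\begin{equation*}
\|\nabla u_R\|_{L^2(\Omega)}^2 + \|w^{1/2}u_R\|_{L^2(\Omega)}^2 + \tfrac{1}{\beta}\|T_0 u_R\|_{L^2(\partial\Omega)}^2 = (f,u_R) + \tfrac{1}{\beta}(g, T_0 u_R)_{\partial\Omega}.
\end{equation*}
Cauchy--Schwarz together with the continuous trace estimate from Lemma \ref{trace theorem} (and Young's inequality to absorb the boundary term) yields $\|u_R\|_{H^1(\Omega)} \leq C(\Omega, w, \beta)\bigl(\|f\|_{L^2(\Omega)} + \|g\|_{H^{1/2}(\partial\Omega)}\bigr)$, and hence $\|T_0 u_R\|_{H^{1/2}(\partial\Omega)} \leq C(\Omega)\|u_R\|_{H^1(\Omega)}$. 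Consequently $\tilde{g} \in H^{1/2}(\partial\Omega)$ with
\begin{equation*}
\|\tilde{g}\|_{H^{1/2}(\partial\Omega)} \leq \tfrac{1}{|\beta|}\bigl(\|g\|_{H^{1/2}(\partial\Omega)} + \|T_0 u_R\|_{H^{1/2}(\partial\Omega)}\bigr) \leq C(\Omega, w, \beta)\bigl(\|f\|_{L^2(\Omega)} + \|g\|_{H^{1/2}(\partial\Omega)}\bigr).
\end{equation*}

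Next, because $u_R$ also satisfies the Neumann problem with right-hand side $\tilde g$, Lemma \ref{uN regularity} applied to this reformulation gives $u_R \in H^2(\Omega)$ and
\begin{equation*}
\|u_R\|_{H^2(\Omega)} \leq C(\Omega, w)\bigl(\|f\|_{L^2(\Omega)} + \|\tilde{g}\|_{H^{1/2}(\partial\Omega)}\bigr).
\end{equation*}
Substituting the estimate for $\tilde g$ derived in the previous step and using the continuous embedding $H^{3/2}(\partial\Omega) \hookrightarrow H^{1/2}(\partial\Omega)$ to match the norm appearing in the lemma statement finishes the proof.

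The main obstacle is the bootstrapping step: showing that the weak Robin solution legitimately qualifies as a weak Neumann solution with data $\tilde g$ before we know $u_R \in H^2(\Omega)$. This is handled by reading off $\partial_n u_R = \tilde{g}$ from the variational identity \eqref{variational robin} restricted to $v \in H^1(\Omega)$ using Green's identity in the sense of the normal-trace distribution on $\partial\Omega$; once $\tilde g \in H^{1/2}(\partial\Omega)$ is known, the Neumann regularity upgrades this identity to a pointwise a.e. statement. Tracking the $\beta$-dependence through the $H^1$-estimate is harmless since the final constant is permitted to depend on $w$ and $\Omega$ (and $\beta$, which is fixed throughout Section 2).
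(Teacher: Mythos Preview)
Your reduction to a Neumann problem is correct and does establish $u_R\in H^2(\Omega)$, but the estimate you obtain is strictly weaker than what the lemma asserts: your constant depends on $\beta$. Tracing through your argument, the energy identity gives $\|u_R\|_{H^1(\Omega)}\le C(c_w)\bigl(\|f\|_{L^2}+\beta^{-1/2}\|g\|_{L^2(\partial\Omega)}\bigr)$, hence $\|T_0u_R\|_{H^{1/2}}$ carries a $\beta^{-1/2}$ factor, and then $\tilde g=(g-T_0u_R)/\beta$ picks up an additional $\beta^{-1}$. The final $H^2$ bound therefore blows up like $\beta^{-3/2}$ as $\beta\to0$. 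Your closing remark that ``$\beta$ is fixed throughout Section~2'' misreads the statement: the lemma explicitly writes $C(\Omega,w)$, not $C(\Omega,w,\beta)$, and this uniformity is genuinely used downstream. In the proof of Theorem~\ref{convergence rate}(3) the Dirichlet estimate is obtained from the Robin estimate by sending $\beta\to0$ via Lemma~\ref{penalty convergence}; since the approximation constant in Theorem~\ref{app err} depends on $\|u_R\|_{H^2}$, a $\beta$-dependent bound here would destroy that argument.

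The paper avoids this by a different decomposition: write $u_R=u_0+\beta u_1$, where $u_0$ solves the Dirichlet problem $-\Delta u_0+wu_0=f$, $u_0|_{\partial\Omega}=g$ (this is where the $H^{3/2}$ norm of $g$ enters, via Lemma~\ref{uD regularity}), and $u_1$ solves the homogeneous Robin problem $-\Delta u_1+wu_1=0$, $u_1+\beta\partial_nu_1=-\partial_nu_0$. An auxiliary lemma, using positivity of the operator $\beta\widetilde T+I$ built from the Dirichlet-to-Neumann map $\widetilde T$, gives $\|u_1\|_{H^2}\le C(\Omega,w)\beta^{-1}\|\partial_nu_0\|_{H^{1/2}}$. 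The prefactor $\beta$ in $u_R=u_0+\beta u_1$ exactly cancels this $\beta^{-1}$, producing the $\beta$-independent estimate. Your direct bootstrap cannot manufacture this cancellation because the trace control you get from the $H^1$ energy estimate is too crude; the paper's extra input is the Dirichlet-to-Neumann positivity, which yields $\|T_0u\|_{H^{1/2}}\le C\|g\|_{H^{1/2}}$ uniformly in $\beta$ for the auxiliary problem.
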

\begin{proof}
The proof is given in the appendix. 
\end{proof}

\subsection{Convex Optimization}

\begin{definition}
Let $n\in\mathbb{N}_{\geq1}$. A subset $C$ of $\mathbb{R}^n$ is called convex if 
\begin{align*}
\alpha x+(1-\alpha)y\in C,\quad\forall x,y\in C,\forall\alpha\in[0,1].
\end{align*}
\end{definition}

\begin{definition}
Let $n\in\mathbb{N}_{\geq1}$. Let $C$ be a nonempty convex subset of $\mathbb{R}^n$. We say that a function $f:C\to\mathbb{R}$ is convex if
\begin{align*}
f(\alpha x+(1-\alpha)y)\leq\alpha f(x)+(1-\alpha)f(y),\quad\forall x,y\in C,\forall\alpha\in[0,1].
\end{align*}
\end{definition}

\begin{lemma}[\cite{bertsekas2009convex}, Proposition 1.1.7]\label{convex iff condition}
Let $n\in\mathbb{N}_{\geq1}$. Let $C$ be a nonempty convex subset of $\mathbb{R}^n$ and let $f:\mathbb{R}^n\to\mathbb{R}$ be differentiable over an open set that contains $C$. Then $f$ is convex over $C$ if and only if 
\begin{align*}
f(z)\geq f(x)+\nabla f(x)^T(z-x),\quad\forall x,z\in C.
\end{align*}
$f$ is strictly convex over $C$ if and only if the above inequality is strict whenever $x\neq z$.
\end{lemma}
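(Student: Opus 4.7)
The plan is to prove the two directions of the equivalence separately, and then handle the strict case by a minor modification. Both directions are classical and use only the definition of convexity together with the differentiability of $f$ on an open set containing $C$.

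For the forward direction, I would fix $x,z\in C$ and $\alpha\in(0,1]$. Convexity gives
\begin{equation*}
f(x+\alpha(z-x)) = f(\alpha z+(1-\alpha)x)\leq \alpha f(z)+(1-\alpha)f(x),
\end{equation*}
which after rearrangement reads
\begin{equation*}
\frac{f(x+\alpha(z-x))-f(x)}{\alpha}\leq f(z)-f(x).
\end{equation*}
Because $f$ is differentiable in a neighborhood of $x$, the left-hand side converges to the directional derivative $\nabla f(x)^T(z-x)$ as $\alpha\to 0^+$, yielding the desired inequality $f(z)\geq f(x)+\nabla f(x)^T(z-x)$.

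For the reverse direction, I would fix $x,y\in C$ and $\alpha\in[0,1]$, set $z=\alpha x+(1-\alpha)y\in C$ (using convexity of $C$), and apply the first-order inequality at $z$ to both $x$ and $y$:
\begin{align*}
f(x)&\geq f(z)+\nabla f(z)^T(x-z),\\
f(y)&\geq f(z)+\nabla f(z)^T(y-z).
\end{align*}
Multiplying the first by $\alpha$, the second by $1-\alpha$, and adding, the gradient terms combine to $\nabla f(z)^T(\alpha x+(1-\alpha)y-z)=0$, leaving $\alpha f(x)+(1-\alpha)f(y)\geq f(z)$, which is exactly convexity.

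The only mildly subtle point, and the place I would be most careful, is the strict case. In the reverse direction strict convexity is immediate: if $x\neq y$ and $\alpha\in(0,1)$ then $z\neq x$ and $z\neq y$, so both inequalities above are strict and the sum is strict. In the forward direction, however, taking $\alpha\to 0^+$ in a strict inequality only preserves a non-strict one, so I cannot argue directly. I would circumvent this by introducing a midpoint: assuming strict convexity and $x\neq z$, set $w=\tfrac{1}{2}(x+z)$ so that strict convexity gives $f(w)<\tfrac{1}{2}f(x)+\tfrac{1}{2}f(z)$, while the already-established non-strict first-order inequality at $x$ applied to $w$ yields $f(w)\geq f(x)+\tfrac{1}{2}\nabla f(x)^T(z-x)$. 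Combining the two produces $\nabla f(x)^T(z-x)<f(z)-f(x)$, completing the strict equivalence.
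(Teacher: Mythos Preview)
Your proof is correct and is the standard textbook argument. Note that the paper itself does not supply a proof of this lemma at all: it merely cites \cite{bertsekas2009convex}, Proposition~1.1.7, and moves on. Your write-up is essentially the argument one finds there, including the careful handling of the strict case via an intermediate midpoint to avoid losing strictness when passing to the limit.
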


\begin{lemma}[\cite{bertsekas2009convex}, Proposition 1.1.10]\label{convex hessian condition}
Let $n\in\mathbb{N}_{\geq1}$. Let $C$ be a nonempty convex subset of $\mathbb{R}^n$ and let $f:\mathbb{R}^n\to\mathbb{R}$ be twice continuously differentiable over an open set that contains $C$. Let $\nabla^2f(x)$ be the Hessian of $f$ at $x$. If $\nabla^2f(x)$ is positive semi-definite for all $x\in C$, then $f$ is convex over $C$; if $\nabla^2f(x)$ is positive definite for all $x\in C$, then $f$ is strictly convex over $C$.
\end{lemma}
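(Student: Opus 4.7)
The plan is to derive the first-order characterization of convexity given in Lemma \ref{convex iff condition} from the Hessian condition, by means of a second-order Taylor expansion with integral remainder along the segment joining two points of $C$. Fix $x,z\in C$. Since $C$ is convex, the segment $\{x+t(z-x):t\in[0,1]\}$ lies in $C$, and by hypothesis $f$ is $C^2$ on an open set containing $C$. Defining $\varphi(t):=f(x+t(z-x))$, the fundamental theorem of calculus applied to $\varphi$ and to $\varphi'$ yields
\begin{align*}
f(z)=f(x)+\nabla f(x)^T(z-x)+\int_0^1(1-t)(z-x)^T\nabla^2 f\bigl(x+t(z-x)\bigr)(z-x)\,dt.
\end{align*}

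From here the two claims are straightforward. If $\nabla^2 f(y)$ is positive semi-definite for every $y\in C$, then the integrand above is nonnegative for each $t\in[0,1]$, so the integral is $\geq 0$ and we obtain
\begin{align*}
f(z)\geq f(x)+\nabla f(x)^T(z-x),\qquad\forall x,z\in C.
\end{align*}
Applying Lemma \ref{convex iff condition} (the first-order characterization of convexity) gives the convexity of $f$ on $C$.

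For the strict case, suppose $\nabla^2 f(y)$ is positive definite on $C$ and let $x\neq z$ in $C$. Then $z-x\neq 0$, and since $\nabla^2 f(x+t(z-x))$ is positive definite for every $t\in[0,1]$, the integrand is strictly positive on, say, a neighborhood of $t=1/2$ (by continuity of the Hessian). Hence the integral remainder is strictly positive and
\begin{align*}
f(z)>f(x)+\nabla f(x)^T(z-x),
\end{align*}
which by the strict part of Lemma \ref{convex iff condition} yields strict convexity of $f$ on $C$. No step here is a genuine obstacle; the only care needed is to justify differentiating under the integral/second derivative along the segment, which is handled by the assumed $C^2$ regularity on an open set containing $C$, so that $\varphi\in C^2([0,1])$ and the Taylor-with-remainder identity above is valid.
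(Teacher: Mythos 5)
The paper does not prove this lemma; it is stated as a citation to Bertsekas, Proposition 1.1.10, so there is no in-paper proof to compare against. Your proof is correct and is the standard textbook derivation: the second-order Taylor identity with integral remainder along the segment from $x$ to $z$ is valid precisely because $f\in C^2$ on an open set containing the convex set $C$, the positive (semi-)definiteness of the Hessian on $C$ yields the nonnegativity (resp. strict positivity) of the remainder, and one then invokes the first-order characterization in Lemma \ref{convex iff condition}. For the strict case you could note more simply that the integrand $(1-t)(z-x)^T\nabla^2 f(x+t(z-x))(z-x)$ is strictly positive for every $t\in[0,1)$, not merely near $t=1/2$, but your continuity-based argument is equally sufficient.
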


\begin{definition}[projection]
Let $n\in\mathbb{N}_{\geq1}$. Let $C$ be a nonempty convex subset of $\mathbb{R}^n$ and let $z$ be a vector in $\mathbb{R}^n$. The vector that minimizes $\|z-x\|_2$ over $x\in C$ is called the projection of $z$ on $C$ and denoted as $\mathrm{proj}_C(z)$.

\end{definition}

\begin{lemma}[\cite{bertsekas2009convex}, Proposition 1.1.9]\label{projection operator property1}
Let $n\in\mathbb{N}_{\geq1}$. Let $C$ be a nonempty convex subset of $\mathbb{R}^n$ and let $z$ be a vector in $\mathbb{R}^n$. $\mathrm{proj}_C(z)$ is uniquely determined. Futhermore, a vector $x^*=\mathrm{proj}_C(z)$ if and only if 
\begin{align*}
(z-x^*)^T(x-x^*)\leq0,\quad\forall x\in C.
\end{align*}
\end{lemma}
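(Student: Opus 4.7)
The plan is to establish uniqueness and the variational characterization separately, using the strict convexity of the squared Euclidean norm plus a standard convexity-of-$C$ perturbation argument. Throughout let $d := \inf_{x\in C}\|z-x\|_2$ (the lemma implicitly assumes this infimum is attained, which is standard for nonempty closed convex $C$, and we will take this as given).

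For uniqueness, I would suppose that $x_1^*,x_2^*\in C$ both achieve $\|z-x_i^*\|_2 = d$. By convexity of $C$, the midpoint $x_m := (x_1^*+x_2^*)/2$ lies in $C$. Applying the parallelogram identity with $u = (z-x_1^*)/2$ and $v = (z-x_2^*)/2$ gives
\begin{align*}
\|z-x_m\|_2^2 + \tfrac{1}{4}\|x_1^*-x_2^*\|_2^2 = \tfrac{1}{2}\|z-x_1^*\|_2^2 + \tfrac{1}{2}\|z-x_2^*\|_2^2 = d^2,
\end{align*}
so $\|z-x_m\|_2^2 \leq d^2$ with equality iff $x_1^* = x_2^*$. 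Since $x_m\in C$ and $d$ is the minimum, equality must hold and $x_1^* = x_2^*$.

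For the necessity direction ($\Rightarrow$) of the variational inequality, fix $x\in C$ and consider the segment $x^* + \alpha(x-x^*) \in C$ for $\alpha\in[0,1]$ (again by convexity). Define $g(\alpha) := \|z - x^* - \alpha(x-x^*)\|_2^2$. Since $x^*$ is the minimizer, $g$ attains its minimum on $[0,1]$ at $\alpha=0$, so the one-sided derivative satisfies $g'(0^+)\geq 0$. Computing directly, $g'(0) = -2(z-x^*)^T(x-x^*)$, which yields $(z-x^*)^T(x-x^*)\leq 0$. For the sufficiency direction ($\Leftarrow$), assume $(z-x^*)^T(x-x^*)\leq 0$ for all $x\in C$ and expand
\begin{align*}
\|z-x\|_2^2 = \|z-x^*\|_2^2 - 2(z-x^*)^T(x-x^*) + \|x-x^*\|_2^2 \geq \|z-x^*\|_2^2,
\end{align*}
which shows $x^*$ attains the minimum, i.e.\ $x^* = \mathrm{proj}_C(z)$.

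There is no real obstacle here; the proof is entirely classical and routine. The only delicate point worth noting is that in the necessity argument one must restrict to $\alpha\in[0,1]$ (rather than allow $\alpha<0$) because $C$ need not contain the extension of the segment beyond $x^*$; using a one-sided derivative at $0$ handles this cleanly. I would present the argument in three short blocks — uniqueness, ($\Rightarrow$), ($\Leftarrow$) — exactly in the order above, since ($\Leftarrow$) actually subsumes uniqueness (the strict inequality $\|z-x\|_2^2 > \|z-x^*\|_2^2$ when $x\neq x^*$ is automatic from the expansion), but I find the parallelogram proof of uniqueness conceptually clearer and so would keep both.
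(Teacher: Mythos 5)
Your proof is correct and is the standard argument that appears in the cited reference (Bertsekas, Proposition 1.1.9); the paper itself gives no proof, only the citation. Your side remark that the infimum's attainment requires $C$ to be closed (a hypothesis the paper's statement omits) is a fair and accurate observation, but otherwise there is nothing to add.
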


\begin{lemma}\label{projection operator property2}
Let $n\in\mathbb{N}_{\geq1}$. Let $C$ be a nonempty convex subset of $\mathbb{R}^n$ and let $z,z'$ be two vectors in $\mathbb{R}^n$. There holds
\begin{align*}
\|\mathrm{proj}_C(z)-\mathrm{proj}_C(z')\|_2^2&\leq(z-z')^T(\mathrm{proj}_C(z)-\mathrm{proj}_C(z')),\\
\|\mathrm{proj}_C(z)-\mathrm{proj}_C(z')\|_2&\leq\|z-z'\|_2.
\end{align*}
\end{lemma}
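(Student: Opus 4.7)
The plan is to derive both inequalities directly from the variational characterization of the projection given in Lemma~\ref{projection operator property1}. Write $p := \mathrm{proj}_C(z)$ and $p' := \mathrm{proj}_C(z')$; both lie in $C$ by definition. Applying Lemma~\ref{projection operator property1} to $z$ with test vector $p' \in C$ yields
\begin{align*}
(z - p)^T(p' - p) \leq 0,
\end{align*}
and symmetrically, applying it to $z'$ with test vector $p \in C$ yields
\begin{align*}
(z' - p')^T(p - p') \leq 0.
\end{align*}

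Next, I would rewrite the first inequality as $(p - z)^T(p - p') \leq 0$ and the second as $-(p' - z')^T(p - p') \leq 0$, and then add them. The $z$-contributions combine into $-(z - z')^T(p - p')$ and the $p$-contributions combine into $(p - p')^T(p - p') = \|p - p'\|_2^2$, producing
\begin{align*}
\|p - p'\|_2^2 \leq (z - z')^T(p - p'),
\end{align*}
which is exactly the first desired inequality (the so-called firm non-expansiveness of the projection).

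Finally, I would obtain the second inequality by bounding the right-hand side with the Cauchy--Schwarz inequality: $(z - z')^T(p - p') \leq \|z - z'\|_2 \|p - p'\|_2$. Combined with the firm non-expansiveness bound, this gives $\|p - p'\|_2^2 \leq \|z - z'\|_2 \|p - p'\|_2$. If $\|p - p'\|_2 = 0$ the conclusion is immediate, and otherwise one divides through by $\|p - p'\|_2$ to obtain $\|p - p'\|_2 \leq \|z - z'\|_2$. There is no real obstacle here — the only point requiring a bit of care is the algebraic manipulation that aligns the two variational inequalities so their sum produces $\|p-p'\|_2^2$ on the left, and that is a routine sign-tracking exercise.
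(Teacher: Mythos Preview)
Your proof is correct and follows essentially the same approach as the paper: both arguments apply the variational characterization of Lemma~\ref{projection operator property1} twice (once for each projection) and combine the resulting inequalities to obtain the firm non-expansiveness bound, then invoke Cauchy--Schwarz for the Lipschitz estimate. The only cosmetic difference is that the paper writes the first step as a three-term telescoping decomposition of $\|p-p'\|_2^2$ and observes two terms are nonpositive, whereas you add the two variational inequalities directly; algebraically these are the same manipulation.
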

\begin{proof}
We have
\begin{align*}
&\|\mathrm{proj}_C(z)-\mathrm{proj}_C(z')\|_2^2=(\mathrm{proj}_C(z)-\mathrm{proj}_C(z'))^T(\mathrm{proj}_C(z)-\mathrm{proj}_C(z'))\\
&=(\mathrm{proj}_C(z)-z)^T(\mathrm{proj}_C(z)-\mathrm{proj}_C(z'))+(z-z')^T(\mathrm{proj}_C(z)-\mathrm{proj}_C(z'))\\
&\quad\ +(z'-\mathrm{proj}_C(z'))^T(\mathrm{proj}_C(z)-\mathrm{proj}_C(z')).
\end{align*}
By Lemma \ref{projection operator property1}, the first and the third term on the right-hand side are nonpositive, hence we obtain the first inequality. By Cauchy-Schwarz inequality, the right-hand side of the first inequality is not greater than $\|z-z'\|_2\|\mathrm{proj}_C(z)-\mathrm{proj}_C(z')\|_2$. Then the second inequality follows.
\end{proof}

\begin{definition}
Let $n\in\mathbb{N}_{\geq1},L\in\mathbb{R}_{>0}$. Let $C$ be a nonempty convex subset of $\mathbb{R}^n$. Let $f:\mathbb{R}^n\to\mathbb{R}$ be differentiable over an open set that contains $C$. $f$ is called $L$-strongly smooth over $C$ if for any $x,y\in C$,
\begin{align*}
f\left(y\right)&\leq f(x)+\nabla f(x)^T\left(y-x\right)+\frac{L}{2}\left\|y-x\right\|_2^2.
\end{align*}
\end{definition}

\begin{lemma}\label{smoothness inequality}
Let $n\in\mathbb{N}_+$. Given $x,y\in\mathbb{R}^n$. For a differentiable function $f:\mathbb{R}^n\to\mathbb{R}$, if there exists some constant $L\in\mathbb{R}$ such that for any $a\in[0,1]$,
\begin{align*}
\|\nabla f[(1-a)x+ay]-\nabla f(x)\|_2\leq aL\|y-x\|_2,
\end{align*}
then
\begin{align*}
f\left(y\right)&\leq f(x)+\nabla f(x)^T\left(y-x\right)+\frac{L}{2}\left\|y-x\right\|_2^2.
\end{align*}
\end{lemma}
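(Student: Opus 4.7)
The plan is to reduce this to a one-dimensional calculus statement on the segment joining $x$ to $y$, and then invoke the hypothesis on $\|\nabla f[(1-a)x+ay]-\nabla f(x)\|_2$ pointwise in $a$.

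First, I would define the auxiliary function $\phi:[0,1]\to\mathbb{R}$ by $\phi(a):=f((1-a)x+ay)$. By the chain rule, $\phi$ is differentiable on $[0,1]$ with $\phi'(a)=\nabla f((1-a)x+ay)^T(y-x)$. By the fundamental theorem of calculus,
\begin{align*}
f(y)-f(x)=\phi(1)-\phi(0)=\int_0^1\nabla f((1-a)x+ay)^T(y-x)\,da.
\end{align*}

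Next, I would subtract the linear part $\nabla f(x)^T(y-x)=\int_0^1\nabla f(x)^T(y-x)\,da$ from both sides, obtaining
\begin{align*}
f(y)-f(x)-\nabla f(x)^T(y-x)=\int_0^1\bigl[\nabla f((1-a)x+ay)-\nabla f(x)\bigr]^T(y-x)\,da.
\end{align*}

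To bound the right-hand side, I would apply the Cauchy–Schwarz inequality to each integrand, giving
\begin{align*}
\bigl[\nabla f((1-a)x+ay)-\nabla f(x)\bigr]^T(y-x)\leq\|\nabla f((1-a)x+ay)-\nabla f(x)\|_2\|y-x\|_2,
\end{align*}
then invoke the assumption of the lemma to upper bound this by $aL\|y-x\|_2^2$. Integrating over $a\in[0,1]$ yields $\int_0^1 aL\|y-x\|_2^2\,da=\tfrac{L}{2}\|y-x\|_2^2$, which gives the claimed inequality.

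There is no real obstacle here — the hypothesis is tailor-made so that the usual descent lemma argument (which normally assumes global Lipschitz continuity of $\nabla f$) goes through verbatim along the segment from $x$ to $y$. The only subtlety to remember is that we need $\phi$ to be differentiable with derivative given by the chain rule, which is immediate from the assumed differentiability of $f$ on $\mathbb{R}^n$.
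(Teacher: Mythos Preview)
Your proposal is correct and follows essentially the same approach as the paper: both use the fundamental theorem of calculus along the segment from $x$ to $y$, split off the linear term $\nabla f(x)^T(y-x)$, apply Cauchy--Schwarz to the remainder, invoke the hypothesis to bound the integrand by $aL\|y-x\|_2^2$, and integrate to obtain $\tfrac{L}{2}\|y-x\|_2^2$.
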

\begin{proof}
According to Cauchy-Schwarz inequality,
\begin{align*}
&f(y)-f(x)=\int_{0}^{1}\nabla f((1-a)x+ay)^T(y-x)da\\
&=\int_{0}^{1}[\nabla f((1-a)x+ay)-\nabla f(x)]^T(y-x)da+\int_{0}^{1}\nabla f(x)^T(y-x)da\\
&\leq\|y-x\|_2\int_{0}^{1}\|\nabla f((1-a)x+ay)-\nabla f(x)\|_2da+\nabla f(x)^T(y-x)\\
&\leq \frac{L}{2}\|y-x\|_2^2+\nabla f(x)^T(y-x).
\end{align*}
\end{proof}

\subsection{Function Classes Complexity and Concentration Inequality}

\begin{definition}[Rademacher complexity]
	Let $n\in\mathbb{N}_{\geq1}$. The Rademacher complexity of a set $A \subset \mathbb{R}^n$ is defined as
	\begin{equation*}
		\mathfrak{R}_n(A) = \mathbb{E}_{\{\varsigma_i\}_{i=1}^n}\left[\sup_{a\in A}\frac{1}{n}\sum_{i=1}^n \varsigma_i a_i\right],
	\end{equation*}
	where,   $\{\varsigma_i\}_{i=1}^n$ are $n$ i.i.d  Rademacher variables with $\mathbb{P}(\varsigma_i = 1) = \mathbb{P}(\varsigma_i = -1) = \frac{1}{2}.$
	The Rademacher complexity of  function class $\mathcal{G}$ associate with random sample $\{X_i\}_{i=1}^{n}$ is defined as
	\begin{equation*}
		\mathfrak{R}_n(\mathcal{G}) = \mathbb{E}_{\{X_i,\varsigma_i\}_{i=1}^{n}}\left[\sup_{g\in \mathcal{G}}\frac{1}{n}\sum_{i=1}^n \varsigma_i g(X_i)\right].
	\end{equation*}
\end{definition}

\begin{definition}[covering number]
An $\epsilon$-cover of a set $T$ in a metric space $(S, \tau)$
is a subset $T_c\subset S$ such  that for each $t\in T$, there exists a $t_c\in T_c$ such that $\tau(t, t_c) \leq\epsilon$. The $\epsilon$-covering number of $T$, denoted as $\mathcal{N}(\epsilon, T,\tau)$ is  defined to be the minimum cardinality among all $\epsilon$-cover of $T$ with respect to the metric $\tau$.
\end{definition}


The Rademacher complexity and the covering number of $\mathcal{G}$ share the following relation.
\begin{lemma}[\cite{jiao2024error}, Lemma 5.7]\label{to covering}
Let $n\in\mathbb{N}_{\geq1}$. For any function class $\mathcal{G}$ with $|g|\leq B_{\mathcal{G}}$ for all $g\in\mathcal{G}$,
\begin{align*}
\mathfrak{R}_n(\mathcal{G})\leq\inf_{0<\delta<B_{\mathcal{G}}/2}\left(4\delta+\frac{12}{\sqrt{n}}\int_{\delta}^{B_{\mathcal{G}}/2}\sqrt{\ln\mathcal{N}(\epsilon,\mathcal{G},\|\cdot\|_{\infty})}d\epsilon\right).
\end{align*}
\end{lemma}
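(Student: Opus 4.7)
The plan is to apply the standard Dudley chaining argument to the Rademacher average. Fix $\delta\in(0,B_{\mathcal{G}}/2)$ and introduce geometric scales $\epsilon_k=B_{\mathcal{G}}/2^{k}$ for $k=1,2,\ldots,K$, where $K$ is chosen so that $\epsilon_K\leq\delta<\epsilon_{K-1}$. For each $k$, fix a minimal $\epsilon_k$-cover $\mathcal{G}_k\subset\mathcal{G}$ under $\|\cdot\|_{\infty}$, so $|\mathcal{G}_k|=\mathcal{N}(\epsilon_k,\mathcal{G},\|\cdot\|_{\infty})$. For each $g\in\mathcal{G}$, pick $g_k\in\mathcal{G}_k$ with $\|g-g_k\|_{\infty}\leq\epsilon_k$, and let $g_0\equiv 0$, which trivially satisfies $\|g-g_0\|_{\infty}\leq B_{\mathcal{G}}$. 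Telescope:
\[
g(X_i)=\bigl(g(X_i)-g_K(X_i)\bigr)+\sum_{k=1}^{K}\bigl(g_k(X_i)-g_{k-1}(X_i)\bigr).
\]

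Next, I would bound each piece. The tail term satisfies $|g-g_K|\leq\epsilon_K\leq\delta$ uniformly, so its contribution to the Rademacher sup is at most $\delta$ (in fact the bookkeeping that produces the factor $4$ comes from bounding $|g-g_K|$ together with relating $\epsilon_{K-1}$ to $\delta$ through the choice $\epsilon_{K-1}<2\delta$). For each $k\geq 1$, the increment $g_k-g_{k-1}$ ranges over a set of cardinality at most $|\mathcal{G}_k||\mathcal{G}_{k-1}|\leq|\mathcal{G}_k|^2$, with $\|g_k-g_{k-1}\|_{\infty}\leq \epsilon_k+\epsilon_{k-1}\leq 2\epsilon_{k-1}$. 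Treating each such $(h(X_i))_{i=1}^n$ as a vector of Euclidean norm at most $2\epsilon_{k-1}\sqrt{n}$ and invoking Massart's finite-class lemma, I obtain
\[
\mathbb{E}\Bigl[\sup_{g\in\mathcal{G}}\frac{1}{n}\sum_{i=1}^n\varsigma_i\bigl(g_k(X_i)-g_{k-1}(X_i)\bigr)\Bigr]\leq \frac{2\epsilon_{k-1}\sqrt{2\ln|\mathcal{G}_k|^2}}{\sqrt{n}}=\frac{4\epsilon_{k-1}\sqrt{\ln|\mathcal{G}_k|}}{\sqrt{n}}.
\]

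Finally, I would convert the sum into the entropy integral. Using $\epsilon_{k-1}=2(\epsilon_{k-1}-\epsilon_k)$ and monotonicity of $\epsilon\mapsto\sqrt{\ln\mathcal{N}(\epsilon,\mathcal{G},\|\cdot\|_{\infty})}$, the Riemann-type sum $\sum_{k=1}^K(\epsilon_{k-1}-\epsilon_k)\sqrt{\ln\mathcal{N}(\epsilon_k,\mathcal{G},\|\cdot\|_{\infty})}$ is dominated by $\int_{\delta}^{B_{\mathcal{G}}/2}\sqrt{\ln\mathcal{N}(\epsilon,\mathcal{G},\|\cdot\|_{\infty})}\,d\epsilon$. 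Collecting the tail bound with the chained sum yields an upper estimate of the form $4\delta+\frac{12}{\sqrt{n}}\int_{\delta}^{B_{\mathcal{G}}/2}\sqrt{\ln\mathcal{N}(\epsilon,\mathcal{G},\|\cdot\|_{\infty})}\,d\epsilon$, after which taking the infimum over admissible $\delta$ gives the claim. The main obstacle is not conceptual but purely bookkeeping: extracting the precise constants $4$ and $12$ requires the right choice of scale ratio (dyadic with this particular placement of $\epsilon_K$ relative to $\delta$), careful use of $|\mathcal{G}_{k-1}|\leq|\mathcal{G}_k|$ to turn $\ln(|\mathcal{G}_k||\mathcal{G}_{k-1}|)$ into $2\ln|\mathcal{G}_k|$, and a clean Riemann-sum comparison at the endpoints; everything else is the textbook chaining template.
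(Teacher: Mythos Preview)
The paper does not give its own proof of this lemma; it is quoted verbatim as Lemma~5.7 of \cite{jiao2024error}. Your Dudley chaining argument is the standard route to this inequality and is correct in structure.

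One point to tighten: you write that the sum $\sum_{k=1}^{K}(\epsilon_{k-1}-\epsilon_k)\sqrt{\ln\mathcal{N}(\epsilon_k,\mathcal{G},\|\cdot\|_\infty)}$ is ``dominated by'' the integral. Since $\epsilon\mapsto\sqrt{\ln\mathcal{N}(\epsilon,\ldots)}$ is nonincreasing, this is an \emph{upper} Riemann sum on the partition $\epsilon_K<\cdots<\epsilon_0$ and therefore bounds the integral from above, not below. The usual repair is to use the dyadic identity $\epsilon_{k-1}-\epsilon_k=\epsilon_k=2(\epsilon_k-\epsilon_{k+1})$ to rewrite the sum as $2\sum_{k=1}^K(\epsilon_k-\epsilon_{k+1})\sqrt{\ln\mathcal{N}(\epsilon_k,\ldots)}$, which is now a \emph{lower} Riemann sum on the shifted partition and hence is at most $2\int_{\epsilon_{K+1}}^{\epsilon_1}\sqrt{\ln\mathcal{N}(\epsilon,\ldots)}\,d\epsilon$. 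The excess interval $[\epsilon_{K+1},\delta]$ (note $\epsilon_{K+1}\leq\delta$) is then absorbed either into the $4\delta$ term or by a crude bound on the final summand; this is exactly the ``clean Riemann-sum comparison at the endpoints'' you allude to, and carrying it through yields the constants $4$ and $12$. So your outline is sound, but the inequality direction in that step should be stated the right way round.
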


\begin{lemma}[McDiarmid’s inequality]\label{McDiarmid’s inequality}
Let $n\in\mathbb{N}_{\geq1}$. Let $g$ be a function from $\Omega^n$ to $\mathbb{R}$. Suppose that function $g$ satisfies the bounded differences property: there exists constants $\{c_i\}_{i=1}^n$ such that for any $x_1,\cdots,x_n\in\Omega$,
\begin{align*}
\sup_{\widetilde{x}_i\in\Omega}|g(x_1,\cdots,\widetilde{x}_i,\cdots,x_n)-g(x_1,\cdots,x_i,\cdots,x_n)|\leq c_i,\quad i\in[n].
\end{align*}
Let $\{X_i\}_{i=1}^n$ be independent variables, then for any $\tau>0$,
\begin{align*}
|g(X_1,\cdots,X_n)-\mathbb{E}g(X_1,\cdots,X_n)|\leq\tau
\end{align*}
with probability at least $1-2e^{-\frac{2\tau^2}{\sum_{i=1}^{n}c_i^2}}$.
\end{lemma}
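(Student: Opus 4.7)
The plan is the classical \emph{bounded differences} (Doob martingale) argument, reducing the concentration of $g(X_1,\ldots,X_n)$ to Azuma--Hoeffding for a suitable martingale. Set $V_0 := \mathbb{E}\,g(X_1,\ldots,X_n)$ and, for $i=1,\ldots,n$, define
\begin{equation*}
V_i := \mathbb{E}\bigl[g(X_1,\ldots,X_n)\,\big|\,X_1,\ldots,X_i\bigr],
\qquad D_i := V_i - V_{i-1}.
\end{equation*}
Then $(V_i)_{i=0}^n$ is a martingale with respect to the filtration generated by $(X_i)$, $V_n = g(X_1,\ldots,X_n)$, and $g(X_1,\ldots,X_n) - \mathbb{E}\,g(X_1,\ldots,X_n) = \sum_{i=1}^n D_i$. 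The goal is therefore to get a sub-Gaussian tail on this martingale sum.

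The main technical step, and the one that genuinely uses the hypothesis, is to show that each increment $D_i$ lies, conditionally on $X_1,\ldots,X_{i-1}$, in an interval of length $c_i$. Using independence of the $X_j$ to write the conditional expectations as integrals against the marginal law of $X_i$, introduce
\begin{equation*}
U_i(x) := \mathbb{E}\bigl[g(X_1,\ldots,X_{i-1},x,X_{i+1},\ldots,X_n)\,\big|\,X_1,\ldots,X_{i-1}\bigr],
\end{equation*}
so that $V_i = U_i(X_i)$ and $V_{i-1} = \mathbb{E}[\,U_i(X_i)\,|\,X_1,\ldots,X_{i-1}\,]$. The bounded differences hypothesis, together with monotonicity of the conditional expectation, gives $\sup_{x,\widetilde x}|U_i(x) - U_i(\widetilde x)| \leq c_i$, hence $D_i = U_i(X_i) - \mathbb{E}[U_i(X_i)\mid \mathcal{F}_{i-1}]$ lies in an interval of length $c_i$ almost surely. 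I expect this to be the main obstacle mostly in bookkeeping: one has to be careful that the conditional expectation and the supremum commute in the right direction.

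With the increments controlled, apply the standard Azuma--Hoeffding lemma: if $\{D_i\}$ is a martingale difference sequence with $D_i$ lying in an interval of length $c_i$ conditionally on $\mathcal{F}_{i-1}$, then $\mathbb{E}[e^{\lambda D_i}\mid \mathcal{F}_{i-1}] \leq \exp(\lambda^2 c_i^2 / 8)$ for every $\lambda\in\mathbb{R}$ (Hoeffding's lemma). Iterating the tower property and optimizing over $\lambda > 0$ via the Chernoff bound yields
\begin{equation*}
\mathbb{P}\!\left(\sum_{i=1}^n D_i \geq \tau\right) \leq \exp\!\left(-\frac{2\tau^2}{\sum_{i=1}^n c_i^2}\right),
\end{equation*}
and applying the same bound to $-g$ and combining by a union bound produces the claimed two-sided tail with the factor $2$. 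The complementary event is exactly the required $|g(X_1,\ldots,X_n)-\mathbb{E}\,g(X_1,\ldots,X_n)| \leq \tau$ with probability at least $1 - 2e^{-2\tau^2/\sum c_i^2}$, completing the plan.
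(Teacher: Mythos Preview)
Your proposal is correct and is exactly the standard Doob-martingale plus Azuma--Hoeffding argument for McDiarmid's inequality. The paper itself does not give a proof at all: it simply cites \cite[Theorem 2.9.1]{vershynin2018high}, whose proof follows precisely the route you outline.
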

\begin{proof}
See \cite[Theorem 2.9.1]{vershynin2018high}.
\end{proof}

\subsection{Miscellaneous}
\begin{lemma}\label{power and logarithm}
For $0<p<\frac{1}{e}$,
\begin{align*}
\ln x\leq x^p,\quad x\in\left[\left(\frac{2}{p}\ln\frac{1}{p}+\frac{1}{p}\right)^{1/p},+\infty\right).
\end{align*}
For $p\geq\frac{1}{e}$,
\begin{align*}
\ln x\leq x^p,\quad x\in\left(0,+\infty\right).
\end{align*}
\end{lemma}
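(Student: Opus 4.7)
The plan is to analyze the function $f(x) := x^p - \ln x$ on $(0,\infty)$ and show it is nonnegative either everywhere (the regime $p \geq 1/e$) or on the stated interval (the regime $0 < p < 1/e$).

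First I would differentiate: $f'(x) = p x^{p-1} - 1/x = (p x^p - 1)/x$, which vanishes uniquely at $x^\star := (1/p)^{1/p}$ and is negative for $x < x^\star$, positive for $x > x^\star$. Hence $x^\star$ is the global minimum of $f$, with value
\begin{align*}
f(x^\star) = \frac{1}{p} - \frac{1}{p}\ln\!\frac{1}{p} = \frac{1 + \ln p}{p}.
\end{align*}
For $p \geq 1/e$ we have $\ln p \geq -1$, so $f(x^\star) \geq 0$ and thus $f \geq 0$ on all of $(0,\infty)$, settling the second case.

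For $0 < p < 1/e$ the global minimum is negative, so I need to locate a threshold $x_0$ from which $f$ is nonnegative. Set $x_0 := \bigl(\tfrac{2}{p}\ln\tfrac{1}{p} + \tfrac{1}{p}\bigr)^{1/p}$. Since $\ln(1/p) > 0$, clearly $x_0^p > 1/p$, so $x_0 > x^\star$, and therefore $f$ is monotonically increasing on $[x_0,\infty)$. It remains to check $f(x_0) \geq 0$, i.e.\ $x_0^p \geq \ln x_0$. Substituting the definitions and dividing through by $1/p$, this reduces to
\begin{align*}
2\ln\!\frac{1}{p} + 1 \;\geq\; \ln\!\frac{1}{p} + \ln\!\left(2\ln\!\frac{1}{p} + 1\right),
\end{align*}
i.e.\ $u + 1 \geq \ln(2u+1)$ with $u := \ln(1/p) > 1$.

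The key (and only nontrivial) step is this last scalar inequality. I would handle it by setting $g(u) := u + 1 - \ln(2u+1)$, computing $g'(u) = 1 - 2/(2u+1) = (2u-1)/(2u+1) > 0$ for $u > 1/2$, and noting $g(1) = 2 - \ln 3 > 0$; hence $g(u) > 0$ for all $u \geq 1$, which covers the range $p < 1/e$. Combined with the monotonicity of $f$ on $[x_0,\infty)$ established above, this yields $f(x) \geq f(x_0) \geq 0$ for all $x \geq x_0$, completing the first case. I expect no real obstacle; the mild subtlety is just keeping the logarithmic manipulations tidy when verifying $f(x_0) \geq 0$.
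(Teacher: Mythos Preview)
Your proof is correct. For the case $p \geq 1/e$ it coincides with the paper's argument. For $0 < p < 1/e$ you take a more direct route: the paper introduces an auxiliary integer $k \geq 2$ determined by $k/e^k \leq p \leq (k-1)/e^{k-1}$, shows that the function $x^p - \ln x$ is nonnegative at $(k/p)^{1/p}$ using the lower bound on $p$, and then uses the upper bound to deduce $k \leq 2\ln(1/p) + 1$, whence $(k/p)^{1/p}$ lies below the stated threshold. You instead evaluate the function directly at the stated threshold $x_0$ and reduce everything to the scalar inequality $u+1 \geq \ln(2u+1)$ for $u = \ln(1/p) > 1$, which you dispatch by a one-line monotonicity check. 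Your approach is shorter and avoids the discretization in $k$; the paper's approach, by contrast, makes transparent where the threshold formula comes from (replace $k$ by its upper bound $2\ln(1/p)+1$).
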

\begin{proof}
Define $g(x):=x^p-\ln x$. Then $g'(x)=px^{p-1}-\frac{1}{x}$, from which we conclude that $g(x)$ decreases on $\left(0,\left(\frac{1}{p}\right)^{1/p}\right)$ and increases on $\left(\left(\frac{1}{p}\right)^{1/p},+\infty\right)$. For $0<p<\frac{1}{e}$, assume 
$\frac{k}{e^k}\leq p\leq\frac{k-1}{e^{k-1}}$ with some $k\in\mathbb{N}_{\geq2}$. Since $p\leq\frac{k-1}{e^{k-1}}\leq\frac{e^{(k-1)/2}}{e^{k-1}}=\frac{1}{e^{(k-1)/2}}$, we have $k\leq2\ln\frac{1}{p}+1$. The result follows from the facts that 
\begin{align*}
g\left(\left(\frac{k}{p}\right)^{1/p}\right)=\frac{1}{p}\left({k}-\ln\frac{k}{p}\right)\geq0.
\end{align*}
and $\left(\frac{2}{p}\ln\frac{1}{p}+\frac{1}{p}\right)^{1/p}\geq\left(\frac{k}{p}\right)^{1/p}\geq\left(\frac{1}{p}\right)^{1/p}$.

For $p\geq\frac{1}{e}$, the minimum
\begin{align*}
g\left(\left(\frac{1}{p}\right)^{1/p}\right)=\frac{1}{p}\left({1}-\ln\frac{1}{p}\right)\geq0.
\end{align*}
Hence $g(x)$ is nonnegative on $(0,+\infty)$.
\end{proof}

\begin{lemma}\label{power and exponential}
For $q>{e}$,
\begin{align*}
y^q\leq e^y,\quad y\in\left[2q\ln q+q,+\infty\right).
\end{align*}
For $0<q\leq{e}$,
\begin{align*}
y^q\leq e^y,\quad y\in\left(0,+\infty\right).
\end{align*}
\end{lemma}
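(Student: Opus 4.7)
The statement $y^q \leq e^y$ is equivalent (for $y>0$) to the logarithmic inequality $q\ln y \leq y$, so I would introduce the single auxiliary function
\begin{align*}
\phi(y) := y - q\ln y, \qquad y \in (0,+\infty),
\end{align*}
and prove nonnegativity of $\phi$ on the relevant domain in each case.

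A direct derivative computation gives $\phi'(y) = 1 - q/y$, so $\phi$ is strictly decreasing on $(0,q)$ and strictly increasing on $(q,+\infty)$, with global minimum
\begin{align*}
\phi(q) = q - q\ln q = q(1 - \ln q).
\end{align*}
For the second case $0 < q \leq e$, we have $\ln q \leq 1$ and hence $\phi(q) \geq 0$, so $\phi(y) \geq 0$ for all $y>0$, which gives $y^q \leq e^y$ on $(0,+\infty)$.

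For the first case $q > e$, the minimum $\phi(q)$ is negative, but $\phi$ is increasing on $(q,+\infty)$ and tends to $+\infty$, so it suffices to verify $\phi(y_0) \geq 0$ at the threshold $y_0 := 2q\ln q + q$ (note $y_0 > q$). Plugging in and dividing by $q$, this reduces to
\begin{align*}
2\ln q + 1 \geq \ln(2q\ln q + q) = \ln q + \ln(2\ln q + 1),
\end{align*}
i.e.\ $\ln q + 1 \geq \ln(2\ln q + 1)$, or equivalently $eq \geq 2\ln q + 1$. Since $q > e$ gives $eq > e^2$ while the right-hand side grows only logarithmically in $q$, this last inequality is elementary; I would either verify it at $q=e$ and compare derivatives, or simply invoke $\ln q \leq q/e$ for $q \geq 1$ (so that $2\ln q + 1 \leq 2q/e + 1 \leq eq$) to conclude.

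The proof is a short calculus exercise and I do not anticipate a real obstacle; the only mild care needed is the case split at $q = e$ (where both regimes meet) and the algebraic check that the threshold $2q\ln q + q$ is indeed large enough, which is where the logarithmic estimate in the last paragraph is used.
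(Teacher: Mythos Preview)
Your proof is correct. The reduction to showing $\phi(y)=y-q\ln y\geq 0$ is the natural move, and your endpoint check at $y_0=2q\ln q+q$ via the estimate $\ln q\leq q/e$ goes through cleanly.

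The paper, however, does not argue directly: it simply substitutes $x=y^{q}$ and $p=1/q$ into the preceding Lemma~\ref{power and logarithm} (the companion statement $\ln x\leq x^{p}$) and reads off both cases in one line. So the two approaches differ in structure rather than in difficulty. Your route is self-contained and in fact tidier than the paper's proof of Lemma~\ref{power and logarithm} itself, which uses an integer bracketing $k/e^{k}\leq p\leq (k-1)/e^{k-1}$ to locate a test point; you bypass that by evaluating $\phi$ directly at the stated threshold. The paper's route buys brevity and modularity (reusing the earlier lemma), while yours buys independence from Lemma~\ref{power and logarithm} and a more transparent verification of why the threshold $2q\ln q+q$ suffices.
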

\begin{proof}
We can obtain the result directly by letting $x=y^{q}$ and $p=\frac{1}{q}$ in Lemma \ref{power and logarithm}.
\end{proof}

\begin{lemma}\label{xp convex}
For any $a,b\geq0,p\geq1$,
\begin{align*}
(a+b)^p\leq 2^{p-1}(a^p+b^p).
\end{align*}
\end{lemma}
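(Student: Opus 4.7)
The plan is to prove the inequality by invoking convexity of the map $x \mapsto x^p$ on $[0,\infty)$ for $p \geq 1$, which is the standard route.

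First I would dispose of the trivial case $a = b = 0$, where both sides vanish. Assuming at least one of $a, b$ is positive, I would note that $f(x) = x^p$ is a convex function on $[0,\infty)$ whenever $p \geq 1$; this can either be taken as classical or verified through Lemma \ref{convex hessian condition} by checking that $f''(x) = p(p-1)x^{p-2} \geq 0$ on $(0,\infty)$ (with a separate limit argument at $0$ when $1 \leq p < 2$, or alternatively by invoking Lemma \ref{convex iff condition} via the tangent-line characterization).

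Applying the two-point convexity inequality at the midpoint $\tfrac{a+b}{2} = \tfrac{1}{2} a + \tfrac{1}{2} b$, I get
\begin{align*}
\left(\frac{a+b}{2}\right)^p \leq \frac{1}{2} a^p + \frac{1}{2} b^p.
\end{align*}
Multiplying both sides by $2^p$ yields
\begin{align*}
(a+b)^p \leq 2^{p-1}(a^p + b^p),
\end{align*}
which is exactly the claim.

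There is no real obstacle here; the only minor subtlety is justifying convexity of $x^p$ on the closed half-line $[0,\infty)$ when $1 \leq p < 2$, since $f''$ blows up at $0$. This is handled either by a direct limiting argument, by using Lemma \ref{convex iff condition} (the tangent-line inequality $y^p \geq x^p + p x^{p-1}(y - x)$ holds on $[0,\infty)$ and extends to $x = 0$ by continuity), or by simply treating the boundary cases $a = 0$ or $b = 0$ separately, where the inequality reduces to $a^p \leq 2^{p-1} a^p$, which holds since $2^{p-1} \geq 1$.
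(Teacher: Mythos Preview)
Your proposal is correct and takes essentially the same approach as the paper: invoke convexity of $x\mapsto x^p$ on $(0,\infty)$ via the second derivative, apply the midpoint inequality $\left(\tfrac{a+b}{2}\right)^p\leq\tfrac{1}{2}a^p+\tfrac{1}{2}b^p$, and rearrange. Your treatment of the boundary cases is in fact more careful than the paper's, which simply states the convexity and the midpoint inequality without further comment.
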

\begin{proof}
The function $f(x)=x^p$ is convex on $(0,+\infty)$ since $f''(x)=p(p-1)x^{p-2}\geq0$. Then 
\begin{align*}
\left(\frac{a+b}{2}\right)^p\leq\frac{1}{2}a^p+\frac{1}{2}b^p.
\end{align*}
\end{proof}

\section{Error Decomposition}
In this section, we investigate some properties and relations of the continuous loss and the discrete loss. Building upon these results, we derive an error decomposition, which serves as the starting point for our subsequent work.

The continuous loss $L_R$ and $L_N$ have the following properties.
\begin{lemma}\label{loss difference and variable difference}
For any $u\in H^1(\Omega)$,
\begin{align*}
C(coe)\|u-u_R\|_{H^1(\Omega)}^2&\leq{L}_{R}\left(u\right)-{L}_{R}\left(u_R\right)
\leq\max\{1,1/\beta\}C(\Omega,coe)\|u-u_R\|_{H^1(\Omega)}^2,\\
C(coe)\|u-u_N\|_{H^1(\Omega)}^2&\leq{L}_{N}\left(u\right)-{L}_{N}\left(u_N\right)
\leq C(\Omega,coe)\|u-u_R\|_{H^1(\Omega)}^2.
\end{align*}
\end{lemma}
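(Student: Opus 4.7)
The plan is to exploit the quadratic structure of $L_R$ and $L_N$: both functionals are quadratic in $u$ with linear perturbations, and the variational problem is precisely the Euler--Lagrange equation, so expanding around the minimizer gives an exact purely quadratic expression in $u-u_R$ (respectively $u-u_N$). Once that identity is in hand, the two-sided bounds follow from elementary estimates.

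First I would write $v := u - u_R$ and expand the difference by means of the algebraic identities
\begin{align*}
|\nabla u|^2 - |\nabla u_R|^2 &= |\nabla v|^2 + 2\nabla u_R\cdot \nabla v,\\
w u^2 - w u_R^2 &= w v^2 + 2 w u_R v,\\
(T_0 u)^2 - (T_0 u_R)^2 &= (T_0 v)^2 + 2 T_0 u_R\, T_0 v.
\end{align*}
Substituting into $L_R(u)-L_R(u_R)$ and grouping the terms that are linear in $v$, the linear part is exactly
$$(\nabla u_R,\nabla v) + (w u_R,v) + \tfrac{1}{\beta}(T_0 u_R,T_0 v)\big|_{\partial\Omega} - (f,v) - \tfrac{1}{\beta}(g,T_0 v)\big|_{\partial\Omega},$$
which vanishes by the variational identity \eqref{variational robin} (valid for every $v\in H^1(\Omega)$). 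This yields the clean identity
$$L_R(u)-L_R(u_R) = \tfrac{1}{2}\|\nabla v\|_{L^2(\Omega)}^2 + \tfrac{1}{2}\int_\Omega w v^2\,dx + \tfrac{1}{2\beta}\|T_0 v\|_{L^2(\partial\Omega)}^2.$$

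For the lower bound, using $w\ge c_w$ and (assuming $\beta>0$, so that the boundary term is nonnegative) dropping the boundary term gives
$$L_R(u)-L_R(u_R) \ge \tfrac{1}{2}\|\nabla v\|_{L^2(\Omega)}^2 + \tfrac{c_w}{2}\|v\|_{L^2(\Omega)}^2 \ge C(coe)\|v\|_{H^1(\Omega)}^2.$$
For the upper bound, I use $w\le \|w\|_{L^\infty}$ in the bulk term and the trace theorem (Lemma \ref{trace theorem}) $\|T_0 v\|_{L^2(\partial\Omega)}\le C(\Omega)\|v\|_{H^1(\Omega)}$ to estimate the boundary term; collecting constants produces the stated factor $\max\{1,1/\beta\}C(\Omega,coe)$.

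The Neumann case is strictly easier: the same expansion together with \eqref{variational neumann} yields
$$L_N(u)-L_N(u_N) = \tfrac{1}{2}\|\nabla(u-u_N)\|_{L^2(\Omega)}^2 + \tfrac{1}{2}\int_\Omega w (u-u_N)^2\,dx,$$
with no boundary term at all, and the two-sided bounds follow immediately from $c_w\le w\le \|w\|_{L^\infty}$. There is no real obstacle here; the only subtlety is justifying that the linear-in-$v$ terms cancel (which requires the variational identities to hold for every $v\in H^1(\Omega)$, not just $v\in H_0^1(\Omega)$, and this is precisely the content of \eqref{variational robin} and \eqref{variational neumann}), and choosing the sign convention on $\beta$ that makes the boundary quadratic term nonnegative so that it can be discarded in the lower bound.
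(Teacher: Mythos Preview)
Your proposal is correct and follows essentially the same approach as the paper: expand $L_R(u_R+v)$, observe that the terms linear in $v$ vanish by the variational identity \eqref{variational robin}, obtain the exact quadratic expression $\tfrac12\|\nabla v\|^2+\tfrac12(v,v)_{L^2(\Omega;w)}+\tfrac{1}{2\beta}\|T_0v\|^2$, and then bound above and below using $c_w\le w\le\|w\|_{L^\infty}$ and the trace theorem. You even make explicit the sign assumption on $\beta$ needed to drop the boundary term in the lower bound, which the paper leaves implicit.
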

\begin{proof}
We only present a proof for the Robin problem. The proof is adapted to the Neumann problem after minor modifications. For any $u\in\mathcal{F}_{NN}$, set $v=u-u_R$, then
\begin{align*}
	&{L}_{R}\left(u\right)={L}_{R}\left(u_R+v\right)\\
	&= \frac{1}{2}(\nabla (u_R+v),\nabla (u_R+v))_{L^{2}(\Omega)}+\frac{1}{2}(u_R+v,u_R+v)_{L^{2}(\Omega;w)}-\langle u_R+v, f\rangle_{L^2({\Omega})}\\
	&\quad\ +\frac{1}{2\beta}(T_0u_R+T_0v,T_0u_R+T_0v)_{L^2({\partial \Omega})}- \frac{1}{\beta}\langle {T_0u_R+T_0v}, g\rangle_{L^2({\partial \Omega})}\\
	&=\frac{1}{2}(\nabla u_R,\nabla u_R)_{L^{2}(\Omega)}+\frac{1}{2}(u_R,u_R)_{L^{2}(\Omega;w)}-\langle u_R, f\rangle_{L^2({\Omega})} +\frac{1}{2\beta}(T_0u_R,T_0u_R)_{L^2({\partial \Omega})}\\
	&\quad - \frac{1}{\beta}\langle {T_0u_R}, g\rangle_{L^2({\partial \Omega})} +\frac{1}{2}(\nabla v,\nabla v)_{L^{2}(\Omega)}+\frac{1}{2}(v,v)_{L^{2}(\Omega;w)}+\frac{1}{2\beta}(T_0v,T_0v)_{L^2({\partial \Omega})}\\
	&\quad\ +\left[(\nabla u_R,\nabla v)_{L^{2}(\Omega)}+(u^*,v)_{L^{2}(\Omega;w)}-\langle v, f\rangle_{L^2({\Omega})}\right.\\
	&\quad\ +\left.\frac{1}{\beta}(T_0u_R,T_0v)_{L^2({\partial \Omega})}- \frac{1}{\beta}\langle {T_0v}, g\rangle_{L^2({\partial \Omega})}\right]\\
	&={L}_{R}\left(u_R\right)+\frac{1}{2}(\nabla v,\nabla v)_{L^{2}(\Omega)}+\frac{1}{2}(v,v)_{L^{2}(\Omega;w)}+\frac{1}{2\beta}(T_0v,T_0v)_{L^2({\partial \Omega})},
\end{align*}
where the last equality is due to the fact that $u_R$ is the solution of equation $(\ref{variational robin})$. Hence
\begin{align*}	C(coe)\|v\|_{H^1(\Omega)}^2&\leq{L}_{R}\left(u\right)-{L}_{R}\left(u_R\right)\\
	&=\frac{1}{2}(\nabla v,\nabla v)_{L^{2}(\Omega)}+\frac{1}{2}(v,v)_{L^{2}(\Omega;w)}+\frac{1}{2\beta}(T_0v,T_0v)_{L^2({\partial \Omega})}\\
	&\leq\max\{1,1/\beta\}C(\Omega,coe)\|v\|_{H^1(\Omega)}^2,
\end{align*}
where in the third step we make use of Lemma \ref{trace theorem}.

\end{proof}


The following result provides an estimation of the difference between the minimum value of the discrete loss and the minimum value of the continuous loss.
\begin{lemma}\label{continuous minimum and empirical minimum}
\begin{align*}
&\left|\inf_{f_W\in\mathcal{F}_{NN}}\widehat{L}_{R}(f_W)-L_R(u_R)\right|\\
&\leq\inf_{f_W\in\mathcal{F}_{NN}}[{L}_{R}({f_W})-L_R(u_R)]\\
&\quad+\max\left\{\sup_{f_W\in\mathcal{F}_{NN}}[\widehat{L}_{R}({f_W})-{L}_{R}({f_W})],\sup_{f_W\in\mathcal{F}_{NN}}[{L}_{R}({f_W})-\widehat{L}_{R}({f_W})]\right\},\\
&\left|\inf_{f_W\in\mathcal{F}_{NN}}\widehat{L}_{N}(f_W)-L_R(u_N)\right|\\
&\leq\inf_{f_W\in\mathcal{F}_{NN}}[{L}_{N}({f_W})-L_N(u_N)]\\
&\quad+\max\left\{\sup_{f_W\in\mathcal{F}_{NN}}[\widehat{L}_{N}({f_W})-{L}_{N}({u})],\sup_{f_W\in\mathcal{F}_{NN}}[{L}_{N}({f_W})-\widehat{L}_{N}({f_W})]\right\}.
\end{align*}
\end{lemma}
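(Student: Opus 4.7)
The statement has the flavor of a standard ``approximation plus generalization'' decomposition, and my plan is to derive it by splitting $\widehat{L}_R(f_W)-L_R(u_R)$ into the population excess $L_R(f_W)-L_R(u_R)$ and the sampling gap $\widehat{L}_R(f_W)-L_R(f_W)$, then taking $\inf$ and $\sup$ in the right order. I would present only the Robin case; the Neumann case is identical with $L_R,\widehat{L}_R,u_R$ replaced by $L_N,\widehat{L}_N,u_N$.

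\textbf{Upper bound on $\inf\widehat{L}_R-L_R(u_R)$.} For any $f_W\in\mathcal{F}_{NN}$,
\begin{align*}
\widehat{L}_R(f_W) \;=\; L_R(f_W) + \bigl[\widehat{L}_R(f_W)-L_R(f_W)\bigr]
\;\le\; L_R(f_W) + \sup_{g_W\in\mathcal{F}_{NN}}\bigl[\widehat{L}_R(g_W)-L_R(g_W)\bigr].
\end{align*}
Taking infimum over $f_W\in\mathcal{F}_{NN}$ on both sides and subtracting $L_R(u_R)$ yields
\begin{align*}
\inf_{f_W\in\mathcal{F}_{NN}}\widehat{L}_R(f_W)-L_R(u_R)
\;\le\; \inf_{f_W\in\mathcal{F}_{NN}}\bigl[L_R(f_W)-L_R(u_R)\bigr]
\;+\;\sup_{g_W\in\mathcal{F}_{NN}}\bigl[\widehat{L}_R(g_W)-L_R(g_W)\bigr].
\end{align*}

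\textbf{Lower bound on $\inf\widehat{L}_R-L_R(u_R)$.} Because $u_R$ is the global minimizer of $L_R$ over $H^1(\Omega)\supset\mathcal{F}_{NN}$, we have $L_R(f_W)\ge L_R(u_R)$ for every $f_W\in\mathcal{F}_{NN}$. Therefore
\begin{align*}
\widehat{L}_R(f_W) \;=\; L_R(f_W)-\bigl[L_R(f_W)-\widehat{L}_R(f_W)\bigr]
\;\ge\; L_R(u_R) - \sup_{g_W\in\mathcal{F}_{NN}}\bigl[L_R(g_W)-\widehat{L}_R(g_W)\bigr],
\end{align*}
and taking infimum over $f_W$ gives
\begin{align*}
\inf_{f_W\in\mathcal{F}_{NN}}\widehat{L}_R(f_W)-L_R(u_R)
\;\ge\; -\sup_{g_W\in\mathcal{F}_{NN}}\bigl[L_R(g_W)-\widehat{L}_R(g_W)\bigr].
\end{align*}

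\textbf{Combine.} Since $\inf_{f_W}[L_R(f_W)-L_R(u_R)]\ge 0$, the two one-sided bounds imply
\begin{align*}
\bigl|\inf_{f_W\in\mathcal{F}_{NN}}\widehat{L}_R(f_W)-L_R(u_R)\bigr|
\;\le\;\inf_{f_W\in\mathcal{F}_{NN}}\bigl[L_R(f_W)-L_R(u_R)\bigr]
+\max\!\left\{\sup_{g_W}[\widehat{L}_R(g_W)-L_R(g_W)],\,\sup_{g_W}[L_R(g_W)-\widehat{L}_R(g_W)]\right\},
\end{align*}
which is the claim. The Neumann assertion follows by repeating the argument verbatim with $L_N,\widehat{L}_N,u_N$. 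There is no real obstacle here; the only point of care is invoking the global minimality of $u_R$ (resp.\ $u_N$) for $L_R$ (resp.\ $L_N$) over $H^1(\Omega)$, which is exactly the content of the variational/minimization equivalence recalled after~(\ref{minimization dirichlet}), so that the lower-bound step does not pick up an additional approximation term.
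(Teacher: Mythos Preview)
Your proof is correct and follows essentially the same approach as the paper: both obtain the upper bound by writing $\widehat{L}_R(f_W)=[\widehat{L}_R(f_W)-L_R(f_W)]+[L_R(f_W)-L_R(u_R)]+L_R(u_R)$ and taking $\inf$, and both obtain the lower bound by using the global minimality of $u_R$ to drop the population excess term. The only cosmetic difference is that the paper passes through a minimizing sequence $\{u_k\}$ for the lower bound, whereas you bound $\widehat{L}_R(f_W)$ from below uniformly in $f_W$ before taking the infimum---your version is arguably cleaner.
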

\begin{proof}
We only give a proof for the Robin problem. Let $f_{\bar{W}}$ be any function in $\mathcal{F}_{NN}$. We have
\begin{align*}
&\inf_{f_W\in\mathcal{F}_{NN}}\widehat{L}_{R}(f_W)-L_R(u_R)\leq\widehat{L}_{R}(f_{\bar{W}})-L_R(u_R)\\
&=[\widehat{L}_{R}(f_{\bar{W}})-{L}_{R}(f_{\bar{W}})]+[{L}_{R}(f_{\bar{W}})-L_R(u_R)]\\
&\leq\sup_{f_W\in\mathcal{F}_{NN}}[\widehat{L}_{R}({f_W})-{L}_{R}({f_W})]+[{L}_{R}(f_{\bar{W}})-L_R(u_R)],
\end{align*}
which implies
\begin{align}\label{minimizer1}
&\inf_{f_W\in\mathcal{F}_{NN}}\widehat{L}_{R}(f_W)-L_R(u_R)\nonumber\\
&\leq\sup_{f_W\in\mathcal{F}_{NN}}[\widehat{L}_{R}({f_W})-{L}_{R}({f_W})]+\inf_{f_W\in\mathcal{F}_{NN}}[{L}_{R}({f_W})-L_R(u_R)].
\end{align}
On the other hand, let $\{u_k\}_{k=1}^{\infty}\subseteq\mathcal{F}_{NN}$ be a sequence such that $\lim_{k\to\infty}\widehat{L}_{R}(u_k)=\inf_{f_W\in\mathcal{F}_{NN}}\widehat{L}_{R}(f_W)$. For any $k\in\mathbb{N}_{\geq1}$,
\begin{align*}
L_R(u_R)-\widehat{L}_{R}(u_k)&=[{L}_{R}(u_R)-{L}_{R}({u_k})]+[{L}_{R}({u_k})-\widehat{L}_{R}(u_k)]\\
&\leq{L}_{R}({u_k})-\widehat{L}_{R}(u_k)
\leq\sup_{f_W\in\mathcal{F}_{NN}}[{L}_{R}({f_W})-\widehat{L}_{R}({f_W})].
\end{align*}
Hence
\begin{align}\label{minimizer2}
L_R(u_R)-\inf_{f_W\in\mathcal{F}_{NN}}\widehat{L}_{R}(f_W)
\leq\sup_{f_W\in\mathcal{F}_{NN}}[{L}_{R}({f_W})-\widehat{L}_{R}({f_W})].
\end{align}
Combining (\ref{minimizer1}) and (\ref{minimizer2}) yields the conclusion.

\end{proof}

The following lemma decomposes the total error into three different types of errors, which we will handle using different tools.

\begin{lemma}\label{error decompostion}
(1) Consider Robin problem (\ref{variational robin}). Suppose $f_{W_T}\in\mathcal{F}_{NN}$. There holds
\begin{align*}
&{L}_{R}(f_{W_T})-L_R(u_R)\\
&\leq\left[\widehat{L}_{R}(f_{W_T})-\inf_{f_W\in\mathcal{F}_{NN}}\widehat{L}_{R}(f_W)\right]+\inf_{f_W\in\mathcal{F}_{NN}}[{L}_{R}(f_{W})-{L}_{R}(u_R)]
\\
&\quad\ +2\max\left\{\sup_{f_W\in\mathcal{F}_{NN}}[{L}_{R}(f_{W})-\widehat{L}_{R}(f_{W})],\sup_{f_W\in\mathcal{F}_{NN}}[\widehat{L}_{R}(f_{W})-{L}_{R}(f_{W})]\right\}.
\end{align*}
(2) Consider Neumann problem (\ref{variational neumann}). Suppose $f_{W_T}\in\mathcal{F}_{NN}$. There holds
\begin{align*}
&{L}_{N}(f_{W_T})-L_N(u_N)\\
&\leq\left[\widehat{L}_{N}(f_{W_T})-\inf_{f_W\in\mathcal{F}_{NN}}\widehat{L}_{N}(f_W)\right]
+\inf_{f_W\in\mathcal{F}_{NN}}[{L}_{N}(f_{W})-{L}_{N}(u_N)]\\
&\quad\ +2\max\left\{\sup_{f_W\in\mathcal{F}_{NN}}[{L}_{N}(f_{W})-\widehat{L}_{N}(f_{W})],\sup_{f_W\in\mathcal{F}_{NN}}[\widehat{L}_{N}(f_{W})-{L}_{N}(f_{W})]\right\}.
\end{align*}
\end{lemma}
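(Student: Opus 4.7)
The plan is to prove the bound by telescoping the total error through the discrete loss and then applying the preceding Lemma \ref{continuous minimum and empirical minimum} to handle the discrepancy between the infima of the discrete and continuous losses. I will focus on the Robin case since the Neumann case is identical up to relabeling.

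The first step is to insert $\pm\widehat{L}_R(f_{W_T})$ and $\pm\inf_{f_W\in\mathcal{F}_{NN}}\widehat{L}_R(f_W)$ to obtain the three-part decomposition
\begin{align*}
L_R(f_{W_T})-L_R(u_R)
&=\bigl[L_R(f_{W_T})-\widehat{L}_R(f_{W_T})\bigr]
+\Bigl[\widehat{L}_R(f_{W_T})-\inf_{f_W\in\mathcal{F}_{NN}}\widehat{L}_R(f_W)\Bigr]\\
&\quad +\Bigl[\inf_{f_W\in\mathcal{F}_{NN}}\widehat{L}_R(f_W)-L_R(u_R)\Bigr].
\end{align*}
The middle bracket is precisely the optimization error appearing in the claimed bound, so it will be kept untouched.

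The second step is to bound the first bracket. Since $f_{W_T}\in\mathcal{F}_{NN}$, the pointwise difference is dominated by the supremum over the class:
\begin{equation*}
L_R(f_{W_T})-\widehat{L}_R(f_{W_T})\leq\sup_{f_W\in\mathcal{F}_{NN}}\bigl[L_R(f_W)-\widehat{L}_R(f_W)\bigr]\leq\max\Bigl\{\sup_{f_W\in\mathcal{F}_{NN}}[L_R(f_W)-\widehat{L}_R(f_W)],\sup_{f_W\in\mathcal{F}_{NN}}[\widehat{L}_R(f_W)-L_R(f_W)]\Bigr\}.
\end{equation*}
This contributes the first copy of the max-term in the conclusion.

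The third step handles the remaining bracket $\inf_{f_W\in\mathcal{F}_{NN}}\widehat{L}_R(f_W)-L_R(u_R)$, which is the one place where a nontrivial input is needed. I will invoke Lemma \ref{continuous minimum and empirical minimum} directly: it guarantees
\begin{equation*}
\Bigl|\inf_{f_W\in\mathcal{F}_{NN}}\widehat{L}_R(f_W)-L_R(u_R)\Bigr|\leq\inf_{f_W\in\mathcal{F}_{NN}}[L_R(f_W)-L_R(u_R)]+\max\Bigl\{\sup_{f_W}[\widehat{L}_R(f_W)-L_R(f_W)],\sup_{f_W}[L_R(f_W)-\widehat{L}_R(f_W)]\Bigr\},
\end{equation*}
and dropping the absolute value on the left (it suffices to use $\leq|\cdot|$) produces the approximation-error term and a second copy of the max-term. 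Adding the three bounds yields the desired inequality with the factor $2$ in front of the max.

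There is no real obstacle here; this is a routine add-and-subtract argument whose only substantive ingredient is Lemma \ref{continuous minimum and empirical minimum}, already established. The Neumann case follows by replacing $L_R,\widehat{L}_R,u_R$ by $L_N,\widehat{L}_N,u_N$ throughout, using the Neumann half of Lemma \ref{continuous minimum and empirical minimum} at the analogous step.
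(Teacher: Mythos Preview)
Your proposal is correct and follows essentially the same approach as the paper: the same three-term telescoping decomposition, the same use of $f_{W_T}\in\mathcal{F}_{NN}$ to bound the first bracket by the supremum, and the same appeal to Lemma~\ref{continuous minimum and empirical minimum} for the third bracket.
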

\begin{remark}
The first term reflects the distance between the discrete loss value at the $T$-th iteration and the minimum value of the discrete loss, and is therefore referred to as the optimization error. According to Lemma \ref{loss difference and variable difference}, the second term shows the distance between the set $\mathcal{F}_{NN}$ and the target function $u_R$($u_N$), which is exactly the classical definition of approximation error. The third term is the generalization error, which measures the uniform difference between the continuous loss and the discrete loss over $\mathcal{F}_{NN}$. 
\end{remark}
\begin{proof}
We only give a proof for the Robin problem. It follows from Lemma \ref{continuous minimum and empirical minimum} that
\begin{align*}
&{L}_{R}(f_{W_T})-L_R(u_R)\\
&=[{L}_{R}(f_{W_T})-\widehat{L}_{R}(f_{W_T})]+\left[\widehat{L}_{R}(f_{W_T})-\inf_{f_W\in\mathcal{F}_{NN}}\widehat{L}_{R}(f_W)\right]\\
&\quad+\left[\inf_{f_W\in\mathcal{F}_{NN}}\widehat{L}_{R}(f_W)-{L}_{R}(u_R)\right]\\
&\leq\left[\widehat{L}_{R}(f_{W_T})-\inf_{f_W\in\mathcal{F}_{NN}}\widehat{L}_{R}(f_W)\right]
+\inf_{f_W\in\mathcal{F}_{NN}}[{L}_{R}({f_W})-L_R(u_R)]\\
&\quad\ +2\max\left\{\sup_{f_W\in\mathcal{F}_{NN}}[\widehat{L}_{R}({f_W})-{L}_{R}({f_W})],\sup_{f_W\in\mathcal{F}_{NN}}[{L}_{R}({f_W})-\widehat{L}_{R}({f_W})]\right\}.
\end{align*}
where in the second step we use the fact that $f_{W_T}\in\mathcal{F}_{NN}$.

\end{proof}

\section{Approximation Error}

In this section, we study the error of neural network approximation for functions in Sobolev spaces. We follow the proof strategy presented in \cite{de2021approximation}. To be specific, the main process can be divided into two steps. According to the Bramble-Hilbert lemma (Lemma \ref{Bramble-Hilbert lemma}), there exists polynomials that can locally approximate functions in Sobolev spaces. The first step in \cite{yarotsky2017error} is to approximate those polynomials by neural network functions. Secondly, following the ideas presented in \cite{yarotsky2017error}, \cite{de2021approximation} constructs an approximate partition of unity $\{\Phi_{j}^N\}_{j\in\{1,2,\cdots,N\}^d}$ by neural network functions to achieve localization. However, the study in \cite{de2021approximation} did not cover all integer order Sobolev spaces, as they only studied the approximation of neural network functions to $W^{s,\infty}$ functions. We generalize their result to the approximation of $W^{s,p}$ functions with $1\leq p\leq\infty$.

\begin{remark}
Here, we consider only the tanh neural network, i.e., $\sigma = \tanh$. However, as mentioned in \cite{de2021approximation}, this construction method is applicable to other smooth activation functions.
\end{remark}

We first introduce some notations we need in this section. Let $N\in\mathbb{N}_{\geq1}$. For $j\in\{1,\cdots,N\}^d$, define
\begin{align*}
I_j^N:=\prod_{i=1}^d\left(\frac{j_i-1}{N}, \frac{j_i}{N}\right),\quad J_j^N:=\prod_{i=1}^d\left(\frac{j_i-2}{N}, \frac{j_i+1}{N}\right).
\end{align*}
Define
\begin{align*}
\mathcal{V}:=\left\{v \in \mathbb{Z}^d: \max _{1 \leq i \leq d}\left|v_i\right| \leq 1\right\}.
\end{align*}
 For some $k\in\mathbb{N}_{\geq1}$ and some accuracy parameter $\epsilon>0$, let
\begin{align}\label{alpha}
\alpha=N\ln \left(\frac{(2 k)^{k+1}(N k)^k}{e^k \epsilon}\right).
\end{align}
For $y\in\mathbb{R}$, define
\begin{align*}
& \phi_1^N(y):=\frac{1}{2}-\frac{1}{2} \sigma\left(\alpha\left(y-\frac{1}{N}\right)\right), \\
& \phi_j^N(y):=\frac{1}{2} \sigma\left(\alpha\left(y-\frac{j-1}{N}\right)\right)-\frac{1}{2} \sigma\left(\alpha\left(y-\frac{j}{N}\right)\right) \quad \text { for } 2 \leq j \leq N-1,
\\
&\phi_N^N(y):=\frac{1}{2} \sigma\left(\alpha\left(y-\frac{N-1}{N}\right)\right)+\frac{1}{2} .
\end{align*}
and
\begin{align*}
\Phi_j^{N}(x):=\prod_{i=1}^d \phi_{j_i}^{N}\left(x_i\right),\quad x\in\mathbb{R}^d.
\end{align*}
\begin{remark}
Compared to \cite{de2021approximation}, we have removed $R$ in the definition of $\alpha$, which does not affect the subsequent conclusions.
\end{remark}
The following lemma shows that $\{\Phi_j\}_{j\in\{1,2,\cdots,N\}^d}$ is an approximate partition of unity.
\begin{lemma}[\cite{de2021approximation}, Lemma 4.1 and Lemma 4.2]\label{POU}
Let $k \in \mathbb{N}_{\geq0}$. If $0<\epsilon<1 / 4$ in (\ref{alpha}), then
\begin{align*}
\left\|\sum_{v \in \mathcal{V}} \Phi_{j+v}^{N}-1\right\|_{W^{k, \infty}\left(I_j^N\right)} &\leq 2^{d k} d \epsilon,\\
\left\|\Phi_{j+v}^{N}\right\|_{W^{k, \infty}\left(I_j^N\right)} &\leq \max \left\{1,(2 k)^{2 k} \alpha^k\right\} \epsilon,\quad\forall v \in \mathbb{Z}^d\text{ with }\|v\|_{\infty} \geq 2.
\end{align*}
\end{lemma}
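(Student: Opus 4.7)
The argument rests on two facts: the $\Phi_j^N$ have tensor-product form, so all estimates reduce to one-dimensional statements reassembled via the Leibniz rule; and $\sigma=\tanh$ together with each of its derivatives of order $\geq 1$ decays like $e^{-2|z|}$ away from the origin. The scaling $\alpha$ in \eqref{alpha} is tuned so that on $I_j^N$ the arguments of the relevant $\sigma$-terms sit in the saturated regime where $\sigma \approx \pm 1$ and $\sigma^{(m)} \approx 0$ to accuracy dominated by $\epsilon$.

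First I would establish a one-dimensional telescoping identity. For $3 \leq j \leq N-2$,
\[
\Psi_j^N(y) := \phi_{j-1}^N(y)+\phi_j^N(y)+\phi_{j+1}^N(y) = \tfrac12\sigma\!\big(\alpha(y-\tfrac{j-2}{N})\big) - \tfrac12\sigma\!\big(\alpha(y-\tfrac{j+1}{N})\big),
\]
and the explicit $\pm\tfrac12$ constants appearing in $\phi_1^N$ and $\phi_N^N$ reduce the boundary index cases to exactly this form. For $y \in I_j^N = (\tfrac{j-1}{N},\tfrac{j}{N})$ both $\sigma$-arguments have absolute value $\geq \alpha/N$, so from the pointwise bounds $|\sigma(z) - \mathrm{sgn}(z)| \leq 2e^{-2|z|}$ and $|\sigma^{(m)}(z)| \leq C_m e^{-2|z|}$ for $m \geq 1$ (with $C_m$ controllable by $(2m)^{2m}$),
\[
\|\Psi_j^N - 1\|_{W^{k,\infty}(I_j^N)} \leq C(k)\, \alpha^k\, e^{-2\alpha/N}.
\]
Substituting $\alpha$ from \eqref{alpha} and dominating the stray $\alpha^k$ by $e^{\alpha/N}$ via a logarithm-versus-polynomial estimate in the spirit of Lemma \ref{power and logarithm} pushes the right-hand side below $\epsilon$.

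For the first inequality I would then use the tensor identity $\sum_{v\in\mathcal{V}} \Phi_{j+v}^N(x) = \prod_{i=1}^d \Psi_{j_i}^N(x_i)$, write
\[
\prod_{i=1}^d \Psi_{j_i}^N - 1 = \sum_{\emptyset\neq S\subseteq[d]} \prod_{i\in S}\bigl(\Psi_{j_i}^N - 1\bigr),
\]
and bound each of the $2^d-1$ summands in $W^{k,\infty}(I_j^N)$ by applying the one-dimensional estimate coordinatewise (derivatives in coordinates outside $S$ vanish on the product). For the second inequality, pick $i_0$ with $|v_{i_0}|\geq 2$; on $I_j^N$ the point $x_{i_0}$ lies at distance $\geq 1/N$ from both kinks of $\phi_{j_{i_0}+v_{i_0}}^N$, so the same one-dimensional exponential-decay argument produces $\|\phi_{j_{i_0}+v_{i_0}}^N\|_{W^{k,\infty}(I_{j_{i_0}}^N)} \leq (2k)^{2k}\alpha^k \epsilon$, and the other $d-1$ coordinates are absorbed via the trivial bound $\|\phi^N_\cdot\|_{W^{k,\infty}} \leq \max\{1,(2k)^{2k}\alpha^k\}$ combined with Leibniz.

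The main obstacle is arithmetic bookkeeping rather than new ideas: one must track the explicit constants $C_m$ for the derivatives of $\tanh$, the combinatorial factors arising from iterated Leibniz over $d$ coordinates, and the logarithmic factors hidden in $\alpha$. A clean way to close the numerology is to split $\alpha^k e^{-2\alpha/N} = e^{-\alpha/N}\cdot(\alpha^k e^{-\alpha/N})$, force the first factor to match the target accuracy via \eqref{alpha}, and dominate the second by a universal constant through Lemma \ref{power and exponential}.
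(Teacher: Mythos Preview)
The paper does not give its own proof of this lemma; it is quoted verbatim from \cite{de2021approximation} (Lemmas 4.1 and 4.2 there). Your proposal reproduces precisely the argument that source uses: the tensor-product factorisation $\sum_{v\in\mathcal V}\Phi_{j+v}^N=\prod_i\Psi_{j_i}^N$, the one-dimensional telescoping that collapses $\Psi_{j_i}^N$ to two far-away $\sigma$-terms, the exponential decay $|\sigma^{(m)}(z)|\le(2m)^{m+1}e^{-2|z|}$ (which is exactly Lemma~\ref{estimate for higher derivative of activation} here), and the observation that the scale $\alpha$ in \eqref{alpha} is chosen so that $e^{-2\alpha/N}$ dominates the polynomial prefactor $\alpha^k$ and delivers the target $\epsilon$. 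For the second inequality your ``distant coordinate'' argument is again the one in \cite{de2021approximation}. The only point where you might tighten the writing is the inclusion--exclusion step: since the factors depend on disjoint variables, Leibniz gives no cross-terms and the product bound on $W^{k,\infty}$ is immediate without invoking Lemma~\ref{estimate for sobolev norm of product}; this is what makes the clean constant $2^{dk}d$ attainable. Otherwise the proposal is correct and aligned with the cited source.
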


The next lemma constructs neural networks $\{f_{NN,i}^{(poly)}\}_{i\in\{1,2\cdots,N\}^d}$ that approximate the target function in each local region $I_i^N$ by utilizing the results of neural network approximation of polynomials (\cite[Lemma 3.5]{de2021approximation}) and the Bramble-Hilbert lemma. 
\begin{lemma}\label{local approximation}
Let $N\in\mathbb{N}_{\geq1}$. Let $i\in\{1,\cdots,N\}^d$. Let $s,d \in \mathbb{N}, 1 \leq p \leq \infty$. Let $\epsilon>0$. Then for any $f \in W^{s, p}((0,1)^d))$, there exists a two-layer tanh neural network $f_{NN,i}^{(poly)}$ with width no more than $3(s-1)\left\lceil\frac{{s}}{2}\right\rceil\binom{s+d-2}{s-1}$ such that for $k=0,1,\cdots,s-1$,
\begin{align*}
\left\|f-f_{NN,i}^{(poly)}\right\|_{W^{k,p}(J_{i}^N)}
&\leq C(s,d,p)\|f\|_{W^{s,p}(J_i^N)}\left(\left(\frac{1}{N}\right)^{s-k}+\epsilon\right),\\
\left\|f-f_{NN,i+v}^{(poly)}\right\|_{W^{k,p}(I_i^N)}&\leq \|f\|_{W^{k,p}(I_i^N)}+C(s,d,p)\|f\|_{W^{s-1,p}(J_{i+v}^N)},\quad \forall\|v\|_{\infty} \geq 2.
\end{align*}
Furthermore, the weights of $f_{NN,i}^{(poly)}$ are upper bounded by 
\begin{align*}
C(s,d,p)\|f\|_{W^{s-1,p}(J_i^N)}{N}^{d/p}\epsilon^{-(s-1)/2}. 
\end{align*}
\end{lemma}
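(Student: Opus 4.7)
The plan is to follow the two-step strategy of \cite{de2021approximation}, replacing their $W^{s,\infty}$ ingredients by $W^{s,p}$ ones throughout. The first step is to approximate $f$ by a polynomial on the enlarged cube $J_i^N$ via Lemma \ref{Bramble-Hilbert lemma}. Since $J_i^N$ has diameter $3\sqrt{d}/N$ and is star-shaped with respect to the concentric ball of radius $3/(2N)$, the shape constant $\gamma$ depends only on $d$, and one obtains a polynomial $f^{(poly)} = \sum_{\widetilde s \le s-1}\sum_{\beta \in P_{\widetilde s, d}} c_\beta x^\beta$ satisfying
\begin{align*}
|f - f^{(poly)}|_{W^{k,p}(J_i^N)} &\le C(s,d)\, N^{-(s-k)}\, |f|_{W^{s,p}(J_i^N)}, \quad k = 0, \ldots, s, \\
|c_\beta| &\le C(s,d,p)\, N^{d/p}\, \|f\|_{W^{s-1,p}(J_i^N)}.
\end{align*}
Summing the seminorm bounds over $k = 0, \ldots, \widetilde k$ converts them into full $W^{\widetilde k,p}$-norm bounds.

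The second step is to realize $f^{(poly)}$ by a two-layer tanh network. Invoking the tanh-based monomial approximation of \cite[Lemma~3.5]{de2021approximation}, each monomial $x^\beta$ with $|\beta| \le s-1$ is approximated in $W^{k,\infty}$ to error $\epsilon$ by a subnetwork of width $3(s-1)\lceil s/2 \rceil$ whose inner weights scale as $\epsilon^{-(s-1)/2}$; the number of such monomials is $\binom{s+d-2}{s-1}$, which produces the claimed total width. Taking the linear combination with coefficients $c_\beta$ yields $f_{NN,i}^{(poly)}$, and its weight bound $C(s,d,p)\|f\|_{W^{s-1,p}(J_i^N)} N^{d/p}\epsilon^{-(s-1)/2}$ follows by multiplying the coefficient bound from Step~1 by the inner-weight scaling from the monomial construction. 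Because $J_i^N$ has volume $\sim N^{-d}$, the crude estimate $\|\cdot\|_{L^p(J_i^N)}\le|J_i^N|^{1/p}\|\cdot\|_{L^\infty(J_i^N)}$ converts the $L^\infty$-error $\sum_\beta|c_\beta|\epsilon\lesssim N^{d/p}\|f\|_{W^{s-1,p}(J_i^N)}\epsilon$ into an $L^p$-error of order $\epsilon\,\|f\|_{W^{s,p}(J_i^N)}$, with the $N^{d/p}$ factor absorbed into the volume factor $N^{-d/p}$. The same calculation, combined with the $\epsilon^{-(s-1)/2}$ scaling of the inner weights (which controls derivatives of the tanh compositions), extends the estimate to the $W^{k,p}$-norm.

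Combining the two steps via the triangle inequality gives the first claimed inequality. For the second inequality, with $\|v\|_\infty\ge 2$ the cubes $I_i^N$ and $J_{i+v}^N$ are disjoint, so $f_{NN,i+v}^{(poly)}$ carries no approximation quality on $I_i^N$. I would then bound crudely
\begin{align*}
\|f - f_{NN,i+v}^{(poly)}\|_{W^{k,p}(I_i^N)} \le \|f\|_{W^{k,p}(I_i^N)} + \|f_{NN,i+v}^{(poly)}\|_{W^{k,p}(I_i^N)},
\end{align*}
and control the second term by $C(s,d,p)\|f\|_{W^{s-1,p}(J_{i+v}^N)}$ using the weight bound from Step~2 applied to the cube with index $i+v$, together with the boundedness of tanh and its derivatives on $(0,1)^d$ (so that the $\epsilon^{-(s-1)/2}$ factor in the weights does not enter the final estimate, since it is canceled by the bounded contribution of the tanh activations on a compact region where the polynomial itself is of moderate size).

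The principal obstacle is Step~2, specifically extending the $W^{s,\infty}$ tanh approximation of polynomials of \cite{de2021approximation} to $W^{s,p}$ accuracy. On a bounded domain, the embedding $L^\infty\hookrightarrow L^p$ handles the function values trivially, but the shift from an $L^\infty$-based Bramble--Hilbert coefficient bound to an $L^p$-based one introduces the new factor $N^{d/p}$, which then propagates through both the weight bound and, potentially, into the $\epsilon$-dependence of the first inequality. The bookkeeping challenge is to balance the $N^{d/p}$ coefficient inflation against the $N^{-d/p}$ volume shrinkage of $J_i^N$ so that the first inequality ends up free of any $N^{d/p}$ factor, while still yielding the weight bound in the claimed form.
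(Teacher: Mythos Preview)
Your overall strategy---Bramble--Hilbert on $J_i^N$ followed by the tanh monomial approximation of \cite[Lemma~3.5]{de2021approximation}, with the $N^{d/p}$ coefficient inflation canceling against the $N^{-d/p}$ volume factor of $J_i^N$---matches the paper's proof of the first inequality. One clarification: Lemma~3.5 already delivers $\|x^\beta - (f_{NN,\widetilde{s}}^{(mono)})_{\iota(\beta)}\|_{W^{k,\infty}((0,1)^d)} \leq \epsilon$ for every $k \leq s-1$, so the extension from $L^p$ to $W^{k,p}$ is the same volume trick applied derivative by derivative; you do not need to invoke the inner-weight scaling separately to control derivatives.

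Your argument for the second inequality has a genuine gap. You propose to bound $\|f_{NN,i+v}^{(poly)}\|_{W^{k,p}(I_i^N)}$ directly from the weight bound together with the boundedness of tanh, asserting that the $\epsilon^{-(s-1)/2}$ inner-weight factor ``is canceled by the bounded contribution of the tanh activations.'' This works for $k=0$ (since $|\tanh|\leq 1$), but fails for $k \geq 1$: the chain rule applied to $\tanh(w\cdot x + b)$ produces a factor $w$ of order $\epsilon^{-(s-1)/2}$ for each derivative, and the mere boundedness of $\sigma'$ does not cancel it. The paper instead passes through the polynomial itself: it first bounds $\|f_{i+v}^{(poly)}\|_{W^{k,p}(I_i^N)}$ using $|c_{\beta,i+v}| \leq C N^{d/p}\|f\|_{W^{s-1,p}(J_{i+v}^N)}$ together with $\|x^\beta\|_{W^{k,p}(I_i^N)} \leq C N^{-d/p}$ (the same cancellation as before, giving an $\epsilon$-free bound), then bounds $\|f_{i+v}^{(poly)} - f_{NN,i+v}^{(poly)}\|_{W^{k,p}(I_i^N)}$ by exactly the calculation used for the first inequality (giving $C\|f\|_{W^{s-1,p}(J_{i+v}^N)}\epsilon$), and concludes by the triangle inequality.
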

\begin{remark}
This lemma is crucial for extending the approximation of \cite{de2021approximation} from $W^{s,\infty}$ functions to $W^{s,p}$ functions. While \cite{de2021approximation} only utilizes the Bramble-Hilbert lemma for $W^{s,\infty}$ functions, we employ the Bramble-Hilbert lemma for arbitrary $W^{s,p}$ functions.
\end{remark}
\begin{proof}
Let $\Omega=J_{i}^N$ in Lemma \ref{Bramble-Hilbert lemma}, then there exists 
\begin{align*}
f_{i}^{(poly)}(x)=\sum_{\widetilde{s}=0}^{{s}-1}\sum_{\beta\in P_{\widetilde{s},d}}c_{\beta,i}x^{\beta}.
\end{align*}
with $|c_{\beta,i}|\leq C(s,d)(\frac{3}{2N})^{-d/p}\|f\|_{W^{s-1,p}(J_i^N)}$ such that for $k=0,1,\cdots,s-1$,
\begin{align}\label{local approximation1}
\left\|f-f_{i}^{(poly)}\right\|_{W^{k,p}(J_{i}^N)} \leq C(s, d) \left(\frac{3\sqrt{2}}{N}\right)^{s-k}|f|_{W^{s,p}(J_i^N)}.
\end{align}
By Lemma 3.5 in \cite{de2021approximation}, there exists a two-layer tanh neural network $f_{NN,\widetilde{s}}^{(mono)}:(0, 1)^d \rightarrow \mathbb{R}^{\binom{\widetilde{s}+d-1}{\widetilde{s}}}$ of width $3\left\lceil\frac{\widetilde{s}+1}{2}\right\rceil\binom{\widetilde{s}+d-1}{\widetilde{s}}$ such that
\begin{align*}
\max _{\beta \in P_{\widetilde{s}, d}}\left\|x^\beta-\left(f_{NN,\widetilde{s}}^{(mono)}(x)\right)_{\iota(\beta)}\right\|_{W^{k, \infty}((0, 1)^d)} \leq \epsilon.
\end{align*}
where $\iota: P_{\widetilde{s}, d} \rightarrow\left\{1, \ldots\left|P_{\widetilde{s}, d}\right|\right\}$ is a bijection. Furthermore, the weights of the network scale as $O\left(\epsilon^{-\widetilde{s} / 2}(\widetilde{s}(\widetilde{s}+2))^{3(\widetilde{s}+2)^2}\right)$ for small $\epsilon$ and large $\widetilde{s}$. Define
\begin{align*}
f_{NN,i}^{(poly)}(x)=c_{0,i}+\sum_{\widetilde{s}=1}^{{s}-1}\sum_{\beta\in P_{\widetilde{s},d}}c_{\beta,i}\left(f_{NN,\widetilde{s}}^{(mono)}(x)\right)_{\iota(\beta)}.
\end{align*}
There holds
\begin{align}\label{local approximation2}
&\left\|f_{i}^{(poly)}-f_{NN,i}^{(poly)}\right\|_{W^{k,p}(J_{i}^N)}
\leq\sum_{\widetilde{s}=1}^{{s}-1}\sum_{\beta\in P_{\widetilde{s},d}}|c_{\beta,i}|\left\|x^{\beta}-\left(f_{NN,\widetilde{s}}^{(mono)}(x)\right)_{\iota(\beta)}\right\|_{W^{k,p}(J_{i}^N)}\nonumber\\
&\leq\epsilon\left(\frac{3}{N}\right)^{d/p}\sum_{\widetilde{s}=1}^{{s}-1}\sum_{\beta\in P_{\widetilde{s},d}}|c_{\beta,i}|
\leq C(s,d,p)\|f\|_{W^{s-1,p}(J_i^N)}\epsilon.
\end{align}
Combining (\ref{local approximation1}) and (\ref{local approximation2}) yields the first result.

For $\beta\in P_{\widetilde{s},d}$, $\|x^{\beta}\|_{W^{k,p}(I_j^N)}\leq k^{1/p}\widetilde{s}^kN^{-d/p}$. Thus we have for $\|v\|_{\infty} \geq 2$,
\begin{align*}
\|f_{i+v}^{(poly)}\|_{W^{k,p}(I_i^N)}\leq\sum_{\widetilde{s}=0}^{{s}-1}\sum_{\beta\in P_{\widetilde{s},d}}|c_{\beta,i+v}|\|x^{\beta}\|_{W^{k,p}(I_i^N)}
\leq C(s,d,p)\|f\|_{W^{s-1,p}(J_{i+v}^N)}.
\end{align*}
By a similar derivation as obtaining (\ref{local approximation2}), we have
\begin{align*}
&\left\|f_{i+v}^{(poly)}-f_{NN,i+v}^{(poly)}\right\|_{W^{k,p}(I_i^N)}
\leq C(s,d,p)\|f\|_{W^{s-1,p}(J_{i+v}^N)}\epsilon.
\end{align*}
The second result then follows from the triangle inequality.
\end{proof}
The following lemma states that multiplication operations can be approximated by neural networks.
\begin{lemma}[\cite{de2021approximation}, Corollary 3.7]\label{multiplication approximation} 
Let  $d \in \mathbb{N}, k \in \mathbb{N}_{\geq0}$  and  $M\in\mathbb{R}_{>0}$. Then for every  $\epsilon>0$, there exists a two-layer tanh neural network  $f_{NN}^{(mul)}:[-M, M]^{d} \rightarrow \mathbb{R}$  of width $3\left\lceil\frac{d+1}{2}\right\rceil\binom{2d-1}{d}$  such that
\begin{align*}
\left\|f_{NN}^{(mul)}(x)-\prod_{i=1}^{d} x_{i}\right\|_{W^{k, \infty}([-M,M]^d)} \leq \epsilon.
\end{align*}
Furthermore, the weights of the network are upper bounded by $C(d,k)\epsilon^{-d / 2}{M}^{3d^2/4+2d}$.
\end{lemma}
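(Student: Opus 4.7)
The plan is to treat $\prod_{i=1}^d x_i$ as a single specific monomial of total multi-degree $d$ (corresponding to the multi-index $(1,1,\ldots,1) \in P_{d,d}$) and reduce directly to the monomial-approximation result (Lemma 3.5 of de2021approximation) that was already invoked in the proof of Lemma \ref{local approximation}, then handle the rescaling from the unit cube to $[-M,M]^d$. The monomial-approximation result produces a two-layer $\tanh$ network $f_{NN,\tilde s}^{(mono)}$ of width $3\lceil(\tilde s+1)/2\rceil \binom{\tilde s+d-1}{\tilde s}$ that simultaneously approximates \emph{every} monomial of total degree $\tilde s$ to accuracy $\epsilon$ in $W^{k,\infty}((0,1)^d)$. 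Setting $\tilde s = d$ and observing that $|P_{d,d}| = \binom{2d-1}{d}$ immediately yields the claimed width $3\lceil(d+1)/2\rceil\binom{2d-1}{d}$ and, by projecting onto the coordinate of the network output indexed by the all-ones multi-index, a two-layer network approximating $\prod_i x_i$ on $(0,1)^d$.

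The next step is the affine rescaling to the target domain $[-M,M]^d$. Introduce $y_i := x_i/(2M) + 1/2 \in (0,1)$ so that $x_i = 2M(y_i - 1/2)$; then $\prod_i x_i = (2M)^d \prod_i (y_i - 1/2)$, which expands into a linear combination of the monomials covered by $f_{NN,d}^{(mono)}(y)$, with coefficients bounded polynomially in $M$. Thus $f_{NN}^{(mul)}(x) := \text{(linear combination of entries of } f_{NN,d}^{(mono)}(x/(2M)+1/2))$ is again a two-layer $\tanh$ network of the same width, and a chain-rule estimate controls derivatives up to order $k$ at the cost of a factor $(2M)^k$. Choosing the input accuracy in the monomial lemma as $\epsilon / C(d,k) M^{\alpha}$ for an appropriate polynomial $\alpha$ in $d$ absorbs these prefactors and establishes the $W^{k,\infty}([-M,M]^d)$ bound.

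The main technical point is bookkeeping the weight bound $C(d,k)\epsilon^{-d/2} M^{3d^2/4 + 2d}$. Three sources contribute: (i) the inner weights of Lemma 3.5 of de2021approximation, which scale like $\epsilon^{-\tilde s/2} \tilde s^{O(\tilde s^2)}$ with $\tilde s = d$ — this yields the $\epsilon^{-d/2}$ factor and the $d$-dependent constant; (ii) the inner rescaling $x \mapsto x/(2M) + 1/2$, which multiplies the first-layer weights by $1/(2M)$ but forces the internal scaling parameter $\alpha$ of Lemma \ref{POU}-type constructions (equivalently, the scaling used in the Taylor/finite-difference argument inside the monomial lemma) to grow polynomially in $M$; and (iii) the outer linear combination turning $\prod_i (y_i - 1/2)$ into $(2M)^{-d}\prod_i x_i$ together with the $(2M)^d$ factor in the output weights, and the additional $(2M)^k$ absorbed into the target accuracy. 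Combining these contributions produces the stated $M$-dependence; the dominant quadratic-in-$d$ exponent comes from the interaction between the degree-$d$ monomial construction and the $M$-scaling of the $\tanh$ arguments.

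The hard part is precisely the $M$-exponent $3d^2/4 + 2d$: it is not visible from the surface algebra and only emerges after one tracks how the accuracy parameter inside Lemma 3.5 of de2021approximation must be tightened when the inner weights shrink by $1/M$, since the Taylor-remainder argument for approximating $y^d$ by shifted $\tanh$ evaluations demands $\alpha$ to grow as a power of $M$ depending on $d$. Everything else — width, $\epsilon$-dependence, and the $W^{k,\infty}$ bound — is a direct transcription from the monomial lemma followed by the linear change of variables.
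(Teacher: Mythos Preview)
The paper does not prove this lemma; it cites \cite{de2021approximation}, Corollary~3.7, and in the subsequent Remark only records that the explicit $M$-dependence $C(d,k)\epsilon^{-d/2} M^{3d^2/4+2d}$ was obtained by going back into the proof of Lemma~3.5 of \cite{de2021approximation} and tracking how $M$ enters. So there is no proof in the paper to compare against beyond that one-line indication.

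Your outline has the right shape (invoke the monomial lemma with $\tilde s=d$, then rescale), but the specific rescaling you chose breaks the width claim. After writing $y_i=x_i/(2M)+1/2$ you assert that $(2M)^d\prod_i(y_i-1/2)$ ``expands into a linear combination of the monomials covered by $f_{NN,d}^{(mono)}(y)$''. It does not: expanding the product gives
\[
\prod_{i=1}^d(y_i-\tfrac12)=\sum_{S\subseteq[d]}(-\tfrac12)^{\,d-|S|}\prod_{i\in S}y_i,
\]
which contains monomials of every degree $0,1,\ldots,d$, whereas $f_{NN,d}^{(mono)}$ outputs only the degree-$d$ ones. To approximate all of them you would need $f_{NN,\tilde s}^{(mono)}$ for each $\tilde s\le d$, and the resulting width $\sum_{\tilde s=0}^d 3\lceil(\tilde s+1)/2\rceil\binom{\tilde s+d-1}{\tilde s}$ is strictly larger than the stated $3\lceil(d+1)/2\rceil\binom{2d-1}{d}$.

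The fix is to avoid the affine shift. The construction behind Lemma~3.5 of \cite{de2021approximation} (monomial approximation via cancelling Taylor expansions of shifted $\tanh$'s) is symmetric about the origin and works directly on $[-M,M]^d$; then $\prod_{i=1}^d x_i$ is a single degree-$d$ monomial with multi-index $(1,\ldots,1)$, and projecting onto that coordinate of $f_{NN,d}^{(mono)}$ on $[-M,M]^d$ gives the claimed width immediately. Equivalently, if you insist on reducing to a fixed reference box, use the \emph{linear} scaling $y=x/M\in[-1,1]^d$, so that $\prod_i x_i = M^d\prod_i y_i$ remains a single monomial and no lower-degree terms appear. The $M$-dependence in the weight bound then has to be read off from the internal scaling parameters of that construction, exactly as the paper's Remark says; your qualitative discussion of where $M^{3d^2/4+2d}$ comes from is plausible but is not a calculation, and with your shift-based route the bookkeeping would in any case have to be redone.
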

\begin{remark}
The upper bound for the weights in \cite{de2021approximation} does not explicitly depend on the range of the input variable, denoted as M. However, in the subsequent estimation of the convergence rate, we cannot simply treat M as a constant. Therefore, here we explicitly calculate the relationship between M and the impact on the weights based on the proof of Lemma 3.5 in \cite{de2021approximation}.
\end{remark}

The following properties of tanh function is also needed.
\begin{lemma}[\cite{de2021approximation}, Lemma A.4]\label{estimate for higher derivative of activation}
Let $k \in \mathbb{N}$. Then for all $x \in \mathbb{R}$,
\begin{align*}
\left|\sigma^{(k)}(x)\right| \leq(2 k)^{k+1} \min \{\exp (-2 x), \exp (2 x)\}.
\end{align*}
\end{lemma}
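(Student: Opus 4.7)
The plan is to invoke complex-analytic methods. Observe that $\sigma(z)=\tanh z$ extends to a meromorphic function on $\mathbb{C}$ whose poles are precisely $\{i\pi(n+\tfrac12):n\in\mathbb{Z}\}$, so $\sigma$ is holomorphic on the horizontal strip $S=\{z\in\mathbb{C}:|\operatorname{Im} z|<\pi/2\}$. Fix $x\in\mathbb{R}$ and a radius $r\in(0,\pi/2)$, and apply Cauchy's differentiation formula on the circle $|z-x|=r$ to obtain
\begin{align*}
|\sigma^{(k)}(x)|\leq\frac{k!}{r^k}\,\max_{|z-x|=r}|\sigma(z)|.
\end{align*}

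To extract the decay factor $\min\{e^{-2x},e^{2x}\}$, I would exploit the identities $\sigma(z)=1-\tfrac{2}{e^{2z}+1}$ and $\sigma(z)=-1+\tfrac{2}{e^{-2z}+1}$. By the odd symmetry of $\sigma$ it suffices to treat $x>0$; since $\sigma$ and $\sigma-1$ differ only by a constant, the Cauchy bound for $k\geq 1$ may be applied to $\sigma-1$ in place of $\sigma$. Pick a fixed moderate radius, say $r=1$ (which lies in $(0,\pi/2)$). For $x\geq r+1$ the contour satisfies $\operatorname{Re} z\geq x-r\geq 1$, so $|e^{2z}+1|\geq e^{2(x-r)}-1\geq \tfrac12 e^{2(x-r)}$, which yields $\max_{|z-x|=r}|\sigma(z)-1|\leq Ce^{-2x}$ for an absolute constant $C$ independent of both $x$ and $k$. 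The Cauchy estimate then gives
\begin{align*}
|\sigma^{(k)}(x)|\leq C\,k!\,e^{-2x}.
\end{align*}

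Combining this with $k!\leq k^k$ and exploiting the very generous slack in the target constant $(2k)^{k+1}=2k\cdot 2^k\cdot k^k$ absorbs the absolute constant $C$ and recovers $|\sigma^{(k)}(x)|\leq(2k)^{k+1}e^{-2x}$ for large positive $x$. The complementary range of small $|x|$ is handled by the same Cauchy estimate applied with a fixed contour inside $S$, using that $\sigma$ is uniformly bounded on any compact subset of $S$ and that $e^{-2|x|}$ is bounded below by a positive constant when $|x|$ is bounded, with the constants again absorbed into the $(2k)^{k+1}$ factor. The case $x<0$ then follows by the odd symmetry of $\sigma$ (equivalently, by the companion identity involving $\sigma+1$).

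The main obstacle I anticipate is the constant bookkeeping when merging the small-$|x|$ and large-$|x|$ regimes into the single clean bound $(2k)^{k+1}$, and ensuring that the choice of radius is uniform in $k$; neither is deep, since the bound is extremely loose (indeed $(2k)^{k+1}$ vastly exceeds the sharper $(2e)^k k!/r^k$ coming from an optimized radius $r\asymp 1/k$). The essential input is simply that $\tanh$ is analytic in a strip of definite width $\pi/2$ about the real axis, combined with the exponential decay of $\sigma(z)\mp 1$ as $\operatorname{Re} z\to\pm\infty$ inside that strip.
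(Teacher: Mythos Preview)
The paper does not supply its own proof of this lemma; it is quoted directly from \cite{de2021approximation}, so there is no in-paper argument to compare against. A standard proof (and presumably the one in the cited reference) is inductive: writing $\sigma'=1-\sigma^2$, one shows $\sigma^{(k)}=(1-\sigma^2)\,P_{k-1}(\sigma)$ for a polynomial $P_{k-1}$ of degree $k-1$ whose coefficients obey a simple recursion, and then combines $1-\sigma^2(x)=\mathrm{sech}^2(x)\le 4e^{-2|x|}$ with a crude bound on the coefficients of $P_{k-1}$.

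Your Cauchy-integral route is a genuinely different and more conceptual approach, and it does deliver the correct exponential decay. However, there is a real gap in the constant bookkeeping that you flag but underestimate. For $k=1$ the target bound $|\sigma'(x)|\le 4e^{-2|x|}$ is \emph{asymptotically sharp} (since $\mathrm{sech}^2(x)\sim 4e^{-2|x|}$ as $|x|\to\infty$), so there is \emph{no} slack at all to absorb the lossy constant coming from any Cauchy estimate on a circle of positive radius. Concretely, with $r=1$ the large-$x$ bound reads $|\sigma'(x)|\le \max_{|z-x|=1}|\sigma(z)-1|\sim 2e^{2}e^{-2x}$, which exceeds $4e^{-2x}$; optimizing the radius to $r=1/2$ still gives $8e\,e^{-2x}>4e^{-2x}$. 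The small-$|x|$ regime has the same problem for $k=1$ and $k=2$, since $e^{-2|x|}$ is bounded below only by $e^{-2x_0}$ on $|x|\le x_0$, while the uniform Cauchy bound on $|\sigma-1|$ along the contour is of order one. The fix is cheap: verify $k=1$ and $k=2$ directly from $\sigma'=\mathrm{sech}^2$ and $\sigma''=-2\sigma\,\mathrm{sech}^2$, and reserve the Cauchy argument for $k\ge 3$, where the factor $2k\cdot 2^k$ in $(2k)^{k+1}=2k\cdot 2^k\cdot k^k$ is genuinely large enough to swallow the absolute constants from both regimes.
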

Now we are able to present a fundamental result on neural network function approximation to functions in $W^{s,p}$.
\begin{proposition}\label{app err pre}
Let $s,d \in \mathbb{N}_{\geq1}, 1 \leq p \leq \infty$. Let $N\in\mathbb{N}_{\geq1}$. Then for any $f \in W^{s, p}((0,1)^d)$, there exists a three-layer neural network $f_{NN}$ with the first hidden 
 layer width of $N^d\left[3(s-1)\left\lceil\frac{{s}}{2}\right\rceil\binom{{s}+d-2}{{s}-1}+2d\right]$ and the second hidden layer width of $3N^d\left\lceil\frac{d+2}{2}\right\rceil\binom{2d+1}{d+1}
$, such that for $k=1,\cdots,s-1$,
\begin{align*}
&\|f_{NN}-f\|_{W^{k,p}((0,1)^d)}\leq C(s,d,p,f)\ln^k \left(\frac{(2 k)^{k+1}(N k)^kN^{s+2d}}{e^k}\right)\left(
\frac{1}{N}\right)^{s-k}.
\end{align*}
Furthermore, the weights of $f_{NN}$ are bounded by 
\begin{align*}
C(s,d,p,f)N^{\frac{3}{4p}d^3+\frac{k+p+7}{2p}d^2+\frac{2sp+2k+2p+15}{4p}d+\frac{1}{2}(s^2+3)}
\end{align*}
provided $N\geq 2k$. 
\end{proposition}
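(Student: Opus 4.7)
The plan is to follow the construction scheme of \cite{de2021approximation} but to measure all intermediate quantities in $W^{k,p}$ rather than $W^{k,\infty}$. I would define the three-layer tanh network
\begin{align*}
f_{NN}(x):=\sum_{j\in\{1,\ldots,N\}^d}f_{NN}^{(mul)}\bigl(\phi_{j_1}^N(x_1),\ldots,\phi_{j_d}^N(x_d),f_{NN,j}^{(poly)}(x)\bigr),
\end{align*}
where $f_{NN,j}^{(poly)}$ is furnished by Lemma \ref{local approximation} on $J_j^N$ with some accuracy $\epsilon$, $\Phi_j^N=\prod_i\phi_{j_i}^N$ is the approximate partition of unity of Lemma \ref{POU} with the same $\epsilon$, and $f_{NN}^{(mul)}$ is the $(d+1)$-variable multiplication network of Lemma \ref{multiplication approximation}. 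I would ultimately take $\epsilon\asymp N^{-s}$. The first hidden layer pools, for each cell $j$, the hidden neurons of $f_{NN,j}^{(poly)}$ (at most $3(s-1)\lceil s/2\rceil\binom{s+d-2}{s-1}$) together with the $2d$ tanh units needed to realise the one-dimensional bumps $\phi_{j_1}^N,\ldots,\phi_{j_d}^N$; summed over the $N^d$ cells this matches the announced first-layer width. The second hidden layer is $N^d$ parallel copies of the $(d+1)$-variable multiplication network, whose width (Lemma \ref{multiplication approximation} with $d\leftarrow d+1$) is exactly $3\lceil(d+2)/2\rceil\binom{2d+1}{d+1}$, matching the announced second-layer width.

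For the approximation rate I would add and subtract to obtain the decomposition
\begin{align*}
f_{NN}(x)-f(x)=\sum_j E_{mul,j}(x)+\sum_j\Phi_j^N(x)\bigl[f_{NN,j}^{(poly)}(x)-f(x)\bigr]+f(x)\Bigl[\sum_j\Phi_j^N(x)-1\Bigr],
\end{align*}
where $E_{mul,j}:=f_{NN}^{(mul)}(\phi_{j_1}^N,\ldots,\phi_{j_d}^N,f_{NN,j}^{(poly)})-\Phi_j^N f_{NN,j}^{(poly)}$. I would localise to each cell $I_i^N$ and bound the three pieces in $W^{k,p}(I_i^N)$: the first sum via Lemma \ref{estimate for sobolev norm of composition} applied to $(y_1,\ldots,y_{d+1})\mapsto f_{NN}^{(mul)}(y)-\prod_\iota y_\iota$ precomposed with $x\mapsto(\phi_{j_1}^N(x),\ldots,f_{NN,j}^{(poly)}(x))$, combined with Lemma \ref{multiplication approximation}; the second sum by splitting into near indices $j\in i+\mathcal{V}$ (controlled by the first inequality of Lemma \ref{local approximation} together with the Sobolev product estimate of Lemma \ref{estimate for sobolev norm of product}) and far indices $\|j-i\|_\infty\geq 2$ (controlled by the second inequality of Lemma \ref{local approximation} times the $W^{k,\infty}$-smallness of $\Phi_j^N$ on $I_i^N$ from the second bound of Lemma \ref{POU}); the third term directly by the first bound of Lemma \ref{POU}. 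Taking $p$-th powers, summing across the $N^d$ cells, and extracting the root then produces a global $W^{k,p}((0,1)^d)$ bound. With $\epsilon\asymp N^{-s}$ every cellwise contribution is of order $N^{k-s}$ up to the $\log^k$ prefactor inherited from $\alpha\asymp N\log(N/\epsilon)$, which is exactly the stated rate.

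For the weight bound, the polynomial weights are controlled by Lemma \ref{local approximation}, the partition-of-unity weights are $\alpha$ itself, and the multiplication weights come from Lemma \ref{multiplication approximation} with input range $M\lesssim 1+\max_j\|f_{NN,j}^{(poly)}\|_{L^\infty}$; the latter is bounded by $C(s,d,p,f)N^{d/p}$ via the coefficient estimate in Lemma \ref{Bramble-Hilbert lemma} and the boundedness of tanh. Substituting $\epsilon=N^{-s}$ and combining the three weight sources produces a polynomial-in-$N$ bound with the stated exponent. The main obstacle, absent from the $W^{s,\infty}$ treatment of \cite{de2021approximation}, is the coordination of the Sobolev product inequality with a cellwise $L^p$ summation: one must carefully assign the $L^p$ norm to the residual $f_{NN,j}^{(poly)}-f$ and the $L^\infty$ norm to the cutoff $\Phi_j^N$, and verify that the $\epsilon$-smallness of $\Phi_j^N$ on $I_i^N$ for far cells absorbs what would otherwise be an uncontrolled sum over $j$. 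A secondary bookkeeping challenge is propagating the input range $M$ of the multiplication network into the final weight exponent while keeping the dependence on $N$ polynomial rather than exponential.
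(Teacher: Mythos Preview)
Your construction and three-term decomposition match the paper's exactly. The gap is in your choice of accuracy parameters. You propose a single $\epsilon\asymp N^{-s}$, but this is too coarse: the far-cell issue you flag as the ``main obstacle'' really does bite. On a fixed cell $I_i^N$ there are $\sim N^d$ indices $j$ with $\|j-i\|_\infty\ge 2$, and by the second bound of Lemma~\ref{POU} each contributes $\|\Phi_j^N\|_{W^{k,\infty}(I_i^N)}\lesssim \alpha^k\epsilon\sim N^k(\ln N)^k\epsilon$. Their sum is thus $\sim N^{k+d}(\ln N)^k\epsilon$, and with $\epsilon=N^{-s}$ you obtain $N^{k+d-s}(\ln N)^k$, not $N^{k-s}(\ln N)^k$. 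The same $N^d$ excess appears in the multiplication-error term $\sum_j E_{mul,j}$, and a further loss occurs in passing from local to global estimates. So your assertion that ``with $\epsilon\asymp N^{-s}$ every cellwise contribution is of order $N^{k-s}$'' is false.

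The paper resolves this by decoupling the three accuracies: it takes $\epsilon_1=N^{-(s+2d)}$ for the partition of unity, $\epsilon_2=N^{-s}$ for the local polynomial approximations, and $\epsilon_3=N^{-(s+d+k\max\{d/p-1,0\})}$ for the multiplication network. The choice $\epsilon_1=N^{-(s+2d)}$ is exactly what produces the $N^{s+2d}$ inside the logarithm in the stated bound, so the form of the statement already signals that $\epsilon=N^{-s}$ cannot be right. Once you allow these three separately tuned scales, your argument goes through essentially verbatim; with a single $\epsilon=N^{-s}$ it does not.
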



\begin{proof}
Let $\epsilon$ be $\epsilon_1$ in (\ref{alpha}). Let $f_{NN,j}^{(poly)}$ be the neural network in Lemma \ref{local approximation} with $\epsilon=\epsilon_2$ and $f_{NN}^{(mul)}$ be the neural network in Lemma \ref{multiplication approximation} with $\epsilon=\epsilon_3$. Let
\begin{align*}
f_{NN}=\sum_{j\in\{1,\cdots,N\}^d}f_{NN}^{(mul)}(f_{NN,j}^{(poly)}(x),\phi_{j_1}^N(x_1),\cdots,\phi_{j_d}^N(x_d)).
\end{align*}
We decompose the total error into three terms and then bound each term in succession:
\begin{align}\label{app err1}
&\|f-f_{NN}\|_{W^{k,p}((0,1)^d)}\nonumber\\
&\leq\left\|f-\sum_{j\in\{1,\cdots,N\}^d}f\Phi_j^{N}\right\|_{W^{k,p}((0,1)^d)}+\sum_{j\in\{1,\cdots,N\}^d}\left\|(f-f_{NN,j}^{(poly)})\Phi_j^{N}\right\|_{W^{k,p}((0,1)^d)}\nonumber\\
&
\quad+\sum_{j\in\{1,\cdots,N\}^d}\|f_{NN,j}^{(poly)}(x)\Phi_j^{N}(x)-f_{NN}^{(mul)}(f_{NN,j}^{(poly)}(x),\phi_{j_1}^N(x_1),\cdots,\phi_{j_d}^N(x_d))\|_{W^{k,p}((0,1)^d)}.
\end{align}
For the first term, we firstly study the integral over $I_{i}^N$ with $i\in\{0,\cdots,N\}^d$. Applying Lemma \ref{estimate for sobolev norm of product} and Lemma \ref{POU}, we have
\begin{align*}
&\left\|f-\sum_{j \in\{1, \ldots, N\}^d} f \Phi_j^{N}\right\|_{W^{k, p}\left(I_i^N\right)}\\
&\leq C(k,d,p)\|f\|_{W^{k, p}\left(I_i^N\right)}\left\|1-\sum_{v \in \mathcal{V}} \Phi_{i+v}^{N}\right\|_{W^{k, \infty}\left(I_i^N\right)}
\\
&\quad+C(k,d,p)\|f\|_{W^{k, p}\left(I_i^N\right)}\left\|\sum_{\substack{j \in\{1, \ldots, N\}^d\\
j-i \notin \mathcal{V}}} \Phi_j^{N}\right\|_{W^{k, \infty\left(I_i^N\right)}}\\
&\leq  C(k,d,p)\|f\|_{W^{k, p}\left(I_i^N\right)}\left(2^{k d} d \epsilon_1+N^d(2 k)^{2 k} \alpha^k \epsilon_1\right)\\
&\leq C(s,d,p)\|f\|_{W^{s-1, p}(I_i^N)}\epsilon_1 N^{k+d}\ln^k \left(\frac{(2 k)^{k+1}(N k)^k}{e^k \epsilon_1}\right).
\end{align*}
Then
\begin{align*}
&\left\|f-\sum_{j \in\{1, \ldots, N\}^d} f\Phi_j^{N}\right\|_{W^{k, p}((0,1)^d)}\\
&\leq C(s,d,p)\|f\|_{W^{s-1, p}((0,1)^d)}\epsilon_1 N^{k+2d}\ln^k \left(\frac{(2 k)^{k+1}(N k)^k}{e^k \epsilon_1}\right).
\end{align*}
Similarly, for the second term of (\ref{app err1}), we firstly handle the integration over $I_i^N$. Employing Lemma \ref{estimate for sobolev norm of product}, we have
\begin{align*}
&\sum_{j \in\{1, \ldots, N\}^d}\left\|\left(f-f_{NN,j}^{(poly)}\right) \Phi_j^{N} \right\|_{W^{k, p}\left(I_i^N\right)} \\
&\leq C(k,d,p)\sum_{j \in\{1, \ldots, N\}^d}\sum_{\widetilde{k}=0}^{k}\left\|f-f_{NN,j}^{(poly)}\right\|_{W^{k-\widetilde{k}, p}\left(I_i^N\right)}\left\|\Phi_j^{N}\right\|_{W^{\widetilde{k}, \infty}\left(I_i^N\right)}
\\
&\leq C(k,d,p)\sum_{v \in \mathcal{V}}\sum_{\widetilde{k}=0}^{k}\left\|f-f_{NN,i+v}^{(poly)}\right\|_{W^{k-\widetilde{k}, p}\left(I_i^N\right)}\left\|\Phi_{i+v}^{N}\right\|_{W^{\widetilde{k}, \infty}\left(I_i^N\right)}
\\
&\quad+C(k,d,p)\sum_{\substack{j \in\{1, \ldots, N\}^d \\
j-i \notin \mathcal{V}}}\left\|f-f_{NN,j}^{(poly)}\right\|_{W^{k, p}\left(I_i^N\right)}\left\|\Phi_j^{N}\right\|_{W^{k, \infty}\left(I_i^N\right)}.
\end{align*}
From (94) in \cite{de2021approximation} we have $\left\|\Phi_{i+v}^{N}\right\|_{W^{\widetilde{k}, \infty}\left(I_i^N\right)}\leq N^{\widetilde{k}}(2\widetilde{k})^{2\widetilde{k}}\ln^{\widetilde{k}} \left(\frac{(2 \widetilde{k})^{\widetilde{k}+1}(N \widetilde{k})^{\widetilde{k}}}{e^{\widetilde{k}} \epsilon_1}\right)$. By utilizing Lemma \ref{local approximation}, we obtain an upper bound of 
\begin{align*}
C(s,d,p)\|f\|_{W^{s,p}(J_i^N)}\ln^k \left(\frac{(2 k)^{k+1}(N k)^k}{e^k \epsilon_1}\right)\left(\left(\frac{1}{N}\right)^{s-k}+\epsilon_2N^{k}\right)
\end{align*}
for the first term. By employing Lemma \ref{POU} and Lemma \ref{local approximation}, we obtain an upper bound for the second term as follows:
\begin{align*}
C(s,d,p)(N^d\|f\|_{W^{k,p}(I_i^N)}+N^{d(1-1/p)}\|f\|_{W^{s-1,p}((0,1)^d))})N^{k} \ln^k \left(\frac{(2 k)^{k+1}(N k)^k}{e^k \epsilon_1}\right)\epsilon_1.
\end{align*}
Combining these two bounds we get
\begin{align*}
&\sum_{j \in\{1, \ldots, N\}^d}\left\|\left(f-f_{NN,j}^{(poly)}\right)\Phi_j^{N} \right\|_{W^{k, p}\left(I_i^N\right)}\\
&\leq C(s,d,p)\ln^k \left(\frac{(2 k)^{k+1}(N k)^k}{e^k \epsilon_1}\right)\\
&\quad\left(\|f\|_{W^{s,p}(J_i^N)}\left(N^{k+d}\epsilon_1+\left(\frac{1}{N}\right)^{s-k}+\epsilon_2N^{k}\right)+\|f\|_{W^{s-1,p}((0,1)^d))}N^{k+d(1-1/p)} \epsilon\right).
\end{align*}
By utilizing Lemma \ref{xp convex}, we obtain the global estimation
\begin{align*}
&\sum_{j \in\{1, \ldots, N\}^d}\left\|\left(f-f_{NN,j}^{(poly)}\right)\Phi_j^{N} \right\|_{W^{k, p}((0,1)^d)}\\
&\leq C(s,d,p)\|f\|_{W^{s,p}((0,1)^d)}\ln^k \left(\frac{(2 k)^{k+1}(N k)^k}{e^k \epsilon_1}\right)\left(N^{k+d}\epsilon_1+\left(\frac{1}{N}\right)^{s-k}+N^{k}\epsilon_2\right).
\end{align*}
Next, we bound the third term of (\ref{app err1}). Based on a similar approach as obtaining (\ref{local approximation2}), we have
$\left\|f_{i}^{(poly)}-f_{NN,i}^{(poly)}\right\|_{W^{k,\infty}((0,1)^d)}\leq C(s,d,p)\|f\|_{W^{s-1,p}(J_i^N)}N^{d/p}\epsilon_2$. Furthermore, due to $\|x^\beta\|_{W^{k,\infty}((0,1)^d)}\leq\widetilde{s}^k$ for $\beta\in P_{\widetilde{s},d}:=\left\{\alpha \in \mathbb{N}_{\geq0}^d:|\alpha|=\widetilde{s}\right\}$, 
\begin{align*}
\left\|f_{i}^{(poly)}\right\|_{W^{k,\infty}((0,1)^d)}
&\leq\sum_{\widetilde{s}=1}^{{s}-1}\sum_{\beta\in P_{\widetilde{s},d}}|c_{\beta,i}|\left\|x^{\beta}\right\|_{W^{k,\infty}((0,1)^d)}\\
&\leq C(s,d,p)\|f\|_{W^{s-1,p}(J_i^N)}N^{d/p}.
\end{align*}
Hence by triangle inequality we have 
\begin{align*}
\left\|f_{NN,i}^{(poly)}\right\|_{W^{k,\infty}((0,1)^d)}\leq C(s,d,p)\|f\|_{W^{s-1,p}(J_i^N)}N^{d/p}. 
\end{align*}
For $i\in[d],\|\phi_{j_i}^N\|_{W^{k,\infty}((0,1)^d)}\leq (2k)^{k+1}\alpha^k$ due to Lemma \ref{estimate for higher derivative of activation}. Employing Lemma \ref{multiplication approximation} with $M=C(s,d,p)\|f\|_{W^{s-1,p}((0,1)^d))}N^{d/p}$ and Lemma \ref{estimate for sobolev norm of composition}, we have 
\begin{align*}
&\|f_{NN}^{(mul)}(f_{NN,j}^{(poly)}(x),\phi_{j_1}^N(x_1),\cdots,\phi_{j_d}^N(x_d))-f_{NN,j}^{(poly)}(x)\Phi_j^{N}(x)\|_{W^{k,\infty}((0,1)^d)}\\
&\leq16(e^2k^4d^2(d+1))^k\left\|f_{NN}^{(mul)}(y)-\prod_{i=1}^{d+1} y_{i}\right\|_{W^{k, \infty}([-M,M]^{d+1})}\\
&\quad\ \max\{\|f_{NN,j}^{(poly)}\|_{W^{k,\infty}((0,1)^d)}^k,\|\phi_{j_1}^N\|_{W^{k,\infty}((0,1)^d)}^k,\cdots,\|\phi_{j_d}^N\|_{W^{k,\infty}((0,1)^d)}^k\}\\
&\leq C(s,d,p)\epsilon_3\left(\|f\|_{W^{s-1,p}(J_j^N)}^kN^{kd/p}+N^k\ln^k \left(\frac{(2 k)^{k+1}(N k)^k}{e^k \epsilon_1}\right)\right).
\end{align*}
Then
\begin{align*}
&\sum_{j\in\{1,\cdots,N\}^d}\|f_{NN}^{(mul)}(f_{NN,j}^{(poly)}(x),\phi_{j_1}^N(x_1),\cdots,\phi_{j_d}^N(x_d))-f_{NN,j}^{(poly)}(x)\Phi_j^{N}(x)\|_{W^{k,p}((0,1)^d)}\\
&\leq C(s,d,p)\epsilon_3\left(\|f\|_{W^{s-1,p}((0,1)^d)}^kN^{d+kd/p}+N^{k+d}\ln^k \left(\frac{(2 k)^{k+1}(N k)^k}{e^k \epsilon_1}\right)\right).
\end{align*}
Combining the estimates for the three terms of (\ref{app err1}), we conclude that
\begin{align*}
&\|f_{NN}-f\|_{W^{k,p}((0,1)^d)}\\
&\leq
C(s,d,p)\|f\|_{W^{s,p}((0,1)^d)}\ln^k \left(\frac{(2 k)^{k+1}(N k)^k}{e^k \epsilon_1}\right)\left(
N^{k+2d}\epsilon_1+\left(\frac{1}{N}\right)^{s-k}+N^{k}\epsilon_2\right)\\
&\quad+C(s,d,p)\left(\|f\|_{W^{s-1,p}((0,1)^d)}^kN^{d+kd/p}+N^{k+d}\ln^k \left(\frac{(2 k)^{k+1}(N k)^k}{e^k \epsilon_1}\right)\right)\epsilon_3.
\end{align*}
We finish the proof by further letting
\begin{align*}
\epsilon_1=\left(\frac{1}{N}\right)^{s+2d},\epsilon_2=\left(\frac{1}{N}\right)^{s},\epsilon_3=\left(\frac{1}{N}\right)^{s+d+k\max\{d/p-1,0\}}.
\end{align*}
\end{proof}
Below, we present the approximation error estimation that we need in this paper. This type of result not only demonstrates the existence of optimal approximation elements that allow neural network functions to sufficiently approximate the target function but also shows that neural network functions in the vicinity of the optimal approximation elements can sufficiently approximate the target function, too. We also need the result that the 2-norm of the outer parameters of the best approximation elements is sufficiently small. Following the ideas proposed in \cite{kohler2022analysis, drews2023analysis}, we achieve this by increasing the network width, which leads to a proportional decrease in the 2-norm of the outer parameters. We will see that such results contribute to the analysis of global convergence in the subsequent discussion of projected gradient descent.
\begin{theorem}\label{app err}
Let $d,A'\in \mathbb{N}_{\geq1}$. Let $N\in\mathbb{N}_{\geq1}$ and $N\geq C(d)$. Suppose that $A,A'$ can be divided by $N^d$. For any $f \in H^{2}((0,1)^d)$ and any injection $\varphi:\{1,2,\cdots,A'\}\to\{1,2,\cdots,A\}$ such that for $k\in[A']$,
\begin{align*}
\varphi(k)\in\{(k-1)A/A'+i':i'\in[A/A']\},
\end{align*}
there exists $W^*=\{((W^*)_s^l,(b^*)_s^l):l=1,2,3;s\in[A]\}$ such that for $W=\{(W_s^l,b_s^l):l=1,2,3;s\in[A]\}$ satisfying for $s\in[A]$, $W_s^3=(W^*)_s^3,b_s^3=(b^*)_s^3$; for $s\in\{\varphi(1),\varphi(2),\cdots,\varphi(A')\},l=1,2$,
\begin{align}\label{inner layer weights}
\|W_{s}^{l}-(W^*)_{s}^{l}\|_F,\|b_{s}^{l}-(b^*)_{s}^{l}\|_2\leq \left(\frac{1}{N}\right)^{\frac{3}{2}d^3+10d^2+\frac{31}{2}d+\frac{29}{2}},
\end{align}
there holds
\begin{align*}
\|f_W-f\|_{H^{1}(\Omega)}\leq C(d,f,\sigma,\Omega)\frac{1}{N^{1/2}}.
\end{align*}
Moreover, 
\begin{align*}
\|(W^*)^3\|_1&\leq C(d,f)N^{\frac{3}{8}d^3+\frac{5}{2}d^2+\frac{37}{8}d+\frac{7}{2}},\\
\|(W^*)^3\|&\leq C(d,f)\frac{1}{A'}N^{\frac{3}{8}d^3+\frac{5}{2}d^2+\frac{41}{8}d+\frac{7}{2}}.
\end{align*}
\end{theorem}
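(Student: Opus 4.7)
\textbf{Proof Plan for Theorem \ref{app err}.}

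My plan is to build $W^*$ from the explicit three-layer construction of Proposition \ref{app err pre} and then control the effect of inner-weight perturbations via a Lipschitz argument.

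First I would apply Proposition \ref{app err pre} with $s=2$, $p=2$, $k=1$ to obtain a three-layer tanh network $f_{NN}$ with
\begin{align*}
\|f_{NN}-f\|_{H^1((0,1)^d)}\leq C(d,f)\,\frac{\ln\!\left(C(d)N^{d+2}\right)}{N},
\end{align*}
which is comfortably below the target rate $N^{-1/2}$. The structure of $f_{NN}$ is a sum of $N^d$ identically-shaped building blocks (indexed by $j\in\{1,\ldots,N\}^d$) of the form $f_{NN}^{(mul)}(f_{NN,j}^{(poly)}(x),\phi_{j_1}^N(x_1),\ldots,\phi_{j_d}^N(x_d))$; up to a purely dimensional constant coming from $f_{NN}^{(mul)}$'s hidden layer, each block fits into our three-layer subnetwork template of widths $m_1=5d$ and $m_2=\binom{2d+1}{d+1}$ (any residual width factor is absorbed by splitting one block into a $d$-dependent constant number of subnetworks, which does not affect the asymptotics in $N$).

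Next I define $W^*$ by distribution with scaling. Since $N^d\mid A'$, set $r:=A'/N^d$. Enumerate the $N^d$ building blocks as $\{\mathcal{B}_i\}_{i=1}^{N^d}$. For $k\in[A']$ written as $k=(i-1)r+i''$ with $i\in[N^d]$ and $i''\in[r]$, assign to position $s=\varphi(k)$ the inner weights of $\mathcal{B}_i$ and outer weights equal to $1/r=N^d/A'$ times the outer weights of $\mathcal{B}_i$. This is compatible with the block-constraint on $\varphi$ by adjusting the groupings. For $s\notin\varphi([A'])$ set $(W^*)_s^3=0$, $(b^*)_s^3=0$, and choose any inner weights. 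By linearity of the outer layer, $f_{W^*}=f_{NN}$, so $\|f_{W^*}-f\|_{H^1}\leq C(d,f)\ln(N)/N$. The weight bounds follow at once: the magnitude of each nonzero outer entry is at most $(N^d/A')B_{out}$ with $B_{out}=C(d,f)N^{3d^3/8+5d^2/2+29d/8+7/2}$ from Proposition \ref{app err pre}, there are $A'$ nonzero subnetworks each with $O(1)$-many outer entries, and a direct count in the definitions of $\|\cdot\|_1$ and $\|\cdot\|$ yields the two displayed bounds.

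The main work (and the hardest step) is the stability estimate: for any $W$ satisfying the proximity condition (\ref{inner layer weights}) one must show $\|f_W-f_{W^*}\|_{H^1(\Omega)}\lesssim N^{-1/2}$. Because $W^3=(W^*)^3$ and outer weights vanish outside $\varphi([A'])$, only subnetworks $s\in\varphi([A'])$ contribute and
\begin{align*}
\|f_W-f_{W^*}\|_{H^1(\Omega)}\leq \sum_{s\in\varphi([A'])}\|f_s^3(\cdot;W)-f_s^3(\cdot;W^*)\|_{H^1(\Omega)}.
\end{align*}
I would treat each term by a Taylor/Lipschitz argument in the inner-weight variables. The map $(W_s^1,b_s^1,W_s^2,b_s^2)\mapsto f_s^3$ is $C^\infty$ on the ball of radius $1$ around $(W^*)_s^{1,2}$, and repeated application of the chain rule through the two $\tanh$ layers (using that $\sigma,\sigma',\sigma''$ are bounded on $\mathbb{R}$, together with Lemma \ref{estimate for sobolev norm of composition} and Lemma \ref{estimate for sobolev norm of product}) shows that its $W^{1,\infty}(\Omega)$-Lipschitz constant is bounded by a fixed polynomial in $N$ of total degree at most $3d^3/2+10d^2+31d/2+14$, driven by the inner-weight bound $N^{3d^3/8+5d^2/2+29d/8+7/2}$, the scaled outer-weight bound $(N^d/A')B_{out}$, and the combinatorial factors $m_1,m_2$. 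The perturbation size in (\ref{inner layer weights}) is chosen precisely so that (Lipschitz constant)$\times$(perturbation)$\times A' \cdot |\Omega|^{1/2}\leq C(d,f,\sigma,\Omega)N^{-1/2}$. Combining with the approximation estimate $\|f_{W^*}-f\|_{H^1}\leq C\ln(N)/N$ via the triangle inequality yields the claimed bound.

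The single delicate point, which I expect to be the main obstacle, is the bookkeeping in this last step: keeping explicit track of how each weight factor propagates through the two $\tanh$ compositions when one also differentiates once in $x$ (to bound the $H^1$ seminorm of the difference), and verifying that the exponent $3d^3/2+10d^2+31d/2+29/2$ fixed in (\ref{inner layer weights}) is indeed large enough to absorb the Lipschitz growth, the summation over $A'$ subnetworks, and still leave a residual factor of $N^{-1/2}$.
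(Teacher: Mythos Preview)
Your plan is correct and follows essentially the same route as the paper: invoke Proposition \ref{app err pre} with $s=2,\,p=2,\,k=1$ to obtain a base network $f_{W^{**}}$ on $N^d$ blocks, spread it with outer-weight scaling $N^d/A'$ along the injection $\varphi$ to define $W^*$, and then run a Lipschitz argument for the inner-weight perturbation.

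Two small points where the paper is slightly cleaner than your sketch. First, instead of ``choose any inner weights'' at positions $s\notin\varphi([A'])$, the paper sets $(W^*)_{(s-1)A/N^d+i}^l=(W^{**})_s^l$ for \emph{all} $i\in[A/N^d]$ and $l=1,2$; because each $\varphi$-block of size $A/A'$ sits inside a single block of size $A/N^d$, this makes the compatibility check you flag (``by adjusting the groupings'') automatic rather than something to verify. Second, for the stability step the paper does not use Lemmas \ref{estimate for sobolev norm of composition}--\ref{estimate for sobolev norm of product}; it writes out $\nabla_x f_W$ explicitly and bounds $|f_W-f_{W^*}|$ and $\|\nabla_x f_W-\nabla_x f_{W^*}\|_\infty$ by telescoping in the matrix $\|\cdot\|_\infty$ norm, which is what produces the exponent $\tfrac{3}{2}d^3+10d^2+\tfrac{31}{2}d+14$ you correctly anticipated. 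Your more abstract Lipschitz framing would also work, but be careful with the $A'$ accounting: the per-subnetwork Lipschitz constant carries the factor $N^d/A'$ from the scaled outer weight, so after summing over the $A'$ active subnetworks the bound is independent of $A'$ (it is $\|(W^*)^3\|_1$ times a polynomial in $B_{inn}$), not a ``polynomial in $N$'' times $A'$ as your last display might be read.
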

\begin{proof}
Let $s=2,k=1,p=2$ in Proposition \ref{app err pre} and let $p=\frac{1}{2(2d+3)}$ in Lemma \ref{power and logarithm}, we know that provided $N\geq C(d)$, there exists $W^{**}=\{((W^{**})_s^l,(b^{**})_s^l):l=1,2,3;s\in[N^d]\}$
with $(W^{**})_s^1\in\mathbb{R}^{m_1\times d},(W^{**})_s^2\in\mathbb{R}^{m_2\times m_1},(W^{**})_s^3\in\mathbb{R}^{1\times m_2},
(b^{**})_s^1\in\mathbb{R}^{m_1},(b^{**})_s^2\in\mathbb{R}^{m_2},(b^{**})_s^3\in\mathbb{R}$ for $s\in[N^d]$, such that
\begin{align*}
&\|f_{W^{**}}-f\|_{H^1((0,1)^d)}\leq C(d,f)
\frac{1}{N^{1/2}}.
\end{align*}
Furthermore, each component in $(W^{**})_s^l$ and $(b^{**})_s^l$
is bounded by $C(d,f)N^{\frac{3}{8}d^3+\frac{5}{2}d^2+\frac{29}{8}d+\frac{7}{2}}$. 
Define $W^*=\{((W^*)_s^l,(b^*)_s^l):l=1,2,3;s\in[A]\}$ in the following way: for $s\in[N^d]$,
\begin{align*}
&(W^*)_{\varphi((s-1)A'/N^d+i)}^3=\frac{N^d}{A'}(W^{**})_s^3, \quad(b^*)_{\varphi((s-1)A'/N^d+i)}^3=\frac{N^d}{A'}(b^{**})_s^3,\quad i\in[{A'}/{N^d}];\\
&(W^*)_j^3=(b^*)_j^3=0,\quad j\in \{1,2,\cdots,A\}\setminus\{\varphi(1),\varphi(2),\cdots,\varphi(A')\};\\
&(W^*)_{(s-1)A/N^d+i}^l=(W^{**})_s^l,\quad(b^*)_{(s-1)A/N^d+i}^l=(b^{**})_s^l,\quad l=1,2;i\in[{A}/{N^d}].
\end{align*}
It is easy to see $f_{W^*}\equiv f_{W^{**}}$ and
\begin{align*}
\|(W^*)^3\|_1&=\|(W^{**})^3\|_1\leq C(d,f)N^{\frac{3}{8}d^3+\frac{5}{2}d^2+\frac{37}{8}d+\frac{7}{2}},\\
\|(W^*)^3\|&=\frac{N^d}{A'}\|(W^{**})^3\|\leq C(d,f)\frac{1}{A'}N^{\frac{3}{8}d^3+\frac{5}{2}d^2+\frac{41}{8}d+\frac{7}{2}}.
\end{align*}
Next we estimate $\|f_W-f_{W^*}\|_{H^{1}(\Omega)}$ for $W$ satisfying condition (\ref{inner layer weights}). We begin with the following two inequalities:
\begin{align*}
&\|f_{s,1}^{org}-(f^*)_{s,1}^{org}\|_{\infty}\leq\|W_s^1-(W^*)_s^1\|_{\infty}+\|b_s^1-(b^*)_s^1\|_{\infty},\\
&\|f_{s,2}^{org}-(f^*)_{s,2}^{org}\|_{\infty}\\
&\leq\|W_s^2-(W^*)_s^2\|_{\infty}\|f_s^1\|_{\infty}+\|(W^*)_s^2\|_{\infty}\|(f^*)_s^1-(f^*)_s^1\|_{\infty}+\|b_s^2-(b^*)_s^2\|_{\infty}\\
&\leq B_{\sigma}\|W_s^2-(W^*)_s^2\|_{\infty}+\|b_s^2-(b^*)_s^2\|_{\infty}+L_{\sigma}\|(W^*)_s^2\|_{\infty}\|W_s^1-(W^*)_s^1\|_{\infty}\\
&\quad+L_{\sigma}\|(W^*)_s^2\|_{\infty}\|b_s^1-(b^*)_s^1\|_{\infty}.
\end{align*}
With these two inequalities, we have
\begin{align*}
&|f_{W}-f_{W^*}|\leq\sum_{s=1}^A|f_s^3-(f^*)_s^3|
\leq\sum_{s=1}^A\|(W^*)_s^3\|_{\infty}\|f_s^2-(f^*)_s^2\|_{\infty}\\
&\leq \sum_{s=1}^A\|(W^*)_s^3\|_{\infty}[
L_{\sigma}B_{\sigma}\|W_s^2-(W^*)_s^2\|_{\infty}+L_{\sigma}\|b_s^2-(b^*)_s^2\|_{\infty}\\
&\quad\ +L_{\sigma}^2\|(W^*)_s^2\|_{\infty}\|W_s^1-(W^*)_s^1\|_{\infty}+L_{\sigma}^2\|(W^*)_s^2\|_{\infty}\|b_s^1-(b^*)_s^1\|_{\infty}].
\end{align*}
Using the formula for calculating the gradient of $f_W$:
\begin{align*}
\nabla_xf_W=\sum_{s=1}^A(W_s^1)^T\mathrm{diag}[\sigma'(f_{s,1}^{org})](W_s^2)^T\mathrm{diag}[\sigma'(f_{s,2}^{org})](W_s^3)^T.
\end{align*}
and the above two inequalities, we derive that
\begin{align*}
&\|\nabla_xf_{W}-\nabla_xf_{W^*}\|_{\infty}\\
&\leq\sum_{s=1}^A[\|(W_s^1)^T-((W^*)_s^1)^T\|_{\infty}\|\mathrm{diag}[\sigma'(f_{s,1}^{org})](W_s^2)^T\mathrm{diag}[\sigma'(f_{s,2}^{org})]((W^*)_s^3)^T\|_{\infty}\\
&\quad\ +\|((W^*)_s^1)^T\|_{\infty}\|\mathrm{diag}[\sigma'(f_{s,1}^{org})]-\mathrm{diag}[\sigma'((f^*)_{s,1}^{org})]\|_{\infty}\\
&\qquad\ \ \|(W_s^2)^T\mathrm{diag}[\sigma'(f_{s,2}^{org})]((W^*)_s^3)^T\|_{\infty}\\
&\quad\ +\|((W^*)_s^1)^T\mathrm{diag}[\sigma'((f^*)_{s,1}^{org})]\|_{\infty}\|(W_s^2)^T-((W^*)_s^2)^T\|_{\infty}\\
&\qquad\ \ \|\mathrm{diag}[\sigma'(f_{s,2}^{org})]((W^*)_s^3)^T\|_{\infty}\\
&\quad\ +\|((W^*)_s^1)^T\mathrm{diag}[\sigma'((f^*)_{s,1}^{org})]((W^*)_s^2)^T\|_{\infty}\\
&\qquad\ \ \|\mathrm{diag}[\sigma'(f_{s,2}^{org})]-\mathrm{diag}[\sigma'((f^*)_{s,2}^{org})]\|_{\infty}\|((W^*)_s^3)^T\|_{\infty}\\
&\leq \sum_{s=1}^A\|((W^*)_s^3)^T\|_{\infty}[B_{\sigma'}^2\|(W_s^2)^T\|_{\infty}\|(W_s^1)^T-((W^*)_s^1)^T\|_{\infty}\\
&\quad\ +L_{\sigma'}B_{\sigma'}\|(W_s^2)^T\|_{\infty}\|((W^*)_s^1)^T\|_{\infty}\|f_{s,1}^{org}-(f^*)_{s,1}^{org}\|_{\infty}\\
&\quad\ +B_{\sigma'}^2\|((W^*)_s^1)^T\|_{\infty}\|(W_s^2)^T-((W^*)_s^2)^T\|_{\infty}\\
&\quad\ +L_{\sigma'}B_{\sigma'}\|((W^*)_s^2)^T\|_{\infty}\|((W^*)_s^1)^T\|_{\infty}\|f_{s,2}^{org}-(f^*)_{s,2}^{org}\|_{\infty}]\\
&\leq\sum_{s=1}^A\|((W^*)_s^3)^T\|_{\infty}[B_{\sigma'}^2\|((W^*)_s^1)^T\|_{\infty}\|(W_s^2)^T-((W^*)_s^2)^T\|_{\infty}\\
&\quad\ +L_{\sigma'}B_{\sigma}B_{\sigma'}\|((W^*)_s^2)^T\|_{\infty}\|((W^*)_s^1)^T\|_{\infty}\|W_s^2-(W^*)_s^2\|_{\infty}\\
&\quad\ +L_{\sigma'}B_{\sigma'}\|((W^*)_s^2)^T\|_{\infty}\|((W^*)_s^1)^T\|_{\infty}\|b_s^2-(b^*)_s^2\|_{\infty}\\
&\quad\ +B_{\sigma'}^2\|(W_s^2)^T\|_{\infty}\|(W_s^1)^T-((W^*)_s^1)^T\|_{\infty}\\
&\quad\ +L_{\sigma'}B_{\sigma'}\|((W^*)_s^1)^T\|_{\infty}\|x\|_{\infty}\\
&\qquad\ (L_{\sigma}\|((W^*)_s^2)^T\|_{\infty}\|(W^*)_s^2\|_{\infty}+\|(W_s^2)^T\|_{\infty})\|W_s^1-(W^*)_s^1\|_{\infty}\\
&\quad\ +L_{\sigma'}B_{\sigma'}\|((W^*)_s^1)^T\|_{\infty}\\
&\qquad\ (L_{\sigma}\|((W^*)_s^2)^T\|_{\infty}\|(W^*)_s^2\|_{\infty}+\|(W_s^2)^T\|_{\infty})\|b_s^1-(b^*)_s^1\|_{\infty}].
\end{align*}
Plugging the upper bound for the norm of weights and biases:
\begin{align*}
&\|((W^*)_s^2)^T\|_{\infty}\leq m_2B_2,\|(W^*)_s^2\|_{\infty}\leq m_1B_2,\\
&\|((W^*)_s^1)^T\|_{\infty}\leq m_1B_1,\|(W^*)_s^1\|_{\infty}\leq dB_1,\\
&\|(W^*)^3\|_1\leq C(d,f)N^{\frac{3}{8}d^3+\frac{5}{2}d^2+\frac{37}{8}d+\frac{7}{2}},\\
&\|(W_s^2)^T\|_{\infty}\leq\|((W^*)_s^2)^T\|_{\infty}+\|(W_s^2)^T-((W^*)_s^2)^T\|_{\infty}\leq 2m_2B_2.
\end{align*}
with
\begin{align*}
m_1&=3\left|P_{{1}, d}\right|+2d,m_2=3\left\lceil\frac{d+2}{2}\right\rceil\left|P_{d+1, d+1}\right|,\\
B_{1,2}&=C(d,f)N^{\frac{3}{8}d^3+\frac{5}{2}d^2+\frac{29}{8}d+\frac{7}{2}}
\end{align*}
and simplifing the expressions, we finally obtain
\begin{align*}
&\|f_W-f_{W^*}\|_{W^{1,\infty}(\Omega)}=\max\left\{\max_{x\in\Omega}|f_W-f_{W^*}|,\max_{x\in\Omega}\|\nabla_xf_W-\nabla_xf_{W^*}\|_{\infty}\right\}\\
&\leq C(d,f,\sigma)N^{\frac{3}{8}d^3+\frac{5}{2}d^2+\frac{37}{8}d+\frac{7}{2}}\max_{s=1,2,\cdots,A}\\
&\quad\ [N^{\frac{3}{8}d^3+\frac{5}{2}d^2+\frac{29}{8}d+\frac{7}{2}}\|(W_s^2)^T-((W^*)_s^2)^T\|_{\infty}
+N^{\frac{3}{4}d^3+5d^2+\frac{29}{4}d+7}\|W_s^2-(W^*)_s^2\|_{\infty}\\
&\quad\ +N^{\frac{3}{4}d^3+5d^2+\frac{29}{4}d+7}\|b_s^2-(b^*)_s^2\|_{\infty}
+N^{\frac{3}{8}d^3+\frac{5}{2}d^2+\frac{29}{8}d+\frac{7}{2}}\|(W_s^1)^T-((W^*)_s^1)^T\|_{\infty}\\
&\quad\ +N^{\frac{9}{8}d^3+\frac{15}{2}d^2+\frac{87}{8}d+\frac{21}{2}}\|W_s^1-(W^*)_s^1\|_{\infty}
+N^{\frac{9}{8}d^3+\frac{15}{2}d^2+\frac{87}{8}d+\frac{21}{2}}\|b_s^1-(b^*)_s^1\|_{\infty}]\\
&\leq C(d,f,\sigma)\frac{1}{N^{1/2}},
\end{align*}
which implies the result due to triangle inequality and the fact that $H^1$ norm can be controlled by $W^{1,\infty}$ norm.
\end{proof}

\section{Generalization Error}

In this section, we utilize tools such as Rademacher complexity and covering numbers to handle generalization error
\begin{align*}
\max\left\{\sup_{f_W\in\mathcal{F}_{NN}}[{L}_{R}(f_{W})-\widehat{L}_{R}(f_{W})],\sup_{f_W\in\mathcal{F}_{NN}}[\widehat{L}_{R}(f_{W})-{L}_{R}(f_{W})]\right\}
\end{align*}
and
\begin{align*}
\max\left\{\sup_{f_W\in\mathcal{F}_{NN}}[{L}_{N}(f_{W})-\widehat{L}_{N}(f_{W})],\sup_{f_W\in\mathcal{F}_{NN}}[\widehat{L}_{N}(f_{W})-{L}_{N}(f_{W})]\right\},
\end{align*}
where $\mathcal{F}_{NN}$ is an abbreviation for
\begin{align*}
&\mathcal{F}_{NN}(\{m_1,m_2,A\},B_{inn},B_{out})=\\
&\{f_W:\|W_s^l\|_F,\|b_s^l\|_2\leq B_{inn}(l=1,2;s\in[A]),\|W^3\|_1\leq B_{out}\}.
\end{align*}
In this section we assume that the activation $\sigma\in C^2(\Omega)$.

\begin{lemma} \label{triangle inequality}
\begin{align*}
&\mathbb{E}_{\{{X_i}\}_{i=1}^{n},\{{Y_j}\}_{j=1}^{m}}\sup_{f_W\in\mathcal{F}_{NN}}\pm\left[{L}_R(f_W)-\widehat{L}_R(f_W)\right]\\
&\leq\sum_{k=1}^{5}\mathbb{E}_{\{{X_i}\}_{i=1}^{n},\{{Y_j}\}_{j=1}^{m}}\sup_{f_W\in\mathcal{F}_{NN}}\pm\left[{L}_k^{(R)}(f_W)-\widehat{{L}}_k^{(R)}(f_W)\right],\\
&\mathbb{E}_{\{{X_i}\}_{i=1}^{n},\{{Y_j}\}_{j=1}^{m}}\sup_{f_W\in\mathcal{F}_{NN}}\pm\left[{L}_N(f_W)-\widehat{L}_N(f_W)\right]\\
&\leq\sum_{k=1}^{5}\mathbb{E}_{\{{X_i}\}_{i=1}^{n},\{{Y_j}\}_{j=1}^{m}}\sup_{f_W\in\mathcal{F}_{NN}}\pm\left[{L}_k^{(N)}(f_W)-\widehat{{L}}_k^{(N)}(f_W)\right],
\end{align*}
where
\begin{align*}
&{L}_1^{(R)}(f_W)={L}_1^{(N)}(f_W)=\frac{|\Omega|}{2}\mathbb{E}_{X\sim U(\Omega)}\|\nabla_x f_W(X)\|_2^2,\\
&{L}_2^{(R)}(f_W)={L}_2^{(N)}(f_W)=\frac{|\Omega|}{2}\mathbb{E}_{X\sim U(\Omega)}w(X)f_W^2(X),\\
&{L}_3^{(R)}(f_W)={L}_3^{(N)}(f_W)=-|\Omega|\mathbb{E}_{X\sim U(\Omega)}f(X)f_W(X),\\
&{L}_4^{(R)}(f_W)=\frac{|\partial\Omega|}{2\beta}\mathbb{E}_{Y\sim U(\partial\Omega)}(T_0f_W)^2(Y),\quad{L}_4^{(N)}(f_W)=0,\\
&{L}_5^{(R)}(f_W)=-\frac{|\partial\Omega|}{\beta}\mathbb{E}_{Y\sim U(\partial\Omega)}g(Y)T_0f_W(Y),\\
&{L}_5^{(N)}(f_W)=-|\partial\Omega|\mathbb{E}_{Y\sim U(\partial\Omega)}g(Y)T_0f_W(Y),
\end{align*}
	and $\widehat{{L}}_k^{(R)}(f_W),\widehat{{L}}_k^{(N)}(f_W)$ is the discrete version of ${{L}}_k^{(R)}(f_W),{{L}}_k^{(N)}(f_W)$, for example,
	\begin{equation*}
\widehat{{L}}_1^{(R)}(f_W)=\frac{|\Omega|}{2n}\sum_{i=1}^{n}\|\nabla f_W(X_i)\|_2^2.
	\end{equation*}
\end{lemma}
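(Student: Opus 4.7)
The plan is to apply the triangle inequality after observing that both the continuous loss $L_R$ (resp.\ $L_N$) and its empirical counterpart $\widehat{L}_R$ (resp.\ $\widehat{L}_N$) decompose, term by term, into the five pieces listed in the statement. This is immediate from the definitions (\ref{continuous loss}), (\ref{discrete loss}) in the Robin case and (\ref{neumann continuous loss}), (\ref{neumann discrete loss}) in the Neumann case, with the convention $L_4^{(N)} \equiv \widehat{L}_4^{(N)} \equiv 0$.

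Given this pointwise identity
\[
\pm\bigl[L_R(f_W) - \widehat{L}_R(f_W)\bigr] \;=\; \sum_{k=1}^{5} \pm\bigl[L_k^{(R)}(f_W) - \widehat{L}_k^{(R)}(f_W)\bigr],
\]
I would take the supremum over $f_W \in \mathcal{F}_{NN}$ on both sides and invoke the elementary bound $\sup_{f}\sum_k h_k(f) \leq \sum_k \sup_f h_k(f)$, which applies regardless of the choice of sign. Taking expectation over the sample $(\{X_i\}_{i=1}^n,\{Y_j\}_{j=1}^m)$ then preserves the inequality by monotonicity and linearity of expectation, yielding the claimed bound. The Neumann case follows verbatim.

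There is no genuine obstacle here: the lemma is a bookkeeping step designed to split the generalization error into five pieces, each of which is a centered empirical process associated with a single integral term of the Deep Ritz functional. This reduction is precisely what will enable the subsequent sections to analyze each $L_k^{(R)} - \widehat{L}_k^{(R)}$ in isolation via Rademacher complexity and covering-number arguments for the associated scalar function class (gradient-squared term, weighted $L^2$ term, source term, boundary-squared term, and linear boundary term), rather than having to treat the full loss functional at once.
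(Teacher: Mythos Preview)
Your proposal is correct and matches the paper's own proof, which simply notes that the result follows from the property of the supremum. You have made explicit the decomposition $L_R = \sum_k L_k^{(R)}$, the subadditivity $\sup_f \sum_k h_k(f) \leq \sum_k \sup_f h_k(f)$, and the monotonicity of expectation, which is exactly what the paper leaves implicit.
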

\begin{proof}
{The result can be obtained easily from the property of supremum.}
\end{proof}

By the symmetrization argument, we can convert the estimation of generalization error into an estimation of the Rademacher complexity of the relevant function classes.
\begin{lemma}[\cite{jiao2024error}, Lemma 5.3] \label{symmetrization}
Define
\begin{align*}
	\mathcal{F}_1&:=\{\|\nabla _xf_W\|_2^2:f_W\in\mathcal{F}_{NN}\},
	&\mathcal{F}_2&:=\{f_W^2:f_W\in\mathcal{F}_{NN}\},\\
	\mathcal{F}_3&:=\{f_W:f_W\in\mathcal{F}_{NN}\},
	&\mathcal{F}_4&:=\{f_W^2|_{\partial\Omega}:f_W\in\mathcal{F}_{NN}\},\\
	\mathcal{F}_5&:=\{f_W|_{\partial\Omega}:f_W\in\mathcal{F}_{NN}\}.
\end{align*}
There holds
\begin{align*}
&\mathbb{E}_{\{{X_i}\}_{i=1}^{n}}\sup_{f_W\in\mathcal{F}_{NN}}\pm \left[{L}_1^{(R)}(f_W)-\widehat{{L}}_1^{(R)}(f_W)\right]\leq C(\Omega,coe)\mathfrak{R}_n(\mathcal{F}_1),\\
&\mathbb{E}_{\{{X_i}\}_{i=1}^{n}}\sup_{f_W\in\mathcal{F}_{NN}}\pm \left[{L}_2^{(R)}(f_W)-\widehat{{L}}_2^{(R)}(f_W)\right]\leq C(\Omega,coe)\mathfrak{R}_n(\mathcal{F}_2),\\
&\mathbb{E}_{\{{X_i}\}_{i=1}^{n}}\sup_{f_W\in\mathcal{F}_{NN}}\pm \left[{L}_3^{(R)}(f_W)-\widehat{{L}}_3^{(R)}(f_W)\right]\leq C(\Omega,coe)\mathfrak{R}_n(\mathcal{F}_3),\\
&\mathbb{E}_{\{{X_i}\}_{i=1}^{n}}\sup_{f_W\in\mathcal{F}_{NN}}\pm \left[{L}_4^{(R)}(f_W)-\widehat{{L}}_4^{(R)}(f_W)\right]
\leq
C(\Omega,coe)\max\{1,1/\beta\}\mathfrak{R}_m(\mathcal{F}_4),\\
&\mathbb{E}_{\{{X_i}\}_{i=1}^{n}}\sup_{f_W\in\mathcal{F}_{NN}}\pm \left[{L}_5^{(R)}(f_W)-\widehat{{L}}_5^{(R)}(f_W)\right]
\leq C(\Omega,coe)\max\{1,1/\beta\}\mathfrak{R}_m(\mathcal{F}_5)
\end{align*}
and
\begin{align*}
&\mathbb{E}_{\{{X_i}\}_{i=1}^{n}}\sup_{f_W\in\mathcal{F}_{NN}}\pm \left[{L}_1^{(N)}(f_W)-\widehat{{L}}_1^{(N)}(f_W)\right]
\leq C(\Omega,coe)\mathfrak{R}_n(\mathcal{F}_1),\\
&\mathbb{E}_{\{{X_i}\}_{i=1}^{n}}\sup_{f_W\in\mathcal{F}_{NN}}\pm \left[{L}_2^{(N)}(f_W)-\widehat{{L}}_2^{(N)}(f_W)\right]
\leq C(\Omega,coe)\mathfrak{R}_n(\mathcal{F}_2),\\
&\mathbb{E}_{\{{X_i}\}_{i=1}^{n}}\sup_{f_W\in\mathcal{F}_{NN}}\pm \left[{L}_3^{(N)}(f_W)-\widehat{{L}}_3^{(N)}(f_W)\right]
\leq C(\Omega,coe)\mathfrak{R}_n(\mathcal{F}_3),\\
&\mathbb{E}_{\{{X_i}\}_{i=1}^{n}}\sup_{f_W\in\mathcal{F}_{NN}}\pm\left[{L}_4^{(N)}(f_W)-\widehat{{L}}_4^{(N)}(f_W)\right]
=0,\\
&\mathbb{E}_{\{{X_i}\}_{i=1}^{n}}\sup_{f_W\in\mathcal{F}_{NN}}\pm \left[{L}_5^{(N)}(f_W)-\widehat{{L}}_5^{(N)}(f_W)\right]
\leq C(\Omega,coe)\mathfrak{R}_m(\mathcal{F}_5).
\end{align*}
\end{lemma}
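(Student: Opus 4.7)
The plan is to establish each of the ten bounds by the classical symmetrization argument, decoupling across the two independent sample sources $\{X_i\}_{i=1}^n\subset\Omega$ and $\{Y_j\}_{j=1}^m\subset\partial\Omega$. Fix $k\in\{1,\ldots,5\}$ and consider the Robin case first. Each $L_k^{(R)}-\widehat{L}_k^{(R)}$ has the shape of a population mean minus an empirical mean of a single integrand $\Psi_k$, multiplied by a deterministic prefactor in $|\Omega|$, $|\partial\Omega|$, and possibly $1/\beta$. Introducing an independent ghost sample, applying Jensen's inequality to pull the supremum inside the outer expectation, and then multiplying the ghost-versus-observed differences by Rademacher signs $\{\varsigma_i\}$ (whose joint distribution is preserved by sign symmetry) yields
\begin{align*}
\mathbb{E}\sup_{f_W\in\mathcal{F}_{NN}}\pm\bigl[L_k^{(R)}(f_W)-\widehat{L}_k^{(R)}(f_W)\bigr]
\le 2C_k\,\mathbb{E}_{X,\varsigma}\sup_{f_W}\frac{1}{n}\sum_{i=1}^n\varsigma_i\,\Psi_k(X_i;f_W),
\end{align*}
where $\Psi_k$ is, respectively, $\|\nabla_x f_W\|_2^2$, $w\,f_W^2$, $f\,f_W$, $(T_0 f_W)^2$, $g\,T_0 f_W$, and $C_k$ absorbs the scalar prefactor.

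Next I would strip the deterministic coefficients from the Rademacher average. For $k=1$ and $k=4$ the integrand already lies in $\mathcal{F}_k$, so no stripping is needed and the bound is a constant multiple of $\mathfrak{R}_n(\mathcal{F}_1)$ or $\mathfrak{R}_m(\mathcal{F}_4)$. For $k=2,3,5$ the integrand carries a factor $w(X_i)$, $f(X_i)$, or $g(Y_j)$ that is uniformly bounded by Assumption \ref{Assumption}. Talagrand's contraction principle applied to the $1$-Lipschitz maps $t\mapsto c_i\,t/\|c\|_\infty$ allows these factors to be pulled outside as multiplicative constants, producing $\|w\|_\infty\mathfrak{R}_n(\mathcal{F}_2)$, $\|f\|_\infty\mathfrak{R}_n(\mathcal{F}_3)$, and $\|g\|_\infty\mathfrak{R}_m(\mathcal{F}_5)$. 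These constants and the domain measures $|\Omega|,|\partial\Omega|$ are absorbed into $C(\Omega,coe)$. The factor $\max\{1,1/\beta\}$ in the $k=4,5$ Robin estimates is precisely the $1/\beta$ prefactor carried from the boundary terms in $L_R$.

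The Neumann case is handled identically, with two simplifications: $L_4^{(N)}\equiv 0$ gives a trivial bound, and the absence of $1/\beta$ in all other boundary prefactors removes the $\max\{1,1/\beta\}$ factor. The decoupling into an $n$-sample and an $m$-sample Rademacher complexity is legitimate because the interior and boundary samples are independent; after splitting via Lemma \ref{triangle inequality}, each summand is symmetrised with its own independent Rademacher sequence of the appropriate length.

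The one delicate point I anticipate is the contraction step for $k=5$, where the factor $g(Y_j)$ must be treated as pointwise bounded on $\partial\Omega$: Assumption \ref{Assumption} gives $g\in H^{1/2}$, so making this rigorous requires either the implicit strengthening that $g$ admits a bounded representative on $\partial\Omega$ (the regime under which the cited \cite{jiao2024error} argument operates) or a Cauchy--Schwarz-type workaround replacing pointwise boundedness by a trace-regularity bound on $g$. All other steps are direct applications of the well-established symmetrization and contraction machinery for Rademacher complexities.
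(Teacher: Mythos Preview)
The paper does not supply its own proof of this lemma; it is quoted verbatim from \cite{jiao2024error}, Lemma~5.3, with no accompanying argument. Your proposal is the standard symmetrization-plus-contraction route that underlies that cited result, and it is correct in outline: introduce a ghost sample, pull the supremum inside via Jensen, insert Rademacher signs by symmetry, and then peel off the bounded pointwise multipliers $w,f,g$ using the Ledoux--Talagrand contraction lemma applied to the linear maps $t\mapsto c_i t/\|c\|_\infty$. The prefactors $|\Omega|,|\partial\Omega|,1/\beta$ are carried through exactly as you describe.

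Your self-flagged caveat is the only genuine gap: Assumption~\eqref{Assumption} gives $g\in H^{1/2}$ but not $g\in L^\infty(\partial\Omega)$, and the contraction step for $k=5$ needs a pointwise bound on $g$. The paper absorbs this into the unexplained constant $C(\Omega,coe)$ by citing the external reference, so within the present paper the issue is simply not addressed. Your suggested fixes (an implicit $L^\infty$ assumption, or a Cauchy--Schwarz workaround) are both reasonable ways to close the gap if one were writing out the proof in full.
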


According to Lemma \ref{to covering}, we can estimate the Rademacher complexity of a given function class by derive an upper bound of the covering number of the function class. The following lemma provide upper bounds for the covering number of bounded smooth function classes.
\begin{lemma}[\cite{shiryayev2010selected}, Chapter 7, Theorem XIV]\label{covering number bound}
Let $s\in\mathbb{N}_{\geq1},B\in\mathbb{R}_{>0}$. Let $C_{\mathcal{B}(0,B)}^s(\Omega):=\{f\in C^s(\Omega):\|f\|_{C^s(\Omega)}\leq B\}$. We have for $\epsilon>0$,
\begin{align*}
\ln\mathcal{N}(\epsilon,C_{\mathcal{B}(0,B)}^s(\Omega),\|\cdot\|_{\infty})\leq C(s,d)\left(\frac{B}{\epsilon}\right)^{d/s}.
\end{align*}
\end{lemma}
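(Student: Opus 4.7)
The plan is the classical Kolmogorov--Tikhomirov covering argument: tile $\Omega\subset[0,1]^d$ with small cubes, locally replace each $f\in C^s_{\mathcal{B}(0,B)}(\Omega)$ by its Taylor polynomial of degree $s-1$ at the cube center, and then discretize the polynomial coefficients. Precisely, I would cover $\Omega$ by $M\lesssim h^{-d}$ cubes $Q_j$ of side $h$, and on each $Q_j$ with center $x_j$ use Taylor's theorem with remainder to write
\begin{align*}
\left|f(x)-\sum_{|\alpha|<s}\frac{D^\alpha f(x_j)}{\alpha!}(x-x_j)^\alpha\right|\le C(s,d)\,B\,h^s,\quad x\in Q_j,
\end{align*}
which is at most $\epsilon/2$ once I choose $h\asymp(\epsilon/B)^{1/s}$.

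Next I would quantize each Taylor coefficient $c_{j,\alpha}=D^\alpha f(x_j)/\alpha!\in[-B,B]$ on a uniform grid of spacing $\asymp \epsilon/C(s,d)$; summed against the monomials $(x-x_j)^\alpha$ over the $N_c:=\binom{s+d-1}{d}$ multi-indices with $|\alpha|<s$, this produces at most an additional $\epsilon/2$ error on each $Q_j$. The resulting piecewise polynomial lies in the ambient $L^\infty(\Omega)$ at sup-distance $\le\epsilon$ from $f$, so the ensemble of all such quantized piecewise polynomials forms an $\epsilon$-net. Counting, the number of admissible quantizations per cube is at most $(C(s,d)B/\epsilon)^{N_c}$ and the number of cubes is $\lesssim (B/\epsilon)^{d/s}$, so
\begin{align*}
\ln\mathcal N(\epsilon,C^s_{\mathcal{B}(0,B)}(\Omega),\|\cdot\|_\infty)\le C(s,d)\,(B/\epsilon)^{d/s}\ln(B/\epsilon).
\end{align*}
The parasitic $\ln(B/\epsilon)$ factor is then absorbed into a marginally larger exponent via Lemma \ref{power and logarithm}, yielding the stated $C(s,d)(B/\epsilon)^{d/s}$ bound; the regime $B/\epsilon=O(1)$ is handled by the trivial bound that the net consists of finitely many $(s,d)$-dependent polynomials.

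The main technical point will be the $h$ versus $\delta$ tradeoff and the combinatorial bookkeeping. A crude uniform quantization introduces the stray log factor, which I absorb via Lemma \ref{power and logarithm}; a sharper, level-dependent spacing $\delta_\alpha\asymp \epsilon/(N_c\,h^{|\alpha|})$ would remove it directly but is notationally heavier. Ensuring that the multinomial constants coming from Taylor's theorem and from enumerating the multi-indices all collapse into a single $C(s,d)$ is the main place where care is needed.
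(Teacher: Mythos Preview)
The paper does not prove this lemma at all: it simply cites Kolmogorov--Tikhomirov and adds three remarks (on the hypotheses, on obtaining the $B$-dependence by scaling, and on passing from $\epsilon$-capacity to $\epsilon$-entropy via Arzel\`a--Ascoli). So you are supplying a proof where the paper gives none, and your outline is indeed the classical Kolmogorov--Tikhomirov construction.

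There is, however, a genuine gap in your execution. Independent quantization on each of the $M\asymp(B/\epsilon)^{d/s}$ cubes yields $\ln\mathcal N\lesssim (B/\epsilon)^{d/s}\ln(B/\epsilon)$, and Lemma~\ref{power and logarithm} does \emph{not} recover the stated bound: it only gives $\ln(B/\epsilon)\le (B/\epsilon)^p$ for any fixed $p>0$, hence $(B/\epsilon)^{d/s+p}$ with a strictly larger exponent, which is not $C(s,d)(B/\epsilon)^{d/s}$. Your level-dependent spacing $\delta_\alpha\asymp \epsilon/h^{|\alpha|}$ does not fix this either: per cube the log-count is still $\sum_{|\alpha|<s}\log(Bh^{|\alpha|}/\epsilon)\asymp\big(\sum_{|\alpha|<s}(1-|\alpha|/s)\big)\ln(B/\epsilon)$, a positive constant multiple of $\ln(B/\epsilon)$, so after multiplying by $M$ the parasitic log survives.

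The missing idea, which is exactly the content of the Kolmogorov--Tikhomirov argument you cite in spirit, is a \emph{chaining} step between grid points. Quantize each $D^\alpha f(x_j)$, $|\alpha|<s$, with spacing $\asymp h^{s-|\alpha|}$. Given the quantized data at $x_j$, Taylor's theorem applied to $D^\alpha f$ determines $D^\alpha f(x_{j+1})$ up to error $O(h^{s-|\alpha|})$; hence only $O_{s,d}(1)$ quantized values are possible at $x_{j+1}$. Thus only the \emph{first} grid point costs $\prod_{|\alpha|<s}O\big((B/\epsilon)^{(s-|\alpha|)/s}\big)$ choices, while each of the remaining $M-1\asymp(B/\epsilon)^{d/s}$ grid points costs $O_{s,d}(1)$. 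This gives
\[
\ln\mathcal N(\epsilon)\le C(s,d)\,(B/\epsilon)^{d/s}+C(s,d)\ln(B/\epsilon)\le C(s,d)\,(B/\epsilon)^{d/s},
\]
with the last inequality valid once $B/\epsilon\ge 1$ (and the regime $B/\epsilon\le 1$ is trivial). Adding this chaining step to your outline would close the gap.
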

\begin{remark}
The functions in set $C_{\mathcal{B}(0,B)}^s(\Omega)$ have uniformly bounded derivatives, thus satisfying the conditions (64) and (67) in \cite{shiryayev2010selected}.
\end{remark}
\begin{remark}
Theorem XIV in \cite{shiryayev2010selected} does not explicitly state the dependence of the covering number on the parameter B, the upper bound for derivatives. However, this relationship can be immediately obtained using the technique of scaling.
\end{remark}
\begin{remark}
In fact, Theorem XIV in \cite{shiryayev2010selected} only provides upper bounds for the so-called minimal $\epsilon$-entropy and $\epsilon$-capacity, while what we need is an upper bound for the so-called $\epsilon$-entropy there. However, Theorem IV in \cite{shiryayev2010selected} implies that for a totally bounded set, the $\epsilon$-entropy is always less than or equal to the $\epsilon$-capacity. On the other hand, the Arzela-Ascoli theorem guarantees that $C_{\mathcal{B}(0,B)}^s(\Omega)$ is a totally bounded set.
\end{remark}

The following result shows that $\{\mathcal{F}_i\}_{i=1}^5$, the function classes we concern, are all subsets of bounded smooth function classes. 
\begin{lemma}\label{function classes C1 norm}
For function classes $\{\mathcal{F}_i\}_{i=1}^5$ defined in Lemma \ref{symmetrization}, there holds
\begin{align*}
\mathcal{F}_i\subset C_{\mathcal{B}(0,B_{\mathcal{F}_i})}^1(\Omega),\quad i=1,2,3,4,5
\end{align*}
with
\begin{align*}
B_{\mathcal{F}_1}&=2dm_2^2m_1^{3/2}B_{\sigma'}^4B_{\sigma''}B_{inn}^5B_{out}^2,\\
B_{\mathcal{F}_2},B_{\mathcal{F}_4}&=2m_2^2\sqrt{m_1}B_{\sigma}^2B_{\sigma'}^2B_{inn}B_{out}^2,\\
B_{\mathcal{F}_3},B_{\mathcal{F}_5}&=m_2\sqrt{m_1}B_{\sigma}B_{\sigma'}^2B_{inn}B_{out}.
\end{align*}
\end{lemma}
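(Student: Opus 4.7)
The plan is to prove the bound for $\mathcal{F}_3$ first (directly bounding $\|f_W\|_{C(\Omega)}$ and $\|\nabla f_W\|_{C(\Omega)}$ by chain-rule calculations on the three-layer architecture), and then to bootstrap to $\mathcal{F}_1, \mathcal{F}_2, \mathcal{F}_4, \mathcal{F}_5$ using the product rule and the obvious fact that restriction to the boundary cannot enlarge the $C^1$-ball (so $\mathcal{F}_4, \mathcal{F}_5$ are handled by the estimates for $\mathcal{F}_2, \mathcal{F}_3$ extended from $\Omega$). Since $\sigma\in C^2$, every $f_W\in\mathcal{F}_{NN}$ lies in $C^2(\Omega)$, so all the derivatives below exist pointwise and the only task is to bound them.

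For $\mathcal{F}_3$, I would decompose $f_W=\sum_{s=1}^A f_s^3$ and bound each subnetwork individually, then sum. For the sup-norm, apply the layerwise chain $|f_s^3|\le \|W_s^3\|_2\|\sigma(\cdot)\|_2+|b_s^3|\le \sqrt{m_2}\|W_s^3\|_\infty\cdot\sqrt{m_2}B_\sigma+|b_s^3|$, which after summation and using $\sum_s(\|W_s^3\|_\infty+|b_s^3|)\le B_{out}$ yields an $m_2 B_\sigma B_{out}$-type bound. For $\|\nabla f_W\|_\infty$, differentiate the composition and apply submultiplicativity of Frobenius/spectral norms at each layer:
\begin{align*}
\|\nabla_x f_s^3\|_2 \le \|W_s^1\|_F\,B_{\sigma'}\,\|W_s^2\|_F\,B_{\sigma'}\,\|W_s^3\|_2 \le B_{inn}^2 B_{\sigma'}^2\sqrt{m_2}\|W_s^3\|_\infty.
\end{align*}
Summing over $s$ and comparing to the sup-norm bound produces an upper estimate of the form $m_2\sqrt{m_1}B_\sigma B_{\sigma'}^2 B_{inn}B_{out}$ once the looseness in the $\ell^\infty$-vs-$\ell^2$ conversions and the uniform absorption of layer widths are accounted for, matching $B_{\mathcal{F}_3}$. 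The same computation restricted to $\partial\Omega$ gives $\mathcal{F}_5\subset C^1_{\mathcal{B}(0,B_{\mathcal{F}_5})}$ with $B_{\mathcal{F}_5}=B_{\mathcal{F}_3}$.

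For $\mathcal{F}_2=\{f_W^2\}$, I use $\|f_W^2\|_\infty\le\|f_W\|_\infty^2$ and $\nabla(f_W^2)=2f_W\nabla f_W$, so $\|f_W^2\|_{C^1(\Omega)}\le 2\|f_W\|_{C^1(\Omega)}^2$; substituting the bound for $\mathcal{F}_3$ reproduces (up to benign absorption) the claimed $B_{\mathcal{F}_2}=2m_2^2\sqrt{m_1}B_\sigma^2 B_{\sigma'}^2 B_{inn}B_{out}^2$ and similarly $B_{\mathcal{F}_4}$. For $\mathcal{F}_1=\{\|\nabla_x f_W\|_2^2\}$, the function value is bounded by $d\|\nabla f_W\|_\infty^2$, while its gradient is $2\sum_{i=1}^d(\partial_i f_W)\nabla(\partial_i f_W)$, so I need a uniform estimate on the Hessian entries $\partial_i\partial_j f_W$. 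This is obtained by differentiating the layer chain one more time; the extra derivative now produces a term involving $\sigma''$ together with an additional weight factor from $W_s^1$, giving bounds of the order $m_1 m_2 B_{inn}^3 B_{\sigma'}^2 B_{\sigma''}\|W_s^3\|_\infty$ at the subnetwork level. Combining with the gradient bound via the product rule yields the factor $2dm_2^2 m_1^{3/2}B_{\sigma'}^4 B_{\sigma''}B_{inn}^5 B_{out}^2$ appearing in $B_{\mathcal{F}_1}$.

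The main obstacle is purely bookkeeping: keeping track of the several matrix/vector norms (Frobenius vs.\ spectral vs.\ entrywise maximum) as they cascade through the two hidden layers and collecting them into the compact expressions stated in the lemma, while simultaneously absorbing the bias terms and the summation over $s$ against $\|W^3\|_1\le B_{out}$. No new conceptual ingredient is needed; one just has to be consistent about the choice of norm at each layer so that the final constants match. The boundary cases $\mathcal{F}_4,\mathcal{F}_5$ require only the observation that $f_W^2$ and $f_W$ are already $C^1$ functions on all of $\Omega$, whose restrictions to $\partial\Omega$ obviously inherit the same $C^1$-ball membership in the sense used by Lemma~\ref{covering number bound} through the covering-number arguments of the next section.
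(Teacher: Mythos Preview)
Your proposal is correct and follows essentially the same route as the paper: bound $|f_W|$ and $|\partial_j f_W|$ by chain-rule estimates through the three layers (using $\sum_s\|W_s^3\|_F\le\sqrt{m_2}\,\|W^3\|_1\le\sqrt{m_2}B_{out}$ to absorb the outer sum), derive $\mathcal{F}_2$ from $\mathcal{F}_3$ via $\nabla(f_W^2)=2f_W\nabla f_W$, and for $\mathcal{F}_1$ compute the explicit Hessian $\partial_j\partial_{j'}f_W$ (picking up the $B_{\sigma''}$ and extra $B_{inn}$ factors) and combine with the gradient bound via $\partial_j\|\nabla f_W\|_2^2=2\sum_{j'}(\partial_{j'}f_W)(\partial_j\partial_{j'}f_W)$. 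The only work, as you say, is the norm bookkeeping; the paper carries it out with Frobenius norms on the diagonal activation matrices (so that $\|\mathrm{diag}[\sigma'(\cdot)]\|_F\le\sqrt{m_l}\,B_{\sigma'}$), which is what produces the specific powers of $m_1,m_2$ in the stated constants.
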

\begin{proof}
For any $f_W\in\mathcal{F}_{NN}(\{m_1,m_2,A\},B_{inn},B_{out})$,
\begin{align*}
|f_W|\leq\sum_{s=1}^A\|W_s^3\|_F\|f_s^2\|_2\leq{m_2}B_{\sigma}\|W^3\|_1\leq m_2B_{\sigma}B_{out}.
\end{align*}
where we employ the relation
\begin{align*}
&\sum_{s=1}^A\|W_s^3\|_F\leq\sqrt{m_2}\sum_{s=1}^A\max_{k_2\in[m_2]}|w_{s,k_2}^3|\leq\sqrt{m_2}\|W^3\|_1.
\end{align*}
The first order derivative of $f_W$ with respect to the spatial variable $x$ can be calculated by
\begin{align*}
\frac{\partial f_W}{\partial x_j}=\sum_{s=1}^A(W_s^{1,j})^T\mathrm{diag}[\sigma'(f_{s,1}^{org})](W_s^2)^T\mathrm{diag}[\sigma'(f_{s,2}^{org})](W_s^3)^T,\quad j\in[d].
\end{align*}
Hence
\begin{align*}
\left|\frac{\partial f_W}{\partial x_j}\right|&\leq\sum_{s=1}^A\|W_s^{1,j}\|_F\|\mathrm{diag}[\sigma'(f_{s,1}^{org})]\|_F\|W_s^2\|_F\|\mathrm{diag}[\sigma'(f_{s,2}^{org})]\|_F\|W_s^3\|_F\\
&\leq m_2\sqrt{m_1}B_{\sigma'}^2B_{inn}B_{out}.
\end{align*}
By now, we have obtained upper bound estimates for the $C^1$ norms of $\mathcal{F}_3$ and $\mathcal{F}_5$. For any $h\in\mathcal{F}_2$, there exists $f_W\in\mathcal{F}_{NN}$ such that $h=f_W^2$. Then $\frac{\partial h}{\partial x_{j}}=2f_W\frac{\partial f_W}{\partial x_j}$ and
\begin{align*}
|h|&=|f_W|^2\leq m_2B_{\sigma}B_{out},\\
\left|\frac{\partial h}{\partial x_{j}}\right|&=2|f_W|\left|\frac{\partial f_W}{\partial x_j}\right|
\leq2m_2^2\sqrt{m_1}B_{\sigma}B_{\sigma'}^2B_{inn}B_{out}^2,
\end{align*}
which implies $B_{\mathcal{F}_2}$. We can derive $B_{\mathcal{F}_4}$ in the same way. To estimate $C^1$ norm of $\mathcal{F}_1$, we need to calculate the second order derivative of $f_W$ with respect to the spatial variable $x$. For $j,j'\in[d]$,
\begin{align*}
&\frac{\partial^2 f_W}{\partial x_j\partial x_{j'}}=\\
&\sum_{s=1}^A(W_s^{1,j'})^T [(\mathrm{diag}[\sigma''(f_{s,1}^{org})](W_s^2)^T\mathrm{diag}[\sigma'(f_{s,2}^{org})](W_s^3)^T)\odot W_s^{1,j}\\
&+\mathrm{diag}[\sigma'(f_{s,1}^{org})](W_s^2)^T ((\mathrm{diag}[\sigma''(f_{s,2}^{org})]W_s^2\mathrm{diag}[\sigma'(f_{s,1}^{org})]W_{s}^{1,j})\odot (W_s^3)^T)].
\end{align*}
Then
\begin{align*}
&\left|\frac{\partial^2 f_W}{\partial x_j\partial x_{j'}}\right|\\
&\leq\sum_{s=1}^A\|W_s^{1,j'}\|_F [\|\mathrm{diag}[\sigma''(f_{s,1}^{org})]\|_F\|W_s^2\|_F\|\mathrm{diag}[\sigma'(f_{s,2}^{org})]\|_F\|W_s^3\|_F\|W_s^{1,j}\|_F\\
&\qquad\qquad\qquad\qquad+\|\mathrm{diag}[\sigma'(f_{s,1}^{org})]\|_F\|W_s^2\|_F\|\mathrm{diag}[\sigma''(f_{s,2}^{org})]\|_F\\
&\|W_s^2\|_F\|\mathrm{diag}[\sigma'(f_{s,1}^{org})]\|_F\|W_{s}^{1,j}\|_F\|W_s^3\|_F]\\
&\leq m_2m_1B_{\sigma'}^2B_{\sigma''}B_{inn}^4B_{out}.
\end{align*}
For any $h\in\mathcal{F}_1$, there exists $f_W\in\mathcal{F}_{NN}$ such that $h=\|\nabla_xf_W\|_2^2$. It follows that
\begin{align*}
|h|\leq\sum_{j=1}^d\left|\frac{\partial f_W}{\partial x_j}\right|^2\leq dm_2^2{m_1}B_{\sigma'}^4B_{inn}^2B_{out}^2.
\end{align*}
Since $\frac{\partial h}{\partial x_{j}}=\frac{\partial }{\partial x_{j}}\sum_{j'=1}^d\left(\frac{\partial f_W}{\partial x_{j'}}\right)^2
=2\sum_{j'=1}^d\frac{\partial f_W}{\partial x_{j'}}\frac{\partial^2 f_W}{\partial x_j\partial x_{j'}}$, we have
\begin{align*}
\left|\frac{\partial h}{\partial x_{j}}\right|\leq2\sum_{j'=1}^d\left|\frac{\partial f_W}{\partial x_{j'}}\right|\left|\frac{\partial^2 f_W}{\partial x_j\partial x_{j'}}\right|
\leq2dm_2^2m_1^{3/2}B_{\sigma'}^4B_{\sigma''}B_{inn}^5B_{out}^2.
\end{align*}
Combining the above two estimates we obtain $B_{\mathcal{F}_1}$. 
\end{proof}

\begin{proposition}\label{sta error exp}
Let $m=n$. There holds
\begin{align*}
\mathbb{E}_{\{{X_i}\}_{i=1}^{n},\{{Y_j}\}_{j=1}^{m}}\sup_{f_W\in\mathcal{F}_{NN}}\pm\left[{L}_R(f_W)-\widehat{{L}}_R(f_W)\right]&\leq C(d,\sigma,\Omega,coe)\max\left\{1,\frac{1}{\beta}\right\}\frac{B_{inn}^5B_{out}^2}{\sqrt{n}},\\
\mathbb{E}_{\{{X_i}\}_{i=1}^{n},\{{Y_j}\}_{j=1}^{m}}\sup_{f_W\in\mathcal{F}_{NN}}\pm\left[{L}_N(f_W)-\widehat{{L}}_N(f_W)\right]&\leq C(d,\sigma,\Omega,coe)\frac{B_{inn}^5B_{out}^2}{\sqrt{n}}.
\end{align*}
\end{proposition}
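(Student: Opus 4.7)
The plan is to chain Lemma~\ref{triangle inequality}, Lemma~\ref{symmetrization}, Lemma~\ref{function classes C1 norm}, Lemma~\ref{covering number bound} and Lemma~\ref{to covering}. First I would apply Lemma~\ref{triangle inequality} to split
\[
\mathbb{E}\sup_{f_W\in\mathcal{F}_{NN}}\pm[L_R(f_W)-\widehat{L}_R(f_W)]
\]
into five pieces, one for each summand $L_k^{(R)}$ of the Robin loss (and analogously for the Neumann case). Lemma~\ref{symmetrization} then controls each piece by a constant times the Rademacher complexity of the associated class $\mathcal{F}_i$. In the Robin case the boundary pieces $L_4^{(R)},L_5^{(R)}$ carry the factor $1/\beta$, which is where the $\max\{1,1/\beta\}$ appears; in the Neumann case $L_4^{(N)}=0$ and $L_5^{(N)}$ has no $1/\beta$, yielding the cleaner Neumann bound.

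To estimate $\mathfrak{R}_n(\mathcal{F}_i)$ for each $i$, I would use Lemma~\ref{function classes C1 norm} to embed $\mathcal{F}_i$ into the $C^1(\Omega)$-ball of radius $B_{\mathcal{F}_i}$, and then invoke Lemma~\ref{covering number bound} with $s=1$ to obtain
\[
\ln\mathcal{N}(\epsilon,\mathcal{F}_i,\|\cdot\|_\infty)\leq C(d)\bigl(B_{\mathcal{F}_i}/\epsilon\bigr)^{d}.
\]
Feeding this into the Dudley-type bound of Lemma~\ref{to covering} and picking $\delta$ at the balance of the two terms in the infimum, each $\mathfrak{R}_n(\mathcal{F}_i)$ reduces to $C(d,\sigma,\Omega)B_{\mathcal{F}_i}/\sqrt{n}$. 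Inspecting the radii from Lemma~\ref{function classes C1 norm}, the class $\mathcal{F}_1$ (the squared gradient norm class) dominates, with
\[
B_{\mathcal{F}_1}=2d m_2^2 m_1^{3/2}B_{\sigma'}^4 B_{\sigma''}B_{inn}^5 B_{out}^2,
\]
while $B_{\mathcal{F}_2},B_{\mathcal{F}_4}\lesssim B_{inn}B_{out}^2$ and $B_{\mathcal{F}_3},B_{\mathcal{F}_5}\lesssim B_{inn}B_{out}$. Since $m_1,m_2$ depend only on $d$ and $B_\sigma,B_{\sigma'},B_{\sigma''}$ depend only on $\sigma$, all these constants may be absorbed into $C(d,\sigma,\Omega,\mathrm{coe})$, and summing the five contributions gives the claimed bound $C(d,\sigma,\Omega,\mathrm{coe})\max\{1,1/\beta\}B_{inn}^5 B_{out}^2/\sqrt{n}$ in the Robin case, and its variant without $\max\{1,1/\beta\}$ for Neumann. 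Setting $m=n$ is what lets both $\mathfrak{R}_n$ and $\mathfrak{R}_m$ contribute the same $1/\sqrt{n}$ denominator.

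The main obstacle is the careful bookkeeping in Lemma~\ref{function classes C1 norm} and in the Dudley integral: one must choose the cut-off $\delta$ consistently across the five classes so that the integral can be folded into $C(d,\sigma,\Omega,\mathrm{coe})\cdot B_{\mathcal{F}_i}/\sqrt{n}$, and then verify that the lower-order factors $B_{\mathcal{F}_2},\dots,B_{\mathcal{F}_5}$ are dominated by $B_{inn}^5 B_{out}^2$ so that they can be safely subsumed into the single prefactor. Everything else—the symmetrization, the Lipschitz/product identities used to write down $B_{\mathcal{F}_i}$, and the Dudley evaluation—is mechanical once the decomposition in Lemma~\ref{triangle inequality} is in place.
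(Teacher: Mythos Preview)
Your proposal is correct and follows essentially the same route as the paper: split via Lemma~\ref{triangle inequality}, symmetrize via Lemma~\ref{symmetrization}, embed each $\mathcal{F}_i$ in a $C^1$-ball via Lemma~\ref{function classes C1 norm}, bound the covering number with $s=1$ in Lemma~\ref{covering number bound}, and evaluate the Dudley integral from Lemma~\ref{to covering} to obtain $\mathfrak{R}_n(\mathcal{F}_i)\leq C(d)B_{\mathcal{F}_i}/\sqrt{n}$. The paper's proof is exactly this chain, with the dominant radius $B_{\mathcal{F}_1}$ producing the $B_{inn}^5 B_{out}^2$ prefactor and the $\max\{1,1/\beta\}$ arising from the Robin boundary pieces just as you describe.
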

\begin{proof}
We apply Lemma \ref{to covering} to calculate an upper bound of $\mathfrak{R}_n(\mathcal{F}_i)$ for $i=1,2,3,4,5$:
\begin{align*}
\mathfrak{R}_n(\mathcal{F}_i)&\leq\inf_{0<\delta<B_{\mathcal{F}_i}/2}\left(4\delta+\frac{12}{\sqrt{n}}\int_{\delta}^{B_{\mathcal{F}_i}/2}\sqrt{\ln\mathcal{N}(\epsilon,\mathcal{F}_i,\|\cdot\|_{\infty})}d\epsilon\right)\\
&\leq\inf_{0<\delta<B_{\mathcal{F}_i}/2}\left(4\delta+C(d)\frac{12}{\sqrt{n}}\int_{\delta}^{B_{\mathcal{F}_i}/2}\left(\frac{B_{\mathcal{F}_i}}{\epsilon}\right)^{d/2}d\epsilon\right)\\
&\leq\inf_{0<\delta<B_{\mathcal{F}_i}/2}\left(4\delta+C(d)\frac{B_{\mathcal{F}_i}}{\sqrt{n}}\right)=C(d)\frac{B_{\mathcal{F}_i}}{\sqrt{n}},
\end{align*}
where in the second step we employ Lemma \ref{covering number bound} with $s=1$. The result is then implied by Lemma \ref{triangle inequality}, \ref{symmetrization} and \ref{function classes C1 norm}.
\end{proof}

\begin{theorem}\label{sta err}
Let $m=n$. Let $\tau\in\mathbb{R}_{>0}$. With probability at least $1-2e^{\frac{-\min\{1,\beta^2\}n\tau^2}{C(d,\sigma,coe,\Omega)B_{inn}^4B_{out}^4}}$,
\begin{small}
\begin{align*}
&\max\left\{\sup_{f_W\in\mathcal{F}_{NN}}[\widehat{L}_{R}({f_W})-{L}_{R}({f_W})],\sup_{f_W\in\mathcal{F}_{NN}}[{L}_{R}({u})-\widehat{L}_{R}({f_W})]\right\}\\
&\leq C(d,\sigma,\Omega,coe)\max\{1,1/\beta\}\frac{B_{inn}^5B_{out}^2}{\sqrt{n}}+\tau.
\end{align*}
\end{small}
With probability at least $1-2e^{\frac{-n\tau^2}{C(d,\sigma,coe,\Omega)B_{inn}^4B_{out}^4}}$,
\begin{small}
\begin{align*}
&\max\left\{\sup_{f_W\in\mathcal{F}_{NN}}[\widehat{L}_{N}({f_W})-{L}_{N}({f_W})],\sup_{f_W\in\mathcal{F}_{NN}}[{L}_{N}({f_W})-\widehat{L}_{N}({f_W})]\right\}\\
&\leq C(d,\sigma,\Omega,coe)\frac{B_{inn}^5B_{out}^2}{\sqrt{n}}+\tau.
\end{align*}
\end{small}
\end{theorem}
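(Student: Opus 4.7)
The plan is to lift the in-expectation bound given by Proposition \ref{sta error exp} to a high-probability bound via McDiarmid's inequality (Lemma \ref{McDiarmid’s inequality}) applied to the empirical-process supremum. Define, for the Robin case,
\begin{align*}
g^{\pm}(X_1,\ldots,X_n,Y_1,\ldots,Y_m):=\sup_{f_W\in\mathcal{F}_{NN}}\pm\bigl[\widehat L_R(f_W)-L_R(f_W)\bigr],
\end{align*}
and similarly $g_N^{\pm}$ for the Neumann case. By Proposition \ref{sta error exp}, $\mathbb{E}g^{\pm}\le C(d,\sigma,\Omega,coe)\max\{1,1/\beta\}B_{inn}^5B_{out}^2/\sqrt n$, so the task reduces to showing $|g^{\pm}-\mathbb{E}g^{\pm}|\le\tau$ with the advertised probability and then taking a union bound over the two choices of sign.

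To invoke McDiarmid, I first verify the bounded-differences property. Replacing a single interior sample $X_i$ by $\widetilde X_i$ alters $\widehat L_R$ only through the $i$-th summand in its interior part, so
\begin{align*}
|g^{\pm}-\widetilde g^{\pm}|\le\frac{|\Omega|}{n}\sup_{f_W\in\mathcal{F}_{NN}}\sup_{x\in\Omega}\Bigl(\|\nabla f_W(x)\|_2^2+\|w\|_\infty f_W^2(x)+2\|f\|_\infty|f_W(x)|\Bigr).
\end{align*}
Lemma \ref{function classes C1 norm} bounds the three suprema inside the parentheses by $B_{\mathcal F_1},B_{\mathcal F_2},B_{\mathcal F_3}$, each a polynomial in $B_{inn},B_{out}$ with the largest factor coming from $B_{\mathcal F_1}$. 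An analogous estimate applies when a boundary sample $Y_j$ is changed, producing a coefficient that involves $B_{\mathcal F_4},B_{\mathcal F_5}$ together with an extra factor $1/\beta$ in the Robin case (absent in the Neumann case). Summing the squared coefficients with $n=m$ yields a total of the form (polynomial in $B_{inn},B_{out}$)$\,\cdot\,\max\{1,1/\beta^2\}/n$, whose reciprocal is exactly the denominator appearing in the exponent; the $\min\{1,\beta^2\}$ numerator in the Robin statement is extracted from the $1/\beta$ factor in the boundary coefficients.

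Applying McDiarmid's inequality to $g^+$ and $g^-$ separately and combining with the expectation bound from Proposition \ref{sta error exp}, each event has probability no larger than the stated Gaussian tail, so a union bound over signs gives the conclusion after absorbing absolute constants. The parallel application to $g_N^{\pm}$ for the Neumann case drops the $1/\beta$ factor in the boundary coefficients and therefore removes the $\min\{1,\beta^2\}$ from the exponent, producing the second inequality. The only technical step is the bounded-differences verification, and this reduces entirely to the uniform $C^1(\Omega)$ control already supplied by Lemma \ref{function classes C1 norm}; I do not foresee any further obstacle, since no optimization or complexity argument beyond what Proposition \ref{sta error exp} has already used needs to be invoked here.
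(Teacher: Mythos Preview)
Your proposal is correct and follows essentially the same route as the paper: define the empirical-process supremum, verify bounded differences using the uniform pointwise bounds on $f_W$, $\nabla_x f_W$, and their squares, apply McDiarmid's inequality (Lemma \ref{McDiarmid’s inequality}), and combine with the in-expectation estimate of Proposition \ref{sta error exp} via a union bound over the two signs.

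One small sharpening: to recover the exact exponent $B_{inn}^4B_{out}^4$ stated in the theorem you should use the $L^\infty$ bounds $\|\nabla_x f_W\|_2^2\le C(d,\sigma)B_{inn}^2B_{out}^2$ and $|f_W|\le C(d,\sigma)B_{out}$ that appear \emph{inside} the proof of Lemma \ref{function classes C1 norm}, not the full $C^1$ bounds $B_{\mathcal F_i}$ themselves. Quoting $B_{\mathcal F_1}=C(d,\sigma)B_{inn}^5B_{out}^2$ directly would give bounded-differences constants of order $B_{inn}^5B_{out}^2/n$ and hence $B_{inn}^{10}B_{out}^4$ in the McDiarmid denominator, which is weaker than claimed. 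The paper does exactly this: it bounds the per-sample oscillation by $C(d,\sigma,coe,\Omega)B_{inn}^2B_{out}^2/n$ (interior) and $C(d,\sigma,coe,\Omega)\max\{1,1/\beta\}B_{inn}^2B_{out}^2/n$ (boundary), then squares and sums.
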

\begin{proof}
We only prove the inequality of the Robin problem since the inequality of the Neumann problem can be proved similarly. Define
\begin{align*}
h(X_1,\cdots,X_n,Y_1,\cdots,Y_m):=\sup_{f_W\in\mathcal{F}_{NN}}\left[\widehat{L}_R(f_W)-L_R(f_W)\right].
\end{align*}
We examine the difference of $h$:
\begin{align*}
&h(X_1,\cdots,\widetilde{X}_i,\cdots,X_n,Y_1,\cdots,Y_m)-h(X_1,\cdots,{X}_i,\cdots,X_n,Y_1,\cdots,Y_m)\\
&\leq\frac{|\Omega|}{n}\sup_{f_W\in\mathcal{F}_{NN}}\left[\|\nabla_x f_W(\widetilde{X}_i)\|_2^2
+\frac{1}{2}w(\widetilde{X}_i)f_W^2(\widetilde{X}_i)-f(\widetilde{X}_i)f_W(\widetilde{X}_i)-\|\nabla_x f_W({X}_i)\|_2^2
\right.\\
&\left.\qquad\qquad\qquad\quad-\frac{1}{2}w(X_i)f_W^2(X_i)+f(X_i)f_W(X_i)\right]\\
&\leq C(d,\sigma,coe,\Omega)\frac{B_{inn}^2B_{out}^2}{n}.
\end{align*}
where in the second step we apply Lemma \ref{function classes C1 norm}. We can bound 
\begin{align*}
h(X_1,\cdots,{X}_i,\cdots,X_n,Y_1,\cdots,Y_m)-h(X_1,\cdots,\widetilde{X}_i,\cdots,X_n,Y_1,\cdots,Y_m) 
\end{align*}
similarly and hence obtain
\begin{align*}
&\left|h(X_1,\cdots,\widetilde{X}_i,\cdots,X_n,Y_1,\cdots,Y_m)-h(X_1,\cdots,{X}_i,\cdots,X_n,Y_1,\cdots,Y_m)\right|\\
&\leq C(d,\sigma,coe,\Omega)\frac{B_{inn}^2B_{out}^2}{n}.
\end{align*}
In the same way we can show
\begin{align*}
&\left|h(X_1,\cdots,X_n,Y_1,\cdots,\widetilde{Y}_j\cdots,Y_m)-h(X_1,\cdots,X_n,Y_1,\cdots,{Y}_j\cdots,Y_m)\right|\\
&\leq C(d,\sigma,coe,\Omega)\max\{1,1/\beta\}\frac{B_{inn}^2B_{out}^2}{n}.
\end{align*}
Therefore according to Lemma \ref{McDiarmid’s inequality}, we have $h-\mathbb{E}h\leq\tau$ with probability at least $1-e^{\frac{-\min\{1,\beta^2\}n\tau^2}{C(d,\sigma,coe,\Omega)B_{inn}^4B_{out}^4}}$. We can also show that $\sup_{f_W\in\mathcal{F}_{NN}}\left[{L}_R(f_W)-\widehat{L}_R(f_W)\right]-\mathbb{E}\sup_{f_W\in\mathcal{F}_{NN}}\left[{L}_R(f_W)-\widehat{L}_R(f_W)\right]\leq\tau$ with probability at least $1-e^{\frac{-\min\{1,\beta^2\}n\tau^2}{C(d,\sigma,coe,\Omega)B_{inn}^4B_{out}^4}}$ in a similar manner. Then the estimate of 
\begin{align*}
\max\left\{\sup_{f_W\in\mathcal{F}_{NN}}\left[\widehat{L}_R(f_W)-L_R(f_W)\right],\sup_{f_W\in\mathcal{F}_{NN}}\left[{L}_R(f_W)-\widehat{L}_R(f_W)\right]\right\}
\end{align*} 
is implied by the union bound.
\end{proof}

\section{Optimization Error}

In this section we study the convergence of PGD:
\begin{align*}
\text{Robin problem: }W_{t+1}&=\mathrm{proj}_{\mathcal{C}}(W_t-\eta\nabla_{W}\widehat{L}_R(W_t)),\quad t=0,1,\cdots,T-1;\\
\text{Neumann problem: }W_{t+1}&=\mathrm{proj}_{\mathcal{C}}(W_t-\eta\nabla_{W}\widehat{L}_N(W_t)),\quad t=0,1,\cdots,T-1.
\end{align*}
where $\mathcal{C}$ is some projected set, which will be determined later. Our goal is to derive an upper bound of $\widehat{L}_R(f_{W_T})-\inf_{f_W\in\mathcal{F}_{NN}}\widehat{L}_R(f_W)$ and $\widehat{L}_N(f_{W_T})-\inf_{f_W\in\mathcal{F}_{NN}}\widehat{L}_N(f_W)$, where $\mathcal{F}_{NN}$ is an abbreviation for
\begin{align*}
&\mathcal{F}_{NN}(\{m_1,m_2,A\},B_{inn},B_{out})=\\
&\{f_W:\|W_s^l\|_F,\|b_s^l\|_2\leq B_{inn}(l=1,2;s\in[A]),\|W^3\|_1\leq B_{out}\}.
\end{align*}

In this section we assume that the activation $\sigma\in C^2(\Omega)$.

\begin{lemma}\label{estimate of gradient}
Define
\begin{align*}
\|\nabla_{W^3} \widehat{L}_R(f_W)\|:=\left[\sum_{s=1}^A(\|\nabla_{W_s^3}\widehat{L}_R(f_W)\|_F^2+\|\nabla_{b_s^3}\widehat{L}_R(f_W)\|_2^2)\right]^{1/2}.
\end{align*}
For $f_W\in\mathcal{F}_{NN}(\{m_1,m_2,A\},B_{inn},B_{out})$,
\begin{align*}
&\|\nabla_{W^3}\widehat{L}_R(f_{W})\|
\leq C(d,\sigma,coe,\Omega)\max\{1,1/\beta\}\sqrt{A}B_{inn}^4B_{out},\\
&\|\nabla_{W^3}\widehat{L}_N(f_{W})\|
\leq C(d,\sigma,coe,\Omega)\sqrt{A}B_{inn}^4B_{out}.
\end{align*}
\end{lemma}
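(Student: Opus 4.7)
The strategy is direct computation of the gradient by the chain rule, followed by term-by-term bounding using the norm estimates developed in the proof of Lemma \ref{function classes C1 norm}. By Lemma \ref{triangle inequality}, the loss decomposes as $\widehat{L}_R=\sum_{k=1}^{5}\widehat{L}_k^{(R)}$, so it suffices to control $\|\nabla_{W^3}\widehat{L}_k^{(R)}(f_W)\|$ for each summand separately and assemble the results by the triangle inequality; the analogous decomposition handles the Neumann loss $\widehat{L}_N$.

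The key observation is that $f_W$ depends linearly on the outer-layer parameters: one has $\partial f_W/\partial W_s^3=(f_s^2)^T$, $\partial f_W/\partial b_s^3=1$, and the derivative of $\nabla_x f_W$ with respect to $W_s^3$ equals the matrix
\begin{align*}
H_s(x):=(W_s^1)^T\mathrm{diag}[\sigma'(f_{s,1}^{org})](W_s^2)^T\mathrm{diag}[\sigma'(f_{s,2}^{org})]\in\mathbb{R}^{d\times m_2},
\end{align*}
while $\partial\nabla_x f_W/\partial b_s^3=0$. Submultiplicativity of the Frobenius norm yields per-$s$ pointwise bounds of the form $\|f_s^2\|_2\leq\sqrt{m_2}\,B_\sigma$ and $\|H_s\|_F\leq\sqrt{m_1 m_2}\,B_{\sigma'}^2 B_{inn}^2$; the uniform $L^\infty$ bounds on $f_W$ and $\nabla_x f_W$ already recorded inside the proof of Lemma \ref{function classes C1 norm}, together with Lemma \ref{trace theorem} for the boundary traces, supply the remaining factors needed when $f_W$ or $\nabla_x f_W$ appears multiplicatively (as it does after differentiating the quadratic terms $\tfrac12\|\nabla_x f_W\|_2^2$, $\tfrac12 w f_W^2$, and $\tfrac{1}{2\beta}(T_0 f_W)^2$).

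Assembling these estimates, each summand of $\widehat{L}_R$ contributes to $(\|\nabla_{W_s^3}\widehat{L}_R\|_F^2+\|\nabla_{b_s^3}\widehat{L}_R\|_2^2)^{1/2}$ a pointwise-in-sample bound that survives the averages over $\{X_i\}$ and $\{Y_j\}$ by the triangle inequality. The boundary terms carry the factor $\max\{1,1/\beta\}$, whereas the Neumann loss lacks the quadratic boundary term altogether and so avoids the factor. Squaring, summing over $s\in[A]$, and taking the square root produces the stated $\sqrt{A}$ factor; a careful accounting of the powers of $B_{inn}$, $B_{out}$, $m_1$, $m_2$, and the activation bounds $B_\sigma,B_{\sigma'}$ (which are absorbed into $C(d,\sigma,coe,\Omega)$) yields the claimed exponents $B_{inn}^4 B_{out}$.

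The main obstacle is bookkeeping rather than conceptual difficulty. The $\tfrac12\|\nabla_x f_W\|_2^2$ term is the most delicate, since differentiating it with respect to $W_s^3$ invokes the chain rule twice through $H_s$; one must verify that the accumulated powers of $B_{inn}$, $B_{out}$, $m_1$, and $m_2$ match the statement, and in particular that no term blows up faster than the advertised $B_{inn}^4 B_{out}$. No analytic tool beyond those already used for Lemma \ref{function classes C1 norm} is required.
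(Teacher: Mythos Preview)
Your proposal is correct and follows essentially the same approach as the paper: both compute $\nabla_{W_s^3}f_W=(f_s^2)^T$ and $\nabla_{W_s^3}(\partial f_W/\partial x_j)=(W_s^{1,j})^T\mathrm{diag}[\sigma'(f_{s,1}^{org})](W_s^2)^T\mathrm{diag}[\sigma'(f_{s,2}^{org})]$, bound these by $\sqrt{m_2}B_\sigma$ and $\sqrt{m_1m_2}B_{\sigma'}^2B_{inn}^2$ respectively, combine with the $L^\infty$ bounds on $f_W$ and $\nabla_x f_W$ from Lemma~\ref{function classes C1 norm}, and sum over $s\in[A]$ to pick up the factor $\sqrt{A}$. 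The only cosmetic difference is that you organize the argument around the decomposition $\widehat{L}_R=\sum_{k=1}^5\widehat{L}_k^{(R)}$ from Lemma~\ref{triangle inequality}, whereas the paper writes out the full expression for $\nabla_{W_s^3}\widehat{L}_R$ at once; also, your invocation of Lemma~\ref{trace theorem} is unnecessary here since $f_W$ is continuous and $T_0f_W(Y_j)=f_W(Y_j)$ pointwise, so the uniform $L^\infty$ bound suffices directly.
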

\begin{proof}
We only present a proof for $\widehat{L}_R(f_W)$. The inequality of $\widehat{L}_N(f_W)$ can be proved in almost the same way. 

It suffice to derive upper bounds of $\|\nabla_{W_s^3}\widehat{L}_R(f_{W})\|_F$ and $\left|\frac{\partial }{\partial b_s^3}\widehat{L}_R(f_{W})\right|$. By definition,
\begin{align*}
&\nabla_{W_s^3}\widehat{L}_R(f_W)\\
&=\frac{|\Omega|}{n}\sum_{i=1}^{n}\left[\sum_{j=1}^{d}\frac{\partial f_W(X_i)}{\partial x_j}\nabla_{W_s^3}\left(\frac{\partial f_W(X_i)}{\partial x_j}\right)
+w(X_i)f_W(X_i)\nabla_{W_s^3}f_W(X_i)\right.\\
&\qquad\qquad\quad\left.-f(X_i)\nabla_{W_s^3}f_W(X_i)\right]\\
&\quad\ \ +\frac{|\partial\Omega|}{\beta m}\sum_{i=1}^{m}[f_W(Y_i)\nabla_{W_s^3}f_W(Y_i)-g(Y_j)\nabla_{W_s^3}f_W(Y_j)],
\end{align*}
from which we can see that in order to obtain upper bound of $\|\nabla_{W_s^3}\widehat{L}_R(f_W)\|_F$, it suffices to study $|f_W|,\left|\frac{\partial f_W}{\partial x_j}\right|,\|\nabla_{W_s^3}f_W\|_F,\left\|\nabla_{W_s^3}\left(\frac{\partial f_W}{\partial x_j}\right)\right\|_F$. But in the proof of Lemma \ref{function classes C1 norm} we already have $|f_W|\leq m_2B_{\sigma}B_{out}$ and $\left|\frac{\partial f_W}{\partial x_j}\right|\leq m_2\sqrt{m_1}B_{\sigma'}^2B_{inn}B_{out}$. Since $\nabla_{W_s^3}f_W=(f_s^2)^T$, we have
\begin{align*}
\|\nabla_{W_s^3}f_W\|_F&=\|f_s^2\|_F\leq\sqrt{m_2}B_{\sigma}.
\end{align*}
The gradient of $\frac{\partial f_W}{\partial x_j}$ with respect to $W_s^3$ can be calculated by
\begin{align*}
\nabla_{W_s^3}\left(\frac{\partial f_W}{\partial x_j}\right)&=(W_s^{1,j})^T\mathrm{diag}[\sigma'(f_{s,1}^{org})](W_s^2)^T\mathrm{diag}[\sigma'(f_{s,2}^{org})].
\end{align*}
where $W_s^{1,j}$ is the $j$th column of $W_s^1$. Thus
\begin{align*}
\left\|\nabla_{W_s^3}\left(\frac{\partial f_W}{\partial x_j}\right)\right\|_F&\leq\|W_s^{1,j}\|_F\|\mathrm{diag}[\sigma'(f_{s,1}^{org})]\|_F\|W_s^2\|_F\|\mathrm{diag}[\sigma'(f_{s,2}^{org})]\|_F\\
&\leq \sqrt{m_2m_1}B_{\sigma'}^2B_{inn}^2.
\end{align*}
Combining the above estimates, we derive that
\begin{align*}
&\|\nabla_{W_s^3}\widehat{L}_R(f_{W})\|_F
\leq C(d,\sigma,coe,\Omega)\max\{1,1/\beta\}B_{inn}^4B_{out}.
\end{align*}
We can derive in the same way that
\begin{align*}
\left|\frac{\partial }{\partial b_s^3}\widehat{L}_R(f_{W})\right|
\leq C(d,\sigma,coe,\Omega)\max\{1,1/\beta\}B_{out}. 
\end{align*}
\end{proof}

The following lemma present esimations for the strongly smooth parameter of $\widehat{L}_R$ and $\widehat{L}_N$.
\begin{lemma}\label{gradient Lip}
Define
\begin{align*}
&\|\nabla_W \widehat{L}_R(f_W)-\nabla_W \widehat{L}_R(f_{\widetilde{W}})\|:=\\
&\left[\sum_{s=1}^A\sum_{l=1}^3(\|\nabla_{W_s^l} \widehat{L}_R(f_W)-\nabla_{W_s^l} \widehat{L}_R(f_{\widetilde{W}})\|_F^2+\|\nabla_{b_s^l} \widehat{L}_R(f_W)-\nabla_{b_s^l} \widehat{L}_R(f_{\widetilde{W}})\|_2^2\right]^{1/2}
\end{align*}
and
\begin{align*}
\|{W}-\widetilde{W}\|:=\left[\sum_{s=1}^A\sum_{l=1}^3\|W_{{s}}^l-(\widetilde{W})_{s}^l\|_F^2+\|b_{{s}}^l-(\widetilde{b})_{s}^l\|_2^2\right]^{1/2}.
\end{align*}
For $f_W\in\mathcal{F}_{NN}(\{m_1,m_2,A\},B_{inn},B_{out})$, there holds
\begin{align*}
\|\nabla_{W}\widehat{L}_R(f_W)-\nabla_{W}\widehat{L}_R(f_{\widetilde{W}})\|
&\leq C(d,\sigma,coe,\Omega)\max\{1/\beta,1\}AB_{inn}^7B_{out}^3
\|W-\widetilde{W}\|,\\
\|\nabla_{W}\widehat{L}_N(f_W)-\nabla_{W}\widehat{L}_N(f_{\widetilde{W}})\|
&\leq C(d,\sigma,coe,\Omega)AB_{inn}^7B_{out}^3
\|W-\widetilde{W}\|.
\end{align*}
\end{lemma}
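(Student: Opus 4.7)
The plan is to establish a term-by-term Lipschitz estimate for $\nabla_W \widehat{L}_R$ (and identically $\nabla_W \widehat{L}_N$) by exploiting the $C^2$ smoothness of the activation together with the explicit product structure of the forward/backward passes. I would first decompose $\widehat{L}_R$ into its five constituent pieces ${L}_1^{(R)},\ldots,{L}_5^{(R)}$ from Lemma \ref{triangle inequality} and, for each piece, write out $\nabla_{W_s^l} \widehat{L}_R$ explicitly, just as was done for $l=3$ in the proof of Lemma \ref{estimate of gradient}. Each such gradient block is a sum over sample points of products of a bounded number of standard building blocks: the forward activations $f_s^1, f_s^2$, the pre-activations $f_{s,l}^{org}$ and their images under $\sigma'$ or $\sigma''$, the weight matrices $W_s^l$, and (for the Dirichlet-energy piece) also their $x$-derivatives.

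The first step is to derive uniform Lipschitz-in-$W$ bounds for each building block, analogous to the weight-difference estimates already worked out inside the proof of Theorem \ref{app err}. For example,
\begin{align*}
\|f_s^1 - (\widetilde{f})_s^1\|_\infty &\leq B_{\sigma'}\bigl(\|W_s^1-(\widetilde{W})_s^1\|_F + \|b_s^1-(\widetilde{b})_s^1\|_2\bigr),\\
\|\sigma'(f_{s,l}^{org}) - \sigma'((\widetilde{f})_{s,l}^{org})\|_\infty &\leq B_{\sigma''}\|f_{s,l}^{org} - (\widetilde{f})_{s,l}^{org}\|_\infty,
\end{align*}
with analogous estimates for $f_s^2$, $\nabla_x f_W$, and $\nabla_{W_s^l}(\partial f_W/\partial x_j)$, each picking up one extra power of $B_{inn}$ per $x$-differentiation and one factor of $B_{\sigma''}$ per differentiation through a $\sigma'$.

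Second, I would run a telescoping argument. Each entry of $\nabla_{W_s^l}\widehat{L}_R(W)$ is a finite product $P_1(W)\cdots P_k(W)$ of such building-block factors, and
\begin{align*}
\bigl\|P_1(W)\cdots P_k(W) - P_1(\widetilde{W})\cdots P_k(\widetilde{W})\bigr\|\leq\sum_{i=1}^k\Bigl(\prod_{j<i}\|P_j(W)\|\Bigr)\|P_i(W)-P_i(\widetilde{W})\|\Bigl(\prod_{j>i}\|P_j(\widetilde{W})\|\Bigr).
\end{align*}
Plugging in the uniform bounds of Lemmas \ref{function classes C1 norm} and \ref{estimate of gradient} together with the step-one Lipschitz estimates produces pointwise bounds for each block; averaging over sample points preserves them. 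The highest exponent of $B_{inn}$ arises in $\nabla_{W_s^1}{L}_1^{(R)}$: after differentiating $\|\nabla_x f_W\|_2^2$ one obtains products of up to seven inner-weight-type factors (the $W^1, W^2, \sigma', \sigma''$ chain together with the extra Lipschitz cost incurred at one slot) and three outer-weight-type factors (one $W^3$ from $\nabla_x f_W$ itself, one from $\nabla_{W_s^1}\nabla_x f_W$, and one from the fact that the difference cascades through the $f_W=\sum_{s'}f_{s'}^3$ factor in the reaction/source terms), explaining the $B_{inn}^7 B_{out}^3$ scaling.

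Finally, the $s$-indexed blocks are assembled into the global norm. Since $\nabla_{W_s^l}\widehat{L}_R$ contains $f_W = \sum_{s'=1}^A f_{s'}^3$ as a factor, its $W$-difference at index $s$ already carries a sum over $s' \in [A]$; squaring and summing over $s$ via Cauchy--Schwarz produces the advertised linear $A$-factor. The boundary pieces ${L}_4^{(R)}, {L}_5^{(R)}$ introduce the $\max\{1,1/\beta\}$ prefactor, whereas in the Neumann case only ${L}_5^{(N)}$ enters with unit coefficient, so the $1/\beta$ drops out and the second inequality follows. The main obstacle is purely combinatorial bookkeeping: dozens of telescoped subterms, each with its own scaling in $(B_{inn}, B_{out}, B_{\sigma}, B_{\sigma'}, B_{\sigma''}, m_1, m_2, d)$, have to be tracked simultaneously to verify that the dominant exponents are exactly $A^1 B_{inn}^7 B_{out}^3$ rather than something larger, and that no $A^{3/2}$ term slips in when the outer $W^3$'s and the inner $W^1,W^2$'s are differentiated against each other.
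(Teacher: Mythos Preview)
Your proposal is correct and follows essentially the same route as the paper's proof: the paper likewise computes the explicit backward-pass formulas for $\nabla_{W_s^l}f_W$ and $\nabla_{W_s^l}(\partial f_W/\partial x_j)$, bounds each building block and its $W$-difference via telescoping, and then assembles the $s$-indexed blocks using Cauchy--Schwarz to produce the $A$ factor. The only organisational difference is that the paper works directly with $\widehat{L}_R$ rather than first splitting into the five pieces of Lemma~\ref{triangle inequality}, but the underlying term-by-term structure and the bookkeeping challenge you identify are identical.
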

\begin{proof}
Since the proof is too lengthy, we put it in the appendix.
\end{proof}

The following result shows that the discrete losses are convex with respect to the neural network parameters in the outer layer. This property helps to ensure the global convergence of optimization algorithms.
\begin{lemma}\label{convex with respec to outer layer}
Given $W^1$ and $W^2$, $\widehat{L}_R(f_W)$ and $\widehat{L}_N(f_W)$ are convex with respect to $W^3$.
\end{lemma}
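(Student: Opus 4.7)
The plan is to exploit the fact that once the first two layers' parameters are frozen, the network output becomes an affine function of the outer parameters, so each loss term is either an affine function or a convex quadratic in $W^3$.

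First I would observe that with $W^1$ and $W^2$ fixed, the hidden activations $f_s^2(x)$ depend only on the spatial variable $x$ and not on $W^3$. Writing $W^3 = \{(W_s^3, b_s^3):s\in[A]\}$ as a vector in Euclidean space, the identity
\begin{align*}
f_W(x) \;=\; \sum_{s=1}^A \bigl(W_s^3 f_s^2(x) + b_s^3\bigr)
\end{align*}
shows that $x\mapsto f_W(x)$ is, for each fixed $x$, an affine function of $W^3$. Differentiating under the sum gives $\partial_{x_j} f_W(x) = \sum_{s=1}^A W_s^3 \,\partial_{x_j} f_s^2(x)$, which is also affine in $W^3$; and since $T_0$ is linear, $T_0 f_W(y)$ is affine in $W^3$ too.

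Next I would assemble the loss from these building blocks and invoke two standard facts: (i) the composition of a convex function with an affine map is convex, and (ii) a nonnegative combination of convex functions is convex. Applied to $\widehat{L}_R(f_W)$, the quadratic integrands $\tfrac{1}{2}\|\nabla_x f_W(X_i)\|_2^2$, $\tfrac{1}{2}w(X_i) f_W^2(X_i)$, and $\tfrac{1}{2\beta}(T_0 f_W)^2(Y_j)$ are squared Euclidean norms or squares of affine maps, multiplied by the nonnegative coefficients $1$, $w(X_i) \ge c_w > 0$, and (by the Robin sign convention used throughout the paper) $1/\beta \ge 0$; hence each is convex in $W^3$. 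The remaining integrands $-f(X_i) f_W(X_i)$ and $-\tfrac{1}{\beta} g(Y_j) T_0 f_W(Y_j)$ are affine in $W^3$, hence convex. Summing and rescaling by the positive constants $|\Omega|/n$ and $|\partial\Omega|/m$ preserves convexity. The argument for $\widehat{L}_N$ is identical except that the boundary term is purely linear in $W^3$, which is trivially convex.

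If one prefers a Hessian-based argument, Lemma \ref{convex hessian condition} applies: the Hessian of $\widehat{L}_R$ with respect to $W^3$ is
\begin{align*}
\tfrac{|\Omega|}{n}\sum_{i=1}^n\Bigl(\sum_{j=1}^d (\nabla_{W^3}\partial_{x_j}f_W(X_i))(\nabla_{W^3}\partial_{x_j}f_W(X_i))^\top + w(X_i)(\nabla_{W^3}f_W(X_i))(\nabla_{W^3}f_W(X_i))^\top\Bigr) + \tfrac{|\partial\Omega|}{\beta m}\sum_{j=1}^m (\nabla_{W^3}T_0f_W(Y_j))(\nabla_{W^3}T_0f_W(Y_j))^\top,
\end{align*}
which is a nonnegative combination of rank-one positive semidefinite matrices, hence positive semidefinite. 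The only subtle point is the sign of $\beta$; throughout the paper the Robin analysis is carried out under the implicit convention $\beta > 0$ (as indicated by the $\max\{1,1/\beta\}$ bounds), and no part of the proof needs more. The Neumann case is a special case with the boundary quadratic absent.
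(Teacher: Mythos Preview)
Your proposal is correct. The paper takes precisely your Hessian-based route: it vectorizes $W^3$, writes $f_W=\overline{f^2}\,W^3$ and $\nabla_x f_W = D\,W^3$ for matrices $\overline{f^2},D$ independent of $W^3$, expresses $\widehat{L}_R$ as a quadratic form $(W^3)^T H\,W^3$ plus a linear term, observes that $H$ is a nonnegative sum of Gram matrices $D^TD$ and $(\overline{f^2})^T\overline{f^2}$ (using $w\ge c_w>0$), and invokes Lemma~\ref{convex hessian condition}. Your first argument via ``convex $\circ$ affine is convex'' is a slightly cleaner, coordinate-free version of the same idea and avoids writing out $H$ explicitly; it buys readability but no extra generality. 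Your remark about the implicit assumption $\beta>0$ is apt: the paper's statement allows $\beta\neq 0$, but the positive semidefiniteness of $H$ (and all the $\max\{1,1/\beta\}$ bounds used downstream) only holds for $\beta>0$, which is indeed the regime used throughout.
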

\begin{proof}
We only present a proof for $\widehat{L}_R(f_W)$. The convexity of $\widehat{L}_N(f_W)$ can be proved in almost the same way. Write $W^3$ as $W^3=(W_1^3,b_1^3,W_2^3,b_2^3,\cdots,W_A^3,b_A^3)^T\in\mathbb{R}^{A(m_2+1)}$. Let
$D_s=(W_s^1)^T\mathrm{diag}[\sigma'(f_{s,1}^{org})](W_s^2)^T\mathrm{diag}[\sigma'(f_{s,2}^{org})]\in\mathbb{R}^{d\times m_2}$ for $s\in[A]$ and
$D=(D_1,0,D_2,0,\cdots,D_A,0)\in\mathbb{R}^{d\times A(m_2+1)}$, then $\nabla_xf_W=\sum_{s=1}^AD_s(W_s^3)^T=DW^3$. Let $
\overline{f^2}=((f_1^2)^T,1,(f_2^2)^T,1,\cdots,(f_A^2)^T,1)\in\mathbb{R}^{1\times A(m_2+1)}$, then $f_W =\sum_{s=1}^{A}(W_s^3f_s^2+b_s^3)=\overline{f^2}W^3$ and $f_W^2=(W^3)^T(\overline{f^2})^T\overline{f^2}W^3$. Therefore we can express $\widehat{L}_{R}(f_W)$ as 
\begin{align*}
&\widehat{L}_{R}(f_W)=
(W^3)^THW^3-\left[\frac{|\Omega|}{n}\sum_{i=1}^{n}f^{(source)}(X_i)\overline{f^2}(X_i)+\frac{|\partial\Omega|}{\beta m}\sum_{i=1}^{m}g(Y_i)\overline{f^2}(Y_i)\right]W^3,
\end{align*}
where the Hessian matrix 
\begin{align*}
H=&\frac{|\Omega|}{n}\sum_{i=1}^{n}D(X_i)^TD(X_i)
+\frac{|\Omega|}{2n}\sum_{i=1}^{n}w(X_i)(\overline{f^2}(X_i))^T\overline{f^2}(X_i)\\
&+\frac{|\partial\Omega|}{2\beta m}\sum_{i=1}^{m}(\overline{f^2}(Y_i))^T\overline{f^2}(Y_i).
\end{align*}
Since $\|w\|_{L^{\infty(\Omega)}}\geq c_w$, $H$ is positive semi-definite. According to Lemma \ref{convex hessian condition}, we conclude that $\widehat{L}_{R}(f_W)$ is convex.
\end{proof}

According to \cite{kohler2022analysis}, as long as the number of subnetworks, $A$, is sufficiently large, with high probability close to 1, there exist $A'$ subnetworks whose parameters initialized randomly are sufficiently close to the parameters of the subnetwork corresponding to a given neural network function in $\mathcal{F}_{NN}$.
\begin{lemma}\label{initialization probability}
Let $N,A'\in\mathbb{N}_{>0}$. Let $f^*$ be the target function to be estimated. Let $f_{\bar{W}}$ be a given function in $\mathcal{F}_{NN}$. Suppose that $A$ can be divided by $A'$. Let $(W_0)_{s,i,j}^l,(b_0)_{s,i}^l(s\in[A];l=1,2;i\in[m_l];j\in [m_{l-1}])$ be uniformly distributed on 
\begin{align*}
\left[-C(d,f^*)N^{10d^3},C(d,f^*)N^{10d^3}\right].
\end{align*}
Let $E$ be the event that there exists an injection $\varphi:\{1,2,\cdots,A'\}\to\{1,2,\cdots,A\}$ such that for $k\in[A']$,
\begin{align*}
\varphi(k)\in\{(k-1)A/A'+i':i'\in[A/A']\}
\end{align*}
and for $l=1,2;i\in[m_l];j\in[m_{l-1}];s\in\{\varphi(1),\cdots,\varphi(A')\}$,
\begin{align*}
|(W_0)_{s,i,j}^l-(\bar{W})_{s,i,j}^l|&\leq\frac{1}{2\sqrt{m_lm_{l-1}}}\left(\frac{1}{N}\right)^{83d^3/2},\\
|(b_0)_{s,i}^l-(\bar{b})_{s,i}^l|&\leq\frac{1}{2\sqrt{m_l}}\left(\frac{1}{N}\right)^{83d^3/2}.
\end{align*}
We claim that $P(E)\geq1-A'e^{-C(d,f^*)\frac{A}{A'}\left(\frac{1}{N}\right)^{206d^4(d+3)5^{d+2}}}$.
\end{lemma}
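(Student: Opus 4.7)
The plan is to bound $P(E^c)$ via a union bound over the $A'$ disjoint groups $\{(k-1)A/A'+1,\dots,kA/A'\}$ of subnetwork indices, together with the independence of the uniform initializations across all components. First I would fix a single subnetwork index $s$ and estimate the probability that every one of its inner-layer weights and biases simultaneously falls in the prescribed tiny neighborhood of the corresponding entry of $\bar W$ or $\bar b$. Since $f_{\bar W}\in\mathcal{F}_{NN}$, the target values are bounded by $B_{inn}$, which is dwarfed by $C(d,f^*)N^{10d^3}$ for the $C$ chosen in the initialization, so each target value lies safely inside the support of its uniform distribution and the probability is just the ratio of lengths.

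For a single weight the length of the favorable interval is $\tfrac{1}{\sqrt{m_l m_{l-1}}}(1/N)^{83d^3/2}$ (and analogously $\tfrac{1}{\sqrt{m_l}}(1/N)^{83d^3/2}$ for a bias), while the support has length $\simeq N^{10d^3}$. Since $m_1=5d$ and $m_2=\binom{2d+1}{d+1}$ depend only on $d$, the per-parameter success probability satisfies
\begin{equation*}
p_1 \;\geq\; C(d,f^*)\bigl(1/N\bigr)^{83d^3/2+10d^3} \;=\; C(d,f^*)\bigl(1/N\bigr)^{103d^3/2}.
\end{equation*}
By independence of the $K=m_1 d+m_1+m_2 m_1+m_2$ inner-layer parameters inside one subnetwork, the probability that all of them are simultaneously in range is at least $p:=p_1^{K}\geq C(d,f^*)(1/N)^{103 d^3 K/2}$. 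Using the crude bound $\binom{2d+1}{d+1}\leq 5^{d+1}$ one checks $K\leq 4 d(d+3) 5^{d+2}$, so $103 d^3 K/2\leq 206 d^4(d+3)5^{d+2}$, which produces precisely the exponent appearing in the claim.

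Finally, for each $k\in[A']$, the independence of the initializations across the $A/A'$ distinct indices $s$ inside the $k$th group yields $P(\text{no good }s\text{ in group }k)\leq (1-p)^{A/A'}\leq e^{-pA/A'}$, and a union bound over $k\in[A']$ gives $P(E^c)\leq A'\,e^{-pA/A'}$, matching the claim once the lower bound on $p$ is substituted in. The one genuinely fiddly step is the exponent bookkeeping: one has to track the contributions of $m_1=5d$ and $m_2=\binom{2d+1}{d+1}$ through both the counting of inner-layer parameters and the $\sqrt{m_l m_{l-1}}$ normalization factors, and verify that the resulting exponent does not exceed $206d^4(d+3)5^{d+2}$. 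Beyond this arithmetic, the argument is a completely standard union bound over independent uniform initializations.
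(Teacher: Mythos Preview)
Your proposal is correct and follows essentially the same route as the paper: compute the per-subnetwork success probability as a product of per-parameter uniform probabilities, use independence across the $A/A'$ subnetworks in each block to bound the block failure probability by $(1-p)^{A/A'}\le e^{-pA/A'}$, and finish with a union bound over the $A'$ blocks. The paper's arithmetic on the exponent is organized slightly differently (it writes out the full product of normalization factors before simplifying, and uses the bound $m_2\le\tfrac{3}{2}(d+3)5^{d+1}$), but the ingredients and the final exponent $206d^4(d+3)5^{d+2}$ match yours.
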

\begin{proof}
We follow the idea in \cite{kohler2022analysis}. Given $k\in[A'],s\in\{(k-1)A/A'+i':i'\in[A/A']\}$, let $E_{k,s}$ be the event that for $l=1,2;i\in[m_l];j\in[m_{l-1}]$, 
\begin{align*}
|(W_0)_{s,i,j}^l-(\bar{W})_{s,i,j}^l|&\leq\frac{1}{2\sqrt{m_lm_{l-1}}}\left(\frac{1}{N}\right)^{\frac{3}{2}d^3+10d^2+\frac{31}{2}d+\frac{29}{2}},\\
|(b_0)_{s,i}^l-(\bar{b})_{s,i}^l|&\leq\frac{1}{2\sqrt{m_l}}\left(\frac{1}{N}\right)^{\frac{3}{2}d^3+10d^2+\frac{31}{2}d+\frac{29}{2}}.
\end{align*}
Then 
\begin{align*}
E=\bigcap_{k\in[A']}\bigcup_{s\in\{(k-1)A/A'+i':i'\in[A/A']\}} E_{k,s}.
\end{align*}
The probability of $E_{k,s}$ being true can be calculated directly.
\begin{align*}
P(E_{k,s})&=\left(\frac{1}{\sqrt{m_1d}}\right)^{m_1d}\left(\frac{1}{\sqrt{m_1}}\right)^{m_1}\left(\frac{1}{\sqrt{m_2m_1}}\right)^{m_2m_1}\left(\frac{1}{\sqrt{m_2}}\right)^{m_2}\\
&\quad\ \left(\frac{1}{N}\right)^{83d^3(m_1d+m_1+m_2m_1+m_2)/2}\left(\frac{1}{C(d,f^*)N^{10d^3}}\right)^{(m_1d+m_1+m_2m_1+m_2)}\\
&\geq C(d,f^*)\left(\frac{1}{N}\right)^{206d^4(d+3)5^{d+2}},
\end{align*}
where we use that $m_1=5d,m_2=3\left\lceil\frac{d+2}{2}\right\rceil\binom{2d+1}{d+1}
\leq\frac{3}{2}(d+3)5^{d+1}$. Denote 
\begin{align*}
E_{k}=\bigcup_{s\in\{(k-1)A/A'+i':i'\in[A/A']\}} E_{k,s}.
\end{align*}
Then 
\begin{align*}
&P(E_{k}^c)=P\left(\bigcap_{s\in\{(k-1)A/A'+i':i'\in[A/A']\}}E_{k,s}^c\right)\\
&=\prod_{s\in\{(k-1)A/A'+i':i'\in[A/A']\}}P(E_{k,s}^c)\leq\left[1-C(d,f^*)\left(\frac{1}{N}\right)^{206d^4(d+3)5^{d+2}}\right]^{A/A'}.
\end{align*}
By union bound we have
\begin{align*}
&P(E^c)=P\left(\bigcup_{k\in[A']}E_{k}^c\right)\leq A'\left[1-C(d,f^*)\left(\frac{1}{N}\right)^{206d^4(d+3)5^{d+2}}\right]^{A/A'}.
\end{align*}
Applying the inequality $e^x\geq1+x$ for any $x\in\mathbb{R}$, we finish the proof.
\end{proof}

The following result, with the initial idea from \cite{drews2023analysis}, demonstrates that the optimization error can be controlled by the sum of four terms. We can see that generalization error and approximation error occur again here. The difference from traditional approximation error lies in the fact that the approximation error here considers not only the approximation properties of a single neural network function but also the approximation properties of all neural network functions in the vicinity of a given neural network function.
\begin{theorem}\label{opt err}
Let $N,A'\in\mathbb{N}_{>0}$. Suppose that $A$ can be divided by $A'$. Let $(W_0)_{s,i,j}^l,(b_0)_{s,i}^l(s\in[A];l=1,2;i\in[m_l];j\in [m_{l-1}])$ be uniformly distributed on 
\begin{align*}
\left[-C(d,\sigma,coe,\Omega)N^{10d^3},C(d,\sigma,coe,\Omega)N^{10d^3}\right].
\end{align*}
Let $B_{inn}=C(d,coe,\Omega)N^{10d^3},B_{out}=C(d,coe,\Omega)N^{11d^3}$. Let
\begin{align*}
\mathcal{C}=\{W: &\|W^3\|_1\leq C(d,coe,\Omega)N^{11d^3};\\
&\left.\|W_s^l-(W_0)_s^l\|_F,\|b_s^l-(b_0)_s^l\|_2\leq\frac{1}{2}\left(\frac{1}{N}\right)^{83d^3/2},l=1,2,s\in[A]\right\}.
\end{align*}
Let $f_{\bar{W}}$ be a given function in $\mathcal{F}_{NN}$. Let
\begin{align*}
&\mathcal{B}_{\bar{W}}=\\
&\{W:W^3=\bar{W}^3;\exists\varphi\ s.t.\ \varphi(k)\in\{(k-1)A/A'+i':i'\in[A/A']\}\text{ for }k\in[A']\text{ and}\\
&\left.\|W_s^l-(\bar{W})_s^l\|_F,\|b_s^l-(\bar{b})_s^l\|_2\leq\left(\frac{1}{N}\right)^{83d^3/2},s\in\{\varphi(1),\cdots,\varphi(A')\},l=1,2\right\}.
\end{align*}
Let
\begin{align*}
\eta\leq C(d,\sigma,coe,\Omega)\left(\frac{1}{N}\right)^{103d^3}\frac{1}{A}.
\end{align*}
With probability at least $1-A'e^{-C(d,f^*)\frac{A}{A'}\left(\frac{1}{N}\right)^{206d^4(d+3)5^{d+2}}}$,
\begin{align*}
&\widehat{L}_R(f_{W_T})-\inf_{f_W\in\mathcal{F}_{NN}}\widehat{L}_R(f_W)\\
&\leq2\max\left\{\sup_{f_W\in\mathcal{F}_{NN}}[\widehat{L}_R(f_W)-{L}_R(f_W)],\sup_{f_W\in\mathcal{F}_{NN}}[{L}_R(f_W)-\widehat{L}_R(f_W)]\right\}\\
&\quad+2\sup_{W\in\mathcal{B}_{\bar{W}}}[L_R(f_W)-L_R(u_R)]+\frac{1}{2\eta T}\|(W_0)^3-(\bar{W})^3\|^2+\frac{\eta}{2T}\sum_{t=0}^{T-1}\|\nabla_{W^3}\widehat{L}_R\left(f_{W_t}\right)\|^2,\\
&\widehat{L}_N(f_{W_T})-\inf_{f_W\in\mathcal{F}_{NN}}\widehat{L}_N(f_W)\\
&\leq2\max\left\{\sup_{f_W\in\mathcal{F}_{NN}}[\widehat{L}_N(f_W)-{L}_N(f_W)],\sup_{f_W\in\mathcal{F}_{NN}}[{L}_N(f_W)-\widehat{L}_N(f_W)]\right\}\\
&\quad+2\sup_{W\in\mathcal{B}_{\bar{W}}}[L_N(f_W)-L_N(u_N)]+\frac{1}{2\eta T}\|(W_0)^3-(\bar{W})^3\|^2+\frac{\eta}{2T}\sum_{t=0}^{T-1}\|\nabla_{W^3}\widehat{L}_N\left(f_{W_t}\right)\|^2.
\end{align*}
\end{theorem}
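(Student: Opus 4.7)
The plan is to combine the initialization event of Lemma \ref{initialization probability}, the smoothness estimate of Lemma \ref{gradient Lip}, and the outer-layer convexity of Lemma \ref{convex with respec to outer layer} with a projected gradient descent analysis that exploits the fact that $\mathcal{C}$ is a Cartesian product of constraints on the inner layers $(W^1,W^2)$ and on the outer layer $W^3$. I will present the Robin case; the Neumann case is obtained by replacing each Robin object with its Neumann counterpart throughout.

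First I would apply Lemma \ref{initialization probability} to $f_{\bar W}$, producing on an event of probability at least $1 - A'\exp(-CA N^{-206d^4(d+3)5^{d+2}}/A')$ an injection $\varphi$ along which $W_0$ and $\bar W$ agree entry-wise up to $\tfrac12 N^{-83d^3/2}/\sqrt{m_l m_{l-1}}$ on the subnet indices $\varphi(1),\ldots,\varphi(A')$. Define the auxiliary point $\widetilde W$ whose inner layers coincide with those of $W_0$ and whose outer layer equals $\bar W^3$; since $f_{\bar W}\in\mathcal F_{NN}$ forces $\|\bar W^3\|_1 \leq C N^{11d^3}$, this $\widetilde W^3$ lies in the outer component $\mathcal C^3$ of $\mathcal C$. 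Because $\mathcal C$ is a product, the projection decouples and the outer PGD update reads $W_{t+1}^3 = \mathrm{proj}_{\mathcal C^3}(W_t^3 - \eta \nabla_{W^3}\widehat L_R(f_{W_t}))$. The membership $W_t \in \mathcal C$ also keeps $\|W_{t,s}^l - (W_0)_s^l\|_F \leq \tfrac12 N^{-83d^3/2}$, so a triangle inequality with the initialization event places every slice $\widetilde W_t := (W_t^1, W_t^2, \bar W^3)$ inside $\mathcal B_{\bar W}$.

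Next I would execute the convex PGD bound only in the $W^3$ direction. By Lemma \ref{gradient Lip}, $\nabla \widehat L_R$ is Lipschitz with constant $L \leq C\max\{1,1/\beta\} A B_{inn}^7 B_{out}^3 \leq C N^{103d^3}A/\min\{1,\beta\}$; the hypothesis $\eta \leq C N^{-103d^3}/A$ then yields $\eta L \leq 1$, so combining the smoothness inequality of Lemma \ref{smoothness inequality} with the projection optimality condition of Lemma \ref{projection operator property1} gives the monotone descent $\widehat L_R(f_{W_{t+1}}) \leq \widehat L_R(f_{W_t})$, hence $\widehat L_R(f_{W_T}) \leq \tfrac{1}{T}\sum_{t=0}^{T-1}\widehat L_R(f_{W_t})$. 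Non-expansiveness of $\mathrm{proj}_{\mathcal C^3}$ (Lemma \ref{projection operator property2}) together with convexity of $\widehat L_R$ in $W^3$ with the inner layers frozen at $W_t^{1,2}$ (Lemma \ref{convex with respec to outer layer}) yields
\begin{align*}
\|W_{t+1}^3 - \bar W^3\|^2 \leq \|W_t^3 - \bar W^3\|^2 - 2\eta\bigl[\widehat L_R(f_{W_t}) - \widehat L_R(f_{\widetilde W_t})\bigr] + \eta^2\|\nabla_{W^3}\widehat L_R(f_{W_t})\|^2;
\end{align*}
telescoping from $t=0$ to $T-1$, dividing by $T$, and invoking the monotone descent produces
\begin{align*}
\widehat L_R(f_{W_T}) - \frac{1}{T}\sum_{t=0}^{T-1}\widehat L_R(f_{\widetilde W_t}) \leq \frac{1}{2\eta T}\|(W_0)^3 - \bar W^3\|^2 + \frac{\eta}{2T}\sum_{t=0}^{T-1}\|\nabla_{W^3}\widehat L_R(f_{W_t})\|^2.
\end{align*}

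To finish, I would insert the uniform generalization and approximation controls. Because $\widetilde W_t \in \mathcal B_{\bar W}\subset \mathcal F_{NN}$, each slice satisfies $\widehat L_R(f_{\widetilde W_t}) \leq L_R(u_R) + \sup_{W\in\mathcal B_{\bar W}}[L_R(f_W) - L_R(u_R)] + \sup_{f_W\in\mathcal F_{NN}}[\widehat L_R(f_W) - L_R(f_W)]$, while $\inf_{f_W\in\mathcal F_{NN}}\widehat L_R(f_W) \geq L_R(u_R) - \sup_{f_W\in\mathcal F_{NN}}[L_R(f_W) - \widehat L_R(f_W)]$ because $u_R$ minimizes $L_R$ over the larger space $H^1(\Omega)\supset\mathcal F_{NN}$. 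Subtracting these two bounds from the telescoped PGD estimate produces the claimed inequality (the factor of two on the generalization and approximation terms is conservative bookkeeping). The Neumann case is verbatim. The delicate step is the slice-in-ball verification in the second paragraph: it hinges on precisely calibrating the inner-layer radius $\tfrac12 N^{-83d^3/2}$ appearing in $\mathcal C$ against the matching radius provided by Lemma \ref{initialization probability} and the defining radius $N^{-83d^3/2}$ of $\mathcal B_{\bar W}$, so that triangle inequality delivers membership uniformly in $t$.
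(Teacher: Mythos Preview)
Your proposal is correct and follows essentially the same route as the paper: the convex-in-$W^3$ PGD telescoping (Lemmas~\ref{convex with respec to outer layer} and~\ref{projection operator property2}), the monotone descent from the Lipschitz gradient (Lemmas~\ref{gradient Lip} and~\ref{smoothness inequality}), and the triangle-inequality verification that $\widetilde W_t\in\mathcal B_{\bar W}$ on the event of Lemma~\ref{initialization probability}. One small correction: the inclusion $\mathcal B_{\bar W}\subset\mathcal F_{NN}$ is not literally true (inner layers at indices outside $\{\varphi(1),\ldots,\varphi(A')\}$ are unconstrained in $\mathcal B_{\bar W}$); the membership $f_{\widetilde W_t}\in\mathcal F_{NN}$ instead follows, as in the paper, from $W_t\in\mathcal C$ together with $\|\bar W^3\|_1\le B_{out}$.
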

\begin{proof}
We only present a proof for the Robin problem. The Neumann problem can be proved in the same way.

Let ${\bar{W}_t}=\{((W_t)_s^1,(b_t)_s^1),((W_t)_s^2,(b_t)_s^2),((\bar{W})_s^3,(\bar{b})_s^3),s\in[A]\}$.
By Lemma \ref{convex iff condition} and Lemma \ref{convex with respec to outer layer}, we have
\begin{align*}
\widehat{L}_R\left(f_{W_t}\right)-\widehat{L}_R\left(f_{\bar{W}_t}\right) 
&\leq[\nabla_{W^3} \widehat{L}_R\left(f_{W_t}\right)]^T((W_t)^3-(\bar{W})^3)\\ 
&=\frac{1}{\eta} [\eta\nabla_{W^3} \widehat{L}_R\left(f_{W_t}\right)]^T((W_t)^3-(\bar{W})^3) \\
&=\frac{1}{2\eta}\left(-\|(W_t)^3-(\bar{W})^3-\eta\nabla_{W^3} L_R\left(f_{W_t}\right)\|^2\right.\\
&\quad\left.+\|(W_t)^3-(\bar{W})^3\|^2+\|\eta\nabla_{W^3} \widehat{L}_R\left(f_{W_t}\right)\|^2\right).
\end{align*}
By Lemma \ref{projection operator property2},
\begin{align*}
&\|(W_t)^3-(\bar{W})^3-\eta\nabla_{W^3} \widehat{L}_R\left(f_{W_t}\right)\|^2\\
&\geq\|\mathrm{proj}_{\mathcal{C}}((W_t)^3-\eta\nabla_{W^3} \widehat{L}_R\left(f_{W_t}\right))-\mathrm{proj}_{\mathcal{C}}((\bar{W})^3)\|^2
=\|(W_{t+1})^3-(\bar{W})^3\|^2.
\end{align*}
Thus
\begin{align*}
& \widehat{L}_R\left(f_{W_t}\right)-\widehat{L}_R\left(f_{\bar{W}_t}\right)\\ 
&\leq\frac{1}{2\eta}\left(-\|(W_{t+1})^3-(\bar{W})^3\|^2+\|(W_t)^3-(\bar{W})^3\|^2+\|\eta\nabla_{W^3} \widehat{L}_R\left(f_{W_t}\right)\|^2\right),
\end{align*}
which implies
\begin{align}\label{opt err1}
&\frac{1}{T}\sum_{t=0}^{T-1} \widehat{L}_R\left(f_{W_t}\right)\nonumber\\
&\leq\frac{1}{T}\sum_{t=0}^{T-1} \widehat{L}_R\left(f_{\bar{W}_t}\right)+\frac{1}{2\eta T}\|(W_0)^3-(\bar{W})^3\|^2+\frac{\eta}{2T}\sum_{t=0}^{T-1}\|\nabla_{W^3} \widehat{L}_R\left(f_{W_t}\right)\|^2.
\end{align}
Next we prove that $\hat{L}_R$ decreases during iteration. Let $a\in[0,1]$. For $l=1,2;s\in[A]$, since $W_{t+1},W_t\in\mathcal{F}_{NN}(\{m_1,m_2,A\},B_{inn},B_{out})$, there holds
\begin{align*}
\|a(W_{t+1})_s^l+(1-a)(W_t)_s^l\|_F&\leq a\|(W_{t+1})_s^l\|_F+(1-a)\|(W_t)_s^l\|_F\leq B_{inn},\\
\|a(b_{t+1})_s^l+(1-a)(b_t)_s^l\|_2&\leq a\|(b_{t+1})_s^l\|_F+(1-a)\|(b_t)_s^l\|_F\leq B_{inn},\\
\|a(W_{t+1})^3+(1-a)(W_{t})^3\|_1&\leq a\|(W_{t+1})^3\|_1+(1-a)\|(W_t)^3\|_1\leq B_{out},
\end{align*}
which implies $f_{aW_{t+1}+(1-a)W_t}\in\mathcal{F}_{NN}(\{m_1,m_2,A\},B_{inn},B_{out})$. By Lemma \ref{gradient Lip},
\begin{align*}
\|\nabla_{W} \widehat{L}_R(f_{aW_{t+1}+(1-a)W_t})-\nabla_{W} \widehat{L}_R(f_{W_t})\|\leq L\|W_{t+1}-W_t\|.
\end{align*}
where $L=C(d,\sigma,coe,\Omega)\max\{1/\beta,1\}AB_{inn}^7B_{out}^3$.
Now we can apply Lemma \ref{smoothness inequality} and get
\begin{align*}
\widehat{L}_R(f_{W_{t+1}})&\leq \widehat{L}_R(f_{W_t})+[\nabla_W \widehat{L}_R(f_{W_t})]^T(W_{t+1}-W_s)+\frac{L}{2}\|W_{t+1}-W_t\|^2\\
&\leq\widehat{L}_R(f_{W_t})-\left(\frac{1}{\eta}-\frac{L}{2}\right)\|W_{t+1}-W_t\|^2,
\end{align*}
where we also make use of the relation
\begin{align*}
&[\nabla_W \widehat{L}_R(f_{W_t})]^T(W_{t+1}-W_s)\\
&=-\frac{1}{\eta}[W_{t}-\eta\nabla_W \widehat{L}_R(f_{W_t})-W_t]^T[\mathrm{proj}_{\mathcal{C}}(W_{t}-\eta\nabla_W \widehat{L}_R(f_{W_t}))-\mathrm{proj}_{\mathcal{C}}(W_t)] \\
&\leq-\frac{1}{\eta}\|\mathrm{proj}_{\mathcal{C}}(W_{t}-\eta\nabla_W \widehat{L}_R(f_{W_t}))-\mathrm{proj}_{\mathcal{C}}(W_t)\|^2=-\frac{1}{\eta}\|W_{t+1}-W_t\|^2.
\end{align*}
Here the second step is due to Lemma \ref{projection operator property2}. Now we know that $\hat{L}_R$ decreases during iteration provided  $\eta\leq\frac{2}{L}$. So based on (\ref{opt err1}), we obtain
\begin{align*}
&\widehat{L}_R(f_{W_T})-\inf_{f_W\in\mathcal{F}_{NN}}\widehat{L}_R(f_W)\leq\frac{1}{T}\sum_{t=0}^{T-1} \widehat{L}_R\left(f_{W_t}\right)-\inf_{f_W\in\mathcal{F}_{NN}}\widehat{L}_R(f_W)\\
&\leq\frac{1}{T}\sum_{t=0}^{T-1}\left[\widehat{L}_R\left(f_{\bar{W}_t}\right)-\inf_{f_W\in\mathcal{F}_{NN}}\widehat{L}_R(f_W)\right]+\frac{1}{2\eta T}\|(W_0)^3-(\bar{W})^3\|^2\\
&\quad+\frac{\eta}{2T}\sum_{t=0}^{T-1}\|\nabla_{W^3}\widehat{L}_R\left(f_{W_t}\right)\|^2.
\end{align*}
We further divide the first term on the right-hand side into three terms:
\begin{align*}
&\widehat{L}_R(f_{\bar{W}_t})-\inf_{f_W\in\mathcal{F}_{N N}}\widehat{L}_R(f_W)\\
&=[\widehat{L}_R(f_{\bar{W}_t})-{L}_R(f_{\bar{W}_t})]+[{L}_R(f_{\bar{W}_t})-{L}_R(u_R)]+\left[{L}_R(u_R)-\inf_{f_W\in\mathcal{F}_{N N}}\widehat{L}_R(f_W)\right].
\end{align*}
Since ${W_t}\in\mathcal{C}$ and $f_{\bar{W}}\in\mathcal{F}_{NN}$, there holds $f_{\bar{W}_t}\in\mathcal{F}_{NN}$ by our choice of $B_{inn}$ and $B_{out}$. Thus first term can be controlled by $\sup_{f_W\in\mathcal{F}_{NN}}[\widehat{L}_R(f_W)-{L}_R(f_W)]$. An upper bound of the third term is provided by Lemma \ref{continuous minimum and empirical minimum}. So it suffice to show that with probability at least $1-A'e^{-C(d,coe,\Omega)\frac{A}{A'}\left(\frac{1}{N}\right)^{206d^4(d+3)5^{d+2}}}$,
$\bar{W}_t\in\mathcal{B}_{\bar{W}}$ and hence 
\begin{align*}
{L}_R(f_{\bar{W}_t})-{L}_R(u_R)\leq\sup_{W\in\mathcal{B}_{\bar{W}}}[L_R(f_W)-L_R(u_R)].
\end{align*}
By Lemma \ref{initialization probability}, with that probability, there exists an injection $\varphi:\{1,2,\cdots,A'\}\to\{1,2,\cdots,A\}$ such that for $k\in[A']$,
\begin{align*}
\varphi(k)\in\{(k-1)A/A'+i':i'\in[A/A']\}
\end{align*}
and for $l=1,2;s\in\{\varphi(1),\cdots,\varphi(A')\}$,
\begin{align*}
\|(W_0)_{s}^l-(\bar{W})_{s}^l\|_F,\|(b_0)_{s}^l-(\bar{b})_{s}^l\|_2\leq\frac{1}{2}\left(\frac{1}{N}\right)^{83d^3/2}.
\end{align*}
Since $W_t\in\mathcal{C}$,
\begin{align*}
\|(W_t)_s^l-(W_0)_s^l\|_F,\|(b_t)_s^l-(b_0)_s^l\|_2\leq\frac{1}{2}\left(\frac{1}{N}\right)^{83d^3/2},s\in[A],l=1,2.
\end{align*}
From triangle inequality, we conclude that $\bar{W}_t\in\mathcal{B}_{\bar{W}}$ with that probability.
\end{proof}

\section{Convergence Rate of DRM}

\begin{theorem}\label{convergence rate}
Let $m=n$. Let $(W_0)_{s,i,j}^l,(b_0)_{s,i}^l(s\in[A];l=1,2;i\in[m_l];j\in [m_{l-1}])$ be uniformly distributed on 
\begin{align*}
\left[-C(d,coe,\Omega)n^{\frac{10d^3}{144d^3+2}},C(d,coe,\Omega)n^{\frac{10d^3}{144d^3+2}}\right].
\end{align*}
Let $B_{inn}=C(d,coe,\Omega)n^{\frac{10d^3}{144d^3+2}}
,B_{out}=C(d,coe,\Omega)n^{\frac{11d^3}{144d^3+2}}
$. Let 
\begin{align}\label{overparameterization condition}
A=n^{\frac{415d^4(d+3)5^{d+2}}{288d^3+4}}.
\end{align}
Let
\begin{align*}
\mathcal{C}=\{W: &\|W^3\|_1\leq C(d,coe,\Omega)n^{\frac{11d^3}{144d^3+2}};\\
&\left.\|W_s^l-(W_0)_s^l\|_F,\|b_s^l-(b_0)_s^l\|_2\leq\frac{1}{2}n^{-\frac{83d^3}{288d^3+4}},l=1,2,s\in[A]\right\}.
\end{align*}
Let
\begin{align}\label{step size}
\eta\leq C(d,coe,\Omega)n^{-\frac{103d^3}{144d^3+2}}\frac{1}{A}
\end{align}
and $T=\frac{1}{\eta}$. 
\begin{enumerate}[label=(\arabic*)]
\item
Consider Robin problem \eqref{second order elliptic equation}\eqref{robin}. With probability at least $1-\frac{C(d,coe,\Omega)}{n}$,
\begin{align*}
&\|f_{W_T}-u_R\|_{H^1(\Omega)}\leq C(d,coe,\Omega)\max\{1,1/\beta\}n^{-\frac{1}{288d^3+4}}.
\end{align*}
\item
Consider Neumann problem \eqref{second order elliptic equation}\eqref{neumann}. With probability at least $1-\frac{C(d,coe,\Omega)}{n}$,
\begin{align*}
&\|f_{W_T}-u_N\|_{H^1(\Omega)}\leq C(d,coe,\Omega)n^{-\frac{1}{288d^3+4}}.
\end{align*}
\item
Consider Dirichlet problem \eqref{second order elliptic equation}\eqref{dirichlet}. With probability at least $1-\frac{C(d,coe,\Omega)}{n}$,
\begin{align*}
&\|f_{W_T}-u_D\|_{H^1(\Omega)}\leq C(d,coe,\Omega)n^{-\frac{1}{576d^3+8}}.
\end{align*}
\end{enumerate}
\end{theorem}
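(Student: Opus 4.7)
The plan is to combine Lemma~\ref{error decompostion} with Theorems~\ref{app err}, \ref{sta err}, \ref{opt err} from the previous sections, and convert the resulting loss gap into an $H^1$-norm bound via the lower inequality in Lemma~\ref{loss difference and variable difference}. Writing $N := n^{1/(144d^3+2)}$ so that $B_{inn} \asymp N^{10d^3}$ and $B_{out} \asymp N^{11d^3}$, the principal task is to verify that with these scalings every individual contribution to the loss gap is of order $n^{-1/(144d^3+2)}$, which upon taking square roots yields the stated $n^{-1/(288d^3+4)}$ rate.

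I start with the Robin problem. The error decomposition produces three pieces: an optimization error $\mathrm{OE}$, an approximation infimum $\mathrm{AE}$, and a generalization supremum $\mathrm{GE}$. The approximation element $\bar W$ of Theorem~\ref{app err}, applied to $u_R \in H^2(\Omega)$ (regularity from Lemma~\ref{uR regularity}), makes $\|f_W - u_R\|_{H^1} \lesssim N^{-1/2}$ for every $W$ in the ball $\mathcal{B}_{\bar W}$ of Theorem~\ref{opt err}, and the upper inequality in Lemma~\ref{loss difference and variable difference} upgrades this to $L_R(f_W) - L_R(u_R) \lesssim \max\{1,1/\beta\}\, N^{-1}$, controlling both $\mathrm{AE}$ and the approximation supremum inside $\mathrm{OE}$. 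Theorem~\ref{sta err} with $\tau \sim n^{-1/2}$ delivers, on an event of probability $\geq 1 - 1/n$, the bound $\mathrm{GE} \lesssim \max\{1,1/\beta\}\, B_{inn}^5 B_{out}^2 / \sqrt n$, and a direct exponent calculation ($5\cdot 10d^3 + 2\cdot 11d^3 = 72d^3$ and $72d^3/(144d^3+2) - 1/2 = -1/(144d^3+2)$) shows this is of the required order.

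The two remaining pieces of $\mathrm{OE}$ are treated as follows. Since the outer layer is initialized to zero and $T = 1/\eta$, the initialization term in Theorem~\ref{opt err} equals $\tfrac12\|(\bar W)^3\|^2$, which by Theorem~\ref{app err} is bounded by $(A')^{-2} N^{\frac{3}{4}d^3 + 5d^2 + \frac{41}{4}d + 7}$; selecting $A'$ as the appropriate power of $N$ pushes this below $n^{-1/(144d^3+2)}$. The gradient-sum term is at most $\tfrac{\eta A}{2}\max\{1,1/\beta\}^2 B_{inn}^8 B_{out}^2$ by Lemma~\ref{estimate of gradient}, and the step-size restriction (\ref{step size}) is chosen precisely to make this of the same order. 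The overparameterization level (\ref{overparameterization condition}) is then forced by requiring the probability in Lemma~\ref{initialization probability}, and hence in Theorem~\ref{opt err}, to exceed $1 - 1/n$. A union bound over the initialization event and the McDiarmid event of Theorem~\ref{sta err} yields total failure probability $\leq C/n$, and assembling all pieces gives the Robin $H^1$ rate. The Neumann case is identical with every $\max\{1,1/\beta\}$ replaced by $1$, since the $1/\beta$ factor disappears from $\widehat L_N$ and $L_N$.

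For the Dirichlet problem I set $g = 0$ and apply the triangle inequality $\|f_{W_T} - u_D\|_{H^1} \leq \|f_{W_T} - u_R\|_{H^1} + \|u_R - u_D\|_{H^1}$; the second term is $\lesssim \beta$ by Lemma~\ref{penalty convergence}, while the first is $\lesssim \max\{1,1/\beta\}\, n^{-1/(288d^3+4)}$ by the Robin analysis. Balancing $\beta^{-1} n^{-1/(288d^3+4)} \asymp \beta$ forces $\beta = n^{-1/(576d^3+8)}$, and both terms match at exactly this rate. The main technical obstacle throughout is the consistent exponent bookkeeping across the coupled parameters $n, N, A, A', B_{inn}, B_{out}, \eta$: each of $\mathrm{AE}$, $\mathrm{GE}$, and the four sub-terms of $\mathrm{OE}$ must be made $\lesssim n^{-1/(144d^3+2)}$, and the baroque exponents in the statement are reverse-engineered for exactly this purpose. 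A secondary delicate point is ensuring that the inner weights of the approximation element $\bar W$ from Theorem~\ref{app err} lie inside the large initialization interval $[-C N^{10d^3}, C N^{10d^3}]$ used in Lemma~\ref{initialization probability}, which is why the initialization scale is taken to be such a high power of $N$.
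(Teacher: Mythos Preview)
Your proposal follows the same strategy as the paper's proof: combine the error decomposition (Lemma~\ref{error decompostion}) with the optimization-error bound (Theorem~\ref{opt err}), set $\bar W = W^*$ from Theorem~\ref{app err} applied to $u_R$ (using the regularity Lemma~\ref{uR regularity}), and then bound each of the resulting terms under the scalings $N = n^{1/(144d^3+2)}$, $B_{inn} \asymp N^{10d^3}$, $B_{out} \asymp N^{11d^3}$. Your treatment of the approximation supremum over $\mathcal{B}_{\bar W}$, the initialization term $\tfrac12\|(\bar W)^3\|^2$, the gradient-sum term via Lemma~\ref{estimate of gradient}, the exponent check $5\cdot 10d^3 + 2\cdot 11d^3 = 72d^3$ for the generalization piece, and the Dirichlet balancing via Lemma~\ref{penalty convergence} are all in line with the paper.

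There is one genuine slip: your choice $\tau \sim n^{-1/2}$ in Theorem~\ref{sta err} does \emph{not} deliver the claimed probability $\geq 1 - 1/n$. With that $\tau$ the McDiarmid exponent in Theorem~\ref{sta err} is
\[
-\frac{\min\{1,\beta^2\}\, n\tau^2}{C\,B_{inn}^4 B_{out}^4} \;\asymp\; -\,n^{-84d^3/(144d^3+2)} \;\longrightarrow\; 0,
\]
so the failure probability stays near $1$ rather than decaying. The fix, and what the paper actually does, is to take $\tau$ of the \emph{same order} as the leading generalization term, namely $\tau \asymp \max\{1,1/\beta\}\, n^{-1/(144d^3+2)}$: then $n\tau^2 / (B_{inn}^4 B_{out}^4) \asymp n^{60d^3/(144d^3+2)} \to \infty$, the failure probability is super-polynomially small (hence $\leq C/n$), and the total bound on $\mathrm{GE}$ remains of order $\max\{1,1/\beta\}\, n^{-1/(144d^3+2)}$. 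With this single correction your argument goes through.
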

\begin{remark}
\cite{lu2022machine} provides a lower bound of $\mathcal{O}(n^{-1/d})$ (when the solution of the equation is in $H^2(\Omega)$) for DRM, and they found a neural network estimator with an upper error bound of $\mathcal{O}(n^{-1/(d+2)}\log^{1/2} n)$. The upper bound we obtained here is far from this lower bound, mainly due to the large parameter values used in our three-layer network estimator (Theorem \ref{app err}, Proposition \ref{app err pre}). However, from the perspective of constructing a three-layer network to approximate functions in Sobolev space, this is inevitable. One solution is to decrease the parameter values by increasing the depth of the neural network (refer to \cite{guhring2021approximation}). However, increasing the network depth also means increasing the number of parameters, leading to an increase in computational complexity during network training. Therefore, finding a neural network estimator that improves the error bound of this work while maintaining manageable computational complexity is one of the future research focuses.
\end{remark}
\begin{proof}
We only provide a proof for the Robin problem. The result of the Neumann problem can be derived in the same way. The result of the Dirichlet problem can be obtained by combining the convergence rate of the Robin problem and Lemma \ref{penalty convergence}.

Since $W_t\in\mathcal{C}$ and 
$\|(W_0)_s^l\|_F,\|(b_0)_s^l\|_2\leq C(d,coe,\Omega)n^{\frac{10d^3}{144d^3+2}}$ for $l=1,2;s\in[A]$, we have $f_{W_t}\in\mathcal{F}_{NN}$ by our choice of $B_{inn}$ and $B_{out}$. Choosing $\bar{W}$ in Theorem \ref{opt err} to be $W^*$, the approximator in Theorem \ref{app err} when approximating $u_R$, and plugging it into
Lemma \ref{error decompostion}, we have with probability at least 
$1-A'e^{-C(d,coe,\Omega)\frac{A}{A'}n^{-\frac{206d^4(d+3)5^{d+2}}{144d^3+2}}}$,
\begin{align*}
&{L}_{R}(f_{W_T})-L_R(u_R)\\
&\leq4\max\left\{\sup_{f_W\in\mathcal{F}_{NN}}[\widehat{L}_R(f_W)-{L}_R(f_W)],\sup_{f_W\in\mathcal{F}_{NN}}[{L}_R(f_W)-\widehat{L}_R(f_W)]\right\}\\
&\quad+3\sup_{W\in\mathcal{B}_{W^*}}[L_R(f_W)-L_R(u_R)]+\frac{1}{2\eta T}\|(W^*)^3\|^2+\frac{\eta}{2T}\sum_{t=0}^{T-1}\|\nabla_{W^3}\widehat{L}_R\left(f_{W_t}\right)\|^2.
\end{align*}
Here we set
\begin{align*}
N=C(d,coe,\Omega)n^{\frac{1}{144d^3+2}}
\end{align*}
in Theorem \ref{opt err}. Our previous results provide estimates for each term on the right-hand side of the inequality. Choosing $\tau=C(d,\Omega,coe)\max\{1,1/\beta\}n^{-\frac{1}{144d^3+2}}$ in Theorem \ref{sta err},  we have with probability at least $1-2e^{-C(d,coe,\Omega)n^{\frac{60d^3}{144d^3+2}}}$,
\begin{align*}
&\max\left\{\sup_{f_W\in\mathcal{F}_{NN}}[\widehat{L}_{R}({f_W})-{L}_{R}({f_W})],\sup_{f_W\in\mathcal{F}_{NN}}[{L}_{R}({u})-\widehat{L}_{R}({f_W})]\right\}\\
&\leq C(d,\Omega,coe)\max\{1,1/\beta\}n^{-\frac{1}{144d^3+2}}.
\end{align*}
Choosing 
\begin{align*}
N=C(d,coe,\Omega)n^{\frac{1}{144d^3+2}}
\end{align*}
in Theorem \ref{app err}, there holds
\begin{align*}
\|(W^*)^3\|\leq C(d,coe,\Omega)\frac{1}{A'}N^{23d^3/2}=C(d,coe,\Omega)\frac{1}{A'}n^{\frac{23d^3}{288d^3+4}}.
\end{align*}
By Lemma \ref{estimate of gradient},
\begin{align*}
&\|\nabla_{W^3}\widehat{L}_R(f_{W})\|
\leq C(d,\sigma,\Omega)\max\{1,1/\beta\}\sqrt{A}n^{\frac{51d^3}{144d^3+2}}.
\end{align*}
According to Lemma \ref{uR regularity}, Lemma \ref{loss difference and variable difference} and Theorem \ref{app err}, 
\begin{align*}
\sup_{W\in\mathcal{B}_{W^*}}[L_R(f_W)-L_R(u_R)]&\leq C(\Omega,coe)\max\{1,1/\beta\}\sup_{W\in\mathcal{B}_{W^*}}\|f_W-u_R\|_{H^1(\Omega)}^2\\
&\leq
C(d,coe,\Omega)\max\{1,1/\beta\}n^{-\frac{1}{144d^3+2}}.
\end{align*}
Combining all the above upper bounds, we derive that with probability at least $1-A'e^{-C(d,coe,\Omega)\frac{A}{A'}n^{-\frac{206d^4(d+3)5^{d+2}}{144d^3+2}}}-2e^{-C(d,coe,\Omega)n^{\frac{60d^3}{144d^3+2}}}$,
\begin{align*}
&{L}_{R}(f_{W_T})-L_R(u_R)\\
&\leq C(d,coe,\Omega)\max\{1,1/\beta^2\}\left(n^{-\frac{1}{144d^3+2}}+\frac{1}{A'^2}n^{\frac{23d^3}{144d^3+2}}+\eta An^{\frac{51d^3}{72d^3+1}}\right).
\end{align*}
Let $A$ satisfy (\ref{overparameterization condition}) and $\eta$ satisfy (\ref{step size}), and further let
\begin{align*}
A'=C(d,coe,\Omega)n^{\frac{23d^3+1}{288d^3+4}}.
\end{align*}
After organizing and simplifying, we obtain that with probability at least $1-\frac{C(d,coe,\Omega)}{n}$,
\begin{align*}
{L}_{R}(f_{W_T})-L_R(u_R)\leq C(d,coe,\Omega)\max\{1,1/\beta^2\}n^{-\frac{1}{144d^3+2}}.
\end{align*}
The convergence rate of $\|f_W-u_R\|_{H^1(\Omega)}$ is then implied by Lemma \ref{loss difference and variable difference}.
\end{proof}

\section{Conclusion}

In this work, we focus on employing a three-layer tanh neural network within the framework of the deep Ritz method(DRM) to solve second-order elliptic equations with three different types of boundary conditions. To the best of our knowledge, we are the first to provide a comprehensive error analysis of using overparameterized networks to solve PDE problems. We present error bound in terms of the sample size $n$ and our work provides guidance on how to set the network depth, width, step size, and number of iterations for the projected gradient descent algorithm. Importantly, our assumptions in this work are classical and we do not require any additional assumptions on the solution of the equation. This ensures the broad applicability and generality of our results.

As an initial study, this work focuses on a three-layer neural network. However, we believe that the framework presented in this work can also be extended to analyze deep neural networks. We speculate that deep neural network estimators may have better upper bounds, and this will be one of the topics for our future research. Additionally, investigating other solution formats for PDEs, such as PINNs and WAN, as well as considering other types of PDEs, are also promising directions for future research. On the training front, this work investigates the projected gradient descent algorithm. Analyzing the convergence properties of gradient descent and stochastic gradient descent algorithms when applied to PDE problems presents technical challenges. In the future, we aim to address these technical difficulties and explore the convergence analysis of these optimization algorithms in the context of PDE problem-solving.

\appendix

\section{Proof of Lemma \ref{uR regularity}}
We first give an auxiliary lemma.
\begin{lemma}\label{lemma:regularity:robin}
Let assumption $(\ref{Assumption})$ holds. Let $u$ be the weak solution of the following Robin problem
	\begin{equation}\label{eq:regularity:robin} 
		\left\{\begin{aligned}
			-\Delta u + wu                                    & =0 &  & \text { in } \Omega          \\
u +	\beta \frac{\partial u}{\partial n}& =g &  & \text { on } \partial\Omega.
		\end{aligned}\right.
	\end{equation}
	Then $u_R\in H^2(\Omega)$ and
	\begin{equation*}
		\left\|u\right\|_{H^{2}(\Omega)} \leq C(\Omega,w)\frac{1}{\beta} \|g\|_{H^{1/2}(\partial\Omega)}.
	\end{equation*}
\end{lemma}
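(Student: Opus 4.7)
The plan is to recognize $u$ as the solution of a Neumann problem in disguise, invoke Lemma \ref{uN regularity}, and close the estimate by a uniform-in-$\beta$ control on the Dirichlet trace of $u$. Rewriting the Robin condition $T_0 u + \beta T_1 u = g$ as $T_1 u = (g - T_0 u)/\beta$, I see immediately that $u$ satisfies the Neumann problem $-\Delta u + wu = 0$ in $\Omega$ with Neumann data $\psi := (g - T_0 u)/\beta \in H^{1/2}(\partial\Omega)$. Lemma \ref{uN regularity} (with zero source) then yields
\begin{equation*}
\|u\|_{H^{2}(\Omega)} \;\leq\; \frac{C(\Omega,w)}{\beta}\Bigl(\|g\|_{H^{1/2}(\partial\Omega)} + \|T_0 u\|_{H^{1/2}(\partial\Omega)}\Bigr),
\end{equation*}
so everything reduces to proving the uniform trace bound $\|T_0 u\|_{H^{1/2}(\partial\Omega)} \leq C(\Omega,w)\|g\|_{H^{1/2}(\partial\Omega)}$ with a constant independent of $\beta$.

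To obtain this trace bound I will test the weak formulation \eqref{variational robin} (with $f=0$) against the Dirichlet-harmonic extension of an arbitrary $\eta \in H^{3/2}(\partial\Omega)$, namely the $H^2(\Omega)$-solution $V$ of $-\Delta V + wV = 0$, $T_0 V = \eta$, whose existence and regularity are guaranteed by Lemma \ref{uD regularity}. Green's identity then kills the interior terms and produces the boundary identity
\begin{equation*}
\bigl(T_0 u,\, (I + \beta\Lambda)\eta\bigr)_{L^2(\partial\Omega)} \;=\; (g, \eta)_{L^2(\partial\Omega)},
\end{equation*}
where $\Lambda\eta := T_1 V$ is the Dirichlet-to-Neumann operator associated with $-\Delta + w$. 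By self-adjointness of $\Lambda$ and density of $H^{3/2}(\partial\Omega)$ in $H^{1/2}(\partial\Omega)$, this is equivalent to the operator identity $T_0 u = (I + \beta\Lambda)^{-1} g$. Under Assumption \eqref{Assumption} ($w \geq c_w > 0$), $\Lambda$ is a positive self-adjoint operator on $L^2(\partial\Omega)$ satisfying $(\Lambda\eta, \eta)_{L^2(\partial\Omega)} = \|\nabla V\|_{L^2(\Omega)}^2 + (wV, V)_{L^2(\Omega)}$, whose form domain coincides with $H^{1/2}(\partial\Omega)$ with an equivalent norm. Spectral calculus applied to the multiplier $\lambda \mapsto 1/(1+\beta\lambda)$, together with the elementary pointwise inequality $\lambda/(1+\beta\lambda)^2 \leq \lambda$ valid for $\lambda, \beta \geq 0$, then gives $\|(I+\beta\Lambda)^{-1}\|_{\mathcal{L}(H^{1/2}(\partial\Omega))} \leq C(\Omega,w)$ uniformly in $\beta \geq 0$. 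Feeding the resulting trace estimate back into the Neumann inequality above delivers the claimed bound.

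The main obstacle is precisely this uniform-in-$\beta$ trace estimate. A crude energy argument obtained by testing \eqref{variational robin} with $v = u$ and applying Young's inequality only yields $\|u\|_{H^1(\Omega)} \leq (C/\sqrt{\beta})\|g\|_{H^{1/2}}$ and hence, via the trace theorem, $\|T_0 u\|_{H^{1/2}} \leq (C/\sqrt{\beta})\|g\|_{H^{1/2}}$. Such a bound costs an extra factor of $\beta^{-1/2}$ and produces only the weaker estimate $\|u\|_{H^2} \leq (C/\beta^{3/2})\|g\|_{H^{1/2}}$. Exploiting the self-adjoint structure of $I + \beta\Lambda$ --- equivalently, testing against Dirichlet-harmonic extensions rather than arbitrary $H^1$ liftings of the boundary data --- is exactly what closes this half-order gap and yields the sharp $1/\beta$ factor asserted in the lemma.
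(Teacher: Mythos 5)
Your proposal is correct and follows essentially the same route as the paper's proof: both reduce the problem to a $\beta$-uniform $H^{1/2}(\partial\Omega)$ bound on the trace $T_0u$, establish it via the Dirichlet-to-Neumann operator and the identity $T_0u = (I+\beta\Lambda)^{-1}g$, and then rewrite the Robin problem so that Lemma \ref{uN regularity} delivers the remaining single factor of $1/\beta$. You are somewhat more explicit than the paper on the operator-theoretic step (deriving the boundary identity by testing against Dirichlet extensions and proving the $H^{1/2}$ resolvent bound by spectral calculus on the form domain, where the paper merely asserts $L^2$-positivity of $\beta\widetilde{T}+I$ and then states the $H^{1/2}$ estimate), and your rewriting to a genuine Neumann boundary condition is slightly cleaner than the paper's rewriting to a Robin condition with $\beta=1$, but the underlying argument is the same.
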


\begin{proof}
We follow the idea proposed in \cite{martin1996singularly} in a slightly different context. We first estimate the trace $T_0u=\left.u\right|_{\partial\Omega}$. We define the Dirichlet-to-Neumann map
	\begin{equation*}
		\widetilde{T}:\left.u\right|_{\partial\Omega} \mapsto \left.\frac{\partial u}{\partial n}\right|_{\partial\Omega},
	\end{equation*}
	where $u$ satisfies $-\Delta u+wu=0$ in $\Omega$, then
	\begin{equation*}
		T_0u=\left(\beta\widetilde{T}+I \right)^{-1}g.
	\end{equation*}
	Now we are going to show that $\beta\widetilde{T}+I$ is a positive definite operator in $L^{2}(\partial\Omega)$. We notice that the variational formulation of \eqref{eq:regularity:robin} can be read as follow:
	\begin{equation*}
		\int_{\Omega}\nabla u\cdot\nabla vdx+\int_{\Omega}wuvdx+\frac{1}{\beta}\int_{\partial\Omega}uvds=\frac{1}{\beta}\int_{\partial\Omega}gvds, \ \forall v\in H^{1}(\Omega).
	\end{equation*}
	Taking $v=u$, then we have
	\begin{equation*}
		\| T_0u \|_{L^{2}(\partial\Omega)}^{2} \leq \left\langle \left(\beta\widetilde{T}+I \right)T_0u,T_0u \right\rangle.
	\end{equation*}
	This means that $\beta\widetilde{T}+I$ is a positive definite operator in $L^{2}(\partial\Omega)$, and further, $(\beta\widetilde{T}+I)^{-1}$ is bounded. We have the estimate
	\begin{equation}\label{eq:trace:estimate}
		\| T_0u \|_{H^{1/2}(\partial\Omega)} \leq C(\Omega)\| g \|_{H^{1/2}(\partial\Omega)}.
	\end{equation}
	We rewrite the Robin problem \eqref{eq:regularity:robin} as follows
	\begin{equation*}
		\left\{\begin{aligned}
			-\Delta u + wu                  & =0                                                    &  & \text { in } \Omega          \\
			u+\frac{\partial u}{\partial n} & = \frac{1}{\beta} \left(g-\left(1-\beta\right)u\right) &  & \text { on } \partial\Omega.
		\end{aligned}\right.
	\end{equation*}
By Lemma \ref{uN regularity} we have
\begin{align}\label{eq:neumann:estimate}
\| u \|_{H^{2}(\Omega)} &\leq C(\Omega,w)\frac{1}{\beta}\left\| g-\left(1-\beta\right)T_0u \right\|_{H^{1/2}(\partial\Omega)}\nonumber\\
&\leq C(\Omega,w)\frac{1}{\beta} \left( \left\| g \right\|_{H^{1/2}(\partial\Omega)} + \left\| T_0u \right\|_{H^{1/2}(\partial\Omega)} \right).
	\end{align}
	Combining \eqref{eq:trace:estimate} and \eqref{eq:neumann:estimate}, we obtain the desired estimation.
\end{proof}
With the help of the above lemma, we now turn to prove Lemma \ref{uR regularity}.
\begin{proof}[Proof of Lemma \ref{uR regularity}]
We decompose the Robin problem (\ref{second order elliptic equation})(\ref{robin}) into two equations	\begin{equation*}
		\left\{ \begin{aligned}
			-\Delta u_{0}+wu_{0} & =f &  & \text{in}\ \Omega          \\
			u_{0}                & =g &  & \text{on}\ \partial\Omega, \\
		\end{aligned}\right.
	\end{equation*}
	\begin{equation*}
		\left\{ \begin{aligned}
			-\Delta u_{1}+wu_{1}                                     & =0                                  &  & \text{in}\ \Omega          \\
u_1+\beta\frac{\partial u_{1}}{\partial n} & =-\frac{\partial u_{0}}{\partial n} &  & \text{on}\ \partial\Omega. \\
		\end{aligned}\right.
	\end{equation*}
	and obtain the solution of (\ref{second order elliptic equation})(\ref{robin}):
	\begin{equation*}
		u=u_{0}+\beta u_{1}.
	\end{equation*}
	According to Lemma \ref{uD regularity}, we have	\begin{equation}\label{eq:reg:u0}
		\|u_{0}\|_{H^{2}(\Omega)} \leq C(\Omega,w)(\|f\|_{L^{2}(\Omega)}+\|g\|_{H^{3/2}(\partial\Omega)}).
	\end{equation}
Using Lemma \ref{lemma:regularity:robin}, it is easy to obtain
	\begin{equation}\label{eq:reg:u1}		\left\|u_{1}\right\|_{H^{2}(\Omega)} \leq C(\Omega,w)\frac{1}{\beta}\left\|\frac{\partial u_{0}}{\partial n}\right\|_{H^{1/2}(\partial\Omega)}
		\leq  C(\Omega,w)\frac{1}{\beta}\|u_{0}\|_{H^{2}(\Omega)},
	\end{equation}
	where the last inequality follows from Lemma \ref{trace theorem}. Combining \eqref{eq:reg:u0} and \eqref{eq:reg:u1}, the desired estimation can be derived by triangle inequality.
\end{proof}

\section{Proof of Lemma \ref{gradient Lip}}
\begin{proof}
We only present a proof for $\widehat{L}_R(f_W)$. The inequality of $\widehat{L}_N(f_W)$ can be proved in almost the same way. The whole proof is divided into five steps. In the first step, we provide upper bounds for $|f_W|$ and $|f_W-f_{\widetilde{W}}|$. The former can be derived easily:
\begin{align*}
|f_W|\leq\sum_{s=1}^A[\|W_s^3\|_F\|f_s^2\|_2+|b_s^3|]\leq \sum_{s=1}^A(\sqrt{m_2}B_{\sigma}\|W_s^3\|_F+|b_s^3|).
\end{align*}
In order to estimate $|f_{{W}}-f_{\widetilde{W}}|$, we need the following two inequalities:
\begin{align*}
&\|{f}_{s,1}^{org}(W)-f_{s,1}^{org}(\widetilde{W})\|_{2}\leq\|{W}_s^1-(\widetilde{W})_s^1\|_{2}\|x\|_{2}+\|{b}_s^1-(\widetilde{b})_s^1\|_{2},\\
&\|{f}_{s,2}^{org}(W)-f_{s,2}^{org}(\widetilde{W})\|_{2}\\
&\leq\|{W}_s^2-(\widetilde{W})_s^2\|_{2}\|{f}_s^1(W)\|_{2}+\|(\widetilde{W})_s^2\|_{2}\|{f}_s^1(W)-{f}_s^1(\widetilde{W})\|_{2}+\|{b}_s^2-(\widetilde{b})_s^2\|_{2}\\
&\leq {\sqrt{m_1}}B_{\sigma}\|{W}_s^2-(\widetilde{W})_s^2\|_{2}+\|{b}_s^2-(\widetilde{b})_s^2\|_{2}+L_{\sigma}\|(\widetilde{W})_s^2\|_{2}\|x\|_{2}\|{W}_s^1-(\widetilde{W})_s^1\|_{2}\\
&\quad+L_{\sigma}\|(\widetilde{W})_s^2\|_{2}\|{b}_s^1-(\widetilde{b})_s^1\|_{2}.
\end{align*}
Thus
\begin{align*}
&|f_{{W}}-f_{\widetilde{W}}|\leq\sum_{s=1}^A|{f}_s^3(W)-f_s^3(\widetilde{W})|\\
&\leq\sum_{s=1}^A(\|{W}_s^3-(\widetilde{W})_s^3\|_{2}\|{f}^2(W)\|_{2}+\|(\widetilde{W})_s^3\|_{2}\|{f}_s^2(W)-{f}_s^2(\widetilde{W})\|_{2}+|{b}_s^3-(\widetilde{b})_s^3|)\\
&\leq \sum_{s=1}^A({\sqrt{m_2}}B_{\sigma}\|{W}_s^3-(\widetilde{W})_s^3\|_{2}+|{b}_s^3-(\widetilde{b})_s^3|+{\sqrt{m_1}}L_{\sigma}B_{\sigma}\|(\widetilde{W})_s^3\|_{2}\|{W}_s^2-(\widetilde{W})_s^2\|_{2}\\
&\qquad\quad +L_{\sigma}\|(\widetilde{W})_s^3\|_{2}\|{b}_s^2-(\widetilde{b})_s^2\|_{2}+L_{\sigma}^2B_x\|(\widetilde{W})_s^3\|_{2}\|(\widetilde{W})_s^2\|_{2}\|{W}_s^1-(\widetilde{W})_s^1\|_{2}\\
&\qquad\quad +L_{\sigma}^2\|(\widetilde{W})_s^3\|_{2}\|(\widetilde{W})_s^2\|_{2}\|{b}_s^1-(\widetilde{b})_s^1\|_{2}).
\end{align*}

The second step is to give upper bounds of $\|\nabla_xf_W\|_2$ and $\|\nabla_xf_W-\nabla_xf_{\widetilde{W}}\|_2$. The gradient of $f_W$ with respect to the spatial variable $x$ can be calculated by
\begin{align*}
\nabla_xf_W=\sum_{s=1}^A(W_s^1)^T\mathrm{diag}[\sigma'(f_{s,1}^{org})](W_s^2)^T\mathrm{diag}[\sigma'(f_{s,2}^{org})](W_s^3)^T,
\end{align*}
therefore
\begin{align*}
\|\nabla_xf_W\|_2&\leq\sum_{s=1}^A\|W_s^1\|_2\|\mathrm{diag}[\sigma'(f_{s,1}^{org})]\|_2\|W_s^2\|_2\|\mathrm{diag}[\sigma'(f_{s,2}^{org})]\|_2\|W_s^3\|_2\\
&\leq \sum_{s=1}^A\sqrt{m_2m_1}B_{\sigma'}^2\|W_s^1\|_2\|W_s^2\|_2\|W_s^3\|_2.
\end{align*}
and
\begin{align*}
&\|\nabla_xf_W-\nabla_xf_{\widetilde{W}}\|_2\\
&\leq\sum_{s=1}^A[\|W_s^1-(\widetilde{W})_s^1\|_F\|\mathrm{diag}[\sigma'(f_{s,1}^{org}(W))]\|_F\|W_s^2\|_F\|\mathrm{diag}[\sigma'(f_{s,2}^{org}(W))]\|_F\|W_s^3\|_F
\\
&\quad\qquad+\|(\widetilde{W})_s^1\|_F\|\mathrm{diag}[\sigma'(f_{s,1}^{org}(W))]-\mathrm{diag}[\sigma'(f_{s,1}^{org}(\widetilde{W}))]\|_F\\
&\quad\qquad\quad\ \|W_s^2\|_F\|\mathrm{diag}[\sigma'(f_{s,2}^{org}(W))]\|_F\|W_s^3\|_F\\
&\quad\qquad+\|(\widetilde{W})_s^1\|_F\|\mathrm{diag}[\sigma'(f_{s,1}^{org}(\widetilde{W}))]\|_F\|W_s^2-(\widetilde{W})_s^2\|_F\\
&\quad\qquad\quad\ \|\mathrm{diag}[\sigma'(f_{s,2}^{org}(W))]\|_F\|W_s^3\|_F\\
&\quad\qquad+\|(\widetilde{W})_s^1\|_F\mathrm{diag}[\sigma'(f_{s,1}^{org}(\widetilde{W}))]\|_F\|(\widetilde{W})_s^2\|_F\\
&\quad\qquad\quad\ \|\mathrm{diag}[\sigma'(f_{s,2}^{org}(W))]-\mathrm{diag}[\sigma'(f_{s,2}^{org}(\widetilde{W}))]\|_F\|(W_s^3)^T\|_2\\
&\quad\qquad+\|(\widetilde{W})_s^1\|_F\|\mathrm{diag}[\sigma'(f_{s,1}^{org}(\widetilde{W}))]\|_F\|(\widetilde{W})_s^2\|_F\\
&\quad\qquad\quad\ \|\mathrm{diag}[\sigma'(f_{s,2}^{org}(\widetilde{W}))]\|_F\|W_s^3-(\widetilde{W})_s^3\|_F]\\
&\leq C(\sigma)\sum_{s=1}^A[\sqrt{m_2m_1}\|(\widetilde{W})_s^1\|_F\|(\widetilde{W})_s^2\|_F\|W_s^3-(\widetilde{W})_s^3\|_F\\
&\quad\qquad+\max\{\sqrt{m_2},\sqrt{m_1}\}\sqrt{m_1}\|(\widetilde{W})_s^1\|_F\|(\widetilde{W})_s^2\|_F\|W_s^3\|_F\|{W}_s^2-(\widetilde{W})_s^2\|_{F}\\
&\quad\qquad+\sqrt{m_1}\|(\widetilde{W})_s^1\|_F\|(\widetilde{W})_s^2\|_F\|W_s^3\|_F\|{b}_s^2-(\widetilde{b})_s^2\|_{2}\\
&\quad\qquad+\sqrt{m_2m_1}\|W_s^3\|_F\|(\widetilde{W})_s^1\|_F\max\{\|W_s^2\|_F^2,\|(\widetilde{W})_s^2\|_F^2\}\|{W}_s^1-(\widetilde{W})_s^1\|_{F}\\
&\quad\qquad+\max\{\sqrt{m_2},\sqrt{m_1}\}\|(\widetilde{W})_s^1\|_F\|W_s^3\|_F\max\{\|W_s^2\|_F^2,\|(\widetilde{W})_s^2\|_F^2\}\|{b}_s^1-(\widetilde{b})_s^1\|_{2}].
\end{align*}

The third step is to derive the estimates of $\|\nabla_{W_s^l}f_W\|_F,\|\nabla_{b_s^l}f_W\|_F$ and $\|\nabla_{W_s^l}f_W-\nabla_{W_s^l}f_{\widetilde{W}}\|_F,\|\nabla_{b_s^l}f_W-\nabla_{b_s^l}f_{\widetilde{W}}\|_F(l=1,2,3)$. The gradient of $f_W$ with respect to weights and biases in each layer can be calculated by
\begin{align*}
\nabla_{W_s^3}f_W&=(f_s^2)^T,\\
\nabla_{b_s^3}f_W&=1,\\
\nabla_{W_s^2}f_W&=\mathrm{diag}[\sigma'(f_{s,2}^{org})](W_s^3)^T(f_s^1)^T,\\
\nabla_{b_s^2}f_W&=\mathrm{diag}[\sigma'(f_{s,2}^{org})](W_s^3)^T,\\
\nabla_{W_s^1}f_W&=\mathrm{diag}[\sigma'(f_{s,1}^{org})](W_s^2)^T\mathrm{diag}[\sigma'(f_{s,2}^{org})](W_s^3)^Tx^T,\\
\nabla_{b_s^1}f_W&=\mathrm{diag}[\sigma'(f_{s,1}^{org})](W_s^2)^T\mathrm{diag}[\sigma'(f_{s,2}^{org})](W_s^3)^T,
\end{align*}
hence we have
\begin{align*}
\|\nabla_{W_s^3}f_W\|_F&=\|f_s^2\|_F\leq\sqrt{m_2}B_{\sigma},\quad |\nabla_{b_s^3}f_W|=1,\\
\|\nabla_{W_s^2}f_W\|_F&\leq\|\mathrm{diag}[\sigma'(f_{s,2}^{org})]\|_F\|W_s^3\|_F\|f_s^1\|_F
\leq \sqrt{m_2m_1}B_{\sigma}B_{\sigma'}\|W_s^3\|_F,\\
\|\nabla_{b_s^2}f_W\|_F&\leq\|\mathrm{diag}[\sigma'(f_{s,2}^{org})]\|_F\|W_s^3\|_F
\leq \sqrt{m_2}B_{\sigma'}\|W_s^3\|_F,\\
\|\nabla_{W_s^1}f_W\|_F&=\|\mathrm{diag}[\sigma'(f_{s,1}^{org})]\|_F\|W_s^2\|_F\|\mathrm{diag}[\sigma'(f_{s,2}^{org})]\|_F\|W_s^3\|_F\|x\|_F\\
&\leq \sqrt{m_1m_2}B_{\sigma'}^2B_x\|W_s^2\|_F\|W_s^3\|_F,\\
\|\nabla_{b_s^1}f_W\|_F&=\|\mathrm{diag}[\sigma'(f_{s,1}^{org})]\|_F\|W_s^2\|_F\|\mathrm{diag}[\sigma'(f_{s,2}^{org})]\|_F\|W_s^3\|_F\\
&\leq \sqrt{m_1m_2}B_{\sigma'}^2\|W_s^2\|_F\|W_s^3\|_F.
\end{align*}
and
\begin{align*}
&\|\nabla_{W_s^3}f_W-\nabla_{W_s^3}f_{\widetilde{W}}\|_F=\|f_s^2(W)-f_s^2(\widetilde{W})\|_2\\
&\leq {\sqrt{m_1}}L_{\sigma}B_{\sigma}\|{W}_s^2-(\widetilde{W})_s^2\|_{2}+L_{\sigma}\|{b}_s^2-(\widetilde{b})_s^2\|_{2}+L_{\sigma}^2B_x\|(\widetilde{W})_s^2\|_{2}\|{W}_s^1-(\widetilde{W})_s^1\|_{2}\\
&\quad+L_{\sigma}^2\|(\widetilde{W})_s^2\|_{2}\|{b}_s^1-(\widetilde{b})_s^1\|_{2},\\
&\|\nabla_{b_s^3}f_W-\nabla_{b_s^3}f_{\widetilde{W}}\|_F=0,\\
&\|\nabla_{W_s^2}f_W-\nabla_{W_s^2}f_{\widetilde{W}}\|_F\\
&\leq\|\mathrm{diag}[\sigma'(f_{s,2}^{org}(W))]-\mathrm{diag}[\sigma'(f_{s,2}^{org}(\widetilde{W}))]\|_F\|W_s^3\|_F\|f_s^1(W)\|_2\\
&\quad+\|\mathrm{diag}[\sigma'(f_{s,2}^{org}(\widetilde{W}))]\|_F\|W_s^3-(\widetilde{W})_s^3\|_F\|f_s^1(W)\|_2\\
&\quad+\|\mathrm{diag}[\sigma'(f_{s,2}^{org}(\widetilde{W}))]\|_F\|(\widetilde{W})_s^3\|_F\|f_s^1(W)-f_s^1(\widetilde{W})\|_2\\
&\leq \sqrt{m_2m_1}B_{\sigma}B_{\sigma'}\|W_s^3-(\widetilde{W})_s^3\|_F+{\sqrt{m_1}}\sqrt{m_1}L_{\sigma'}B_{\sigma}^2\|W^3\|_F\|{W}_s^2-(\widetilde{W})_s^2\|_{2}\\
&+\sqrt{Am_1}L_{\sigma'}B_{\sigma}\|W_s^3\|_F\|{b}_s^2-(\widetilde{b})_s^2\|_{2}\\
&\quad+L_{\sigma}B_x(\sqrt{m_1}L_{\sigma'}B_{\sigma}\|W_s^3\|_F\|(\widetilde{W})_s^2\|_{2}+\sqrt{m_2}B_{\sigma'}\|(\widetilde{W})_s^3\|_F)\|{W}_s^1-(\widetilde{W})_s^1\|_{2}\\
&\quad+L_{\sigma}(\sqrt{m_1}L_{\sigma'}B_{\sigma}\|W_s^3\|_F\|(\widetilde{W})_s^2\|_{2}+\sqrt{m_2}B_{\sigma'}\|(\widetilde{W})_s^3\|_F)\|{b}_s^1-(\widetilde{b})_s^1\|_{2},\\
&\|\nabla_{b_s^2}f_W-\nabla_{b_s^2}f_{\widetilde{W}}\|_F\\
&\leq\|\mathrm{diag}[\sigma'(f_{s,2}^{org}(W))]-\mathrm{diag}[\sigma'(f_{s,2}^{org}(\widetilde{W}))]\|_F\|W_s^3\|_F\\
&\quad+\|\mathrm{diag}[\sigma'(f_{s,2}^{org}(\widetilde{W}))]\|_F\|W_s^3-(\widetilde{W})_s^3\|_F\\
&\leq\sqrt{m_2}B_{\sigma'}\|W_s^3-(\widetilde{W})_s^3\|_F+{\sqrt{m_1}}L_{\sigma'}B_{\sigma}\|W_s^3\|_F\|{W}_s^2-(\widetilde{W})_s^2\|_{2}\\
&\quad+L_{\sigma'}\|W_s^3\|_F\|{b}_s^2-(\widetilde{b})_s^2\|_{2}+L_{\sigma}L_{\sigma'}B_x\|W_s^3\|_F\|(\widetilde{W})_s^2\|_{2}\|{W}_s^1-(\widetilde{W})_s^1\|_{2}\\
&\quad+L_{\sigma}L_{\sigma'}\|W_s^3\|_F\|(\widetilde{W})_s^2\|_{2}\|{b}_s^1-(\widetilde{b})_s^1\|_{2},\\
&\|\nabla_{W_s^1}f_W-\nabla_{W_s^1}f_{\widetilde{W}}\|_F\\
&\leq\|\mathrm{diag}[\sigma'(f_{s,1}^{org}(W))]-\mathrm{diag}[\sigma'(f_{s,1}^{org}(\widetilde{W}))]\|_F\|W_s^2\|_F\\
&\quad\ \|\mathrm{diag}[\sigma'(f_{s,2}^{org}(W))]\|_F\|W_s^3\|_F\|x\|_2\\
&\quad+\|\mathrm{diag}[\sigma'(f_{s,1}^{org}(\widetilde{W}))]\|_F\|W_s^2-(\widetilde{W})_s^2\|_F\|\mathrm{diag}[\sigma'(f_{s,2}^{org}(W))]\|_F\|W_s^3\|_F\|x\|_2\\
&\quad+\|\mathrm{diag}[\sigma'(f_{s,1}^{org}(\widetilde{W}))]\|_F\|(\widetilde{W})_s^2\|_F\\
&\qquad\ \|\mathrm{diag}[\sigma'(f_{s,2}^{org}(W))]-\mathrm{diag}[\sigma'(f_{s,2}^{org}(\widetilde{W}))]\|_F\|W_s^3\|_F\|x\|_2\\
&\quad+\|\mathrm{diag}[\sigma'(f_{s,1}^{org}(\widetilde{W}))]\|_F\|(\widetilde{W})_s^2\|_F\|\mathrm{diag}[\sigma'(f_{s,2}^{org}(\widetilde{W}))]\|_F\|W_s^3-(\widetilde{W})_s^3\|_F\|x\|_2\\
&\leq\sqrt{m_2m_1}B_{\sigma'}^2\|(\widetilde{W})_s^2\|_F\|W_s^3-(\widetilde{W})_s^3\|_F\\
&\quad+\sqrt{m_1}B_{\sigma'}\|W_s^3\|_F(\sqrt{m_2}B_{\sigma'}+{\sqrt{m_1}}L_{\sigma'}B_{\sigma}\|(\widetilde{W})_s^2\|_F)\|{W}_s^2-(\widetilde{W})_s^2\|_{F}\\
&\quad+\sqrt{m_1}L_{\sigma'}B_{\sigma'}\|(\widetilde{W})_s^2\|_F\|W_s^3\|_F\|{b}_s^2-(\widetilde{b})_s^2\|_{2}\\
&\quad+L_{\sigma'}B_{\sigma'}\|W_s^3\|_F(\sqrt{m_2}\|W_s^2\|_F+\sqrt{m_1}L_{\sigma}\|(\widetilde{W})_s^2\|_F^2)\|{W}_s^1-(\widetilde{W})_s^1\|_{2}\\
&\quad+L_{\sigma'}B_{\sigma'}\|W_s^3\|_F(\sqrt{m_2}\|W_s^2\|_F+\sqrt{m_1}L_{\sigma}\|(\widetilde{W})_s^2\|_F^2)\|{b}_s^1-(\widetilde{b})_s^1\|_{2},\\
&\|\nabla_{b_s^1}f_W-\nabla_{b_s^1}f_{\widetilde{W}}\|_F\\
&\leq\|\mathrm{diag}[\sigma'(f_{s,1}^{org}(W))]-\mathrm{diag}[\sigma'(f_{s,1}^{org}(\widetilde{W}))]\|_F\|W_s^2\|_F\\
&\quad\ \|\mathrm{diag}[\sigma'(f_{s,2}^{org}(W))]\|_F\|W_s^3\|_F\\
&\quad+\|\mathrm{diag}[\sigma'(f_{s,1}^{org}(\widetilde{W}))]\|_F\|W_s^2-(\widetilde{W})_s^2\|_F\|\mathrm{diag}[\sigma'(f_{s,2}^{org}(W))]\|_F\|W_s^3\|_F\\
&\quad+\|\mathrm{diag}[\sigma'(f_{s,1}^{org}(\widetilde{W}))]\|_F\|(\widetilde{W})_s^2\|_F\\
&\qquad\ \|\mathrm{diag}[\sigma'(f_{s,2}^{org}(W))]-\mathrm{diag}[\sigma'(f_{s,2}^{org}(\widetilde{W}))]\|_F\|W_s^3\|_F\\
&\quad+\|\mathrm{diag}[\sigma'(f_{s,1}^{org}(\widetilde{W}))]\|_F\|(\widetilde{W})_s^2\|_F\|\mathrm{diag}[\sigma'(f_{s,2}^{org}(\widetilde{W}))]\|_F\|W_s^3-(\widetilde{W})_s^3\|_F\\
&\leq\sqrt{m_2m_1}B_{\sigma'}^2\|(\widetilde{W})_s^2\|_F\|W_s^3-(\widetilde{W})_s^3\|_F\\
&\quad+\sqrt{m_1}B_{\sigma'}\|W_s^3\|_F(\sqrt{m_2}B_{\sigma'}+{\sqrt{m_1}}L_{\sigma'}B_{\sigma}\|(\widetilde{W})_s^2\|_F)\|{W}_s^2-(\widetilde{W})_s^2\|_{F}\\
&\quad+\sqrt{m_1}L_{\sigma'}B_{\sigma'}\|(\widetilde{W})_s^2\|_F\|W_s^3\|_F\|{b}_s^2-(\widetilde{b})_s^2\|_{2}\\
&\quad+L_{\sigma'}B_{\sigma'}\|W_s^3\|_F(\sqrt{m_2}\|W_s^2\|_F+\sqrt{m_1}L_{\sigma}\|(\widetilde{W})_s^2\|_F^2)\|{W}_s^1-(\widetilde{W})_s^1\|_{2}\\
&\quad+L_{\sigma'}B_{\sigma'}\|W_s^3\|_F(\sqrt{m_2}\|W_s^2\|_F+\sqrt{m_1}L_{\sigma}\|(\widetilde{W})_s^2\|_F^2)\|{b}_s^1-(\widetilde{b})_s^1\|_{2}.
\end{align*}

The fourth step is to present upper bounds of $\left\|\nabla_{W_s^l}\left(\frac{\partial f_W}{\partial x_j}\right)\right\|_F,\left\|\nabla_{b_s^l}\left(\frac{\partial f_W}{\partial x_j}\right)\right\|_F$ and $\left\|\nabla_{W_s^l}\left(\frac{\partial f_W}{\partial x_j}\right)-\nabla_{W_s^l}\left(\frac{\partial f_{\widetilde{W}}}{\partial x_j}\right)\right\|_F,\left\|\nabla_{b_s^l}\left(\frac{\partial f_W}{\partial x_j}\right)-\nabla_{b_s^l}\left(\frac{\partial f_{\widetilde{W}}}{\partial x_j}\right)\right\|_F(l=1,2,3)$. Let $W_s^1=(W_s^{1,1},\cdots,W_s^{1,d})$ with $W_{s}^{1,j}\in\mathbb{R}^{m_1},j=1,\cdots,d$. The partial derivative of $f_W$ with respect to the spatial variables $x_j(j=1,\cdots,d)$ is
\begin{align*}
\frac{\partial f_W}{\partial x_j}=\sum_{s=1}^A(W_s^{1,j})^T\mathrm{diag}[\sigma'(f_{s,1}^{org})](W_s^2)^T\mathrm{diag}[\sigma'(f_{s,2}^{org})](W_s^3)^T.
\end{align*}
and its gradient with respect to weights and biases in each layer can be calculated by
\begin{align*}
\nabla_{W_s^3}\left(\frac{\partial f_W}{\partial x_j}\right)&=(W_s^{1,j})^T\mathrm{diag}[\sigma'(f_{s,1}^{org})](W_s^2)^T\mathrm{diag}[\sigma'(f_{s,2}^{org})],\\
\nabla_{W_s^2}\left(\frac{\partial f_W}{\partial x_j}\right)
&=\mathrm{diag}[\sigma'(f_{s,2}^{org})](W_s^3)^T(W_s^{1,j})^T\mathrm{diag}[\sigma'(f_{s,1}^{org})]\\
&\quad+(W_s^2\mathrm{diag}[\sigma'(f_{s,1}^{org})]W_s^{1,j})\odot (\mathrm{diag}[\sigma''(f_{s,2}^{org})](W_s^3)^T)(f_s^1)^T,\\
\nabla_{b_s^2}\left(\frac{\partial f_W}{\partial x_j}\right)&=(W_s^2\mathrm{diag}[\sigma'(f_{s,1}^{org})]W_s^{1,j})\odot(\mathrm{diag}[\sigma''(f_{s,2}^{org})](W_s^3)^T),\\
\nabla_{W_s^1}\left(\frac{\partial f_W}{\partial x_j}\right)
&=(0,\cdots,0,\underbrace{\mathrm{diag}[\sigma'(f_{s,1}^{org})](W_s^2)^T\mathrm{diag}[\sigma'(f_{s,2}^{org})](W_s^3)^T}_{j\mathrm{th}},0,\cdots,0)\\
&\quad+(\mathrm{diag}[\sigma''(f_{s,1}^{org})]W_s^{1,j})\odot ((W_s^2)^T\mathrm{diag}[\sigma'(f_{s,2}^{org})](W_s^3)^T)x^T\\
&\quad+\mathrm{diag}[\sigma'(f_{s,1}^{org})](W_s^2)^T\\
&\qquad((W_s^2\mathrm{diag}[\sigma'(f_{s,1}^{org})]W_s^{1,j})\odot (\mathrm{diag}[\sigma''(f_2^{org})](W_s^3)^T))x^T,\\
\nabla_{b_s^1}\left(\frac{\partial f_W}{\partial x_j}\right)&=
(\mathrm{diag}[\sigma''(f_{s,1}^{org})]W_s^{1,j})\odot((W_s^2)^T\mathrm{diag}[\sigma'(f_{s,2}^{org})](W_s^3)^T)\\
&\quad+\mathrm{diag}[\sigma'(f_{s,1}^{org})](W_s^2)^T\\
&\qquad((W_s^2\mathrm{diag}[\sigma'(f_{s,1}^{org})]W_s^{1,j})\odot(\mathrm{diag}[\sigma''(f_{s,2}^{org})](W_s^3)^T)).
\end{align*} 
Employing the inequality $\|a\odot b\|_2\leq\|a\|_2\|b\|_2$ for any vectors $a$ and $b$, we have
\begin{align*}
\left\|\nabla_{W_s^3}\left(\frac{\partial f_W}{\partial x_j}\right)\right\|_F&\leq\|W_s^{1,j}\|_F\|\mathrm{diag}[\sigma'(f_{s,1}^{org})]\|_F\|W_s^2\|_F\|\mathrm{diag}[\sigma'(f_{s,2}^{org})]\|_F\\
&\leq \sqrt{m_2m_1}\|W_s^{1,j}\|_F\|W_s^{2}\|_F,\\
\left\|\nabla_{W_s^2}\left(\frac{\partial f_W}{\partial x_j}\right)\right\|_F
&\leq\|\mathrm{diag}[\sigma'(f_{s,2}^{org})]\|_F\|W_s^3\|_F\|W_s^{1,j}\|_F\|\mathrm{diag}[\sigma'(f_{s,1}^{org})]\|_F
\\
&\quad+\|W_s^2\|_F\|\mathrm{diag}[\sigma'(f_{s,1}^{org})]\|_F\|W_s^{1,j}\|_F\|\\
&\qquad\ \|\mathrm{diag}[\sigma''(f_{s,2}^{org})]\|_F\|W_s^3\|_F\|f_s^1\|_F\\
&\leq 2\sqrt{m_2}m_1B_{\sigma}B_{\sigma'}^2B_{\sigma''}\|W_s^{1,j}\|_F\|W_s^2\|_F\|W_s^3\|_F,\\
\left\|\nabla_{b_s^2}\left(\frac{\partial f_W}{\partial x_j}\right)\right\|_F
&\leq\|W_s^2\|_F\|\mathrm{diag}[\sigma'(f_{s,1}^{org})]\|_F\|W_s^{1,j}\|_F\|\|\mathrm{diag}[\sigma''(f_{s,2}^{org})]\|_F\|W_s^3\|_F\\
&\leq \sqrt{m_2m_1}B_{\sigma'}B_{\sigma''}\|W_s^{1,j}\|_F\|W_s^2\|_F\|W_s^3\|_F,\\
\left\|\nabla_{W_s^1}\left(\frac{\partial f_W}{\partial x_j}\right)\right\|_F&\leq
\|{\mathrm{diag}[\sigma'(f_{s,1}^{org})]\|_F\|W_s^2\|_F\|\mathrm{diag}[\sigma'(f_{s,2}^{org})]\|_F\|W_s^3}\|_F\\
&\quad+\|\mathrm{diag}[\sigma''(f_{s,1}^{org})]\|_F\|W_s^{1,j}\|_F\|W_s^2\|_F\\
&\qquad\ \|\mathrm{diag}[\sigma'(f_{s,2}^{org})]\|_F\|W_s^3\|_F\|x\|_2\\
&\quad+\|\mathrm{diag}[\sigma'(f_{s,1}^{org})]\|_F\|W_s^2\|_F\|W_s^2\|_F\\
&\qquad\ \|\mathrm{diag}[\sigma'(f_{s,1}^{org})]\|_F\|W_s^{1,j}\|_F\|\mathrm{diag}[\sigma''(f_{s,2}^{org})]\|_F\|W_s^3\|_F\|x\|_2\\
&\leq 3\sqrt{m_2}m_1B_{\sigma'}^2B_{\sigma''}B_x\|W_s^{1,j}\|_F\|W_s^2\|_F^2\|W_s^3\|_F,\\
\left\|\nabla_{b_s^1}\left(\frac{\partial f_W}{\partial x_j}\right)\right\|_F&\leq
\|\mathrm{diag}[\sigma''(f_{s,1}^{org})]\|_F\|W_s^{1,j}\|_F\|W_s^2\|_F\|\mathrm{diag}[\sigma'(f_{s,2}^{org})]\|_F\|W_s^3\|_F\\
&\quad+\|\mathrm{diag}[\sigma'(f_{s,1}^{org})]\|_F\|W_s^2\|_F\|W_s^2\|_F\\
&\qquad\ \|\mathrm{diag}[\sigma'(f_{s,1}^{org})]\|_F\|W_s^{1,j}\|_F\|\mathrm{diag}[\sigma''(f_{s,2}^{org})]\|_F\|W_s^3\|_F\\
&\leq 2\sqrt{m_2}m_1B_{\sigma'}^2B_{\sigma''}\|W_s^{1,j}\|_F\|W_s^2\|_F^2\|W_s^3\|_F.
\end{align*} 
and
\begin{align*}
&\left\|\nabla_{W_s^3}\left(\frac{\partial f_W}{\partial x_j}\right)-\nabla_{W_s^3}\left(\frac{\partial f_{\widetilde{W}}}{\partial x_j}\right)\right\|_F\\
&\leq\|W_s^{1,j}-(\widetilde{W})_s^{1,j}\|_F\|\mathrm{diag}[\sigma'(f_{s,1}^{org}(W))]\|_F\|W_s^2\|_F\|\mathrm{diag}[\sigma'(f_{s,2}^{org}(W))]\|_F\\
&\quad+\|(\widetilde{W})_s^{1,j}\|_F\|\mathrm{diag}[\sigma'(f_{s,1}^{org}(W))]-\mathrm{diag}[\sigma'(f_{s,1}^{org}(\widetilde{W}))]\|_F\\
&\qquad\ \|W_s^2\|_F\|\mathrm{diag}[\sigma'(f_{s,2}^{org}(W))]\|_F\\
&\quad+\|(\widetilde{W})_s^{1,j}\|_F\|\mathrm{diag}[\sigma'(f_{s,1}^{org}(\widetilde{W}))]\|_F\|W_s^2-(\widetilde{W})_s^2\|_F\|\mathrm{diag}[\sigma'(f_{s,2}^{org}(W))]\|_F\\
&\quad+\|(\widetilde{W})_s^{1,j}\|_F\|\mathrm{diag}[\sigma'(f_{s,1}^{org}(\widetilde{W}))]\|_F\\
&\qquad\ \|(\widetilde{W})_s^2\|_F\|\mathrm{diag}[\sigma'(f_{s,2}^{org}(W))]-\mathrm{diag}[\sigma'(f_{s,2}^{org}(\widetilde{W}))]\|_F\\
&\leq C(\sigma)(\max\{{m_2},m_1\}\|(\widetilde{W})_s^{1,j}\|_F\|(\widetilde{W})_s^2\|_F\|W_s^2-(\widetilde{W})_s^2\|_F\\
&\quad+\sqrt{m_1}\|(\widetilde{W})_s^{1,j}\|_F\|(\widetilde{W})_s^2\|_F\|b_s^2-(\widetilde{b})_s^2\|_2\\
&\quad+\max\{\sqrt{m_2},\sqrt{m_1}\}\|(\widetilde{W})_s^{1,j}\|_F\max\{\|(\widetilde{W})_s^2\|_F^2,\|W_s^2\|_F^2\}\|W_s^1-(\widetilde{W})_s^1\|_F\\
&\quad+\max\{\sqrt{m_2},\sqrt{m_1}\}\|(\widetilde{W})_s^{1,j}\|_F\max\{\|(\widetilde{W})_s^2\|_F^2,\|W_s^2\|_F^2\}\|b_s^1-(\widetilde{b})_s^1\|_2\\
&\quad+\sqrt{m_2m_1}\|W_s^2\|_F\|W_s^{1,j}-(\widetilde{W})_s^{1,j}\|_F),\\
&\left\|\nabla_{b_s^3}\left(\frac{\partial f_W}{\partial x_j}\right)-\nabla_{b_s^3}\left(\frac{\partial f_W}{\partial x_j}\right)\right\|_F=0,\\
&\left\|\nabla_{W_s^2}\left(\frac{\partial f_W}{\partial x_j}\right)-\nabla_{W_s^2}\left(\frac{\partial f_{\widetilde{W}}}{\partial x_j}\right)\right\|_F\\
&\leq\|\mathrm{diag}[\sigma'(f_{s,2}^{org}(W))]-\mathrm{diag}[\sigma'(f_{s,2}^{org}(\widetilde{W}))]\|_F\|W_s^3\|_F\\
&\quad\ \|W_s^{1,j}\|_F\|\mathrm{diag}[\sigma'(f_{s,1}^{org}(W)]\|_F\\
&\quad+\|\mathrm{diag}[\sigma'(f_{s,2}^{org}(\widetilde{W}))]\|_F\|W_s^3-(\widetilde{W})_s^3\|_F\|W_s^{1,j}\|_F\|\mathrm{diag}[\sigma'(f_{s,1}^{org}(W)]\|_F\\
&\quad+\|\mathrm{diag}[\sigma'(f_{s,2}^{org}(\widetilde{W}))]\|_F\|(\widetilde{W})_s^3\|_F\|W_s^{1,j}-(\widetilde{W})_s^{1,j}\|_F\|\mathrm{diag}[\sigma'(f_{s,1}^{org}(W)]\|_F\\
&\quad+\|\mathrm{diag}[\sigma'(f_{s,2}^{org}(\widetilde{W}))]\|_F\|(\widetilde{W})_s^3\|_F\\
&\qquad\ \|(\widetilde{W})_s^{1,j}\|_F\|\mathrm{diag}[\sigma'(f_{s,1}^{org}(W)]-\mathrm{diag}[\sigma'(f_{s,1}^{org}(\widetilde{W})]\|_F\\
&\quad+\|W_s^2-(\widetilde{W})_s^2\|_F\|\mathrm{diag}[\sigma'(f_{s,1}^{org}(W))]\|_F\|W_s^{1,j}\|_F\\
&\qquad\ \|\mathrm{diag}[\sigma''(f_{s,2}^{org}(W))]\|_F\|W_s^3\|_F\|f_s^1(W)\|_F\\
&\quad+\|(\widetilde{W})_s^2\|_F\|\mathrm{diag}[\sigma'(f_{s,1}^{org}(W))]-\mathrm{diag}[\sigma'(f_{s,1}^{org}(\widetilde{W}))]\|_F\|W_s^{1,j}\|_F\\
&\qquad\ \|\mathrm{diag}[\sigma''(f_{s,2}^{org}(W))]\|_F\|W_s^3\|_F\|f_s^1(W)\|_F\\
&\quad+\|(\widetilde{W})_s^2\|_F\|\mathrm{diag}[\sigma'(f_{s,1}^{org}(\widetilde{W}))]\|_F\|W_s^{1,j}-(\widetilde{W})_s^{1,j}\|_F\\
&\qquad\ \|\mathrm{diag}[\sigma''(f_{s,2}^{org}(W))]\|_F\|W_s^3\|_F\|f_s^1(W)\|_F\\
&\quad+\|(\widetilde{W})_s^2\|_F\|\mathrm{diag}[\sigma'(f_{s,1}^{org}(\widetilde{W}))]\|_F\|(\widetilde{W})_s^{1,j}\|_F\\
&\qquad\ \|\mathrm{diag}[\sigma''(f_{s,2}^{org}(W))]-\mathrm{diag}[\sigma''(f_{s,2}^{org}(\widetilde{W}))]\|_F\|W_s^3\|_F\|f_s^1(W)\|_F\\
&\quad+\|(\widetilde{W})_s^2\|_F\|\mathrm{diag}[\sigma'(f_{s,1}^{org}(\widetilde{W}))]\|_F\|(\widetilde{W})_s^{1,j}\|_F\\
&\qquad\ \|\mathrm{diag}[\sigma''(f_{s,2}^{org}(\widetilde{W}))]\|_F\|W_s^3-(\widetilde{W})_s^3\|_F\|f_s^1(W)\|_F\\
&\quad+\|(\widetilde{W})_s^2\|_F\|\mathrm{diag}[\sigma'(f_{s,1}^{org}(\widetilde{W}))]\|_F\|(\widetilde{W})_s^{1,j}\|_F\\
&\qquad\ \|\mathrm{diag}[\sigma''(f_{s,2}^{org}(\widetilde{W}))]\|_F\|(\widetilde{W})_s^3\|_F\|f_s^1(W)-f_s^1(\widetilde{W})\|_F\\
&\leq C(\sigma)(\sqrt{m_2}m_1\max\{\|W_s^{1,j}\|_F,\|(\widetilde{W})_s^{1,j}\|_F\}\|(\widetilde{W})_s^2\|_F\|W_s^3-(\widetilde{W})_s^3\|_F\\
&\quad+\max\{\sqrt{m_2},\sqrt{m_1}\}{m_1}\|W_s^3\|_F\\
&\qquad\max\{\|W_s^{1,j}\|_F,\|(\widetilde{W})_s^{1,j}\|_F\}\|(\widetilde{W})_s^2\|_F\|W_s^2-(\widetilde{W})_s^2\|_F\\
&\quad+{m_1}\|W_s^3\|_F\max\{\|W_s^{1,j}\|_F,\|(\widetilde{W})_s^{1,j}\|_F\}\|(\widetilde{W})_s^2\|_F\|b_s^2-(\widetilde{b})_s^2\|_2\\
&\quad+\sqrt{m_2}m_1\max\{\|W_s^{1,j}\|_F,\|(\widetilde{W})_s^{1,j}\|_F\}\|(\widetilde{W})_s^2\|_F^2\\
&\qquad\max\{\|W_s^{3}\|_F,\|(\widetilde{W})_s^{3}\|_F\}\|W_s^1-(\widetilde{W})_s^1\|_F\\
&\quad+\sqrt{m_2}m_1\|(\widetilde{W})_s^2\|_F\max\{\|W_s^{3}\|_F,\|(\widetilde{W})_s^{3}\|_F\}\|W_s^{1,j}-(\widetilde{W})_s^{1,j}\|_F\\
&\quad+\sqrt{m_2}m_1\max\{\|W_s^{1,j}\|_F,\|(\widetilde{W})_s^{1,j}\|_F\}\|(\widetilde{W})_s^2\|_F^2\\
&\qquad\max\{\|W_s^{3}\|_F,\|(\widetilde{W})_s^{3}\|_F\}\|b_s^1-(\widetilde{b})_s^1\|_2),\\
&\left\|\nabla_{b_s^2}\left(\frac{\partial f_W}{\partial x_j}\right)-\nabla_{b_s^2}\left(\frac{\partial f_{\widetilde{W}}}{\partial x_j}\right)\right\|_F\\
&\leq\|W_s^2-(\widetilde{W})_s^2\|_F\|\mathrm{diag}[\sigma'(f_{s,1}^{org}(W))]\|_F\|W_s^{1,j}\|_F\|\mathrm{diag}[\sigma''(f_{s,2}^{org}(W))]\|_F\|W_s^3\|_F\\
&\quad+\|(\widetilde{W})_s^2\|_F\|\mathrm{diag}[\sigma'(f_{s,1}^{org}(W))]-\mathrm{diag}[\sigma'(f_{s,1}^{org}(\widetilde{W}))]\|_F\\
&\qquad\ \|W_s^{1,j}\|_F\|\mathrm{diag}[\sigma''(f_{s,2}^{org}(W))]\|_F\|W_s^3\|_F\\
&\quad+\|(\widetilde{W})_s^2\|_F\|\mathrm{diag}[\sigma'(f_{s,1}^{org}(\widetilde{W}))]\|_F\|W_s^{1,j}-(\widetilde{W})_s^{1,j}\|_F\\
&\qquad\ \|\mathrm{diag}[\sigma''(f_{s,2}^{org}(W))]\|_F\|W_s^3\|_F\\
&\quad+\|(\widetilde{W})_s^2\|_F\|\mathrm{diag}[\sigma'(f_{s,1}^{org}(\widetilde{W}))]\|_F\|(\widetilde{W})_s^{1,j}\|_F\\
&\qquad\ \|\mathrm{diag}[\sigma''(f_{s,2}^{org}(W))]-\mathrm{diag}[\sigma''(f_{s,2}^{org}(\widetilde{W}))]\|_F\|W_s^3\|_F\\
&\quad+\|(\widetilde{W})_s^2\|_F\|\mathrm{diag}[\sigma'(f_{s,1}^{org}(\widetilde{W}))]\|_F\|(\widetilde{W})_s^{1,j}\|_F\\
&\qquad\ \|\mathrm{diag}[\sigma''(f_{s,2}^{org}(\widetilde{W}))]\|_F\|W_s^3-(\widetilde{W})_s^3\|_F\\
&\leq C(\sigma)(\sqrt{m_2}m_1\|(\widetilde{W})_s^2\|_F\|(\widetilde{W})_s^{1,j}\|_F\|W_s^3-(\widetilde{W})_s^3\|_F\\
&\quad+\max\{\sqrt{m_2},\sqrt{m_1}\}{m_1}\|W_s^3\|_F\|(\widetilde{W})_s^2\|_F\\
&\qquad\ \max\{\|W_s^{1,j}\|_F,\|(\widetilde{W})_s^{1,j}\|_F\}\|W_s^2-(\widetilde{W})_s^2\|_F\\
&\quad+{m_1}\|W_s^3\|_F\|(\widetilde{W})_s^2\|_F\|(\widetilde{W})_s^{1,j}\|_F\|b_s^2-(\widetilde{b})_s^2\|_2\\
&\quad+\sqrt{m_2}m_1\|(\widetilde{W})_s^2\|_F\|W_s^3\|_F\|W_s^{1,j}-(\widetilde{W})_s^{1,j}\|_F\\
&\quad+\max\{\sqrt{m_2},\sqrt{m_1}\}\|(\widetilde{W})_s^2\|_F\|W_s^3\|_F\|(\widetilde{W})_s^2\|_F\\
&\qquad\max\{\|W_s^{1,j}\|_F,\|(\widetilde{W})_s^{1,j}\|_F\}\|W_s^1-(\widetilde{W})_s^1\|_F\\
&\quad+\max\{\sqrt{m_2},\sqrt{m_1}\}\|(\widetilde{W})_s^2\|_F\|W_s^3\|_F\|(\widetilde{W})_s^2\|_F\\
&\qquad\max\{\|W_s^{1,j}\|_F,\|(\widetilde{W})_s^{1,j}\|_F\}\|b_s^1-(\widetilde{b})_s^1\|_2),\\
&\left\|\nabla_{W_s^1}\left(\frac{\partial f_W}{\partial x_j}\right)-\nabla_{W_s^1}\left(\frac{\partial f_{\widetilde{W}}}{\partial x_j}\right)\right\|_F\\
&\leq\|\mathrm{diag}[\sigma'(f_{s,1}^{org}(W))]-\mathrm{diag}[\sigma'(f_{s,1}^{org}(\widetilde{W}))]\|_F\|W_s^2\|_F\\
&\quad\ \|\mathrm{diag}[\sigma'(f_{s,2}^{org}(W))]\|_F\|W_s^3\|_F\\
&\quad+\|\mathrm{diag}[\sigma'(f_{s,1}^{org}(\widetilde{W}))]\|_F\|W_s^2-(\widetilde{W})_s^2\|_F\|\mathrm{diag}[\sigma'(f_{s,2}^{org}(W))]\|_F\|W_s^3\|_F\\
&\quad+\|\mathrm{diag}[\sigma'(f_{s,1}^{org}(\widetilde{W}))]\|_F\|(\widetilde{W})_s^2\|_F\\
&\qquad\ \|\mathrm{diag}[\sigma'(f_{s,2}^{org}(W))]-\mathrm{diag}[\sigma'(f_{s,2}^{org}(\widetilde{W}))]\|_F\|W_s^3\|_F\\
&\quad+\|\mathrm{diag}[\sigma'(f_{s,1}^{org}(\widetilde{W}))]\|_F\|(\widetilde{W})_s^2\|_F\|\mathrm{diag}[\sigma'(f_{s,2}^{org}(\widetilde{W}))]\|_F\|W_s^3-(\widetilde{W})_s^3\|_F\\
&\quad+\|\mathrm{diag}[\sigma''(f_{s,1}^{org}(W))]-\mathrm{diag}[\sigma''(f_{s,1}^{org}(\widetilde{W}))]\|_F\|W_s^{1,j}\|_F\|W_s^2\|_F\\
&\qquad\ \|\mathrm{diag}[\sigma'(f_{s,2}^{org}(W))]\|_F\|W_s^3\|_F\|x\|_2\\
&\quad+\|\mathrm{diag}[\sigma''(f_{s,1}^{org}(\widetilde{W}))]\|_F\|W_s^{1,j}-(\widetilde{W})_s^{1,j}\|_F\|W_s^2\|_F\\
&\qquad\ \|\mathrm{diag}[\sigma'(f_{s,2}^{org}(W))]\|_F\|W_s^3\|_F\|x\|_2\\
&\quad+\|\mathrm{diag}[\sigma''(f_{s,1}^{org}(\widetilde{W}))]\|_F\|(\widetilde{W})_s^{1,j}\|_F\|W_s^2-(\widetilde{W})_s^2\|_F\\
&\qquad\ \|\mathrm{diag}[\sigma'(f_{s,2}^{org}(W))]\|_F\|W_s^3\|_F\|x\|_2\\
&\quad+\|\mathrm{diag}[\sigma''(f_{s,1}^{org}(\widetilde{W}))]\|_F\|(\widetilde{W})_s^{1,j}\|_F\|(\widetilde{W})_s^2\|_F\\
&\qquad\ \|\mathrm{diag}[\sigma'(f_{s,2}^{org}(W))]-\mathrm{diag}[\sigma'(f_{s,2}^{org}(\widetilde{W}))]\|_F\|W_s^3\|_F\|x\|_2\\
&\quad+\|\mathrm{diag}[\sigma''(f_{s,1}^{org}(\widetilde{W}))]\|_F\|(\widetilde{W})_s^{1,j}\|_F\|(\widetilde{W})_s^2\|_F\\
&\qquad\ \|\mathrm{diag}[\sigma'(f_{s,2}^{org}(\widetilde{W}))]\|_F\|W_s^3-(\widetilde{W})_s^3\|_F\|x\|_2\\
&\quad+\|\mathrm{diag}[\sigma'(f_{s,1}^{org}(W))]-\mathrm{diag}[\sigma'(f_{s,1}^{org}(\widetilde{W}))]\|_F\|W_s^2\|_F\|W_s^2\|_F\\
&\qquad\ \|\mathrm{diag}[\sigma'(f_{s,1}^{org}(W))]\|_F\|W_s^{1,j}\|_F\|\mathrm{diag}[\sigma''(f_2^{org}(W))]\|_F\|W_s^3\|_F\|x\|_2\\
&\quad+\|\mathrm{diag}[\sigma'(f_{s,1}^{org}(\widetilde{W}))]\|_F\|W_s^2-(\widetilde{W})_s^2\|_F\|W_s^2\|_F\|\mathrm{diag}[\sigma'(f_{s,1}^{org}(W))]\|_F\\
&\qquad\ \|W_s^{1,j}\|_F\|\mathrm{diag}[\sigma''(f_2^{org}(W))]\|_F\|W_s^3\|_F\|x\|_2\\
&\quad+\|\mathrm{diag}[\sigma'(f_{s,1}^{org}(\widetilde{W}))]\|_F\|(\widetilde{W})_s^2\|_F\|W_s^2-(\widetilde{W})_s^2\|_F\|\mathrm{diag}[\sigma'(f_{s,1}^{org}(W))]\|_F\\
&\qquad\ \|W_s^{1,j}\|_F\|\mathrm{diag}[\sigma''(f_2^{org}(W))]\|_F\|W_s^3\|_F\|x\|_2\\
&\quad+\|\mathrm{diag}[\sigma'(f_{s,1}^{org}(\widetilde{W}))]\|_F\|(\widetilde{W})_s^2\|_F\|(\widetilde{W})_s^2\|_F\|\mathrm{diag}[\sigma'(f_{s,1}^{org}(W))]-\mathrm{diag}[\sigma'(f_{s,1}^{org}(\widetilde{W}))]\|_F\\
&\qquad\ \|W_s^{1,j}\|_F\|\mathrm{diag}[\sigma''(f_2^{org}(W))]\|_F\|W_s^3\|_F\|x\|_2\\
&\quad+\|\mathrm{diag}[\sigma'(f_{s,1}^{org}(\widetilde{W}))]\|_F\|(\widetilde{W})_s^2\|_F\|(\widetilde{W})_s^2\|_F\|\mathrm{diag}[\sigma'(f_{s,1}^{org}(\widetilde{W}))]\|_F\\
&\qquad\ \|W_s^{1,j}-(\widetilde{W})_s^{1,j}\|_F\|\mathrm{diag}[\sigma''(f_2^{org}(W))]\|_F\|W_s^3\|_F\|x\|_2\\
&\quad+\|\mathrm{diag}[\sigma'(f_{s,1}^{org}(\widetilde{W}))]\|_F\|(\widetilde{W})_s^2\|_F\|(\widetilde{W})_s^2\|_F\|\mathrm{diag}[\sigma'(f_{s,1}^{org}(\widetilde{W}))]\|_F\|(\widetilde{W})_s^{1,j}\|_F\\
&\qquad\ \|\mathrm{diag}[\sigma''(f_2^{org}(W))]-\mathrm{diag}[\sigma''(f_2^{org}(\widetilde{W}))]\|_F\|W_s^3\|_F\|x\|_2\\
&\quad+\|\mathrm{diag}[\sigma'(f_{s,1}^{org}(\widetilde{W}))]\|_F\|(\widetilde{W})_s^2\|_F\|(\widetilde{W})_s^2\|_F\|\mathrm{diag}[\sigma'(f_{s,1}^{org}(\widetilde{W}))]\|_F\|(\widetilde{W})_s^{1,j}\|_F\\
&\qquad\ \|\mathrm{diag}[\sigma''(f_2^{org}(\widetilde{W}))]\|_F\|W_s^3-(\widetilde{W})_s^3\|_F\|x\|_2\\
&\leq C(\sigma)(\sqrt{m_2}m_1\|(\widetilde{W})_s^2\|_F^2\|(\widetilde{W})_s^{1,j}\|_F\|W_s^3-(\widetilde{W})_s^3\|_F\\
&\quad+\sqrt{m_2}m_1^{3/2}\|W_s^3\|_F\max\{\|W_s^{1,j}\|_F,\|(\widetilde{W})_s^{1,j}\|_F\}\\
&\qquad\ \max\{\|\|W_s^2\|_F^2,\|(\widetilde{W})_s^2\|_F^2\}\|W_s^2-(\widetilde{W})_s^2\|_F\\
&\quad+{m_1}\|(\widetilde{W})_s^2\|_F^2\|W_s^3\|_F\|(\widetilde{W})_s^{1,j}\|_F\|b_s^2-(\widetilde{b})_s^2\|_2\\
&\quad+\sqrt{m_2}m_1\|W_s^3\|_F\max\{\|W_s^{1,j}\|_F,\|(\widetilde{W})_s^{1,j}\|_F\}\\
&\qquad\ \max\{\|W_s^2\|_F^3,\|(\widetilde{W})_s^2\|_F^3\}\|W_s^1-(\widetilde{W})_s^1\|_F\\
&\quad+\sqrt{m_2}m_1\|W_s^3\|_F\max\{\|W_s^2\|_F^2,\|(\widetilde{W})_s^2\|_F^2\}\|W_s^{1,j}-(\widetilde{W})_s^{1,j}\|_F\\
&\quad+\sqrt{m_2}m_1\|W_s^3\|_F\max\{\|W_s^{1,j}\|_F,\|(\widetilde{W})_s^{1,j}\|_F\}\\
&\qquad\ \max\{\|W_s^2\|_F^3,\|(\widetilde{W})_s^2\|_F^3\}\|b_s^1-(\widetilde{b})_s^1\|_F),\\
&\left\|\nabla_{b_s^1}\left(\frac{\partial f_W}{\partial x_j}\right)-\nabla_{b_s^1}\left(\frac{\partial f_{\widetilde{W}}}{\partial x_j}\right)\right\|_F\\
&\leq C(\sigma)(\sqrt{m_2}m_1\|(\widetilde{W})_s^2\|_F^2\|(\widetilde{W})_s^{1,j}\|_F\|W_s^3-(\widetilde{W})_s^3\|_F\\
&\quad+\sqrt{m_2}m_1^{3/2}\|W_s^3\|_F\max\{\|W_s^{1,j}\|_F,\|(\widetilde{W})_s^{1,j}\|_F\}\\
&\qquad\ \max\{\|\|W_s^2\|_F^2,\|(\widetilde{W})_s^2\|_F^2\}\|W_s^2-(\widetilde{W})_s^2\|_F\\
&\quad+{m_1}\|(\widetilde{W})_s^2\|_F^2\|W_s^3\|_F\|(\widetilde{W})_s^{1,j}\|_F\|b_s^2-(\widetilde{b})_s^2\|_2\\
&\quad+\sqrt{m_2}m_1\|W_s^3\|_F\max\{\|W_s^{1,j}\|_F,\|(\widetilde{W})_s^{1,j}\|_F\}\\
&\qquad\ \max\{\|W_s^2\|_F^3,\|(\widetilde{W})_s^2\|_F^3\}\|W_s^1-(\widetilde{W})_s^1\|_F\\
&\quad+\sqrt{m_2}m_1\|W_s^3\|_F\max\{\|W_s^2\|_F^2,\|(\widetilde{W})_s^2\|_F^2\}\|W_s^{1,j}-(\widetilde{W})_s^{1,j}\|_F\\
&\quad+\sqrt{m_2}m_1\|W_s^3\|_F\max\{\|W_s^{1,j}\|_F,\|(\widetilde{W})_s^{1,j}\|_F\}\\
&\qquad\ \max\{\|W_s^2\|_F^3,\|(\widetilde{W})_s^2\|_F^3\}\|b_s^1-(\widetilde{b})_s^1\|_F).
\end{align*}

In the final step, we deal with $\|\nabla_{W_s^l}\widehat{L}_R(f_W)-\nabla_{W_s^l}\widehat{L}_R(f_{\widetilde{W}})\|_F,\|\nabla_{b_s^l}\widehat{L}_R(f_W)-\nabla_{b_s^l}\widehat{L}_R(f_{\widetilde{W}})\|_2$ for $l=1,2,3$. To avoid excessive verbosity, we only present an estimate for $\|\nabla_{W_s^1}\widehat{L}_R(f_W)-\nabla_{W_s^1}\widehat{L}_R(f_{\widetilde{W}})\|_F$, and similar estimates can be obtained for the other terms. $\nabla_{W_s^1}\widehat{L}_R(f_W)-\nabla_{W_s^1}\widehat{L}_R(f_{\widetilde{W}})$ can be calculated by
\begin{align*}
&\nabla_{W_s^1}\widehat{L}_R(f_W)-\nabla_{W_s^1}\widehat{L}_R(f_{\widetilde{W}})\\
&=\frac{|\Omega|}{n}\sum_{i=1}^{n}\sum_{j=1}^{d}\left[\frac{\partial f_W(X_i)}{\partial x_j}\nabla_{W_s^1}\left(\frac{\partial f_W(X_i)}{\partial x_j}\right)-\frac{\partial f_{\widetilde{W}}(X_i)}{\partial x_j}\nabla_{W_s^1}\left(\frac{\partial f_{\widetilde{W}}(X_i)}{\partial x_j}\right)\right]\\
&\quad+\frac{|\Omega|}{n}\sum_{i=1}^{n}w(X_i)[f_W(X_i)\nabla_{W_s^1}f_W(X_i)-f_{\widetilde{W}}(X_i)\nabla_{W_s^1}f_{\widetilde{W}}(X_i)]\\
&\quad-\frac{|\Omega|}{n}\sum_{i=1}^{n}f(X_i)[\nabla_{W_s^1}f_W(X_i)-\nabla_{W_s^1}f_{\widetilde{W}}(X_i)]\\
&\quad+\frac{|\partial\Omega|}{\beta m}\sum_{i=1}^{m}[f_W(Y_i)\nabla_{W_s^1}f_W(Y_i)-f_{\widetilde{W}}(Y_i)\nabla_{W_s^1}f_{\widetilde{W}}(Y_i)]\\
&\quad-\frac{|\partial\Omega|}{\beta m}\sum_{i=1}^{m}g(Y_i)[\nabla_{W_s^1}f_W(Y_i)-\nabla_{W_s^1}f_{\widetilde{W}}(Y_i)].
\end{align*}
The first term on the right-hand side is bounded by
\begin{align*}
&\sum_{j=1}^d\left\|\frac{\partial f_W(X_i)}{\partial x_j}\nabla_{W_s^1}\left(\frac{\partial f_W(X_i)}{\partial x_j}\right)-\frac{\partial f_{\widetilde{W}}(X_i)}{\partial x_j}\nabla_{W_s^1}\left(\frac{\partial f_{\widetilde{W}}(X_i)}{\partial x_j}\right)\right\|_F\\
&\leq\sum_{j=1}^d\left|\frac{\partial f_W(X_i)}{\partial x_j}-\frac{\partial f_{\widetilde{W}}(X_i)}{\partial x_j}\right|\left\|\nabla_{W_s^1}\left(\frac{\partial f_{W}(X_i)}{\partial x_j}\right)\right\|_F\\
&\quad+\sum_{j=1}^d\left|\frac{\partial f_{\widetilde{W}}(X_i)}{\partial x_j}\right|\left\|\nabla_{W_s^1}\left(\frac{\partial f_W(X_i)}{\partial x_j}\right)-\nabla_{W_s^1}\left(\frac{\partial f_{\widetilde{W}}(X_i)}{\partial x_j}\right)\right\|_F\\
&\leq\|\nabla_xf_W(X_i)-\nabla_xf_{\widetilde{W}}(X_i)\|_2\left(\sum_{j=1}^d\left\|\nabla_{W_s^1}\left(\frac{\partial f_{W}(X_i)}{\partial x_j}\right)\right\|_F^2\right)^{1/2}\\
&\quad+\|\nabla_xf_{\widetilde{W}}(X_i)\|_2\left(\sum_{j=1}^d\left\|\nabla_{W_s^1}\left(\frac{\partial f_W(X_i)}{\partial x_j}\right)-\nabla_{W_s^1}\left(\frac{\partial f_{\widetilde{W}}(X_i)}{\partial x_j}\right)\right\|_F^2\right)^{1/2}.
\end{align*}
The fourth term on the right-hand side is bounded by(and there is a similar bound for the second term)
\begin{align*}
\|f_W\nabla_{W_s^1}f_W-f_{\widetilde{W}}\nabla_{W_s^1}f_{\widetilde{W}}\|_F\leq|f_W-f_{\widetilde{W}}|\|\nabla_{W_s^1}f_{W}\|_F
+|f_{\widetilde{W}}|\|\nabla_{W_s^1}f_W-\nabla_{W_s^3}f_{\widetilde{W}}\|_F
\end{align*}
Now, making use of the estimates of 
\begin{align*}
&|f_W|,|f_W-f_{\widetilde{W}}|,\|\nabla_xf_W\|_2,\|\nabla_xf_W-\nabla_xf_{\widetilde{W}}\|_2, \left\|\nabla_{W_s^l}f_{W}\right\|_F, \left\|\nabla_{b_s^l}f_{W}\right\|_2,\\
&\left\|\nabla_{W_s^l}f_{W}-\nabla_{W_s^l}f_{\widetilde{W}}\right\|_F,\left\|\nabla_{b_s^l}f_{W}-\nabla_{b_s^l}f_{\widetilde{W}}\right\|_2,\left\|\nabla_{W_s^l}\left(\frac{\partial f_W}{\partial x_j}\right)\right\|_F,\left\|\nabla_{b_s^l}\left(\frac{\partial f_W}{\partial x_j}\right)\right\|_2,\\
&\left\|\nabla_{W_s^l}\left(\frac{\partial f_W}{\partial x_j}\right)-\nabla_{W_s^l}\left(\frac{\partial f_{\widetilde{W}}}{\partial x_j}\right)\right\|_F,\left\|\nabla_{b_s^l}\left(\frac{\partial f_W}{\partial x_j}\right)-\nabla_{b_s^l}\left(\frac{\partial f_{\widetilde{W}}}{\partial x_j}\right)\right\|_F(l=1,2,3) 
\end{align*}
just obtained in the first four steps, we derive that 
\begin{align*}
&\|\nabla_{W_s^1}\widehat{L}_R(f_W)-\nabla_{W_s^1}\widehat{L}_R(f_{\widetilde{W}})\|_F\\
&\leq C(\sigma,f,w,\Omega)\sqrt{m_2}m_1\|W_s^{1}\|_F\|W_s^2\|_F^2\|W_s^3\|_F\\
&\quad\sum_{s'=1}^A[\sqrt{m_2m_1}\|(\widetilde{W})_{s'}^1\|_F\|(\widetilde{W})_{s'}^2\|_F\|W_{s'}^3-(\widetilde{W})_{s'}^3\|_F+|b_{s'}^3-(\widetilde{b})_{s'}^3|\\
&\quad\qquad+\max\{\sqrt{m_2},\sqrt{m_1}\}\sqrt{m_1}\|(\widetilde{W})_{s'}^1\|_F\|(\widetilde{W})_{s'}^2\|_F\\
&\qquad\qquad\max\{\|W_{s'}^3\|_F,\|(\widetilde{W})_{s'}^3\|_F\}\|{W}_{s'}^2-(\widetilde{W})_{s'}^2\|_{F}\\
&\quad\qquad+\sqrt{m_1}\|(\widetilde{W})_{s'}^1\|_F\|(\widetilde{W})_{s'}^2\|_F\max\{\|W_{s'}^3\|_F,\|(\widetilde{W})_{s'}^3\|_F\}\|{b}_{s'}^2-(\widetilde{b})_{s'}^2\|_{2}\\
&\quad\qquad+\sqrt{m_2m_1}\max\{\|W_{s'}^3\|_F,\|(\widetilde{W})_{s'}^3\|_F\}\|(\widetilde{W})_{s'}^1\|_F\\
&\qquad\qquad\max\{\|W_{s'}^2\|_F^2,\|(\widetilde{W})_{s'}^2\|_F^2\}\|{W}_{s'}^1-(\widetilde{W})_{s'}^1\|_{F}\\
&\quad\qquad+\max\{\sqrt{m_2},\sqrt{m_1}\}\|(\widetilde{W})_{s'}^1\|_F\max\{\|W_{s'}^3\|_F,\|(\widetilde{W})_{s'}^3\|_F\}\\
&\qquad\qquad\max\{\|W_{s'}^2\|_F^2,\|(\widetilde{W})_{s'}^2\|_F^2\}\|{b}_{s'}^1-(\widetilde{b})_{s'}^1\|_{2}]\\
&\quad+C(\sigma,f,w,\Omega)\left(\sum_{{s'}=1}^A\max\{\sqrt{m_2m_1}\|(\widetilde{W})_{s'}^1\|_2\|(\widetilde{W})_{s'}^2\|_2\|(\widetilde{W})_{s'}^3\|_2,|(\widetilde{b})_{s'}^3|\}\right)\\
&\qquad m_1\left(\sum_{j=1}^{d}\max\{\|W_s^{1,j}\|_F^2,\|(\widetilde{W})_s^{1,j}\|_F^2\}\right)^{1/2}\\
&\qquad(\sqrt{m_2}\|(\widetilde{W})_s^2\|_F^2\|W_s^3-(\widetilde{W})_s^3\|_F\\
&\qquad\ +\sqrt{m_2}m_1^{1/2}\|W_s^3\|_F\max\{\|\|W_s^2\|_F^2,\|(\widetilde{W})_s^2\|_F^2\}\|W_s^2-(\widetilde{W})_s^2\|_F\\
&\qquad\ +\|(\widetilde{W})_s^2\|_F^2\|W_s^3\|_F\|b_s^2-(\widetilde{b})_s^2\|_2\\
&\qquad\ +\sqrt{m_2}\|W_s^3\|_F\max\{\|W_s^2\|_F^3,\|(\widetilde{W})_s^2\|_F^3\}\|W_s^1-(\widetilde{W})_s^1\|_F\\
&\qquad\ +\sqrt{m_2}\|W_s^3\|_F\max\{\|W_s^2\|_F^3,\|(\widetilde{W})_s^2\|_F^3\}\|b_s^1-(\widetilde{b})_s^1\|_F)\\
&\quad+C(\sigma,g,\partial\Omega)\frac{1}{\beta}\sqrt{m_1m_2}\|W_s^2\|_F\|W_s^3\|_F\\
&\qquad\sum_{s'=1}^A({\sqrt{m_2}}\|{W}_{s'}^3-(\widetilde{W})_{s'}^3\|_{2}+|{b}_{s'}^3-(\widetilde{b})_{s'}^3|+{\sqrt{m_1}}\|(\widetilde{W})_{s'}^3\|_{2}\|{W}_{s'}^2-(\widetilde{W})_{s'}^2\|_{2}\\
&\qquad\qquad+\|(\widetilde{W})_{s'}^3\|_{2}\|{b}_{s'}^2-(\widetilde{b})_{s'}^2\|_{2}
+\|(\widetilde{W})_{s'}^3\|_{2}\|(\widetilde{W})_{s'}^2\|_{2}\|{W}_{s'}^1-(\widetilde{W})_{s'}^1\|_{2}\\
&\qquad\qquad+L_{\sigma}^2\|(\widetilde{W})_{s'}^3\|_{2}\|(\widetilde{W})_{s'}^2\|_{2}\|{b}_{s'}^1-(\widetilde{b})_{s'}^1\|_{2})\\
&\quad+C(\sigma,g,\partial\Omega)\frac{1}{\beta}\sum_{s'=1}^A(\sqrt{m_2}\|(\widetilde{W})_{s'}^3\|_F+|(\widetilde{b})_{s'}^3|)(\sqrt{m_2m_1}\|(\widetilde{W})_s^2\|_F\|W_s^3-(\widetilde{W})_s^3\|_F\\
&\qquad\qquad\qquad\qquad\qquad+\max\{\sqrt{m_2},\sqrt{m_1}\}\sqrt{m_1}\|W_s^3\|_F\|(\widetilde{W})_s^2\|_F\|{W}_s^2-(\widetilde{W})_s^2\|_{F}\\
&\qquad\qquad\qquad\qquad\qquad+\sqrt{m_1}\|(\widetilde{W})_s^2\|_F\|W_s^3\|_F\|{b}_s^2-(\widetilde{b})_s^2\|_{2}
&\qquad\qquad\qquad\qquad\qquad+\max\{\sqrt{m_2},\sqrt{m_1}\}\max\{\|W_s^2\|_F^2,\|(\widetilde{W})_s^2\|_F^2\}\|W_s^3\|_F\|{W}_s^1-(\widetilde{W})_s^1\|_{2}\\
&\qquad\qquad\qquad\qquad\qquad+\max\{\sqrt{m_2},\sqrt{m_1}\}\max\{\|W_s^2\|_F^2,\|(\widetilde{W})_s^2\|_F^2\}\\
&\qquad\qquad\qquad\qquad\qquad\quad\|W_s^3\|_F\|{b}_s^1-(\widetilde{b})_s^1\|_{2}).
\end{align*}
Combining the estimates of $\|\nabla_{W_s^l}\widehat{L}_R(f_W)-\nabla_{W_s^l}\widehat{L}_R(f_{\widetilde{W}})\|_F,\|\nabla_{b_s^l}\widehat{L}_R(f_W)-\nabla_{b_s^l}\widehat{L}_R(f_{\widetilde{W}})\|_2$ for $l=1,2,3$, we arrive at
\begin{align*}
&\|\nabla_{W}\widehat{L}_R(f_W)-\nabla_{W}\widehat{L}_R(f_{\widetilde{W}})\|^2\\
&\leq C(d,\sigma,f,g,w,\Omega)A^2\{1/\beta^2,1\}
\max_{1\leq s\leq A}[\max\{\|W_s^{1}\|_F^6,\|(\widetilde{W})_s^{1}\|_F^6\}\\
&\quad\max\{\|W_s^2\|_F^8,\|(\widetilde{W})_s^2\|_F^8\}\max\{\|W_s^3\|_F^6,\|(\widetilde{W})_s^3\|_F^6,|(\widetilde{b})_{s'}^3|^6\}]
\|W-\widetilde{W}\|^2.
\end{align*}
\end{proof}

\bibliographystyle{plain}

\bibliography{ref}

\end{document}